\documentclass{amsart}
\usepackage{tikz-cd}
\usepackage{enumitem}
\usepackage[a4paper,margin= 3 cm]{geometry}
\usepackage[hyphens]{url}
\usepackage{amsthm,amsmath,amssymb,amsfonts}
\usepackage[hidelinks]{hyperref}
\usepackage[indentafter]{titlesec}
\usepackage{mathrsfs}
\usepackage{bm}
\numberwithin{equation}{section}
\titleformat{name=\section}{}{\thetitle.}{0.8em}{\centering\scshape}
\titleformat{\subsection}
  {\normalfont\bfseries}
  {\thesubsection}
  {1em}
  {}
\theoremstyle{plain}
\newtheorem*{theorem*}{Theorem}
\newtheorem{theorem}{\textbf{Theorem}}[section]
\newtheorem{lemma}[theorem]{\textbf{Lemma}}
\newtheorem{proposition}[theorem]{\textbf{Proposition}}
\newtheorem{corollary}[theorem]{\textbf{Corollary}}

\theoremstyle{definition}
\newtheorem{definition}[theorem]{\textbf{Definition}}
\newtheorem{example}[theorem]{\textbf{Example}}
\newtheorem{remark}[theorem]{\textbf{Remark}}

\title[Bruhat decompositions of operator algebras]
{Bruhat decompositions of operator algebras}
\author{Thibaut Lescure}
\address{LMNO\\ 6 Boulevard Maréchal Juin\\ 14000 Caen \\ FRANCE}
\email{thibaut.lescure@etu.unicaen.fr}
\begin{document}

\begin{abstract}
We introduce and study a notion of decomposition of a C$^*$-algebra over a Coxeter system based on Tits' definition of a $W$-distance. When such a decomposition comes with suitable conditional expectations, we build an associated Fock Hilbert module and reduced C$^*$-algebra $A^r$. This unifies constructions of Voiculescu \cite{Voi85} and Caspers--Fima \cite{CF17} and provides a noncommutative analogue of the situation of a discrete group $G$ acting on a building, in which case $A^r \cong C^*_r(G)$. 

We construct covariance C$^*$-algebras $\mathscr{C}(i)\supset A^r$ as a noncommutative analogue of the crossed product $C(\Omega)\rtimes_r G \supset C^*_r(G) $ where $\Omega$ is Caprace and Lécureux's minimal combinatorial compactification \cite{CL11} of the locally finite building. Following the approach of Hasegawa \cite{Has17}  and Klisse \cite{Kli25}, we prove a universal property for the covariance algebras. This structural result yields different approximation properties, some of which are new even for the group case.
\end{abstract}
\maketitle

\section{Introduction}

Let $X$ be a building of type $(W,S)$. Let $G$ be a discrete group. The axioms of a $W$-distance tell us that $W$-isometric actions of $G$ on $X$ coincide with the data of partitions of $G$ of the form \[G = \biguplus_{w\in W}C_{i,j}(w),\] for all $i,j \in \mathscr{I} = G\backslash X$, satisfying a number of axioms coming from a $G$-equivariant identification $X \cong \coprod_{i\in \mathscr{I}}G/H_i$, with $H_i = C_{i,i}(e)$ (see Theorem \ref{CTB}). We call such a decomposition a $W$-decomposition of $G$. Bruhat decompositions of $G$ (for example coming from a BN-pair) correspond to the particular case of a chamber-transitive action (see \cite[Chapter 6]{AB08}). Passing to the reduced C$^*$-algebra, we obtain a direct sum decomposition 
\[C^*_r(G) = \overline{\bigoplus_{w\in W}L_{i,j}(w)},\] where each $L_{i,j}(w)$ is spanned by the unitaries coming from elements of $C_{i,j}(w)$. Translating the group-theoretic axioms, we get purely operator-theoretic notions of $W$-decomposition over an index set $\mathscr{I}$ and of Bruhat decomposition (see Definition \ref{OB}).  The first feature of this framework is that it unifies different, apparently unrelated constructions: Iwahori-Hecke C$^*$-algebras with positive real multiparameter, Voiculescu's amalgamated free products and graph product C$^*$-algebras are all perfectly natural examples of operator algebras with Bruhat decompositions. We also ask for the existence of conditional expectations to the base (or Borel) C$^*$-subalgebras $E_i: A \rightarrow B_i= \overline{L_{i,i}(e)}$ for all $i\in \mathscr{I}$ satisfying the condition \[\forall w\in W\setminus \{e\}, E_i(L_{i,i}(w)) = 0.\] With these conditional expectations, we are able to build Fock modules with natural direct sum decompositions $X_j = \bigoplus_{w\in W}X_{i,j}(w) = L^2(A,E_j)$. Putting all these right Hilbert $B_j$-modules together into a single right Hilbert module over $\mathcal{P} =\prod_{j\in \mathscr{I}}B_j$ which we call $\mathcal{X}$, we define a reduced C$^*$-algebra $A^r \subset \mathcal{L}_\mathcal{P}(\mathcal{X})$ (see Definition \ref{red}) which generalizes the reduced C$^*$-algebra constructed by Voiculescu and Caspers-Fima \cite{CF17}. The reduced C$^*$-algebra inherits a similar expected $W$-decomposition $A^r = \overline{\sum_{w\in W}F_{i,j}(w)}$. In the particular case of a Bruhat decomposition of some C$^*$-algebra $A$, we show that a conditional expectation $E : A \rightarrow B$ exists if we assume the existence of a coherent family of smaller conditional expectations $E_I : A_I \rightarrow B$ defined on parabolic C$^*$-subalgebras associated to spherical subsets $I\subset S$ of cardinality at most $3$ (see Theorem \ref{VV}).

We also claim that this framework is adapted to the study of operator algebraic properties of groups acting on buildings, as done in many different articles, e.g. \cite{Rob00, RS99, MRV24, KS91, Lec09}. Inspired by the work of Klisse on boundaries of graphs \cite{Kli23}, graph products C$^*$-algebras \cite{Kli25} and the work of Hasegawa on amalgamated free products C$^*$-algebras \cite{Has19}, we define what we call the covariance C$^*$-algebra based at $i$, denoted by $\mathscr{C}(i)$ for all $i\in \mathscr{I}$. For all $i\in \mathscr{I},$ $\mathscr{C}(i)$ is defined as the closure of the span of the path operators from $i$ to $i$. For any $i,j \in \mathscr{I},$ a path operator from $i$ to $j$ is a finite product of creation, annihilation, or elementary diagonal operators associated with elements $a_1,\ldots a_n$ such that for all $1 \leq k \leq n$ we have $a_k\in F_{i_{k-1},i_k}(s_k)$ for some $i_0,\ldots i_n \in \mathscr{I}, s_1, \ldots s_n \in S$ such that $i_0 = i$ and $i_n = j.$ We also define the global covariance C$^*$-algebra $\mathscr{C}$ as the C$^*$-subalgebra of $\mathcal{L}_{\mathcal{P}}(\mathcal{X})$ generated by arbitrary path operators. As each element of one of the subspaces of the form $F_{i,j}(s)$ decomposes as a sum of a creation, an annihilation and an elementary diagonal operator we have $A^r \subset \mathscr{C}(i) \subset \mathcal{L}_\mathcal{P}(\mathcal{X})$. In the case when the C$^*$-algebra comes from a discrete group $G$ acting on a locally finite building, we show that $\mathscr{C}=\mathscr{C}(i)\cong C(\Omega)\rtimes_r G$ (see Theorem \ref{convcrossed}), where $\Omega$ is Klisse's bordification of the building $X=\coprod_{i\in \mathscr{I}}G/H_i$ seen as a graph. In this particular case, $\mathscr{C}(i)$ is independent of the choice of $i.$ It turns out (see Theorem \ref{capraceklisse}) that $\Omega$ also coincides with Caprace and Lécureux's combinatorial compactification of the building \cite{CL11}. 

We introduce a universal counterpart of our covariance C$^*$-algebra denoted by $\mathscr{C}^{\max}$. It is the universal C$^*$-algebra generated by the class of so-called covariant representations of the decomposition. Our main result (see Theorem \ref{bigbig}) is that if $S$ is finite, the decomposition is of finite type (see Proposition \ref{finito}) and for all $i\in \mathscr{I}$ and $s\in S$, the left action of $B_i$ on $\mathcal{X}_i(s)$ is faithful, then the canonical surjection $\mathscr{C}^{\max} \twoheadrightarrow \mathscr{C}$ induces an isomorphism $\mathscr{C}^{\max}(i) \cong\mathscr{C}(i)$ between the corresponding C$^*$-subalgebras for every $i \in \mathscr{I}$. The proof, carried out essentially in section 4, goes as follows: \begin{itemize} \item Decompose the C$^*$-algebras $\mathscr{C}(i)$ and $\mathscr{C}^{\max}(i)$ as the closure of a sum of subspaces $\mathscr{C}_w(i)$ and $\mathscr{C}^{\max}_w(i)$ satisfying the axioms of a Fell bundle over $W$ (e.g. $\mathscr{C}_u(i) \mathscr{C}_v(i) \subset \mathscr{C}_{uv}(i)$ for all $u,v \in W$).
    \item Prove that if the decomposition is of finite type then the diagonal subalgebras $\mathcal{D}^{\max}(i)$ and $\mathcal{D}(i)$ are spanned by compact operators.
    \item Adapt Katsura's proof of the gauge invariance uniqueness theorem \cite{Kat04} to prove that the canonical map $\mathcal{D}^{\max}(i) \twoheadrightarrow \mathcal{D}(i)$ is injective.
    \item Identify the C$^*$-subalgebra $\Delta_0(i) \subset \mathcal{D}^{\max}(i)$ generated by the projections $(p_{i,w})_{w\in W}$ with that of continuous functions $C(\overline{(W,S)})$ on the bordification of the Cayley graph of $W$ in the sense of \cite{Kli23}. Then use the F\o{}lner sequence $a_{\lambda} : W \rightarrow C(\overline{(W,S)})$ built by \cite{DJ99, Kli23} to prove that the $W$-grading of $\mathscr{C}^{\max}(i)$ satisfies Exel's approximation property \cite{Exe97}.
\end{itemize}

Once this theorem is established and under some technical assumptions, we deduce the fact that $\mathscr{C}(i)$ satisfies any approximation property among nuclearity, exactness, LLP, WEP if and only if the Borel C$^*$-subalgebras $(B_j)_{j\in \mathscr{I}}$ all satisfy the same approximation property. In the case of groups, asking for the decomposition to be of finite type is actually equivalent to asking that the building $X = \coprod_{i\in \mathscr{I}}G/H_i$ is locally finite (see Proposition \ref{finitogroup}). As a corollary we get \begin{theorem*}[Corollary~\ref{exactG}]
Let $G$ be a discrete group acting on a locally finite
building. Assume that the stabilizers of the chambers are exact. Then $G$ is
exact.
\end{theorem*}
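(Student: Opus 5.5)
The plan is to deduce exactness of $G$ from exactness of the covariance algebra $\mathscr{C}(i)\cong C(\Omega)\rtimes_r G$, and to get the latter from the main structural result (Theorem \ref{bigbig}) together with its approximation-property consequences. Let $X=\coprod_{i\in\mathscr{I}}G/H_i$ be the locally finite building with $W$-isometric $G$-action. By Theorem \ref{CTB}, the action gives a $W$-decomposition of $G$, hence an expected $W$-decomposition of $C^*_r(G)$ over $\mathscr{I}=G\backslash X$. The Borel subalgebras are $B_i=C^*_r(H_i)$, where $H_i=C_{i,i}(e)$ is a chamber stabilizer, and the conditional expectations $E_i$ are the canonical restrictions to $C^*_r(H_i)$.

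First I check the hypotheses of Theorem \ref{bigbig}. Local finiteness makes the type finite in the sense of Proposition \ref{finito}, by Proposition \ref{finitogroup}; I will also use that local finiteness means each panel is finite, so only finitely many $s\in S$ really matter, and $S$ is finite for a building of finite rank. For faithfulness of the left action of $B_i$ on $\mathcal{X}_i(s)$: in the group case $\mathcal{X}_i(s)$ is spanned by the cosets $C_{i,j}(s)$, which are nonempty and form a union of $H_i$-translates. So $\mathcal{X}_i(s)$ contains a copy of the regular $H_i$-module $\ell^2(H_i)\otimes C^*_r(H_j)$-type pieces, and $C^*_r(H_i)$ acts faithfully on it through the left regular representation. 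This needs thickness, or at least nonempty $s$-panels, which holds in a building.

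Next, by Theorem \ref{bigbig} we get $\mathscr{C}^{\max}(i)\cong\mathscr{C}(i)$, and the approximation-property consequence gives that $\mathscr{C}(i)$ is exact whenever all $B_j$ are exact. The $B_j=C^*_r(H_j)$ are exact because exact groups have exact reduced C$^*$-algebras. By Theorem \ref{convcrossed}, $\mathscr{C}(i)\cong C(\Omega)\rtimes_r G$, so this crossed product is exact. Finally, $C^*_r(G)\subset C(\Omega)\rtimes_r G$, and exactness passes to C$^*$-subalgebras. Hence $C^*_r(G)$ is exact, and for discrete groups this is equivalent to exactness of $G$ (Kirchberg–Wassermann / Ozawa).

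The main obstacle is the "technical assumptions" behind the approximation-property transfer, namely how exactness of the $B_j$ lifts to $\mathscr{C}^{\max}(i)$. I expect the route to be as follows. Exel's approximation property for the $W$-graded algebra reduces exactness of $\mathscr{C}(i)$ to exactness of the unit fibre $\mathcal{D}(i)$. That fibre is an inductive limit of algebras built from compact operators on modules over the $B_j$, so it is Morita-equivalent to, or an extension of, corners of the $B_j$ and is therefore exact. What I must verify is that these assumptions hold in the group setting, in particular in the case where $\mathscr{I}$ is infinite.
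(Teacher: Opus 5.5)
Restricted to type-preserving actions, your argument is the paper's: finite type from Proposition \ref{finitogroup}, faithfulness from Proposition \ref{nondeggroup}, exactness of $\mathscr{C}(i)$ from Theorem \ref{nucnucexex}, $\mathscr{C}(i)\cong C(\Omega)\rtimes_r G$ from Theorem \ref{convcrossed}, and then $C^*_r(G)\subset C(\Omega)\rtimes_r G$. The gap is your very first step, where you assume the action is $W$-isometric. In Corollary \ref{exactG} an action on a building may permute types. It comes with a homomorphism $\sigma\colon G\to\mathrm{Aut}(W,S)$ such that $\delta(gx,gy)=\sigma_g(\delta(x,y))$. For such an action the sets $C_{i,j}(w)$ need not satisfy (Wdec2) and (Wdec4). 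So Theorem \ref{CTB}, and everything built on it, does not apply to $G$ itself.

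The missing step is the paper's reduction to a finite-index subgroup. Since $S$ is finite, $\mathrm{Aut}(W,S)$ is finite, so $G_0=\ker\sigma$ has finite index in $G$. The group $G_0$ acts $W$-isometrically, and its chamber stabilizers $H_x\cap G_0$ are exact as subgroups of exact groups. Your argument then shows that $G_0$ is exact. Exactness passes to the finite-index overgroup $G$: writing $G=\bigsqcup_k r_kG_0$, the algebra $C^*_r(G)$ embeds in $M_n(C^*_r(G_0))$ with $n=[G:G_0]$. The paper instead transfers the property of the crossed product using \cite[Corollary~3.11]{FA18}.

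Finiteness of $S$ is a hypothesis of Corollary \ref{exactG}, not a consequence of local finiteness. Local finiteness only says each panel is finite; it puts no bound on the number of panel types. So your remark that ``only finitely many $s\in S$ really matter'' is not a valid derivation. You need $S$ finite for the finite-index reduction above and also for Theorems \ref{convcrossed} and \ref{nucnucexex}.

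Your closing worries are unfounded. An infinite $\mathscr{I}$ causes no trouble, because finite type (Proposition \ref{finito}(iii)) already forces only finitely many $X_{i,j}(s)$ to be nonzero. The exactness half of Theorem \ref{nucnucexex} uses neither Theorem \ref{bigbig} nor Exel's approximation property. It uses three ingredients:
\begin{itemize}
\item the faithful conditional expectation onto $\mathcal{D}(i)$ from Proposition \ref{cgraded};
\item exactness of $\mathcal{D}(i)$, obtained from the filtration by the subalgebras $K_i(\mathcal{F})$;
\item exactness of $W$ \cite{DJ99}, combined with \cite[Proposition 25.12]{Exe17}.
\end{itemize}
The approximation property is only needed for nuclearity, WEP and LLP.
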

For comparison, \cite{Lec09} establishes exactness of a locally compact group acting properly on a not necessarily locally finite building. A crucial step of our proof of the nuclearity/WEP/LLP  of $C(\Omega)\rtimes_r G$ (see Theorem \ref{nucnucexex}  and Corollary \ref{wepllp}) is to use the amenability of the action of $W$ on the bordification of its Cayley graph $(W,S)$ established in \cite{Lec09,Kli23}. In order to prove the exactness of $G$ we only have to use the exactness result of \cite{DJ99}.

The paper is organized as follows. In section \ref{section2} we recall the axioms of a $W$-distance, and introduce $W$-decompositions of C$^*$-algebras. Afterward, we assume the existence of suitable conditional expectations and introduce expected $W$-decompositions of C$^*$-algebras. We then construct the associated Fock module and reduced C$^*$-algebra and give a sufficient condition for a conditional expectation to exist. In section \ref{section3} we recall Klisse's definition of the bordification of a rooted graph, then we define the covariance C$^*$-algebras associated to an expected $W$-decomposition. When the decomposition comes from a group acting on a building, we show that the covariance C$^*$-algebras are all isomorphic to the reduced crossed product of $G$ with the C$^*$-algebra of continuous functions on the minimal combinatorial compactification of the building. In section \ref{section4} we introduce the notion of covariant representation of an expected $W$-decomposition. We then carry out the proof of our main theorem, and use it to establish different approximation properties.

\section{Actions on buildings and Bruhat decompositions}\label{section2}

\subsection{\texorpdfstring{$W$}{W}-decompositions of a discrete group}

We refer the reader to \cite{BB05} for fundamental properties of Coxeter groups and to \cite{AB08} for the fundamentals of the theory of buildings. We fix a Coxeter system $(W,S)$, which we shall often refer to simply as $W$.
\begin{definition}
    A building of type $(W,S)$ is the data of a set $X$, whose elements are called chambers and a map $\delta : X\times X \rightarrow W$ (called a $W$-distance) such that 
    \begin{enumerate}[label=\textbf{(Bui\arabic*)}]
    \item For all $x,y \in X$ we have $\delta(x,y) = e $ if and only if $x=y$.
    \item Let $x,y \in X$, define $\delta(x,y) = w$. For all $z\in X$ such that $\delta(z,x) = s \in S$ we have $\delta(z,y) \in \{sw,w\}$. Moreover if $l(sw) > l(w)$ then $\delta(z,y) = sw$.
    \item Let $x,y \in X$, define $\delta(x,y) = w$. For all $s\in S$ there exists a $z\in X$ such that $\delta(z,x) = s$ and $\delta(z,y) = sw$.
    \end{enumerate}
\end{definition}
The building $(X,\delta)$ is called locally finite if for every chamber $x\in X$ and $s\in S$ the number of $y\in X$ such that $\delta(x,y) = s$ is finite. The following definition should be natural to readers familiar with buildings. To the best of our knowledge, this precise system of axioms does not appear in the existing literature. The closest related notion is that of a $W$-groupoid, introduced by Norledge in \cite{Nor17,Nor21}. Let $G$ be a discrete group. For any subsets $A,B$ of $G$, we write $A\uplus B$ for their union in the case when they are disjoint and we write $A\cdot B = \{ab |a\in A, b\in B\}$.
\begin{definition}
     A $W$-decomposition of $G$ is the data of a nonempty index set $\mathscr{I}$ together with a family $(C_{i,j}(w))_{i,j\in \mathscr{I},w\in W}$ of subsets of $G$ such that
    \begin{enumerate}[label=\textbf{(Wdec\arabic*)}]
    \item For any fixed $i,j\in \mathscr{I}$, the subsets $(C_{i,j}(w))_{w\in W}$ are pairwise disjoint and $G = \biguplus_{w\in W} C_{i,j}(w)$.
    \item For any $i\neq j\in \mathscr{I}$, $C_{i,j}(e) = \emptyset$, for all $i\in \mathscr{I}, C_{i,i}(e) = H_i$ is a subgroup of $G$, called the $i$-th Borel subgroup of $G$ and for all $i,j\in \mathscr{I},w\in W$ we have $H_i\cdot C_{i,j}(w)\cdot H_j = C_{i,j}(w)$.
    \item For all $i\in \mathscr{I}$ and $s\in S$, there exists a $j \in \mathscr{I}$ such that $C_{i,j}(s)$ is nonempty.
    \item For all $s\in S, w\in W, i,j,k\in \mathscr{I},$ we have $C_{i,j}(s)\cdot C_{j,k}(w) \subset C_{i,k}(sw)\uplus C_{i,k}(w)$. Moreover if $l(sw) > l(w)$ then $C_{i,k}(sw) = \bigcup_{j\in\mathscr{I}}C_{i,j}(s)\cdot C_{j,k}(w)$.    
    \item For all $i,j \in \mathscr{I},w\in W$, $C_{i,j}(w)^{-1} = C_{j,i}(w^{-1})$.
    \end{enumerate}
\end{definition}
A Bruhat decomposition of $G$ of type $W$ is a $W$-decomposition of $G$ whose index set $\mathscr{I}$ is a singleton. In other words it is the data of a partition $G = \biguplus_{w\in W} C(w)$ of $G$ by a family $(C(w))_{w\in W}$ of nonempty subsets such that
\begin{itemize}
\item $C(e) = H$ is a subgroup of $G$ and for all $w\in W$ we have $H\cdot C(w)\cdot H = C(w)$.
\item For all $s\in S$ and $w\in W$ we have $C(s)\cdot C(w) \subset C(sw)\uplus C(w)$. Moreover if $l(sw) > l(w)$ then $C(sw) = C(s)\cdot C(w)$.    \item For all $w\in W$, $C(w)^{-1} = C(w^{-1})$.
\end{itemize}
    
We will see that $W$-decompositions of $G$ correspond exactly to actions of $G$ on a building by $W$-isometries. Let $G$ be a discrete group. Assume that $G$ admits a $W$-decomposition $G = \biguplus_{w\in W} C_{i,j}(w)$ with index set $\mathscr{I}$. For every $g\in G$ and $i,j \in \mathscr{I}$ define $[g]_{i,j}$ to be the unique element of $W$ such that $g \in C_{i,j}([g]_{i,j})$. Set $X = \coprod_{i\in \mathscr{I}}G/H_i$ and define a map $\delta : X\times X \rightarrow W$ by $\delta(g_1H_i,g_2H_j) = [g_1^{-1}g_2]_{i,j}$. $G$ acts on $X$ by left translations and for all $x_1,x_2\in X, g\in G$ we have $\delta(gx_1,gx_2) = \delta(x_1,x_2)$. Assume that $G$ admits an action on a building $X$ with a $W$-distance $\delta$. Let $\mathscr{I} = G \backslash X$ be the quotient set. For all $i\in \mathscr{I}$, let $x_i \in X$ be any representative of the class $i$. For all $i,j \in \mathscr{I},w\in W$, set $C_{i,j}(w) = \{g\in G | \delta(x_i,g\cdot x_j) = w\}$.
\begin{theorem}\label{CTB}
If $G$ admits a $W$-decomposition then $X = \coprod_{i\in \mathscr{I}}G/H_i$ with the $W$-distance defined above is a building on which $G$ acts by $W$-isometries. Reciprocally if $G$ admits a $W$-isometric action on a building $X$ then the family of subsets $C_{i,j}(w)$ over the index set $\mathscr{I} = G \backslash X$ is a $W$-decomposition of $G$. Moreover these two constructions are inverses of each other up to conjugation of the subsets $C_{i,j}(w)$ by elements of $G$.
\end{theorem}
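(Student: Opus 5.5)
The proof has three parts: the forward direction, the converse, and the mutual inverse property. Throughout, the axioms are translated chamber by chamber.

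\textbf{From a $W$-decomposition to a building.} First I check that $\delta(g_1H_i,g_2H_j)=[g_1^{-1}g_2]_{i,j}$ is well defined. Replacing $g_1$ by $g_1h$ and $g_2$ by $g_2h'$ with $h\in H_i$, $h'\in H_j$ changes $g_1^{-1}g_2$ into $h^{-1}g_1^{-1}g_2h'$. This element lies in the same $C_{i,j}(w)$ by (Wdec2), and (Wdec1) ensures that $[\cdot]_{i,j}$ is a function. $G$-invariance of $\delta$ is immediate.

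I then verify the building axioms.
\begin{itemize}
\item (Bui1): $\delta(g_1H_i,g_2H_j)=e$ forces $i=j$ and $g_1^{-1}g_2\in H_i$ by (Wdec2), so the two chambers coincide.
\item (Bui2): Let $x=g_1H_i$, $y=g_2H_j$ and $z=gH_k$ with $\delta(z,x)=s$, so $g^{-1}g_1\in C_{k,i}(s)$. Then $g^{-1}g_2=(g^{-1}g_1)(g_1^{-1}g_2)\in C_{k,i}(s)C_{i,j}(w)\subset C_{k,j}(sw)\uplus C_{k,j}(w)$ by (Wdec4).
\item (Bui2), second clause: if $l(sw)>l(w)$, I must exclude $g^{-1}g_2\in C_{k,j}(w)$. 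The product lies in $C_{k,j}(sw)$ by the equality part of (Wdec4), and disjointness (Wdec1) rules out $C_{k,j}(w)$.
\item (Bui3): Given $s$, (Wdec3) produces $k$ and some $c\in C_{i,k}(s)$. Hence $C_{k,i}(s)=C_{i,k}(s)^{-1}\neq\emptyset$ by (Wdec5). I set $z=g_1c'H_k$ for a suitable $c'$, where $c'^{-1}\in C_{k,i}(s)$.
\end{itemize}
This last case splits in two. If $l(sw)>l(w)$, any choice works by the previous item. If $l(sw)<l(w)$, write $w=sw'$ with $l(sw')>l(w')$. By (Wdec4), $g_1^{-1}g_2\in C_{i,j}(sw')=\bigcup_k C_{i,k}(s)C_{k,j}(w')$, so $g_1^{-1}g_2=ab$ with $a\in C_{i,k}(s)$ and $b\in C_{k,j}(w')$. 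Taking $z=g_1aH_k$ gives $\delta(z,x)=[a^{-1}]=s$ and $\delta(z,y)=[b]_{k,j}=w'=sw$.

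\textbf{From a building to a $W$-decomposition.} Conversely, given the action and representatives $x_i$, I check the axioms in turn.
\begin{itemize}
\item (Wdec1) holds because $\delta$ is a function.
\item (Wdec2) follows from (Bui1): $\delta(x_i,gx_j)=e$ iff $gx_j=x_i$, which forces $i=j$ and makes $H_i$ the stabilizer. Invariance under $H_i\cdot-\cdot H_j$ comes from $G$-invariance of $\delta$.
\item (Wdec5) needs $\delta(y,x)=\delta(x,y)^{-1}$. This is a standard consequence of (Bui1)–(Bui3), which I take from \cite{AB08}. Then $\delta(x_j,g^{-1}x_i)=\delta(gx_j,x_i)=w^{-1}$.
\item (Wdec3) follows from (Bui3) applied to $x=y=x_i$, via the symmetry.
\item (Wdec4), inclusion part: for $a\in C_{i,j}(s)$ and $b\in C_{j,k}(w)$, set $z=x_i$, $x=ax_j$, $y=abx_k$. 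Then $\delta(x_i,ax_j)=s$ and $\delta(ax_j,abx_k)=w$, so (Bui2) gives the claim.
\end{itemize}

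\textbf{Main obstacle.} The equality part of (Wdec4) is where I expect the main difficulty; it is essentially the gate property. Let $g\in C_{i,k}(sw)$ with $l(sw)>l(w)$. I want a chamber $y$ with $\delta(x_i,y)=s$ and $\delta(y,gx_k)=w$. Such a $y$ exists by (Bui3) applied to $gx_k$ and $x_i$ with $\delta(gx_k,x_i)=w^{-1}s$, using the symmetry and the reduced decomposition. Then $y=ax_j$ for some $a\in G$ and $j$ the orbit of $y$, which gives $a\in C_{i,j}(s)$ and $a^{-1}g\in C_{j,k}(w)$.

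\textbf{Mutual inverses.} Starting from a decomposition, the orbits of $G$ on $\coprod G/H_i$ are indexed by $\mathscr{I}$, and choosing $x_i=H_i$ recovers $C_{i,j}(w)$ exactly. Another choice $x_i=g_iH_i$ replaces $C_{i,j}(w)$ by $g_iC_{i,j}(w)g_j^{-1}$, which is the stated conjugation.

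Starting from a building, the map $gH_i\mapsto gx_i$ is a $G$-equivariant bijection $\coprod G/H_i\to X$. It is an isometry by construction, since $\delta(g_1x_i,g_2x_j)=\delta(x_i,g_1^{-1}g_2x_j)$.
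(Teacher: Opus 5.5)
Your proof is correct and follows essentially the same route as the paper. The building axioms are read off from (Wdec1)--(Wdec5) via $\delta(g_1H_i,g_2H_j)=[g_1^{-1}g_2]_{i,j}$, with (Bui3) split according to whether $l(sw)>l(w)$. Conversely, the equality part of (Wdec4) comes from (Bui3) together with $\delta(y,x)=\delta(x,y)^{-1}$. Your well-definedness check and your explicit treatment of the ``inverse up to conjugation'' claim (the choice $x_i=g_iH_i$ yielding $g_iC_{i,j}(w)g_j^{-1}$) fill in details the paper leaves implicit.

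The only thing to tidy is the citation in the (Wdec4) equality step. (Bui3) produces a chamber adjacent to its \emph{first} argument, so apply it to the pair $(x_i,gx_k)$ with $\delta(x_i,gx_k)=sw$ and the letter $s$. This gives $y$ with $\delta(y,x_i)=s$ and $\delta(y,gx_k)=w$, and symmetry then yields $\delta(x_i,y)=s$.
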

\begin{proof}
Assume that $G$ admits a $W$-decomposition $G = \biguplus_{w\in W} C_{i,j}(w)$ with index set $\mathscr{I}$. If $\delta(g_1H_i,g_2H_j) = e$ then $[g_1^{-1}g_2]_{i,j} = e$ hence $g_1^{-1}g_2 \in C_{i,j}(e)$ which implies $i = j$ and $g_1H_i = g_2H_j$ by (Wdec2). This proves (Bui1). Let $i,j,k \in \mathscr{I}$ and $g_1,g_2,g_3 \in G$ be such that $\delta(g_3H_k,g_1H_i) = s \in S$. Define $\delta(g_1H_i,g_2H_j) = w$. We have $g_3^{-1}g_1 \in C_{k,i}(s)$ and $g_1^{-1}g_2 \in C_{i,j}(w)$. By (Wdec4) we get $g_3^{-1}g_2 \in C_{k,j}(sw)\uplus C_{k,j}(w)$ thus $[g_3^{-1}g_2]_{k,j} = \delta(g_3H_k,g_2H_j) \in \{sw,w\}$. Moreover if $l(sw) > l(w)$ then $g_3^{-1}g_2 \in C_{k,j}(sw)$ thus  $\delta(g_3H_k,g_2H_j) = sw$. This proves (Bui2). Let $i,k \in \mathscr{I}$, let $g_1,g_2 \in G$. Define $\delta(g_1H_i,g_2H_k) = w$. Let $s\in S$. Assume that $l(sw) > l(w)$. Use (WDec3) to take a $g_3\in C_{i,j}(s)$ for some $j \in \mathscr{I}$. We have $\delta(g_1g_3H_j,g_1H_i) = [g_3^{-1}]_{j,i} = s$ by (Wdec5). Moreover, $\delta(g_1g_3H_j,g_2H_k) = [g_3^{-1}g_1^{-1}g_2]_{j,k} = sw$ by (Wdec4) and (Wdec5) because $l(sw) > l(w)$. Assume now that $w = s\cdot v$ for $l(v) = l(w) - 1$. By (Wdec4), there exists a $j\in \mathscr{I}$, a $g_3 \in C_{i,j}(s)$ and a $g_4\in C_{j,k}(v)$ such that $g_1^{-1}g_2 = g_3g_4$.  We have $\delta(g_1g_3H_j,g_1H_i) = [g_3^{-1}]_{j,i} = s$ by (Wdec5). We have $\delta(g_1g_3H_j,g_2H_k) = [g_3^{-1}g_1^{-1}g_2]_{j,k} = [g_4]_{j,k} = sw$. This proves (Bui3).

Let $(X,\delta)$ be a building of type $(W,S)$ on which $G$ acts by $W$-isometries. (Wdec1) is obvious. Define $H_i = \text{stab}_G(x_i)$ for all $i \in\mathscr{I}$. By (Bui1) we know that $C_{i,i}(e) = H_i$ is a subgroup of $G$. For all $i,j \in \mathscr{I}$, $w\in W, g\in C_{i,j}(w)$ and $h_1 \in H_i, h_2 \in H_j$ we have $\delta(x_i,h_1gh_2x_j) = \delta(x_i, h_1gx_j) = \delta(h_1^{-1}x_i,gx_j) = \delta(x_i,gx_j) = w$ as $G$ acts by $\delta$-isometries. This proves (Wdec2). Now identify $X = \biguplus_{i\in \mathscr{I}} Gx_i$ with $\coprod_{i\in \mathscr{I}}G/H_i$. (Wdec5) is a direct consequence of the formula $\delta(x,y)^{-1} = \delta(y,x)$ for all $x,y\in X$ (see \cite[Corollary 5.17]{AB08}). Let $i\in \mathscr{I}$ and $s\in S$. By (Bui3), there exists a $z \in X$ such that $\delta(x_i,z) = s$. This proves (Wdec3). Let $s\in S, w\in W$ and $i,j,k \in \mathscr{I}$. Let $g_1\in C_{i,j}(s)$ and $g_2 \in C_{j,k}(w)$. We have $\delta(g_1^{-1}x_i,x_j) = \delta(x_i, g_1x_j) = s$ and $\delta(x_j, g_2x_k) = w$ thus by (Bui2) we have $\delta(x_i,g_1g_2x_k) = \delta(g_1^{-1}x_i,g_2x_k) \in \{sw,w\}$ and $\delta(x_i,g_1g_2x_k) = sw$ if $l(sw) > l(w)$ by (Bui2). Assume that $l(sw) > l(w)$ and take $g_1\in C_{i,k}(sw)$. We have $\delta(x_i,g_1x_k) = sw$ thus by (Bui3) there exists a $j\in \mathscr{I}$ and a $g_2\in G$ such that $\delta(g_2x_j,x_i) = s$ and $\delta(g_2x_j,g_1x_k) = w$. In other words, we have $g_1 = g_2(g_2^{-1}g_1)$ with $g_2^{-1}g_1 \in C_{j,k}(w)$, $g_2^{-1}\in C_{j,i}(s)$ and hence $g_2\in C_{i,j}(s)$. This proves (Wdec4).
\end{proof}
From this theorem, it is easy to see that chamber-transitive actions of $G$ on a building coincide with Bruhat decompositions of $G$. In this case, the action of $G = \biguplus_{w\in W}C(w)$ on $X = G/H$ is Weyl-transitive in the sense of \cite[Chapter 6]{AB08} if and only if the double cosets $H \backslash C(w) / H$ are all singletons. Typical examples of discrete groups acting on buildings are lattices in reductive algebraic groups over non-Archimedean local fields \cite{BT72,BT84}. Notice that in these cases, the action is very rarely chamber-transitive \cite{KLT87}. 

\begin{example}
The expression $W = \biguplus_{w\in W}\{w\}$ is a Bruhat decomposition of type $W$. The associated building is just $W$ equipped with the $W$-distance defined by $\delta(u,v) = u^{-1}v$ for all $u,v\in W$.
\end{example}
\begin{example}
   Let $D_\infty=\langle s,t \mid s^2=t^2=e\rangle$ be the infinite dihedral group. Let $G_s$ and $G_t$ be two groups containing a common subgroup $H$. Let $G = G_s\star_H G_t$ be the associated amalgamated free product. Reduced words of $G$ are either the elements of $H$ or the elements of the form $g =g_1\ldots g_n$ with $g_i \in G_{s_i}\setminus{H}$, $s_i \in \{s,t\}$ so that for all $1\leq i < n$ we have $s_{i+1} \neq s_i$. In other words, reduced words of $G$ seen as an amalgamated free product are exactly the words whose indices in $S$ form a reduced word of the Coxeter group $W$. Set $C(e) = H$. For every $w\in D_\infty\setminus\{e\}$, denote by $C(w)$ the set of all reduced words $g$ of this form such that $s_1\ldots s_n = w$. Then $G = \biguplus_{w\in W} C(w)$ is a Bruhat decomposition of type $D_\infty$. The associated building has $X = G/H$ as a set of chambers, which coincides with the set of edges of the Bass-Serre tree of the amalgamated free product \cite{Ser77}. The preceding theorem generalizes the following classical result: splittings of a group $G$ as an amalgamated free product of subgroups correspond to actions of the group on trees $T$ that are without inversions and whose quotient graph $T/G$ is a single edge with two distinct vertices.
\end{example}
Let us give a few elementary properties of $W$-decompositions. Let $G = \biguplus_{w\in W} C_{i,j}(w)$ be a $W$-decomposition of $G$ with index set $\mathscr{I}$.
\begin{proposition}
    Let $s\in S$ and $w\in W$ be such that $l(sw) > l(w)$. Let $i,k \in \mathscr{I}$. The subsets $(C_{i,j}(s)C_{j,k}(w))_{j\in \mathscr{I}}$ are disjoint: $C_{i,k}(sw) = \biguplus_{j\in\mathscr{I}}C_{i,j}(s)\cdot C_{j,k}(w)$.
\end{proposition}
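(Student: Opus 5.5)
The union is already given by (Wdec4), so only disjointness has to be proved. We argue by contradiction. Suppose $g \in C_{i,j}(s)C_{j,k}(w)\cap C_{i,j'}(s)C_{j',k}(w)$ with $j\neq j'$, say $g = a b = a' b'$ where $a\in C_{i,j}(s)$, $b\in C_{j,k}(w)$, $a'\in C_{i,j'}(s)$ and $b'\in C_{j',k}(w)$. The idea is to pass to the building $X=\coprod_{i}G/H_i$ of Theorem \ref{CTB}, where each coset $gH_i$ lies in the $i$-th piece of the disjoint union. Disjointness then reduces to showing that two particular chambers coincide, which forces their indices to agree.

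Consider the chambers $x = aH_j$ and $x' = a'H_{j'}$, together with $c = H_i$ and $y = gH_k$. By (Wdec5) we have $\delta(x,c) = [a^{-1}]_{j,i} = s$ and $\delta(x,y) = [a^{-1}g]_{j,k} = [b]_{j,k} = w$. Likewise $\delta(x',c)=s$ and $\delta(x',y)=w$. Since $\delta(c,y)=[g]_{i,k}=sw$ (by (Wdec4)), the chambers $x$ and $x'$ both lie in the $s$-panel of $c$ and are at distance $w$ from $y$, while $c$ itself is at distance $sw$ from $y$.

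The key step is the standard uniqueness of projections onto panels: given a chamber $y$ and an $s$-panel $P$, there is a unique chamber of $P$ at distance $w$ from $y$ with $l(sw)>l(w)$, namely $\mathrm{proj}_P(y)$, and every other chamber of $P$ is at distance $sw$. To prove this with the axioms (Bui1)--(Bui3) alone, suppose $x\neq x'$ are both $s$-adjacent to $c$. Then $\delta(x',x)\in\{s\}$ by the panel property; this follows from (Bui2) applied to $\delta(x,c)=s$, which gives $\delta(x',x)\in\{s\cdot s,\ s\}=\{e,s\}$, together with (Bui1). Now apply (Bui2) to $x'$ with $\delta(x',x)=s$ and $\delta(x,y)=w$. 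Since $l(sw)>l(w)$, we get $\delta(x',y)=sw\neq w$, a contradiction. Hence $x=x'$ as chambers of $X$. Since $x\in G/H_j$ and $x'\in G/H_{j'}$ lie in different pieces of the disjoint union, this forces $j=j'$.

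The main obstacle is making sure that (Bui2) applies in the required direction, namely with $z=x'$ and the pair $(x,y)$. We therefore need $\delta(x',x)=s$ rather than only $\delta(x,x')=s$; this is supplied by (Wdec5) and the symmetry $\delta(u,v)^{-1}=\delta(v,u)$ already used in the proof of Theorem \ref{CTB}. If one prefers to stay purely group-theoretic, the same argument reads as follows. From $a^{-1}a' \in C_{j,i}(s)C_{i,j'}(s)\subset C_{j,j'}(e)\uplus C_{j,j'}(s)$ and $C_{j,j'}(e)=\emptyset$, we get $a^{-1}a'\in C_{j,j'}(s)$. Then $b = a^{-1}a'b' \in C_{j,j'}(s)C_{j',k}(w)\subset C_{j,k}(sw)$ by (Wdec4), contradicting $b\in C_{j,k}(w)$.
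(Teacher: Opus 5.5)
Your proposal is correct, and its final group-theoretic paragraph is exactly the paper's proof: $a^{-1}a'\in C_{j,j'}(s)$ because $C_{j,j'}(e)=\emptyset$, hence $b\in C_{j,k}(sw)$, contradicting $b\in C_{j,k}(w)$. The building-theoretic version is the same computation transported through Theorem \ref{CTB}; the only small imprecision is that the first use of (Bui2) needs $\delta(c,x)=s$ rather than $\delta(x,c)=s$, which holds directly since $a\in C_{i,j}(s)$.
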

\begin{proof}
    Let $j, j'\in \mathscr{I}$. Let $g_1 \in C_{i,j}(s), g_2 \in C_{j,k}(w)$ and $g_1' \in C_{i,j'}(s), g_2' \in C_{j',k}(w)$ be such that $g_1g_2=g_1'g_2'$. We have $g_2 = (g_1^{-1}g_1')g_2'$ and $g_1^{-1}g_1' \in C_{j,j'}(s)\uplus C_{j,j'}(e)$. Assume that $j\neq j'$. (Wdec2) gives $g_1^{-1}g_1' \in C_{j,j'}(s)$ and hence $g_2 \in C_{j,j'}(s)\cdot C_{j',k}(w) = C_{j,k}(sw)$ as $l(sw) > l(w)$. But $g_2 \in C_{j,k}(w)$ so there is a contradiction.
\end{proof}
Hence for every $i,j \in \mathscr{I}$ and every reduced expression $w = s_1\ldots s_n$ of an element of $W$, we have \[C_{i,j}(w) = \biguplus_{i\in \mathscr{I}^{n-1}}C_{i,i_1}(s_1)\ldots C_{i_{n-1},j}(s_n).\]

Now we assume that $G$ actually has a Bruhat decomposition $G = \biguplus_{w\in W}C(w)$ of type $W$. For all $I\subset S,$ consider $G_I = \biguplus_{w\in W_I}C(w)$. One easily proves that $G_I$ is a subgroup of $G$ and that the family $(C(w))_{w\in W_I}$ forms a Bruhat decomposition of type $W_I$ of $G_I$. We call $G_I$ the parabolic subgroup of $G$ of type $I$. Notice that $G_\emptyset = H$. It turns out that $G$ has a universal property coming from its Bruhat decomposition.
\begin{proposition}\label{univgroup}
Let $\mathcal{R}_2 = \left\{ I\subseteq S \;\middle|\; |I|\leq 2 \text{ and } m_{s,t}<\infty \text{ whenever } I=\{s,t\} \right\}$ which is ordered by inclusion.
Let $G' = \bigstar_{I\in \mathcal{R}_2}G_I$ be the colimit (amalgamated sum) of the $G_I$ for $I\in \mathcal{R}_2$ together with the inclusions $G_I \subset G_J$. The canonical group homomorphism $\phi : G' \rightarrow G$ is an isomorphism.
\end{proposition}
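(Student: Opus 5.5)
Surjectivity of $\phi$ is immediate: $G=\biguplus_{w\in W}C(w)$, and for a reduced expression $w=s_1\ldots s_n$ the Bruhat axiom (applied inductively, since each left multiplication by $s_k$ increases length) gives $C(w)=C(s_1)\cdots C(s_n)$. Each factor $C(s_k)$ lies in $G_{\{s_k\}}$, which belongs to $\mathcal{R}_2$, so every element of $G$ is a product of elements of the images of the $G_I$. The real content is injectivity. Rather than a Tits-style word-reduction argument, I will construct an inverse $\psi : G\to G'$ by defining a map on each $C(w)$ and checking that it is multiplicative.

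\textbf{Step 1 (a normal form in $G'$).} For $g\in C(w)$ choose a reduced expression $w=s_1\ldots s_n$ and a factorization $g=g_1\cdots g_n$ with $g_k\in C(s_k)\subset G_{\{s_k\}}$, which exists by the remark above. Set $\psi(g)=\iota_{s_1}(g_1)\cdots\iota_{s_n}(g_n)\in G'$, where $\iota_I:G_I\to G'$ are the canonical maps; for $g\in H$ set $\psi(g)=\iota_\emptyset(g)$. I must show that this is independent of both choices.
\begin{enumerate}[label=(\alph*)]
\item \emph{Fixed expression, different factorization.} If $g_1\cdots g_n=g_1'\cdots g_n'$, then $h=g_1^{-1}g_1'$ lies in $C(s_1)^{-1}C(s_1)\subset C(s_1)\uplus H$. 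If $h\in C(s_1)$, then $g_2\cdots g_n=hg_2'\cdots g_n'\in C(s_1 s_2\cdots s_n)$, contradicting $g_2\cdots g_n\in C(s_2\cdots s_n)$. Hence $h\in H$. Since $\iota_{s_1}(g_1h)=\iota_{s_1}(g_1)\iota_\emptyset(h)$ and $\iota_\emptyset(h)=\iota_{s_2}(h)$ in $G'$, I absorb $h$ into the next factor and induct on $n$.
\item \emph{Different reduced expressions.} By Matsumoto/Tits, any two reduced expressions are linked by braid moves, each involving $\{s,t\}$ with $m_{s,t}<\infty$. A braid move replaces a factor $g_k\cdots g_{k+m-1}\in C(w_0(s,t))\subset G_{\{s,t\}}$ by another factorization of the same element along the other reduced word. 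Both products equal the image of the same element of $G_{\{s,t\}}$ under $\iota_{\{s,t\}}$, because $\iota_{\{s\}}=\iota_{\{s,t\}}|_{G_s}$ in the colimit. Hence they agree in $G'$.
\end{enumerate}

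\textbf{Step 2 (homomorphism).} Clearly $\phi\circ\psi=\mathrm{id}_G$. It remains to show that $\psi$ is multiplicative. It suffices to check $\psi(xg)=\psi(x)\psi(g)$ for $x\in C(s)$ or $x\in H$ and arbitrary $g$, because these elements generate $G$ and every element of $G$ is such a product.
\begin{itemize}
\item For $x\in H$ this follows from Step 1 by absorbing $x$ into $g_1$.
\item For $x\in C(s)$ with $l(sw)>l(w)$, the factorization $x g_1\cdots g_n$ is a legitimate choice along the reduced word $s s_1\ldots s_n$, so the identity holds.
\item For $l(sw)<l(w)$, choose a reduced expression of $w$ beginning with $s$, so that $g=g_1g'$ with $g_1\in C(s)$. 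Then $xg_1\in G_{\{s\}}$ and $\psi(xg_1)=\iota_s(x)\iota_s(g_1)$. Whether $xg_1$ lies in $H$ or in $C(s)$, multiplicativity reduces to the previous cases.
\end{itemize}

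\textbf{Step 3 (conclusion).} Since $\psi$ is a homomorphism whose image contains every $\iota_I(G_I)$, it is surjective, because each $G_I$ is generated by $C(s)$ for $s\in I$ together with $H$. Combined with $\phi\circ\psi=\mathrm{id}$, this shows that $\psi$ is a bijection inverse to $\phi$.

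The main obstacle is Step 1(b): making precise that the intermediate factorizations produced by a braid move line up with factorizations in $G_{\{s,t\}}$. This requires the disjoint-union decomposition of $C(w)$ along reduced words established just before the statement, applied inside the Bruhat decomposition of the parabolic subgroup $G_{\{s,t\}}$.
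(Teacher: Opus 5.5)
Your argument is correct, but it is not the paper's route. The paper never constructs an inverse. It shows $\ker\phi$ is trivial by induction on the length of products $g_1\cdots g_{n+1}$ with $g_k\in C'(s_k)=j_{\{s_k\}}(C(s_k))$. If such a product lies in $\ker\phi$, the word $s_1\cdots s_{n+1}$ cannot be reduced. The paper then uses the non-reduced half of Tits' word property: braid moves alone lead to a word with two equal adjacent letters. Together with the set identities $C'(s)C'(t)\cdots=C'(t)C'(s)\cdots$ in $G'$ and $C'(s)C'(s)\subset C'(s)\cup H'$ (where $H'=j_\emptyset(H)$), this shortens the product.

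You instead define an explicit section $\psi$ on Bruhat normal forms, which only needs the reduced-word half of Matsumoto's theorem. Your extra ingredient is Step 1(a): factorizations along a fixed reduced word are unique up to sliding elements of $H$ between adjacent factors. This replaces the nil-move analysis. It also makes $\psi(g)$ automatically independent of the chosen chain of braid moves, so no coherence statement about self-homotopies is needed, unlike the proof of Theorem \ref{VV}. The paper's proof is shorter; yours is more self-contained and yields an explicit normal form in $G'$.

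Step 1(b) is also easier than you expect. You only need that $g_k\cdots g_{k+m-1}$ admits some factorization along the other braid word. This follows from $C(s)C(t)\cdots=C(t)C(s)\cdots$, where both sides equal $C$ of the longest element of $W_{\{s,t\}}$ by the Bruhat axiom, since both braid words are reduced. Uniqueness is already supplied by Step 1(a), so the disjoint-union decomposition is not needed there.
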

\begin{proof}
For all $I\in \mathcal{R}_2,$ write $j_I : G_I \hookrightarrow G'$ for the canonical embedding. For all $s\in S$ let $C'(s) = j_{\{s\}}(C(s))$. Let $H' = j_\emptyset(H)$. First notice that for all $s,t \in S$, such that $m_{s,t} <\infty$, we have \[\begin{aligned} C'(s)C'(t)\ldots = j_{s,t}(C(s)C(t)\ldots) \\ = j_{s,t}(C(st\ldots))\\ =  j_{s,t}(C(ts\ldots)) \\ = j_{s,t}(C(t)C(s)\ldots) \\ = C'(t)C'(s)\ldots  \end{aligned}\]
where each product has $m_{s,t}$ terms. 

For all $n\geq 0$ we let $\mathcal{E}_n \subset G'$ be the subset containing $H'$ and arbitrary products of at most $n$ elements of some $C'(s)$ for $s\in S$. We now prove by induction on $n$ that $\ker\phi\cap \mathcal{E}_n = \{e\}$. The result is true for $n=0,1$ by definition of $G'$. Let $n\geq 1$ be such that the result holds. Let $g\in \ker \phi \cap \mathcal{E}_{n+1}$. There exist $s_1,\ldots, s_{n+1} \in S$ and $g_i \in C'(s_i)$ for $1\leq i \leq n+1$ such that $g = g_1\ldots g_{n+1}$. We have $\phi(g_1)\ldots \phi(g_{n+1}) = e$ with $\phi(g_i) \in C(s_i)$ for all $1 \leq i\leq n+1$. The expression $s_1\ldots s_{n+1}$ cannot be a reduced word of $W$. Hence, by Matsumoto's theorem \cite[Theorem 3.3.1]{BB05}, there is a sequence of braid moves in $W$ which leads to a situation of the form $s_i = s_{i+1}$. By the last remark, this implies that $g \in \mathcal{E}_n$.
\end{proof}
\begin{remark}\label{groupoids}
    It seems that there is no way of establishing a similar universal property in the more general case of a group $G$ with an arbitrary $W$-decomposition. In fact, it seems that in this case there is no satisfying notion of parabolic subgroups besides the Borel subgroups $H_i$ for $i\in \mathscr{I}$. These problems disappear if one uses Norledge's formalism of $W$-groupoids \cite{Nor17} instead of $W$-decompositions. A $W$-groupoid is essentially just the data of a discrete groupoid $\mathcal{G}$ with object set $\mathcal{G}^{(0)} = \mathscr{I}$ together with partitions of each Hom-set $\mathcal{G}(i,j) = \biguplus_{w\in W}C_{i,j}(w)$ satisfying axioms similar to those of a $W$-decomposition. For every $I\subset S$ the parabolic groupoid of type $I$ is given by the same object set as $\mathcal{G}$ and hom-sets given by $\mathcal{G}_I(i,j) = \biguplus_{w\in W_I}C_{i,j}(w)$ for all $i,j\in \mathscr{I}$. The last proof can be directly adapted to express $\mathcal{G}$ as the colimit of the family of parabolics $(\mathcal{G}_I)_{I\in \mathcal{R}_2}$ in the category of groupoids. As there is a naturally associated reduced and a maximal C$^*$-category to each groupoid, this article could be entirely formulated using the formalism of $W$-groupoids and C$^*$-categories.
\end{remark}

\subsection{\texorpdfstring{$W$}{W}-decompositions of a \texorpdfstring{C$^*$-algebra}{C*-algebra}}
We fix a Coxeter system $(W,S)$ and a unital C$^*$-algebra $A$. 
For every $X,Y \subset A$ write $X\cdot Y = \operatorname{span}\{xy |x\in X,y\in Y\}$. 
The following is the C$^*$-algebraic analogue of the notion of $W$-decomposition of a discrete group.
\begin{definition}\label{OB}
     A $W$-decomposition of $A$ is the data of a nonempty index set $\mathscr{I}$ together with a family $(L_{i,j}(w))_{i,j\in \mathscr{I},w\in W}$ of subspaces of $A$ such that
    \begin{enumerate}[label=\textbf{(OWD\arabic*)}]
    \item For any fixed $i,j\in \mathscr{I}$, we have $A = \overline{\sum_{w\in W} L_{i,j}(w)}$.
    \item For any $i\neq j\in \mathscr{I}$, $L_{i,j}(e) = 0$. Moreover, for all $i\in \mathscr{I},$ $L_{i,i}(e)$ is a $*$-subalgebra of $A$ with the same unit such that for all $i,j\in \mathscr{I},w\in W$ we have $L_{i,i}(e)\cdot L_{i,j}(w)\cdot L_{j,j}(e) = L_{i,j}(w)$.
    \item For all $s\in S, w\in W, i,j,k\in \mathscr{I},$ we have $L_{i,j}(s)\cdot L_{j,k}(w) \subset L_{i,k}(sw) + L_{i,k}(w)$. Moreover if $l(sw) > l(w)$ then $L_{i,k}(sw) = \sum_{j\in\mathscr{I}}L_{i,j}(s)\cdot L_{j,k}(w)$.    
    \item For all $i,j \in \mathscr{I}, w\in W$, $L_{i,j}(w)^{*} = L_{j,i}(w^{-1})$.
    \end{enumerate}
For all $i\in \mathscr{I}$ we call $B_i = \overline{L_{i,i}(e)}$ the $i$-th Borel C$^*$-subalgebra of $A$. A Bruhat decomposition of $A$ of type $W$ is a $W$-decomposition of $A$ whose index set is a singleton.
\end{definition}
A direct consequence of (OWD3) is that for every $i,j \in \mathscr{I}$ and every reduced expression $w = s_1\ldots s_n$ of an element of $W$, we have \[L_{i,j}(w) = \sum_{i\in \mathscr{I}^{n-1}}L_{i,i_1}(s_1)\ldots L_{i_{n-1},j}(s_n).\]
\begin{example}\label{Bruhathecke}
    Let $q = (q_s)_{s\in S}$ be a family of positive real numbers indexed by $S$ such that if $s,t\in S$ are conjugate then $q_s = q_t$. Let $C^*_{r,q}(W)$ be the C$^*$-subalgebra of $B(l^2W)$ generated by the operators $T_s$, $s\in S$ defined on the canonical basis $(\delta_w)_{w\in W}$ of $l^2W$ by \[T_s(\delta_w) = \begin{cases}
\delta_{sw} & \text{if } l(sw) > l(w), \\
\delta_{sw} + p_s(q) \delta_w & \text{if } l(sw) < l(w),
\end{cases}\]
where $p_s(q) = q_s^{-1/2}(q_s-1)$. For all $w\in W$ and reduced expression $w = s_1 \ldots s_n$ let $T_w = T_{s_1}\ldots T_{s_n}$. One can prove that $T_w \in B(l^2W)$ is a well-defined operator \cite[Theorem 3.1.1]{Kli22}. Then $C^*_{r,q}(W) = \overline{\bigoplus_{w\in W}\mathbb{C}\cdot T_w}$ is a Bruhat decomposition of type $W$ of $C^*_{r,q}(W)$ of Borel C$^*$-algebra $\mathbb{C}$. These C$^*$-algebras have been extensively studied in \cite{Kli22, Sol07}.
\end{example}
We now want to define a suitable notion of a reduced C$^*$-algebra associated to a $W$-decomposition.
\begin{definition}
    An expected $W$-decomposition of $A$ is the data of a $W$-decomposition $A = \overline{\sum_{w\in W}L_{i,j}(w)}$ with index set $\mathscr{I}$ together with a family of conditional expectations $(E_i : A \rightarrow B_i)_{i\in \mathscr{I}}$ such that for all $i\in \mathscr{I}$ and $w \in W\setminus \{e\}$ we have $E_i(L_{i,i}(w)) = 0$.
\end{definition}
Notice that if such a family of conditional expectations exists, then it is unique. We fix an expected $W$-decomposition of $A$ with such conditional expectations $E_i : A \rightarrow B_i$. The following is the key proposition enabling us to do computations with the $E_i$.
\begin{proposition}\label{calcul}
    Let $i,j \in \mathscr{I}$. Let $u,v \in W$. Let $x \in L_{i,j}(u)$ and $y \in L_{i,j}(v)$. If $u\neq v$ then $E_{j}(x^*y) = 0$. Assume $u = v\neq e$ and take $u = s_1\ldots s_n$ a reduced expression. Assume that $x$ and $y$ are of the form $x = x_1\ldots x_n$ and $y = y_1\ldots y_n$ with $i_0,\ldots i_n ,j_0\ldots j_n \in \mathscr{I}$ such that $i_0 = j_0 = i$, $i_n=j_n=j$ and for all $1\leq k\leq n$, $x_k\in L_{i_{k-1},i_{k}}(s_k)$ and $y_k\in L_{j_{k-1},j_{k}}(s_k)$. If there exists a $1\leq k\leq n$ such that $i_k \neq j_k$ then $E_j(x^*y) = 0$, otherwise we have \[E_j(x^*y) = E_{i_n}(x_n^*(E_{i_{n-1}}(x_{n-1}^*\ldots E_{i_1}(x_1^*y_1)\ldots y_{n-1})y_n).\]
\end{proposition}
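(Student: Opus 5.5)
We prove both assertions together by induction on $n = l(u)$ (or on $l(u)+l(v)$ for the first assertion), reducing every claim to the defining property $E_k(L_{k,k}(w)) = 0$ for $w\neq e$, together with (OWD3), (OWD4) and the $B_k$-bimodularity of the conditional expectations. The first step is the orthogonality statement. For $x\in L_{i,j}(u)$ and $y\in L_{i,j}(v)$, (OWD4) gives $x^*\in L_{j,i}(u^{-1})$. We expand $x$ via a reduced expression of $u$ as a sum of products $x_1\cdots x_n$ with $x_k\in L_{i_{k-1},i_k}(s_k)$, and we absorb the letters of $x^*$ one at a time into $y$ using (OWD3). Each step $L_{i_{k},i_{k-1}}(s_k)\cdot L_{i_{k-1},j}(w')\subset L_{i_k,j}(s_kw')+L_{i_k,j}(w')$ shows that $x^*y\in\sum_{w'} L_{j,j}(w')$, where $w'$ ranges over the elements $u^{-1}$-subwords applied to $v$. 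More precisely, $w'$ ranges over the elements obtainable as $s_n^{\epsilon_n}\cdots s_1^{\epsilon_1}v$ with $\epsilon_k\in\{0,1\}$. We then need the standard Coxeter fact that $e$ is obtainable only if $u=v$. Rather than doing the Coxeter combinatorics directly, I would argue by induction on $l(u)$: write $u=s u'$ with $l(u')<l(u)$ and $x=x_1x'$, where $x'\in L_{i_1,j}(u')$. Then $x^*y=x'^*(x_1^*y)$, and $x_1^*y\in L_{i_1,j}(sv)+L_{i_1,j}(v)$. If $u\neq v$, the induction hypothesis applied to the pairs $(u',sv)$ and $(u',v)$ kills both terms unless $u'=sv$ or $u'=v$. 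The case $u'=sv$ gives $u=v$, so only $u'=v$ remains. But then $l(sv)>l(v)$, so by (OWD3) the product $x_1^*y$ lies purely in $L_{i_1,j}(sv)$ and the $v$-component is absent; hence $u'\neq sv$ finishes the argument. The delicate point is that the decomposition in (OWD3) is a sum rather than a direct sum, so I must make sure that "lies in $L(sv)$" is used only through the containment given when the length goes up, and not through a projection. With that care, the induction closes.

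Next we turn to the equal case $u=v$ with the product forms. We peel off the first letters: $x^*y = x_n^*\cdots x_2^*(x_1^*y_1)y_2\cdots y_n$, where $x_1^*y_1\in L_{i_1,j_1}(e)+L_{i_1,j_1}(s_1)$. We split $x_1^*y_1 = E_{i_1}(x_1^*y_1) + (x_1^*y_1 - E_{i_1}(x_1^*y_1))$ in the case $i_1=j_1$. The main obstacle is showing that the remainder lies in (the closure of) $L_{i_1,i_1}(s_1)$. For this I would use that $E_{i_1}$ kills $L_{i_1,i_1}(s_1)$ and is the identity on $B_{i_1}$, and that $x_1^*y_1 = b + c$ with $b\in L(e)$ and $c\in L(s_1)$; then $E(x_1^*y_1)=b$ and the remainder is $c$. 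If $i_1\neq j_1$, then $L_{i_1,j_1}(e)=0$ by (OWD2), so $x_1^*y_1\in L_{i_1,j_1}(s_1)$.

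The remainder term $c$ then contributes $x_n^*\cdots x_2^*\,c\,y_2\cdots y_n$. Since $l(s_1s_2\cdots s_n)>l(s_2\cdots s_n)$, we have $c\,y_2\cdots y_n\in L_{i_1,j}(u)$ by (OWD3). Also $x_2\cdots x_n\in L_{i_1,j}(s_2\cdots s_n)$, whose length is $n-1\neq n$. By the first part, $E_j$ of this remainder term vanishes; this applies equally when $i_1\neq j_1$, using the first part with indices $(i_1,j)$, and only the $c$ term is present then. The surviving term is $x_n^*\cdots x_2^*\,\bigl(E_{i_1}(x_1^*y_1)y_2\bigr)\cdots y_n$. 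Here $E_{i_1}(x_1^*y_1)y_2\in B_{i_1}L_{i_1,i_2}(s_2)\subset \overline{L_{i_1,i_2}(s_2)}$ by (OWD2). This is again a product of the same shape, of length $n-1$, starting at $i_1$. Induction on $n$, with $E_j$ continuous so that closures are harmless, gives both the vanishing when some $i_k\neq j_k$ and the nested formula.
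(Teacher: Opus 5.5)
Your proposal is correct. The treatment of the equal case follows the paper's argument: you split $x_1^*y_1$ into $E_{i_1}(x_1^*y_1)$ plus a remainder in $L_{i_1,j_1}(s_1)$, kill the remainder because $c\,y_2\cdots y_n\in L_{i_1,j}(u)$ is paired against $x_2\cdots x_n$ of degree of length $n-1$, and induct.

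Your proof of orthogonality for $u\neq v$, however, takes a different route. The paper proceeds in three steps:
\begin{enumerate}
\item It asserts from (OWD3) that $x^*y$ lies in a sum of spaces $L_{j,j}(w)$ with $l(w)\geq|l(u)-l(v)|$, which disposes of unequal lengths.
\item It shows that a left descent of $u$ which is not one of $v$ forces vanishing, via a length gap of $2$.
\item Only then does it strip a common left descent from $x$ and $y$ simultaneously, which already requires the $E_k$-splitting of the equal case.
\end{enumerate}
Your induction on $l(u)$ peels a letter off $x$ alone and uses a trichotomy:
\begin{itemize}
\item $u'=sv$ is excluded, since it would give $u=v$;
\item if $u'=v$, then $l(sv)>l(v)$, so $x_1^*y\in L_{i_1,j}(sv)$ by the second clause of (OWD3), and $u'\neq sv$;
\item if $u'\notin\{v,sv\}$, the induction hypothesis applies to both terms.
\end{itemize}
This needs neither the length bound nor descent sets, and no property of the conditional expectations beyond the base case $u=e$. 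You should state that base case explicitly: $x=0$ if $i\neq j$, and otherwise $x^*y\in L_{i,i}(e)L_{i,i}(v)\subset L_{i,i}(v)$ because $L_{i,i}(e)$ contains the unit. Your route is the more economical one.

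Two minor points. First, the heuristic you discard is false as stated: with unrestricted $\epsilon_k$, $e$ is obtainable when $v=e$ and $u$ is arbitrary. The true version forbids dropping a length-increasing letter, which is exactly what your induction encodes. Second, $E_{i_1}(x_1^*y_1)$ is the $L_{i_1,i_1}(e)$-component of $x_1^*y_1$. So $E_{i_1}(x_1^*y_1)\,y_2$ already lies in $L_{i_1,j_2}(s_2)$ (index $j_2$, not $i_2$), and no closure or continuity argument is needed.
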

\begin{proof}
(OWD3) allows us to decompose $x^*y$ as a finite linear combination of elements of some $L_{j,j}(w)$ for some $w\in W$ such that $l(w) \geq |l(u)-l(v)|$. Hence, if $l(u) \neq l(v)$ then $E_j(x^*y)=0$ and we will treat the case when $l(u) = l(v)$. Assume that there is an $s\in S$ such that $l(su) < l(u)$ but $l(sv)> l(v)$. For all $k\in \mathscr{I}$, $a \in L_{i,k}(s)$ and $x'\in L_{k,j}(su)$, we have $E_j((ax')^*y) = E_j(x'^*(a^*y))$ but as $l(sv)> l(v) > l(su)$, we have $E_j((ax')^*y) = 0$. By (OWD3) this implies $E_j(x^*y) = 0$ and we will treat the case when $u$ and $v$ begin with the same set of elements of $S$.

Now we proceed to prove the proposition by induction on $l(u) = l(v)$. The result is obvious for $l(u)=l(v)=1$. Let $n\geq 1$. Assume the proposition holds at $n-1$. Let $s \in S$ be a common prefix of $u$ and $v$. Assume that $x = ax', y = by'$ for some $k_1,k_2\in \mathscr{I}$, $a \in L_{i,k_1}(s), x'\in L_{k_1,j}(su)$ and $b \in L_{i,k_2}(s), y'\in L_{k_2,j}(sv)$. We have $E_j(x^*y) = E_j(x'^*a^*by')$. Moreover $a^*b \in L_{k_1,k_2}(s)+L_{k_1,k_2}(e)$. If $k_1\neq k_2$ then (OWD2) gives $L_{k_1,k_2}(e) = 0$ hence $a^*b \in L_{k_1,k_2}(s)$ and $E_j(x'^*(a^*by')) = 0$ as $a^*by' \in L_{k_1,j}(v)$ and $l(v) > l(su)$. Now we assume that $k_1 = k_2 =k$. Write \[E_j(x^*y) = E_j(x'^*a^*by') = E_j(x'^*E_k(a^*b)y')+E_j(x'^*(a^*b-E_k(a^*b))y').\] We know that $(a^*b-E_k(a^*b))y' \in L_{k,j}(v)$, as $l(v) > l(su)$ we have $E_j(x'^*(a^*b-E_k(a^*b))y') = 0$. This finishes the proof as we can now apply the induction hypothesis to compute $E_j(x^*y) = E_j(x'^*z)$ where $z = E_k(a^*b)y'\in L_{k,j}(sv)$.
\end{proof}
For all $i\in \mathscr{I}$, the $i$-th Fock module is defined as the right $B_i$ Hilbert module obtained by GNS-construction $X_i = L^2(A,E_i)$. We denote by $\eta_i$ its cyclic vector so that $X_i = \overline{A\eta_iB_i}$. The following proposition is a direct consequence of the last one.
\begin{proposition}\label{fock}
    For all $i,j \in \mathscr{I}$ and $w\in W$, we define $X_{i,j}(w) = \overline{L_{i,j}(w)\cdot \eta_j}\subset  X_j$. Each $X_{i,j}(w)$ is a right Hilbert $B_j$-submodule of $X_j$. For arbitrary $i,j \in \mathscr{I}$, the right $B_j$-module $X_j$ decomposes as a direct sum \[X_j =  \bigoplus_{w\in W}X_{i,j}(w).\]
    Moreover for every reduced decomposition $w= s_1\ldots s_n,$ there is a right $B_j$-linear unitary isomorphism \[X_{i,j}(w) \cong \bigoplus_{(i_1 \ldots i_{n-1}) \in \mathscr{I}^{n-1}}X_{i,i_1}(s_1)\otimes_{B_{i_1}}\ldots \otimes_{B_{i_{n-1}}}X_{i_{n-1},j}(s_n),\] implemented by \[a_1\eta_{i_1}\otimes a_2\eta_{i_2} \ldots \otimes  a_n \eta_{j} \mapsto a_1\ldots a_n \eta_j.\]
\end{proposition}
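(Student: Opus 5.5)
The plan is to read off everything from Proposition \ref{calcul}, which computes all inner products $\langle x\eta_j, y\eta_j\rangle = E_j(x^*y)$ for homogeneous elements.

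\emph{Submodules.} Since $L_{i,j}(w)\cdot L_{j,j}(e) = L_{i,j}(w)$ by (OWD2), the subspace $L_{i,j}(w)\eta_j$ is stable under right multiplication by $L_{j,j}(e)$. Its closure is therefore stable under $B_j$, by continuity of the right action. Hence $X_{i,j}(w)$ is a closed right $B_j$-submodule.

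\emph{Direct sum.} By the first assertion of Proposition \ref{calcul}, $E_j(x^*y)=0$ for $x\in L_{i,j}(u)$, $y\in L_{i,j}(v)$ with $u\ne v$. So the submodules $X_{i,j}(w)$, $w\in W$, are pairwise orthogonal. By (OWD1), $\sum_w L_{i,j}(w)$ is dense in $A$, so $\sum_w L_{i,j}(w)\eta_j$ is dense in $\overline{A\eta_j}$. Since $A$ is unital and $B_j\subset A$, we have $\overline{A\eta_j B_j}=\overline{A\eta_j}$. Thus $X_j$ is the closure of an algebraic orthogonal sum of closed submodules, which is the Hilbert module direct sum $\bigoplus_w X_{i,j}(w)$. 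No complementability issue arises, because the summands are mutually orthogonal and their sum is dense.

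\emph{Tensor decomposition.} Fix a reduced expression $w=s_1\cdots s_n$. On elementary tensors $a_1\eta_{i_1}\otimes\cdots\otimes a_n\eta_j$ with $a_k\in L_{i_{k-1},i_k}(s_k)$, we compare inner products. In the interior tensor product, the inner product of $a_1\eta_{i_1}\otimes\cdots\otimes a_n\eta_j$ with $b_1\eta_{i_1'}\otimes\cdots$ is zero across different summands of the direct sum over $\mathscr{I}^{n-1}$. Within the same summand it equals the iterated expression
\[E_{j}\bigl(a_n^*E_{i_{n-1}}(\cdots E_{i_1}(a_1^*b_1)\cdots)b_n\bigr).\]
By the second part of Proposition \ref{calcul}, this is exactly $E_j((a_1\cdots a_n)^*(b_1\cdots b_n))$. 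This covers both the vanishing when some $i_k\ne i_k'$ and the iterated formula otherwise.

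Hence the map $\bigoplus_{\underline i}\bigl(\text{algebraic tensors}\bigr)\to X_j$, $a_1\eta_{i_1}\otimes\cdots\mapsto a_1\cdots a_n\eta_j$, preserves $B_j$-valued inner products on finite sums. It is therefore well defined, since elements of norm zero map to zero. It is right $B_j$-linear, and it extends to an isometry on the completion. The algebraic tensors are dense in each $X_{i_{k-1},i_k}(s_k)\otimes\cdots$, because $L\eta$ is dense in each factor and the $B$-balanced relations are respected. Note also that $a_k\eta_{i_k}b = a_kb\eta_{i_k}$ with $a_kb\in L_{i_{k-1},i_k}(s_k)$, so the map is compatible with balancing.

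\emph{Surjectivity.} The image of the isometry is closed. By the consequence of (OWD3) stated after Definition \ref{OB}, namely $L_{i,j}(w)=\sum_{\underline i} L_{i,i_1}(s_1)\cdots L_{i_{n-1},j}(s_n)$, the image contains $L_{i,j}(w)\eta_j$. It is therefore all of $X_{i,j}(w)$, and the map is a unitary.

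The main subtlety is bookkeeping: matching the interior tensor product inner product with the iterated formula of Proposition \ref{calcul}, and handling the off-diagonal cases $i_k\ne i'_k$. This step is essentially a direct check once the formulas are aligned.
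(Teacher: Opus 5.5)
Your proposal is correct and follows the paper's route. The paper records this proposition simply as a direct consequence of Proposition \ref{calcul}, and you have spelled out that deduction: orthogonality from the first part of Proposition \ref{calcul}, density from (OWD1), isometry of the tensor map from the iterated formula (including the vanishing for mismatched intermediate indices), and surjectivity from the (OWD3) factorization of $L_{i,j}(w)$. Two routine points are left implicit and are worth stating: products along a reduced word land in $L_{i,j}(w)$ by (OWD3), and $B_{i_{k-1}}$ acts on the left of $X_{i_{k-1},i_k}(s_k)$ by multiplication in $A$, which is well defined thanks to (OWD2).
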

We now introduce the Fock module on which the reduced C$^*$-algebra will be defined and give two elementary lemmas which will be useful many times in the rest of this article. Let $\mathcal{P} = \prod_{i\in \mathscr{I}}B_i$. For all $i\in \mathscr{I}$, write $1_i\in \mathcal{P}$ for the vector whose $i$-th coordinate is $1$ and all other are zero. Embed each $B_i$ into $\mathcal{P}$ by the map $b \mapsto b\cdot1_i$. We define the following right $\mathcal{P}$-module \[
\mathcal{X}= \bigoplus_{i\in \mathscr{I}}X_i\otimes_{B_i}\mathcal{P}.
\]
\begin{lemma}\label{ZERO}
    Let $C,D$ be two unital C$^*$-algebras. Let $H$ be a right Hilbert $C$-module and $K$ be a $C$-$D$ correspondence. Assume that the left action $C \rightarrow \mathcal{L}_{D}(K)$ is faithful. $\mathcal{L}_C(H)$ embeds into $\mathcal{L}_D(H\otimes_CK)$ by $f \mapsto f\otimes \mathrm{id}_K$. Moreover, for all $f\in \mathcal{L}_C(H)$, if $f\otimes \mathrm{id}_K \in \mathcal{L}_D(H\otimes_C K)$ is compact then so is $f$.
\end{lemma}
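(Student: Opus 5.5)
The plan is to show first that $\iota : f \mapsto f\otimes \mathrm{id}_K$ is a well-defined unital $*$-homomorphism, then that it is isometric, and finally to deduce compactness of $f$ from that of $f\otimes\mathrm{id}_K$ through the linking algebra.

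\emph{Well-definedness.} For $f\in\mathcal{L}_C(H)$ the map $\xi\otimes\eta\mapsto f\xi\otimes\eta$ is bounded by $\|f\|$; this is the standard interior tensor product fact, proved via $\langle f\xi,f\xi\rangle\le\|f\|^2\langle\xi,\xi\rangle$ and positivity of the induced maps. It is adjointable with adjoint $f^*\otimes\mathrm{id}_K$. Hence $\iota$ is a unital $*$-homomorphism between C$^*$-algebras, and it suffices to prove that $\iota$ is injective, since injective $*$-homomorphisms are isometric.

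\emph{Injectivity.} Suppose $f\otimes\mathrm{id}_K=0$. For every $\xi\in H$ and $\eta,\eta'\in K$ we get
\[
0=\langle f\xi\otimes\eta,f\xi\otimes\eta'\rangle=\langle \eta,\phi(\langle f\xi,f\xi\rangle_C)\eta'\rangle_D,
\]
where $\phi:C\to\mathcal{L}_D(K)$ is the left action. Thus $\phi(\langle f\xi,f\xi\rangle)=0$, and faithfulness of $\phi$ gives $\langle f\xi,f\xi\rangle=0$, so $f\xi=0$. The same computation shows directly that $\|f\xi\otimes\eta\|$ controls $\|f\xi\|$, using that $\phi$ is isometric.

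\emph{Compactness.} I expect this to be the main step. The natural route is to identify $\mathcal{K}_C(H)$ with $\iota^{-1}(\mathcal{K}_D(H\otimes_C K))$. Because $\phi$ is faithful but not assumed to land in $\mathcal{K}_D(K)$, $\iota$ need not map compacts into compacts, so I would not try to use that. Instead I would argue with approximate units. Let $T=f\otimes\mathrm{id}_K$ be compact and $e_\lambda=\sum\theta_{\xi,\xi}$ be an approximate unit of $\mathcal{K}_C(H)$. It suffices to show $\|f-fe_\lambda\|\to0$. Since $\iota$ is isometric, this is equivalent to $\|T-T(e_\lambda\otimes\mathrm{id})\|\to0$.

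The operators $e_\lambda\otimes\mathrm{id}$ form a bounded net converging strictly to $1$ on $H\otimes_C K$. Indeed, on $\xi\otimes\eta$ we have $e_\lambda\xi\to\xi$, and an $\varepsilon/3$ argument using boundedness extends this to all vectors. A compact $T$ satisfies $\|T-TU_\lambda\|\to0$ for any bounded net $U_\lambda$ converging strictly to $1$. To see this, approximate $T$ by finite sums of $\theta_{x,y}$ and note that $\theta_{x,y}U_\lambda=\theta_{x,U_\lambda^*y}$ with $U_\lambda^*y\to y$, using self-adjointness of $e_\lambda$. Hence $f=\lim fe_\lambda$ in norm, and each $fe_\lambda$ is compact because $\mathcal{K}_C(H)$ is an ideal. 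Therefore $f$ is compact.
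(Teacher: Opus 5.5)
Your proof is correct and follows the paper's own argument. In both, injectivity of $\iota$ comes from faithfulness of the left action. Then an approximate unit $(e_\lambda)$ of $\mathcal{K}_C(H)$ is amplified to $e_\lambda\otimes\mathrm{id}_K$, which converges strongly to the identity. Compactness of $f\otimes\mathrm{id}_K$ upgrades this to norm convergence, and the isometric $\iota$ pulls that convergence back to $f$.

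The only differences are cosmetic. You multiply on the right ($fe_\lambda$, which uses $e_\lambda=e_\lambda^*$), whereas the paper multiplies on the left ($e_\lambda f$). You also write out the injectivity computation and the "injective implies isometric" step, which the paper leaves implicit.
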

\begin{proof}
    Denote by $\iota : \mathcal{L}_C(H) \rightarrow \mathcal{L}_D(H\otimes_CK)$ the $*$-homomorphism given by $\iota(f) = f\otimes \mathrm{id}_K.$ The injectivity of $\iota$ is easy. We assume that $f\in \mathcal{L}_C(H)$ is such that $\iota(f)$ is compact. Let $(e_\lambda)_{\lambda\in I}$ be an approximate identity for $\mathcal{K}_C(H)$. As $e_\lambda x \rightarrow x $ in norm for all $x\in H$, we know that, if $f\otimes \mathrm{id}_K$ is compact, then $e_\lambda f \otimes \mathrm{id}_K$ converges in norm to $f\otimes \mathrm{id}_K$. The injectivity of $\iota$ thus implies that $e_\lambda f$ converges in norm to $f$.
\end{proof}
\begin{lemma}\label{box}
There is a $*$-isomorphism $\mathcal{L}_{\mathcal{P}}(\mathcal{X}) \cong \prod_{i\in \mathscr{I}}\mathcal{L}_{B_i}(X_i)$ which sends every $f\in \mathcal{L}_{B_i}(X_i)$ to $f\otimes \mathrm{id}_\mathcal{P}$. Moreover this isomorphism induces at the level of compact operators the isomorphism $\mathcal{K}_{\mathcal{P}}(\mathcal{X}) \cong \bigoplus^{c_0}_{i\in \mathscr{I}}\mathcal{K}_{B_i}(X_i).$
\end{lemma}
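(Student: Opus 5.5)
The plan is to use the central projections $1_i \in \mathcal{P}$ to cut $\mathcal{X}$ into orthogonal pieces. First I will identify each summand $X_i\otimes_{B_i}\mathcal{P}$ with $X_i$, viewed as a right $\mathcal{P}$-module through the quotient map $\mathcal{P}\to B_i$, $p\mapsto p_i$. Since $\mathcal{P}$ acts on the left of $\mathcal{P}$ by multiplication and $B_i\cdot 1_i = \mathcal{P}1_i$, the map $x\otimes p \mapsto x\cdot p_i$ is a unitary. This is checked directly on inner products: $\langle x\otimes p, y\otimes q\rangle = p^*\langle x,y\rangle 1_i q$, and the right-hand side equals $1_i p_i^*\langle x,y\rangle q_i$. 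Hence $\mathcal{X}\cong\bigoplus_i X_i$, with $\mathcal{P}$-valued inner product $\langle (x_i),(y_i)\rangle = (\langle x_i,y_i\rangle_{B_i})_i$. Here the direct sum means the Hilbert module of families with $(\langle x_i,x_i\rangle)_i$ lying in $\mathcal{P}$, i.e. with $\sup_i\|x_i\|<\infty$. This gives $\mathcal{X}\cdot 1_i = X_i$.

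Next, given $T\in\mathcal{L}_\mathcal{P}(\mathcal{X})$, $\mathcal{P}$-linearity gives $T(\xi 1_i) = T(\xi)1_i$. So $T$ maps $\mathcal{X}1_i$ into itself and restricts to some $T_i\in\mathcal{L}_{B_i}(X_i)$, with adjoint $(T^*)_i$ and $\|T_i\|\le\|T\|$. This yields a $*$-homomorphism $\mathcal{L}_\mathcal{P}(\mathcal{X})\to\prod_i\mathcal{L}_{B_i}(X_i)$. It is injective because $\xi=\sup$-bounded family with $\xi 1_i$ determining $\xi$: if all $T_i=0$ then $T\xi\cdot 1_i=0$ for all $i$, hence $T\xi=0$. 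It is surjective because a bounded family $(T_i)$ defines $T(x_i)_i=(T_ix_i)_i$. This $T$ is adjointable with adjoint $(T_i^*)$, since the required inner-product identity holds coordinatewise. Moreover an $f\in\mathcal{L}_{B_i}(X_i)$ corresponds to the family supported at $i$, which under the unitary above is exactly $f\otimes\mathrm{id}_\mathcal{P}$.

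For the compact operators, the rank-one operator $\theta_{\xi,\eta}$ on $\mathcal{X}$ has components $(\theta_{\xi_i,\eta_i})_i$. Since $\|\theta_{\xi_i,\eta_i}\|\le\|\xi_i\|\|\eta_i\|$ and these are only uniformly bounded, I first split $\xi$. Cutting $\xi$ down to finitely many coordinates with $\|\xi_i\|\ge\varepsilon$ approximates $\theta_{\xi,\eta}$ within $\varepsilon\|\eta\|$. Thus I must show that the $c_0$-condition holds on the whole closure, and this is the main point to handle carefully. Once it is established, the image of $\mathcal{K}_\mathcal{P}(\mathcal{X})$ lies in the norm closure of finitely supported families of compacts, namely $\bigoplus^{c_0}_i\mathcal{K}_{B_i}(X_i)$. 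Conversely, each $\theta_{x,y}$ with $x,y\in X_i$ is the image of $\theta_{x1_i,y1_i}$, so finitely supported families of compacts are reached. Since the image of a closed ideal under a $*$-isomorphism is closed, the two sides are equal.
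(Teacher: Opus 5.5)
Your treatment of $\mathcal{L}_{\mathcal{P}}(\mathcal{X})$ is fine. The explicit unitary $X_i\otimes_{B_i}\mathcal{P}\cong X_i$, $x\otimes p\mapsto x\cdot p_i$, is a clean substitute for the paper's use of Lemma~\ref{ZERO}.

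The gap is in the compact part, and it comes from misidentifying $\mathcal{X}$. The Hilbert-module direct sum $\bigoplus_i E_i$ consists of the families $(\xi_i)$ for which $\sum_i\langle\xi_i,\xi_i\rangle$ converges in norm. It does not consist of the families with $(\langle\xi_i,\xi_i\rangle)_i\in\mathcal{P}$. Here the $i$-th term lies in the central summand $B_i1_i$ of $\mathcal{P}=\prod_i B_i$, which carries the sup-norm. So norm convergence means exactly $\|\xi_i\|\to 0$, and $\mathcal{X}$ is the $c_0$-sum of the $X_i$, not the bounded sum you describe.

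In your bounded model the second assertion of the lemma is false as soon as $\mathscr{I}$ is infinite. Take $\xi=(\eta_i)_{i\in\mathscr{I}}$, the family of cyclic vectors, each of norm one because $\langle\eta_i,\eta_i\rangle=E_i(1)=1$. Then $\theta_{\xi,\xi}$ has components $\theta_{\eta_i,\eta_i}$, which are nonzero projections. Hence $\theta_{\xi,\xi}\notin\bigoplus^{c_0}_i\mathcal{K}_{B_i}(X_i)$.

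Your approximation step is inconsistent with your own model for the same reason. ``Cutting $\xi$ down to finitely many coordinates with $\|\xi_i\|\ge\varepsilon$'' presupposes there are only finitely many such coordinates, which fails for bounded families. The $c_0$-property, which you call the main point, is then left unproved.

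Once $\mathcal{X}$ is identified correctly, that main point is immediate. Since $\|\theta_{\xi_i,\eta_i}\|\le\|\xi_i\|\,\|\eta_i\|\to 0$, every $\theta_{\xi,\eta}$ already lies in the closed subspace $\bigoplus^{c_0}_i\mathcal{K}_{B_i}(X_i)$ of $\prod_i\mathcal{L}_{B_i}(X_i)$. Hence so does its closed span $\mathcal{K}_{\mathcal{P}}(\mathcal{X})$, and your converse argument (finitely supported families of compacts are attained, and the image is closed) finishes the proof. In the first part you should also check that $(T_i\xi_i)_i$ is again a $c_0$-family; this holds because $\sup_i\|T_i\|<\infty$.
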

\begin{proof}
The last lemma gives the injectivity of the $*$-homomorphism $\prod_{i\in \mathscr{I}}\mathcal{L}_{B_i}(X_i) \rightarrow \mathcal{L}_{\mathcal{P}}(\mathcal{X})$. Notice that for all $i,j \in \mathscr{I}$ and $b\in B_i$, we have $b\otimes 1_j = b1_i\otimes 1_j = \delta_{i,j}b\otimes 1_\mathcal{P}.$ Hence any $f\in \mathcal{L}_\mathcal{P}(\mathcal{X})$ satisfies $f(X_i\otimes_{B_i}\mathcal{P}) \subset X_i\otimes_{B_i}\mathcal{P}$ and its restriction to $X_i\otimes_{B_i}\mathcal{P}$ is just given by an element of $\mathcal{L}_{B_i}(X_i)$. The second part of the last lemma gives the second identification.
\end{proof}
As $A$ acts on each $X_i$ on the left by adjointable operators, $\mathcal{X}$ is actually an $A$-$\mathcal{P}$ C$^*$-correspondence with a left action we denote by $\lambda : A \rightarrow \mathcal{L}_\mathcal{P}(X)$.
\begin{definition}\label{red}
    The reduced C$^*$-algebra $A^r \subset \mathcal{L}_{\mathcal{P}}(\mathcal{X})$ associated to the expected $W$-decomposition $A = \overline{\sum_{w\in W}L_{i,j}(w)}$ is the image of the $*$-homomorphism $\lambda$.
\end{definition}
For all $i,j \in \mathscr{I}$ and $w\in W$, we write $F_{i,j}(w) = \lambda(L_{i,j}(w)) \subset A^r$. The following proposition is easy to prove.
\begin{proposition}
$A^r = \overline{\sum_{w\in W} F_{i,j}(w)}$ is a $W$-decomposition of $A^r$ with index set $\mathscr{I}$. For all $i\in \mathscr{I}$, the Borel C$^*$-subalgebra $\overline{F_{i,i}(e)}$ of $A^r$ is $*$-isomorphic to $B_i$.     
\end{proposition}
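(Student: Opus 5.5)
The proposition has two parts, and I would handle them in turn.

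\emph{The axioms transfer to $A^r$.} Since $\lambda : A \to A^r$ is a surjective $*$-homomorphism and $F_{i,j}(w) = \lambda(L_{i,j}(w))$, each axiom (OWD1)--(OWD4) should transfer from $A$ to $A^r$ as follows.
\begin{itemize}
\item (OWD1): by continuity, $\lambda(\overline{\sum_w L_{i,j}(w)}) \subset \overline{\sum_w F_{i,j}(w)}$, and the image of a C$^*$-algebra under a $*$-homomorphism is closed, so $A^r = \overline{\sum_w F_{i,j}(w)}$.
\item (OWD2), (OWD3), (OWD4): these are purely algebraic identities and inclusions between spans of products and adjoints of subspaces. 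A linear multiplicative $*$-map preserves them; for instance $\lambda(X\cdot Y) = \lambda(X)\cdot\lambda(Y)$ and $\lambda(X+Y) = \lambda(X)+\lambda(Y)$.
\item Unit: $\lambda(1)$ is the unit of $A^r$, so $F_{i,i}(e)$ is a unital $*$-subalgebra.
\end{itemize}

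\emph{The Borel subalgebras.} Here one must show that $\lambda$ restricted to $B_i$ is injective. It is then isometric, so $\overline{F_{i,i}(e)} = \lambda(B_i) \cong B_i$, the first equality holding because $\lambda(B_i)$ is closed and contains $\lambda(L_{i,i}(e))$ densely.

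For injectivity, let $b \in B_i$ with $\lambda(b) = 0$. By Lemma \ref{box}, $\lambda(b)$ corresponds to the family of left multiplications by $b$ on the modules $X_j$, so in particular $b$ acts as zero on $X_i$. Applying this to the cyclic vector gives $b\eta_i = 0$. On the other hand,
\[
\langle b\eta_i, b\eta_i\rangle = E_i(b^*b) = b^*b,
\]
because $E_i$ is a conditional expectation onto $B_i$ and $b^*b \in B_i$. Hence $b^*b = 0$, and so $b = 0$.

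The only point needing care is this injectivity. It rests on $E_i$ restricting to the identity on $B_i$, which holds by the definition of a conditional expectation. Everything else is formal.
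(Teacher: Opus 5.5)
Your proof is correct and is exactly the routine verification the paper has in mind; the paper states the proposition as ``easy to prove'' and gives no proof. In your argument, (OWD1)--(OWD4) pass through the surjective unital $*$-homomorphism $\lambda$, and $\lambda|_{B_i}$ is injective because $\langle b\eta_i,b\eta_i\rangle_{B_i}=E_i(b^*b)=b^*b$, i.e.\ $B_i$ acts faithfully on $X_{i,i}(e)=\overline{B_i\eta_i}\cong B_i$.
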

\begin{remark}
    The decomposition $A = \overline{\sum_{w\in W} L_{i,j}(w)}$ will be of little use in practice. It is only used in this first section to introduce the formal properties of $W$-decompositions. The decomposition of the reduced C$^*$-algebra $A^r = \overline{\sum_{w\in W} F_{i,j}(w)}$ is the one we shall focus on in the next sections.
\end{remark}
\begin{example}\label{GOP}
    Let $G = \biguplus_{w\in W}C_{i,j}(w)$ be a discrete group $G$ with a $W$-decomposition of index set $\mathscr{I}$ and Borel subgroups $H_i = C_{i,i}(e)$. Let $A = C^*_r(G)$ be the reduced C$^*$-algebra of $G$, which is generated by the unitaries $\lambda(g)\in B(l^2G)$ implementing the action of $G$ on itself by left translation. For all $i,j \in \mathscr{I}$ and $w\in W$ let $F_{i,j}(w) = \operatorname{span}\{\lambda(g)|g\in C_{i,j}(w)\} \subset A$. For all $i\in \mathscr{I}$, let $B_i = \overline{F_{i,i}(e)} = C^*_r(H_i)$. $A = \overline{\bigoplus_{w\in W}F_{i,j}(w)}$ is a $W$-decomposition of index set $\mathscr{I}$ of $A$. Let $E_i : C^*_r(G) \rightarrow C^*_r(H_i)$ be the usual conditional expectations. This $W$-decomposition together with the family of conditional expectations $(E_i)_{i\in \mathscr{I}}$ form an expected $W$-decomposition of $C^*_r(G)$. As each $E_i$ is faithful the reduced C$^*$-algebra of this decomposition is isomorphic to $C^*_r(G)$.
    \end{example}
For all $i\in \mathscr{I}$ and $w\in W$ we define \[\mathcal{X}_{i}(w) = \bigoplus_{j\in \mathscr{I}}X_{i,j}(w)\otimes_{B_j}\mathcal{P},\] so that we have $\mathcal{X} = \bigoplus_{w\in W} \mathcal{X}_i(w)$ for all $i\in \mathscr{I}$.
\begin{lemma}\label{xxxxxxx}
For all $i,k \in \mathscr{I}$ and $u,v \in W$ such that $l(uv) =l(u)+l(v)$, there are unitary isomorphisms of right Hilbert $B_k$-modules and right Hilbert $\mathcal{P}$-modules \[X_{i,k}(uv) \cong \bigoplus_{j\in \mathscr{I}} X_{i,j}(u)\otimes_{B_j}X_{j,k}(v) \text{ and } \mathcal{X}_i(uv) \cong \bigoplus_{j\in \mathscr{I}}X_{i,j}(u)\otimes_{B_j}\mathcal{X}_j(v).\]
\end{lemma}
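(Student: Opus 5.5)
The plan is to define the isomorphism directly on the dense subspaces and check that it preserves inner products, in the same way as Proposition \ref{fock}. Fix $i,k$ and $u,v$ with $l(uv)=l(u)+l(v)$. For each $j\in\mathscr{I}$, define a map on the algebraic tensor product $L_{i,j}(u)\eta_j\odot L_{j,k}(v)\eta_k$ by
\[a\eta_j\otimes b\eta_k\mapsto ab\,\eta_k .\]
By (OWD3), iterated along a reduced expression of $uv$ obtained by concatenating reduced expressions of $u$ and $v$, we have $ab\in L_{i,k}(uv)$. We then sum these maps over $j$.

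The key step is inner preservation. For $a,a'\in L_{i,j}(u)$, $b,b'\in L_{j,k}(v)$, and $a''\in L_{i,j'}(u)$, $b''\in L_{j',k}(v)$, we need
\[E_k\big((ab)^*a'b'\big)=E_k\big(b^*E_j(a^*a')b'\big)\quad\text{and}\quad E_k\big((ab)^*a''b''\big)=0 \text{ when } j\neq j'.\]
This should be proved by reducing to Proposition \ref{calcul}. Fix reduced expressions $u=s_1\cdots s_m$ and $v=s_{m+1}\cdots s_n$. Using the displayed consequence of (OWD3), write $a$, $a'$, $b$, $b'$ as finite sums of products of elements of the $L_{\cdot,\cdot}(s_r)$, with intermediate index sequences passing through $j$ (resp. $j'$) at position $m$. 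Proposition \ref{calcul} then does two things. It kills the cross terms whenever the index sequences differ, in particular whenever $j\neq j'$. For matching sequences it gives the nested formula, whose inner part, up to step $m$, is exactly $E_j$ applied to the corresponding products, by Proposition \ref{calcul} again applied to $u$ with endpoint $j$.

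The point needing care is linearity. Proposition \ref{calcul} evaluates only products along fixed index paths, but both sides of the identity are sesquilinear, so the identity extends to sums. The inner part of the nested formula for the matching sequences sums exactly to $E_j(a^*a')$, regrouped by intermediate paths of $u$. So the identity holds. The map is therefore isometric and descends to the interior tensor product. Its image contains $\sum_j L_{i,j}(u)L_{j,k}(v)\eta_k$, which equals $L_{i,k}(uv)\eta_k$ by the iterated (OWD3) again. Hence the image is dense and the map extends to a unitary onto $X_{i,k}(uv)$. It is right $B_k$-linear by construction.

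Alternatively, one can compose the isomorphisms of Proposition \ref{fock} for $u$, $v$ and $uv$, using associativity of interior tensor products and the fact that concatenated paths for $uv$ biject with pairs consisting of a path for $u$ ending at $j$ and a path for $v$ starting at $j$. I would use this as a cross-check on the formula.

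For the second isomorphism, tensor the first with $\mathcal{P}$ over $B_k$ and sum over $k$:
\[\mathcal{X}_i(uv)=\bigoplus_k X_{i,k}(uv)\otimes_{B_k}\mathcal{P}\cong\bigoplus_j X_{i,j}(u)\otimes_{B_j}\Big(\bigoplus_k X_{j,k}(v)\otimes_{B_k}\mathcal{P}\Big).\]
This uses associativity of interior tensor products and the fact that they commute with direct sums. The main obstacle is the bookkeeping in the inner-product identity. The summation over paths from Proposition \ref{calcul} has to be regrouped cleanly into $E_j(a^*a')$, and this requires the decompositions of $a$ and $a'$ to use the same reduced expression of $u$, which I fix at the outset.
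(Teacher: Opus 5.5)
Your proposal is correct and matches the paper. The paper gets the first isomorphism simply by composing the unitaries of Proposition~\ref{fock} along a reduced expression of $uv$ obtained by concatenation, which is your ``cross-check'' route, so your direct inner-product verification via Proposition~\ref{calcul} re-derives what Proposition~\ref{fock} already packages; the second isomorphism is obtained exactly as you do it, by tensoring with $\mathcal{P}$ over $B_k$, summing over $k$ and regrouping. You should also set aside the degenerate cases $u=e$ or $v=e$ explicitly before invoking the reduced-word decompositions; there $j$ is forced to equal $i$ (resp.\ $k$) and the claim is trivial.
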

\begin{proof}
    The first assertion follows directly from Proposition \ref{fock}. Let us prove the second one. Let $u,v\in W$ be such that $l(uv) = l(u) + l(v)$. Compute \[
\begin{aligned}
\mathcal{X}_i(uv)
&=
\bigoplus_{j\in\mathscr{I}}
X_{i,j}(uv)\otimes_{B_{j}}\mathcal{P}
\\
&\cong
\bigoplus_{j,j'\in\mathscr{I}} X_{i,j'}(u)
\otimes_{B_{j'}}
X_{j',j}(v)\otimes_{B_{j}}\mathcal{P}
\\
&\cong
\bigoplus_{j'\in\mathscr{I}}
X_{i,j'}(u)\otimes_{B_{j'}}
\mathcal{X}_{j'}(v).
\end{aligned}
\]
\end{proof}
Recall that a right Hilbert module $\mathcal{H}$ over a unital C$^*$-algebra $\mathcal{A}$ is called finitely generated if it is finitely generated as a right $\mathcal{A}$-module and that this condition is equivalent to asking that $id_\mathcal{H} \in \mathcal{K}_\mathcal{A}(\mathcal{H})$.
\begin{proposition}\label{finito}
Let $A = \overline{\sum_{w\in W}L_{i,j}(w)}$ be an expected $W$-decomposition with conditional expectations $E_i : A \rightarrow B_i$ and index set $\mathscr{I}$. The following three conditions are equivalent \begin{enumerate}[label=(\roman*)]
    \item For all $i \in \mathscr{I}$ and $s\in S$, the right $\mathcal{P}$-module $\mathcal{X}_i(s)$ is finitely generated.
    \item For all $i \in \mathscr{I}$ and $w\in W$, the right $\mathcal{P}$-module $\mathcal{X}_i(w)$ is finitely generated.
    \item For all $i \in \mathscr{I}$ and $s\in S$, the right $B_j$-modules $X_{i,j}(s)$ for $j\in \mathscr{I}$ are finitely generated and only a finite number of them are nonzero. 
\end{enumerate}
\end{proposition}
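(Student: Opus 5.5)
I will prove (i)$\Leftrightarrow$(iii) first, and then (i)$\Rightarrow$(ii); the implication (ii)$\Rightarrow$(i) is trivial since $S\subset W$.

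\emph{(i)$\Leftrightarrow$(iii).} By Lemma \ref{box}, $\mathcal{X}_i(s)=\bigoplus_{j}X_{i,j}(s)\otimes_{B_j}\mathcal{P}$. Each summand $X_{i,j}(s)\otimes_{B_j}\mathcal{P}$ is identified with $X_{i,j}(s)$, on which $\mathcal{P}$ acts through the projection onto $B_j$. The compact operators of $\mathcal{X}_i(s)$ are therefore the $c_0$-direct sum $\bigoplus^{c_0}_j\mathcal{K}_{B_j}(X_{i,j}(s))$, and the identity is the family $(\mathrm{id}_{X_{i,j}(s)})_j$. I will use the criterion that a Hilbert module is finitely generated exactly when its identity is compact. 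The identity lies in the $c_0$-sum if and only if each $\mathrm{id}_{X_{i,j}(s)}$ is compact and $\|\mathrm{id}_{X_{i,j}(s)}\|\to 0$. Since these norms are $0$ or $1$, the second condition means that only finitely many $X_{i,j}(s)$ are nonzero. This gives (i)$\Leftrightarrow$(iii). To restrict to a single summand I will compress by the projection $1_j$ in $\mathcal{P}$.

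\emph{(i)$\Rightarrow$(ii).} I argue by induction on $l(w)$. The case $w=e$ holds because $\mathcal{X}_i(e)=X_i\cdot$-part $=\overline{B_i\eta_i}\otimes\mathcal{P}\cong B_i$, which is generated by $\eta_i\otimes 1_i$. For $w=sv$ with $l(w)=l(v)+1$, Lemma \ref{xxxxxxx} gives
\[\mathcal{X}_i(w)\cong\bigoplus_{j}X_{i,j}(s)\otimes_{B_j}\mathcal{X}_j(v).\]
By (iii), only finitely many $j$ contribute, and each $X_{i,j}(s)$ is generated as a right $B_j$-module by finitely many vectors $\xi_1,\dots,\xi_m$. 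By induction, $\mathcal{X}_j(v)$ is generated over $\mathcal{P}$ by finitely many vectors $\zeta_1,\dots,\zeta_p$. The elementary tensors $\xi_a\otimes\zeta_b$ then generate the internal tensor product algebraically. Indeed, every $\xi\otimes\zeta$ equals $\sum_a\xi_a\otimes b_a\zeta$, and $b_a\zeta=\sum_b\zeta_b p_b$. The span of the $\xi\otimes\zeta$ is dense, but a finitely generated algebraic submodule need not be closed, so I will argue at the level of compact identities instead. Writing $\mathrm{id}_{X_{i,j}(s)}=\sum_a\theta_{\xi_a,\xi'_a}$, one obtains
\[\mathrm{id}=\sum_a\theta_{\xi_a,\xi'_a}\otimes\mathrm{id}.\]
Each term $\theta_{\xi,\xi'}\otimes\mathrm{id}_{\mathcal{X}_j(v)}$ is compact when $\mathrm{id}_{\mathcal{X}_j(v)}$ is compact: it is the norm limit, in fact the exact value, of $\sum_b\theta_{\xi\otimes\zeta_b,\,\xi'\otimes\zeta'_b}$, obtained from $\mathrm{id}_{\mathcal{X}_j(v)}=\sum_b\theta_{\zeta_b,\zeta'_b}$. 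The identity of the finite direct sum is therefore compact, and the module is finitely generated.

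The main obstacle is this compactness computation: checking that $\theta_{\xi,\xi'}\otimes\mathrm{id}$ equals $\sum_b\theta_{\xi\otimes\zeta_b,\xi'\otimes\zeta'_b}$ on the interior tensor product. I would verify it directly on elementary tensors. One compares $\xi\langle\xi',x\rangle\otimes y$ with $\sum_b\xi\otimes\zeta_b\langle\zeta'_b,\langle\xi',x\rangle y\rangle$, and these agree because $\sum_b\theta_{\zeta_b,\zeta'_b}$ is the identity on $\mathcal{X}_j(v)$.
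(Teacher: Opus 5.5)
Your proposal is correct and follows essentially the paper's argument. You get (i)$\Leftrightarrow$(iii) from the $c_0$-sum description of $\mathcal{K}_{\mathcal{P}}(\mathcal{X}_i(s))$, and (iii)$\Rightarrow$(ii) by writing $\mathcal{X}_i(w)$ as a finite direct sum of interior tensor products of finitely generated modules. The differences are minor: you peel off one letter at a time using Lemma \ref{xxxxxxx}, and you explicitly check that the identity of $X_{i,j}(s)\otimes_{B_j}\mathcal{X}_j(v)$ is finite rank. The paper instead expands over all paths in $\mathscr{I}^{n}$ at once and leaves that finiteness detail implicit.
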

\begin{proof}
The equivalence of (i) and (iii) follows from the formula given by Lemma \ref{box}: \[\mathcal{K}_{\mathcal{P}}(\mathcal{X}_i(s)) \cong \bigoplus^{c_0}_{j\in \mathscr{I}}\mathcal{K}_{B_j}(X_{i,j}(s)),\] as a $c_0$-direct sum of C$^*$-algebras can only be unital if finitely many of the C$^*$-algebras are nonzero. Assume (iii) holds. Let $w =s_1 \ldots s_n$ be a reduced expression of an arbitrary element of $W$. Write \[\mathcal{X}_{i}(w) = \bigoplus_{j\in \mathscr{I}}X_{i,j}(w)\otimes_{B_j}\mathcal{P} \cong \bigoplus_{(i_1 \ldots i_{n}) \in \mathscr{I}^{n}}X_{i,i_1}(s_1)\otimes_{B_{i_1}}\ldots \otimes_{B_{i_{n-1}}}X_{i_{n-1},i_n}(s_n)\otimes_{B_{i_n}}\mathcal{P}.\] As for all $i \in \mathscr{I}$ and $s\in S$, only a finite number of the $X_{i,j}(s)$ are nonzero, this sum is actually finite. As each of the $X_{i_{k-1},i_{k}}(s_{k})$ is finitely generated as a right $B_{i_k}$-module, this proves that $\mathcal{X}_i(w)$ is a finitely generated right $\mathcal{P}$-module.
\end{proof}

Such an expected $W$-decomposition is said to be of finite type if any of the above conditions is verified. This notion should be understood as the noncommutative analog of the local finiteness of the underlying building, as the following proposition shows.
\begin{proposition}\label{finitogroup}
    Let $G = \biguplus_{w\in W}C_{i,j}(w)$ be a discrete group $G$ with a $W$-decomposition of index set $\mathscr{I}$ and Borel subgroups $H_i = C_{i,i}(e)$. Let $A = C^*_r(G) = \overline{\bigoplus_{w\in W}F_{i,j}(w)}$ be the associated expected $W$-decomposition reduced C$^*$-algebra of $G$ as in Example \ref{GOP}. This decomposition is of finite type if and only if the underlying building is locally finite.
\end{proposition}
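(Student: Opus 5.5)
We will verify condition (iii) of Proposition \ref{finito} directly in the group setting. The first step is to describe the modules $X_{i,j}(s)$ explicitly. Here $B_j = C^*_r(H_j)$ and $E_j$ is the canonical conditional expectation. The module $X_j = L^2(C^*_r(G),E_j)$ is then the standard induced Hilbert module, and it decomposes as a direct sum indexed by cosets:
\[X_j \cong \bigoplus_{gH_j\in G/H_j}\lambda(g)\eta_j B_j .\]
Each summand $\lambda(g)\eta_jB_j$ is a free rank-one $B_j$-module, since $\langle \lambda(g)\eta_j,\lambda(g)\eta_j\rangle = E_j(1)=1$. Distinct cosets give orthogonal summands, because $E_j(\lambda(g^{-1}g'))=0$ when $g^{-1}g'\notin H_j$.

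Since $C_{i,j}(s)$ is right $H_j$-invariant by (Wdec2), it follows that
\[X_{i,j}(s)\cong\bigoplus_{gH_j\in C_{i,j}(s)/H_j}\lambda(g)\eta_jB_j,\]
a free right $B_j$-module whose rank is $|C_{i,j}(s)/H_j|$. Through the identification $X=\coprod_k G/H_k$ of Theorem \ref{CTB}, the union over $j$ of the sets $C_{i,j}(s)/H_j$ is exactly the set of chambers $y=gH_j$ with $\delta(H_i,y)=s$, that is, the $s$-neighbours of the base chamber $x_i=H_i$.

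For the direction local finiteness $\Rightarrow$ finite type, assume the building is locally finite. Then the total number of $s$-neighbours of $x_i$ is finite. Hence only finitely many of the sets $C_{i,j}(s)/H_j$ are nonempty, and each of them is finite. Each $X_{i,j}(s)$ is therefore a finite direct sum of copies of $B_j$, so it is finitely generated, and only finitely many of them are nonzero. This is condition (iii).

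For the converse, we need that a free $B_j$-module $\bigoplus_{\kappa}B_j$ (the Hilbert-module direct sum) is finitely generated only when the cardinal $\kappa$ is finite. Our first approach is to show that the identity is not compact when $\kappa$ is infinite. The projections onto finite subsums $P_F$ satisfy $\|P_{F'}-P_F\|=1$ for $F\subsetneq F'$. A compact identity $T$ would force $P_FT\to T$ in norm, because compact operators are norm limits of finite-rank operators and $P_F\theta_{x,y}\to\theta_{x,y}$; this is impossible. Alternatively, we use that the orthonormal vectors $\lambda(g)\eta_j$ cannot lie in the image of a finitely generated module, by a norm estimate. Given this, finite type implies that each set of $s$-neighbours of every base chamber $x_i$ is finite. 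Since $G$ acts by $W$-isometries and every chamber is a translate of some $x_i$, every chamber has finitely many $s$-neighbours, so the building is locally finite.

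The main obstacle is to make the identification of $X_{i,j}(s)$ with the free module on $C_{i,j}(s)/H_j$ rigorous. This requires checking that the closure of $\operatorname{span}\lambda(C_{i,j}(s))\eta_j$ is the internal direct sum of the pieces $\lambda(g)\eta_jB_j$, using $\lambda(gh)\eta_j=\lambda(g)\eta_j\lambda(h)$. The non-finite-generation argument for infinite rank is the other delicate point.
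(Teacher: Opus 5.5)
Your proof is correct and follows the paper's route: you identify $X_{i,j}(s)$ with the free right $B_j$-module $l^2(C_{i,j}(s)/H_j,B_j)$, observe that $\coprod_{j\in\mathscr{I}} C_{i,j}(s)/H_j$ is exactly the set of $s$-neighbours of the base chamber $H_i$, and then apply condition (iii) of Proposition \ref{finito}. Where the paper only asserts these points, you also verify the coset decomposition of $X_{i,j}(s)$ and show that an infinite-rank free Hilbert module has non-compact identity, hence is not finitely generated.
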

\begin{proof}
As the underlying building identifies with $\coprod_{i\in \mathscr{I}}G/H_i$, it is locally finite if and only if for all $i \in \mathscr{I}$ and $s\in S$, the disjoint union $\coprod_{j\in\mathscr{I}} C_{i,j}(s)/H_j$ is finite. The result follows from the last proposition and the identification $X_{i,j}(s) \cong l^2(C_{i,j}(s)/H_j,B_j)$ as right $B_j$-modules. \end{proof}
Let $\mathcal{A}$ and $\mathcal{B}$ be two C$^*$-algebras. We will denote by $\mathcal{A}\otimes\mathcal{B}$ their minimal tensor product. Given a right Hilbert $\mathcal{A}$-module $H$ and a right Hilbert $\mathcal{B}$-module $K$, their exterior tensor product is the right Hilbert $\mathcal{A}\otimes \mathcal{B}$-module $H\otimes_{\mathrm{ext}}K$ given by separation and completion of their algebraic tensor product over $\mathbb{C}$ with respect to the inner product $\langle h_1\otimes k_1,h_2\otimes k_2\rangle = \langle h_1, h_2 \rangle \otimes \langle k_1, k_2 \rangle$.
\begin{proposition}\label{tensor}
    Let $A = \overline{\sum_{w\in W}L_{i,j}(w)}$, together with conditional expectations $E_i : A \rightarrow B_i$ be an expected $W$-decomposition of a unital C$^*$-algebra $A$ with index set $\mathscr{I}$. Let $Q$ be a unital C$^*$-algebra. For all $w\in W$, define $G_{i,j}(w)$ to be the linear span of elements of the form $x\otimes q$ for $x\in L_{i,j}(w)$ and $q\in Q$ inside the minimal tensor product $A\otimes Q$. Then
    \[
        A \otimes Q = \overline{\sum_{w\in W}G_{i,j}(w)},
    \]
    together with conditional expectations $E_i\otimes \mathrm{id}_Q :
        A\otimes Q \rightarrow B_i\otimes Q$
    is an expected $W$-decomposition of the unital C$^*$-algebra $A\otimes Q$, with Borel subalgebras $B_i\otimes Q$ and index set $\mathscr{I}$. Moreover, the corresponding Fock modules
    \[
        Y_{j} = \bigoplus_{w\in W}Y_{i,j}(w)
    \]
    are given by the exterior tensor products $Y_j \cong X_j \otimes_{\mathrm{ext}}Q$ and $Y_{i,j}(w) \cong X_{i,j}(w)\otimes_{\mathrm{ext}}Q.$
\end{proposition}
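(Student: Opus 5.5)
We verify the axioms (OWD1)--(OWD4) for the family $(G_{i,j}(w))$ directly from those of $(L_{i,j}(w))$, using only that the algebraic tensor product $A\odot Q$ is dense in $A\otimes Q$ and that elementary tensors multiply componentwise. For (OWD1), fix $i,j$. Since $\sum_w L_{i,j}(w)$ is dense in $A$, every elementary tensor $a\otimes q$ is a norm limit of elements of $\sum_w G_{i,j}(w)$, because $\|a\otimes q\| = \|a\|\|q\|$ in the minimal tensor product. Hence $\sum_w G_{i,j}(w)$ is dense. For (OWD2), $G_{i,j}(e)=0$ when $i\neq j$. Moreover $G_{i,i}(e)=L_{i,i}(e)\odot Q$ is a unital $*$-subalgebra, and
\[
G_{i,i}(e)\cdot G_{i,j}(w)\cdot G_{j,j}(e) = \big(L_{i,i}(e)L_{i,j}(w)L_{j,j}(e)\big)\odot Q = G_{i,j}(w),
\]
using $Q\cdot Q\cdot Q = Q$ because $Q$ is unital. (OWD3) is similar. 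First, $G_{i,j}(s)G_{j,k}(w)\subset (L_{i,k}(sw)+L_{i,k}(w))\odot Q$. For the equality case we need $L_{i,k}(sw)\odot Q=\sum_j (L_{i,j}(s)L_{j,k}(w))\odot (Q\cdot Q)$; this holds by writing $q = q\cdot 1$. (OWD4) is immediate from $(x\otimes q)^*=x^*\otimes q^*$. Finally, the Borel subalgebra $\overline{L_{i,i}(e)\odot Q}$ is the closure of $B_i\odot Q$ inside $A\otimes Q$, which is $B_i\otimes Q$ by injectivity of the minimal tensor product.

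Next, $E_i\otimes \mathrm{id}_Q$ is a well-defined contractive completely positive map $A\otimes Q\to B_i\otimes Q\subset A\otimes Q$, since tensor products of c.p. maps extend to the minimal tensor product. It is idempotent with range $B_i\otimes Q$, so by Tomiyama it is a conditional expectation. Alternatively, bimodularity can be checked on elementary tensors and extended by continuity. It kills $G_{i,i}(w)$ for $w\neq e$ because $E_i$ kills $L_{i,i}(w)$. So we have an expected $W$-decomposition.

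For the Fock modules, we define $U: X_j\otimes_{\mathrm{ext}}Q\to Y_j$ on the dense subspace by
\[
U\big((a\eta_j b)\otimes q\big) = (a\otimes q)\,\eta'_j\,(b\otimes 1),
\]
where $\eta'_j$ is the cyclic vector of $Y_j=L^2(A\otimes Q,E_j\otimes\mathrm{id})$. More simply, we send $a\eta_j\otimes q\mapsto (a\otimes q)\eta'_j$. The key computation is the preservation of inner products:
\[
\langle (a\otimes q)\eta'_j,(a'\otimes q')\eta'_j\rangle = (E_j\otimes\mathrm{id})(a^*a'\otimes q^*q') = E_j(a^*a')\otimes q^*q',
\]
which equals $\langle a\eta_j,a'\eta_j\rangle\otimes\langle q,q'\rangle$. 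Hence $U$ is well defined and isometric on the algebraic span, and it is right $B_j\odot Q$-linear. Its range contains $(A\odot Q)\eta'_j$, which is dense in $Y_j$ since $A\odot Q$ is dense and $x\mapsto x\eta'_j$ is contractive. So $U$ extends to a unitary. Restricting to $L_{i,j}(w)\odot Q$, the image of the dense subspace $L_{i,j}(w)\eta_j\odot Q$ of $X_{i,j}(w)\otimes_{\mathrm{ext}}Q$ is $G_{i,j}(w)\eta'_j$, which is dense in $Y_{i,j}(w)$. This gives $Y_{i,j}(w)\cong X_{i,j}(w)\otimes_{\mathrm{ext}}Q$. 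The main obstacle is the bookkeeping that these isomorphisms respect the right module structures over $B_j\otimes Q$, the completion of $B_j\odot Q$, together with the continuity extensions. Everything else is formal.
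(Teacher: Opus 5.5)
Your proposal is correct and follows the paper's proof. The paper likewise calls the axiom checks routine and builds the unitary $a\eta_j\otimes q\mapsto (a\otimes q)\xi_j$ from $X_j\otimes_{\mathrm{ext}}Q$ onto $Y_j=L^2(A\otimes Q,E_j\otimes\mathrm{id}_Q)$, justified by the same identity $(E_j\otimes\mathrm{id}_Q)(a^*a'\otimes q^*q')=E_j(a^*a')\otimes q^*q'$; you additionally spell out what the paper leaves implicit, namely the verification of (OWD1)--(OWD4), the Tomiyama argument for $E_i\otimes\mathrm{id}_Q$, the density of the range, and the restriction to the summands $Y_{i,j}(w)$.
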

\begin{proof}
    One easily checks the axioms of an expected $W$-decomposition. Let $Y_{j} = L^2(A\otimes Q,E_j\otimes \mathrm{id}_Q).$ Denote by $\eta_j$ the cyclic vector for $X_j = L^2(A,E_j)$ and by $\xi_j$ the cyclic vector for $Y_j = L^2(A\otimes Q,E_j\otimes\mathrm{id}_Q)$ . Define the operator $V : X_j\otimes_{\mathrm{ext}}Q \rightarrow Y_j$ by $V(a\cdot\eta_j \otimes q) = (a\otimes q)\cdot \xi_j$ for all $a\in A$ and $q\in Q$. The operator $V$ is well-defined and isometric as for all $a_1,\ldots a_n\in A$ and $q_1,\ldots q_n \in Q$ we have  \begin{align*}
&(E_j \otimes \mathrm{id}_Q)
\left(
\left(\sum_{k=1}^n a_k \otimes q_k\right)^*
\left(\sum_{k=1}^n a_k \otimes q_k\right)
\right) \\
&\qquad =
(E_j \otimes \mathrm{id}_Q)
\left(
\sum_{k,l=1}^n a_k^*a_l \otimes q_k^*q_l
\right) \\
&\qquad =
\sum_{k,l=1}^n E_j(a_k^*a_l) \otimes q_k^*q_l.
\end{align*}
Moreover, $V$ is clearly left $A$-linear, right $B_j$-linear, adjointable and surjective.
\end{proof}
Notice that if the decomposition of $A$ is of finite type then that of $A\otimes Q$ is also of finite type.
\subsection{Gluing conditional expectations}
Let $A = \overline{\sum_{w\in W}L(w)}$ be a Bruhat decomposition of type $W$ of a unital C$^*$-algebra. We write the Borel C$^*$-subalgebra $B = \overline{L(e)}$. In this section, we will see that one can define a maximal C$^*$-algebra $A^{\max}$ associated to this decomposition. Moreover this maximal C$^*$-algebra admits a natural Bruhat decomposition, and a conditional expectation for this decomposition can be built from smaller conditional expectations of the form $E_{I} : A^{\max}_{I} \rightarrow B$ (see Theorem \ref{VV}).
\begin{definition} For all $I\subset S$, the parabolic C$^*$-subalgebra of type $I$ of $A$ is defined by $A_I = \overline{\sum_{w\in W_I}L(w)}$. \end{definition}
It is easy to check that $A_I = \overline{\sum_{w\in W_I}L(w)}$ is a Bruhat decomposition of $A_I$ of type $W_I$. Inspired by Proposition \ref{univgroup}, we let the universal C$^*$-algebra of the Bruhat decomposition $A^{\max}$ be the amalgamated sum (the colimit in the category of unital C$^*$-algebras) of the C$^*$-algebras $A_I$ for $I\in \mathcal{R}_2$ together with the inclusions $A_I \subset A_J$ whenever $I\subset J \in \mathcal{R}_2$.

In other words, $A^{\max}$ is the universal C$^*$-algebra generated by families of $*$-homomorphisms $\alpha_I : A_I \rightarrow B(H)$ for all $I \in \mathcal{R}_2$, where $H$ is any Hilbert space, satisfying $\alpha_J\circ i_{I\subset J} = \alpha_I$ whenever $I\subset J \in \mathcal{R}_2$ (here $i_{I\subset J}$ just denotes the inclusion map $A_I \hookrightarrow A_J$). By definition of $A^{\max}$ there are canonical $*$-homomorphisms $j_I : A_I \rightarrow A^{\max}$ and $\alpha : A^{\max} \rightarrow A$ such that $\alpha \circ j_I = \mathrm{id}_{A_I}$ for all $I\in \mathcal{R}_2$. Write $j_{s,t} = j_{\{s,t\}}$ for all $s,t \in S$ with $m_{s,t} < \infty$ and $j = j_\emptyset : B \rightarrow A^{\max}$. \begin{proposition} For all $s\in S$ define $U(s) = j_s(L(s))$. For all $w\in W$ and reduced expression $w = s_1\ldots s_n$ define \[ U(w) = U(s_1)\ldots U(s_n). \] For all $w\in W$, $U(w)$ is a well-defined subspace of $A^{\max}$. Moreover, \[ A^{\max} = \overline{\sum_{w\in W}U(w)} \] is a Bruhat decomposition of $A^{\max}$ of type $W$ such that for all $I\in \mathcal{R}_2$ we have $A_I^{\max} \cong A_I$. \end{proposition}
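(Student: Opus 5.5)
We follow the proof of Proposition \ref{univgroup}, replacing subsets of a group by subspaces of a C$^*$-algebra and using Matsumoto's theorem to control products of the $U(s)$.

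\textbf{Well-definedness of $U(w)$.} If $s,t\in S$ with $m_{s,t}<\infty$, then inside $A_{\{s,t\}}$ the axiom (OWD3), applied iteratively, gives
\[
L(s)L(t)L(s)\cdots = L(sts\cdots) = L(tst\cdots) = L(t)L(s)L(t)\cdots
\]
with $m_{s,t}$ factors on each side. Both products are reduced expressions of the longest element of $W_{\{s,t\}}$. Applying $j_{s,t}$, and using $j_{s,t}|_{A_{\{s\}}} = j_s$ (compatibility of the colimit), we get the braid relation $U(s)U(t)\cdots = U(t)U(s)\cdots$ in $A^{\max}$. By Matsumoto's theorem any two reduced expressions of $w$ are related by braid moves, so $U(w)$ does not depend on the chosen reduced expression.

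\textbf{Axioms (OWD1)–(OWD4).} (OWD4) follows from $L(s)^*=L(s)$ and from the fact that the reversed word of a reduced expression of $w$ is a reduced expression of $w^{-1}$. For (OWD2), $U(e)=j(L(e))$ is a unital $*$-subalgebra, and $L(e)L(s)L(e)=L(s)$ gives $U(e)U(w)U(e)=U(w)$. For (OWD3), if $l(sw)>l(w)$ then $U(s)U(w)=U(sw)$ by definition. If $l(sw)<l(w)$, write $w=sv$ with $v$ reduced; then $U(s)U(w)=U(s)U(s)U(v)$. In $A_{\{s\}}$ we have $L(s)L(s)\subset L(e)+L(s)$, hence
\[
U(s)U(w)\subset U(e)U(v)+U(s)U(v)=U(w)+U(sw).
\]
For (OWD1), $A^{\max}$ is generated as a C$^*$-algebra by the images $j_I(A_I)$, and each $A_I$ is the closure of $\sum_{w\in W_I}L(w)$. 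So $A^{\max}$ is the closed span of products of elements of the $U(s)$ and of $U(e)$. Using the inclusion just proved together with $U(e)U(s)U(e)=U(s)$, an induction on the number of factors shows that every such product lies in $\sum_w U(w)$, and that $\sum_w U(w)$ is a $*$-algebra. Hence it is dense in $A^{\max}$.

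\textbf{The isomorphism $A_I^{\max}\cong A_I$.} Here $A^{\max}_I$ is the closure of $\sum_{w\in W_I}U(w)$, and for $w\in W_I$ (choosing a reduced expression in letters of $I$) we have $U(w)=j_I(L(w))$. Therefore $A^{\max}_I = j_I(A_I)$. Since $\alpha\circ j_I=\mathrm{id}_{A_I}$, the map $j_I$ is injective, and thus $j_I\colon A_I\to A^{\max}_I$ is a $*$-isomorphism.

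This isomorphism also shows that the $U(s)$ are nonzero, since the $L(s)$ are; nonzeroness is only needed for the decomposition to be nondegenerate.

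The main technical point is the density/closure step in (OWD1): one has to check carefully that all products of generators coming from different $j_I$ reduce into $\sum_w U(w)$. This reduction relies on the multiplicativity $L(s)L(s)\subset L(e)+L(s)$ and on $L(e)L(s)L(e)=L(s)$ being transported into $A^{\max}$ through the compatible maps $j_I$.
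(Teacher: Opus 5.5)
Your proposal is correct and follows essentially the same route as the paper. In both, well-definedness comes from the braid relations transported through $j_{s,t}$ together with Matsumoto's theorem, (OWD3) from $U(s)U(s)\subset U(s)+U(e)$, density from the fact that $A^{\max}$ is generated by the $j_I(A_I)$, and $A_I^{\max}\cong A_I$ from $\alpha\circ j_I=\mathrm{id}_{A_I}$. You are more explicit than the paper on (OWD1) and (OWD4), which the paper only sketches.
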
 \begin{proof} We first prove that the subspaces $U(w)$ are well-defined. By Matsumoto's theorem \cite[Theorem 3.3.1]{BB05}, it suffices to prove that \[ \underbrace{U(s)U(t)\ldots}_{m_{s,t} \text{ terms}} = \underbrace{U(t)U(s)\ldots}_{m_{s,t} \text{ terms}} \] for every $s,t\in S$ with $m_{s,t} < \infty$. Compute \[ \begin{aligned} U(s)U(t)\ldots &= j_{s,t}(L(s))j_{s,t}(L(t))\ldots \\ &= j_{s,t}(L(s)L(t)\ldots) \\ &= j_{s,t}(L(st\ldots)) \\ &= j_{s,t}(L(ts\ldots)) \\ &= j_{s,t}(L(t)L(s)\ldots) \\ &= U(t)U(s)\ldots . \end{aligned} \] The C$^*$-algebra $A^{\max}$ is the closure of the span of products of elements of the subalgebras $j_I(A_I)$ for $I\in \mathcal{R}_2$. This gives the decomposition $A^{\max} = \overline{\sum_{w\in W}U(w)}.$ Moreover, $\overline{U(e)} = \overline{j(L(e))} \cong B$ is a C$^*$-subalgebra of $A^{\max}$ with the same unit and, for all $w\in W$, \[ j(L(e))\cdot U(w)\cdot j(L(e)) = j_{s_1}(L(e)\cdot L(s_1))\ldots j_{s_n}(L(s_n)\cdot L(e)) = U(w). \] For all $s\in S$ we have \[ U(s)U(w) = U(s)U(s_1)\ldots U(s_n). \] If $l(sw) > l(w)$, then $U(s)U(w) = U(sw)$ by definition. Otherwise, we can assume that $s_1 = s$ and \[ U(s)U(w) = U(s)U(s)\ldots U(s_n). \] We have \[ U(s)U(s) = j_s(L(s))j_s(L(s)) = j_s(L(s)L(s)) \subset j_s(L(s)+L(e)) = U(s)+U(e).\] This proves (OWD3). For all $I\in \mathcal{R}_2$, $A_I$ is isomorphic to $j_I(A_I)$ as a C$^*$-algebra, with inverse given by the restriction of $\alpha$ to $j_I(A_I)$. Compute \[ j_I(A_I) = j_I\left(\overline{\sum_{w\in W_I}L(w)}\right) = \overline{\sum_{w\in W_I}U(w)} = A_I^{\max}. \] \end{proof}
As $L(w) = L(s_1)\ldots L(s_n)$ for every $w\neq e$ in $W$ and every reduced expression $w = s_1\ldots s_n$, we also know that $\alpha : A^{\max} \rightarrow A$ is surjective. For all $n\geq 0,$ define  \[\mathcal{R}_n = \left\{ I\subseteq S \;\middle|\; |I|\leq n \text{ and } W_I \text{ is finite} \right\}.\] Recall that a subset $I \subset S$ such that $W_I$ is finite is called spherical. Our goal is now to prove the following theorem.
\begin{theorem}\label{VV}
Assume that there are conditional expectations $E_{I} : A_{I} \rightarrow B$ for all $I \in \mathcal{R}_3$, satisfying
\begin{itemize}
    \item For all $I,J \in \mathcal{R}_3$, $E_{I}$ and $E_{J}$ coincide on $A_I\cap A_J$.
    \item For all $I\in \mathcal{R}_3$, $w\in W_{I}$, if $w\neq e$ then $E_{I}(L(w)) = 0$.
\end{itemize}
then there exists a (necessarily unique) conditional expectation $E : A^{\max}\rightarrow B$ satisfying $E(U(w)) = 0$ for all $w \in W\setminus \{e\}.$ Moreover for all $I\in \mathcal{R}_2, a\in A_I^{\max},$ we have $E(a) = E_I(\alpha(a))$.
\end{theorem}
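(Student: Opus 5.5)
Uniqueness is immediate: $A^{\max}=\overline{\sum_w U(w)}$, so a conditional expectation killing every $U(w)$ with $w\neq e$ is determined by its values on $U(e)$, where it is the identity. For existence I would build $E$ from a Fock-type representation, following Voiculescu's construction for amalgamated free products and the $B$-valued GNS construction behind Proposition~\ref{fock}.

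\textbf{Step 1: the Fock module.} Let $X_s=L^2(A_{\{s\}},E_{\{s\}})$ and let $X_s^\circ=\overline{L(s)\eta}$, the orthogonal complement of $B\eta$. For each $w$ with reduced expression $w=s_1\ldots s_n$, put $\mathcal{F}_w=X^\circ_{s_1}\otimes_B\cdots\otimes_B X^\circ_{s_n}$. To make this independent of the reduced expression, I would use the spherical expectations. For $m_{s,t}<\infty$, the GNS module $L^2(A_{\{s,t\}},E_{\{s,t\}})$ splits along $W_{\{s,t\}}$, by the argument of Proposition~\ref{calcul} applied inside the Bruhat decomposition of $A_{\{s,t\}}$. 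Proposition~\ref{fock} then gives canonical unitaries identifying $X^\circ_s\otimes X^\circ_t\otimes\cdots$ with $X^\circ_t\otimes X^\circ_s\otimes\cdots$ ($m_{s,t}$ factors each), both equal to the $w_0(s,t)$-component. By Matsumoto's theorem, any two reduced expressions are connected by braid moves. To get a well-defined $\mathcal{F}_w$, the resulting identification must not depend on the chosen sequence of braid moves. By Tits' solution of the word problem, the relations among braid-move sequences are generated by rank-$3$ spherical configurations. These are controlled by $E_I$ for $|I|=3$, which yields a $W_I$-graded GNS module in which all these identifications are realized simultaneously, so they agree. Set $\mathcal{F}=\bigoplus_{w\in W}\mathcal{F}_w$ with vacuum $\eta\in\mathcal{F}_e=B$.

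\textbf{Step 2: the representation.} For $I\in\mathcal{R}_2$, define $\pi_I:A_I\to\mathcal{L}_B(\mathcal{F})$ as follows. Decompose each $w\in W$ uniquely as $w=uv$ with $u\in W_I$ and $v$ minimal in $W_Iv$, so that $l(w)=l(u)+l(v)$. Then $\mathcal{F}\cong\bigoplus_{v}L^2(A_I,E_I)\otimes_B\mathcal{F}_v$, where $v$ runs over minimal coset representatives, using Step 1 to identify $\mathcal{F}_{uv}$ with $\mathcal{F}_u\otimes\mathcal{F}_v$. Let $A_I$ act by left multiplication on the first factor. Two compatibilities must be checked. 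First, $\pi_J|_{A_I}=\pi_I$ for $I\subset J$; this reduces to the compatibility of the identifications with the coset decompositions, again supplied by Step 1. Second, each $\pi_I$ is a $*$-homomorphism, which is clear from the GNS construction. The universal property of $A^{\max}$ then gives $\pi:A^{\max}\to\mathcal{L}_B(\mathcal{F})$.

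\textbf{Step 3: defining $E$.} Set $E(a)=\langle\eta,\pi(a)\eta\rangle$. This map is completely positive and $B$-bimodular, since $\pi(j(b))\eta=\eta b$. Take $x=x_1\cdots x_n\in U(w)$ with $x_k\in U(s_k)$ and $w=s_1\ldots s_n$ reduced. Then $\pi(x)\eta$ is the image of $x_1\eta\otimes\cdots\otimes x_n\eta$, which lies in $\mathcal{F}_w$ and is orthogonal to $\eta$. Hence $E(U(w))=0$ for $w\neq e$. For $a\in A^{\max}_I$ with $I\in\mathcal{R}_2$, $\pi(a)\eta$ is $\alpha(a)\eta\in L^2(A_I,E_I)$, so $E(a)=E_I(\alpha(a))$.

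\textbf{Main obstacle.} The hardest part is the coherence in Step 1: showing that the braid-move identifications form a consistent system, which is exactly where the rank-$3$ expectations enter. I would handle it via the presentation of the Coxeter monoid, or Deligne's theorem on coherence of actions of the positive braid monoid. That reduces coherence to checking the Zamolodchikov-type rank-$3$ relations, which hold inside $L^2(A_I,E_I)$ for $|I|=3$ by Proposition~\ref{fock} applied to $A_I$.
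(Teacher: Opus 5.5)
Your proposal is correct and follows essentially the same route as the paper. The common steps are: the Fock module built from tensor products of $X(s)=\overline{L(s)\eta}$ along reduced words; coherence of the braid-move unitaries via Tits' theorem (Ronan's Theorem~2.17), which reduces essential self-homotopies to residues of type $I\in\mathcal{R}_3$, where realizing everything inside $L^2(A_I,E_I)$ forces the unitary to be the identity; the representations $\pi_I$ obtained from the decomposition $w=w_I\,{}^Iw$ and glued into $\pi$ on $A^{\max}$; and $E$ defined as the vacuum expectation.

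The differences are minor. You verify $E=E_I\circ\alpha$ on $A_I^{\max}$ by computing $\pi(a)\eta$ directly, where the paper appeals to uniqueness. You should also say explicitly that the first hypothesis, $E_{\{s\}}=E_{\{s,t\}}$ on $A_{\{s\}}$, is what identifies $\overline{L(s)\eta}\subset L^2(A_{\{s\}},E_{\{s\}})$ with the $s$-component of $L^2(A_{\{s,t\}},E_{\{s,t\}})$.
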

The strategy consists in building a Fock module $X$ on which $A^{\max}$ will act on the left. Let $I\in \mathcal{R}_3,$ we define $X_I = L^2(A_I,E_I)$ to be the right Hilbert $B$-module obtained by the GNS construction. We write $\eta_I \in X_I$ for the associated cyclic vector. By assumption we know that $A_I = \overline{\sum_{w\in W_I}L(w)}$ is an expected Bruhat decomposition. Hence by Proposition \ref{fock} we have
\[X_I = \bigoplus_{w\in W_I}X_I(w),\]
where for all $w\in W_I$, $X_I(w) = \overline{L(w)\eta_I}$.
\begin{lemma}
    Let $I,J \in \mathcal{R}_3$. Let $w\in W_{I\cap J} = W_I\cap W_J$. There is a right $B$-linear unitary isomorphism $X_I(w)\xrightarrow{\cong} X_J(w)$ sending $a\eta_I\in X_I(w)$ to $a\eta_J\in X_J(w)$ for all $a\in L(w)$.
\end{lemma}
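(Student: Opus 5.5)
The plan is to define the map on the algebraic level, show that it preserves the $B$-valued inner products, and then extend by continuity. For finite sums $\xi=\sum_k a_k\eta_I b_k$ with $a_k\in L(w)$ and $b_k\in B$, set
\[
V\Big(\sum_k a_k\eta_I b_k\Big)=\sum_k a_k\eta_J b_k .
\]
Since $\eta_I b=b\eta_I$ for $b\in B$ and $L(w)\cdot L(e)=L(w)$ by (OWD2), the vectors $a\eta_I$ with $a\in L(w)$ already span a dense subspace of $X_I(w)=\overline{L(w)\eta_I}$. So it suffices to handle $\xi=\sum_k a_k\eta_I$ and define $V(\xi)=\sum_k a_k\eta_J$.

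The key step is the inner product identity. For $a,a'\in L(w)$ I need
\[
\langle a\eta_I,a'\eta_I\rangle=E_I(a^*a')=E_J(a^*a')=\langle a\eta_J,a'\eta_J\rangle .
\]
Because $w\in W_{I\cap J}$, we have $L(w)\subset A_{I\cap J}\subset A_I\cap A_J$, so $a^*a'\in A_I\cap A_J$. The first hypothesis of Theorem \ref{VV} says that $E_I$ and $E_J$ coincide on $A_I\cap A_J$, and this gives the middle equality. It is also worth noting that $I\cap J\in\mathcal{R}_3$, so $E_{I\cap J}$ exists and agrees with both $E_I$ and $E_J$ there. Alternatively, Proposition \ref{calcul} applied inside the expected decompositions of $A_I$ and of $A_J$ computes $E_I(a^*a')$ and $E_J(a^*a')$ by the same iterated formula, which gives a second route to the same equality.

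By sesquilinearity the identity extends to $\langle V\xi,V\xi'\rangle=\langle \xi,\xi'\rangle$ for all finite sums $\xi,\xi'$. In particular, if $\xi=0$ then $\|V\xi\|^2=\|\langle\xi,\xi\rangle\|=0$, so $V$ is well defined. It is right $B$-linear and isometric, hence extends to an isometry $X_I(w)\to X_J(w)$. Its image contains the dense subspace $L(w)\eta_J$, and an isometry has closed range, so $V$ is surjective. A surjective inner-product-preserving $B$-linear map between Hilbert modules is a unitary, with adjoint equal to the inverse map built symmetrically.

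I expect the main (mild) obstacle to be justifying that $L(w)\subset A_I\cap A_J$ with both expectations defined there, which is exactly what the inclusion $W_{I\cap J}\subset W_I\cap W_J$ and the coincidence hypothesis provide. The rest is routine.
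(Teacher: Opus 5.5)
Your proof is correct and is essentially the paper's argument: the paper likewise checks that $\langle a\eta_I,a'\eta_I\rangle=E_I(a^*a')$ equals $E_J(a^*a')$ for $a,a'\in L(w)$, and then extends by continuity. The only difference is how this identity is justified. The paper uses the iterated formula of Proposition~\ref{calcul} together with the agreement of $E_I$ and $E_J$ on each $L(v)$, $v\in W_{I\cap J}$; that agreement is in fact automatic, since both fix $B$ and kill $L(v)$ for $v\neq e$. You instead get it in one step from $a^*a'\in A_{I\cap J}\subset A_I\cap A_J$ and the coincidence hypothesis, which is equally valid.
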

\begin{proof}
    Use Proposition \ref{calcul} and the fact that $E_I$ and $E_J$ coincide on each $L(w)$ for $w\in W_{I\cap J}$.
\end{proof}
For all $s\in S$ we let $X(s) = X_{\{s\}}(s)$ so that $X_{\{s\}} = L^2(A_{\{s\}},E_{\{s\}}) = B\eta_{\{s\}}\oplus X(s)$. Let $\operatorname{Red}(W) \subset \coprod_{n\geq 0} S^n$ be the set of reduced words of $W$. Let \[
\operatorname{ev}\colon \operatorname{Red}(W)\longrightarrow W,
\qquad
(s_1,\ldots,s_n)\longmapsto s_1\cdots s_n.
\] be the evaluation map. We take the convention that the empty word $\emptyset$ is reduced and satisfies $\operatorname{ev}(\emptyset) = e$.
For all $f = (s_1,\ldots s_n) \in \operatorname{Red}(W)$ we define \[X(f) = X(s_1)\otimes_B\ldots \otimes_B X(s_n)\] as a $B$-correspondence. We let $X(\emptyset) = B.$ For all $w\in W,$ we choose an arbitrary reduced expression $f_w$ such that $\operatorname{ev}(f_w) = w$ and define the following right Hilbert $B$-module \[X = \bigoplus_{w\in W}X(f_w).\]
In particular, $X(f_e) = B$. The main difficulty is to prove that there is a natural $*$-homomorphism $A^{\max}\rightarrow \mathcal{L}_B(X).$ We first recall several results describing more precisely the combinatorics of reduced words, following primarily \cite[Chapter~2]{Ron09}. For any two reduced words $f_1,f_2,$ we denote by $f_1f_2 \in \operatorname{Red}(W)$ their concatenation whenever it is reduced. For all $f\in \operatorname{Red}(W),$ we let $f^{-1}$ denote the word with same letters as $f$ but in reverse order.

\begin{definition}
    For all $s,t\in S$ such that $m_{s,t}$ is finite we define $p(s,t) = (s,t,\ldots) \in \operatorname{Red}(W)$ to be the longest reduced word in $s$ and $t$ starting by $s$. An elementary homotopy is a transformation from a reduced word of the form $f_1p(s,t)f_2$ to the reduced word $f_1p(t,s)f_2$. A homotopy from a reduced word $f_1$ to a reduced word $f_2$ is a sequence of elementary homotopies from $f_1$ to $f_2$. For all $f_1,f_2\in \operatorname{Red}(W),$ we let $H(f_1,f_2)$ be the set of homotopies from $f_1$ to $f_2$ and we say that $f_1$ and $f_2$ are homotopic if there exists a homotopy between them. We write $\gamma : f_1\rightarrow f_2$ to signify $\gamma\in H(f_1,f_2)$. A homotopy from $f$ to $f$ is called a self-homotopy, we write $H(f) = H(f,f).$ The composition of two homotopies with corresponding source and target reduced words is just the concatenation of their underlying sequences of elementary homotopies.
\end{definition}
Matsumoto's theorem \cite[Theorem 3.3.1]{BB05} says that for all $f_1,f_2\in \operatorname{Red}(W),$ we have $\operatorname{ev}(f_1) = \operatorname{ev}(f_2)$ if and only if $f_1$ and $f_2$ are homotopic. For any two composable homotopies $\gamma_1,\gamma_2,$ we denote by $\gamma_1\gamma_2$ their concatenation. For any homotopy $\gamma\in H(f_1,f_2),$ we let $\gamma^{-1}\in H(f_2,f_1)$ denote the homotopy obtained by reversing the order of the elementary homotopies of $\gamma$. For all $s,t \in S$ such that $m_{s,t} < \infty$, write $s\vee t = \operatorname{ev}(p(s,t)) = \operatorname{ev}(p(t,s)).$

\begin{lemma}\label{unitaryri}
    Let $f_1,f_2 \in \operatorname{Red}(W)$. Every $\gamma\in H(f_1,f_2)$ induces a unitary right $B$-linear isomorphism $u_\gamma : X(f_1) \xrightarrow{\cong}X(f_2)$ such that $u_\gamma^* = u_{\gamma^{-1}}$. Moreover for all $f_1,f_2,f_3 \in \operatorname{Red}(W),$ $\gamma_1 \in H(f_2,f_3)$ and $\gamma_2 \in H(f_1,f_2)$ we have $u_{\gamma_1\gamma_2} = u_{\gamma_1}u_{\gamma_2}$.
\end{lemma}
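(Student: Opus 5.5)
The plan is to define $u_\gamma$ first for elementary homotopies and then extend it to arbitrary homotopies by composition. Functoriality and the adjoint formula then follow formally.

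Let $\gamma$ be the elementary homotopy from $f_1 = g\,p(s,t)\,h$ to $f_2 = g\,p(t,s)\,h$, and set $I=\{s,t\}\in\mathcal{R}_2\subset\mathcal{R}_3$. The key local isomorphism is
\[
X(p(s,t)) \cong X_I(s\vee t)\cong X(p(t,s)).
\]
Write $p(s,t)=(r_1,\ldots,r_m)$. Applying Proposition \ref{fock} to the expected Bruhat decomposition $A_I=\overline{\sum_{w\in W_I}L(w)}$ with $E_I$ (singleton index set), we get a unitary
\[
X_I(r_1)\otimes_B\cdots\otimes_B X_I(r_m)\to X_I(s\vee t), \qquad a_1\eta_I\otimes\cdots\otimes a_m\eta_I\mapsto a_1\cdots a_m\eta_I .
\]
The preceding lemma, applied to $\{r_k\}\subset I$ and $w=r_k\in W_{\{r_k\}}$, identifies each $X(r_k)=X_{\{r_k\}}(r_k)$ with $X_I(r_k)$ via $a\eta_{\{r_k\}}\mapsto a\eta_I$. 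Composing these gives a unitary $\phi_{s,t}\colon X(p(s,t))\to X_I(s\vee t)$. In the same way we get $\phi_{t,s}\colon X(p(t,s))\to X_I(s\vee t)$.

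We then put $v_{s,t}=\phi_{t,s}^{-1}\phi_{s,t}$. This is a right $B$-linear unitary, and it is also left $B$-linear, because all the maps above are $B$-bimodular: $L(e)L(r)\subset L(r)$ acts by left multiplication. By construction $v_{t,s}=v_{s,t}^{-1}=v_{s,t}^*$. Since $v_{s,t}$ is a unitary of $B$-correspondences, it tensors with identities, and we define
\[
u_\gamma=\mathrm{id}_{X(g)}\otimes v_{s,t}\otimes \mathrm{id}_{X(h)}\colon X(f_1)\to X(f_2).
\]
This is well defined and unitary, because the interior tensor product of adjointable maps is adjointable, with $(x\otimes y)^*=x^*\otimes y^*$, and left $B$-linearity of $v_{s,t}$ makes $\mathrm{id}\otimes v_{s,t}$ make sense. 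The reverse elementary homotopy is the one from $p(t,s)$ to $p(s,t)$ in the same position, so $u_{\gamma^{-1}}=\mathrm{id}\otimes v_{t,s}\otimes\mathrm{id}=u_\gamma^*$.

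For a general homotopy $\gamma=\epsilon_k\cdots\epsilon_1$, written as a sequence of elementary homotopies, we set $u_\gamma=u_{\epsilon_k}\cdots u_{\epsilon_1}$, and $u_\gamma=\mathrm{id}$ for the empty homotopy. Multiplicativity $u_{\gamma_1\gamma_2}=u_{\gamma_1}u_{\gamma_2}$ is then immediate from concatenation, since the definition depends on the sequence and not only on its endpoints. Also $\gamma^{-1}$ reverses the sequence and replaces each step by its reverse, so $u_{\gamma^{-1}}=u_{\epsilon_1}^*\cdots u_{\epsilon_k}^*=u_\gamma^*$.

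The main point requiring care is the well-definedness of $\phi_{s,t}$ as a map on the tensor product. This needs the multiplication map of Proposition \ref{fock} to be isometric, which in turn uses the inner-product formula of Proposition \ref{calcul} for $E_I$ together with the compatibility hypothesis: $E_{\{r\}}$ and $E_I$ agree on $A_{\{r\}}$. That compatibility is what makes the identification $X(r)\cong X_I(r)$ isometric. Beyond that, the only thing to verify is left $B$-linearity, which is needed to tensor with identities, and it is clear from the explicit formulas.
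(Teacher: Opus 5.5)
Your proposal is correct and follows the paper's proof: build the local unitary $X(p(s,t))\cong X_{\{s,t\}}(s\vee t)\cong X(p(t,s))$ from Proposition \ref{fock}, tensor it with identities for an elementary homotopy, and compose along the sequence of elementary steps. You also spell out details the paper leaves implicit, namely the identification $X(r)\cong X_{\{s,t\}}(r)$ from the preceding lemma and the left $B$-linearity needed to form $\mathrm{id}\otimes v_{s,t}\otimes\mathrm{id}$.
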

\begin{proof}
    By Proposition \ref{fock}, we have a unitary $u_{s,t} : X(p(s,t)) \xrightarrow{\cong}X(p(t,s))$ for every $s,t\in S$ such that $m_{s,t} < \infty$. Indeed, we let $u_{s,t}$ be the composition of the unitaries \[X(p(s,t)) \xrightarrow{\cong} X_{\{s,t\}}(s\vee t) \xrightarrow{\cong} X(p(t,s)).\] Let $\gamma$ be an elementary homotopy from the reduced word $f_1 =g_1p(s,t)g_2$ to the reduced word $f_2 =g_1p(t,s)g_2$. We know that $X(f_1) = X(g_1)\otimes_B X(p(s,t))\otimes_BX(g_2)$ and $X(f_2) = X(g_1)\otimes_B X(p(t,s))\otimes_BX(g_2).$  Define $u_\gamma = \mathrm{id}_{X(g_1)}\otimes u_{s,t} \otimes \mathrm{id}_{X(g_2)}$. For every homotopy $\gamma,$ define $u_\gamma$ to be the composition of such operators corresponding to the elementary homotopies of $\gamma$.
\end{proof}
Let $s,t \in S$ such that $m_{s,t} < \infty.$ Let $m = m_{s,t}$. For any $a_1\in L(s), a_2 \in L(t)\ldots$ so that $a_1\ldots a_m \in L(s\vee t)$, there exists $(b_{1,i})_{1\leq i \leq n} \in L(t), (b_{2,i})_{1\leq i \leq n} \in L(s)\ldots $ such that $a_1a_2\ldots = \sum_{1\leq i \leq n} b_{1,i}b_{2,i} \ldots$ in $L(s\vee t) \subset A_{s,t}$. The unitary operator $u_{s,t} : X(p(s,t)) \xrightarrow{\cong} X(p(t,s))$ of the last proposition is then characterized by the formula \[u_{s,t}(a_1\eta_s \otimes a_2 \eta_t \ldots) = \sum_{1\leq i \leq n} b_{1,i}\eta_t \otimes b_{2,i} \eta_s \ldots.\]
\begin{definition}
    Let $f\in \operatorname{Red}(W)$. We call a self-homotopy inessential if it is of the form
\[
f=f_0
\rightarrow f_1
\rightarrow \cdots
\rightarrow f_{r-1}
\rightarrow f_r
\rightarrow f_{r-1}
\rightarrow \cdots
\rightarrow f_1
\rightarrow f_0=f,
\]
i.e. of the form $g^{-1}g$ for some homotopy $g :f_0 \rightarrow f_r$,  or if it is of the form
\[
\begin{tikzcd}[column sep=large,row sep=large]
f = f_1p(s,t)h p(s',t')f_2
  \arrow[r,rightarrow]
  \arrow[d,leftarrow]
&
f_1p(t,s)h p(s',t')f_2
  \arrow[d,rightarrow]
\\
f_1p(s,t)h p(t',s')f_2
  \arrow[r,leftarrow]
&
f_1p(t,s)h p(t',s')f_2 .
\end{tikzcd}
\]
\end{definition}
 \begin{definition}
    Let $I\subset S$. We say that a homotopy $\gamma$ lies in a residue of type $I$ if there exist $g_1,g_2 \in \operatorname{Red}(W)$ such that $\gamma$ is the composition of elementary homotopies of the form $g_1h_kg_2 \rightarrow g_1h_{k+1}g_2,$ involving only braid relations between elements of $I$, with $h_0, \ldots h_n \in \operatorname{Red}(W_I)$ representing the same element of $W_I$, and such that for all $k,$ $g_1h_kg_2$ is reduced.
\end{definition}
The following theorem is the reason why we ask for conditional expectations to be defined on the rank three spherical parabolic subalgebras. We say that a homotopy $\gamma$ decomposes into two homotopies $\gamma_1,\gamma_2$ if there exists a homotopy $\gamma_3$ such that $\gamma = \gamma_1\gamma_3^{-1}\gamma_3\gamma_2$ whenever this composition is well-defined.
\begin{theorem}\cite[Chapter~2, \S5, Theorem~2.17]{Ron09}\label{coherence}
Every self-homotopy decomposes into a sequence of self-homotopies each of which is inessential or lies inside some residue of type $I \in \mathcal{R}_3.$
\end{theorem}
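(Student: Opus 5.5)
Since this statement is quoted from \cite[Chapter~2, \S5, Theorem~2.17]{Ron09}, I would reprove it by induction on the length $n = l(w)$ of the element $w = \operatorname{ev}(f)$ underlying the self-homotopy. Fix a self-homotopy $\gamma : f = f_0 \rightarrow f_1 \rightarrow \cdots \rightarrow f_r = f$. I would use two standard facts from \cite{BB05}, which I assume rather than prove here. First, the left descent set $D(w) = \{ s \in S \mid l(sw) < l(w) \}$ is spherical. Second, for every $J \subset D(w)$ one has $w = w_J w'$ with $l(w) = l(w_J) + l(w')$, where $w_J$ is the longest element of $W_J$. The first letter of each $f_k$ lies in $D(w)$.

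\emph{Case 1: the first letter never changes.} If every elementary homotopy of $\gamma$ leaves the first letter untouched, write $f_k = s g_k$. Then $\gamma$ is obtained by prefixing $s$ to a self-homotopy of $g_0$, whose underlying element $sw$ is shorter than $w$. By the induction hypothesis that self-homotopy decomposes into inessential ones and ones lying in residues of type $I \in \mathcal{R}_3$. Prefixing $s$ preserves both properties, because the definitions of "inessential" and "lies in a residue of type $I$" allow arbitrary fixed prefixes $g_1$ and $f_1$. This settles Case 1.

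\emph{Case 2: the first letter changes.} I would introduce reference homotopies. For each $s \in D(w)$, fix a reduced word $c_s = s g_s$ for $w$ beginning with $s$. Fix also a base letter $s_0 \in D(w)$ and, for each $s$, a homotopy $\beta_s : c_s \rightarrow c_{s_0}$ passing through a word beginning with $p(s,s_0)$; such a word exists because $w = w_{\{s,s_0\}} w'$ reducedly. Then for every $k$ with first letter $s$, I would choose a homotopy $\sigma_k : f_k \rightarrow c_s$ that never changes the first letter; this is possible by Matsumoto's theorem applied to $sw$. Inserting $\sigma_k^{-1}\sigma_k$ and $\beta^{-1}\beta$ between consecutive steps decomposes $\gamma$ into small loops $\lambda_k$, one for each elementary step $f_k \rightarrow f_{k+1}$, each based at $c_{s_0}$.
\begin{itemize}
\item If the step does not change the first letter, the loop $\lambda_k$ conjugates a self-homotopy of words starting with $s$. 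That self-homotopy falls under Case 1 and is handled by the induction hypothesis.
\item If the step is a braid move $p(s,t)g \rightarrow p(t,s)g$ at the front, the loop can be rerouted, up to first-letter-preserving pieces (Case 1 again), into two kinds of loop. The first kind lies in the residue of type $\{s,t\}$, with fixed suffix $g$. The second kind is a triangle comparing the routes $c_s \rightarrow c_t$ and $c_s \rightarrow c_{s_0} \rightarrow c_t$.
\end{itemize}

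The main obstacle is these triangles. For each one I would pass to $J = \{s,t,s_0\} \subset D(w)$, which is spherical of rank at most $3$, so $J \in \mathcal{R}_3$. Using $w = w_J w'$ with lengths adding, I would homotope every word in the triangle to the form $h\, g'$, where $h$ is a reduced word for $w_J$ and $g'$ is a fixed reduced word for $w'$, so that only the prefix $h$ varies. The connecting pieces either keep the first letter fixed, and so fall under Case 1, or are of the commuting-square type, and so are inessential. The remaining loop then lies in the residue of type $J$. Checking that this normalization can be carried out uniformly, so that all the leftover pieces are genuinely of the allowed forms, is where I expect the real work. This is also exactly where rank $3$, and not rank $2$, is needed, which explains why Theorem \ref{VV} asks for conditional expectations on $A_I$ for $I \in \mathcal{R}_3$.
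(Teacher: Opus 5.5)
The paper gives no argument of its own here. It imports the statement from Ronan and uses it only as a black box in the lemma asserting $u_{\gamma_1}=u_{\gamma_2}$. Your outline reconstructs the standard proof of this coherence result: induction on $l(w)$, isolating the elementary steps that change the first letter, and absorbing them into the spherical residue spanned by at most three left descents of $w$. The outline is correct. Compared with the citation, it gives a self-contained explanation of why $\mathcal{R}_3$ rather than $\mathcal{R}_2$ is needed in Theorem~\ref{VV}. Your decomposition has the form of a product of conjugates of allowed loops, modulo inserted backtracks $\beta^{-1}\beta$. This is exactly what the paper consumes, since $\gamma\mapsto u_\gamma$ is multiplicative with $u_{\gamma^{-1}}=u_\gamma^{*}=u_\gamma^{-1}$.

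The step you flag as ``the real work'' does go through with the tools you already named. You need one precaution: choose every reference route ($\beta_s$, and the route $c_s\to c_t$) to consist of exactly one front braid move flanked by first-letter-preserving homotopies. Put $J=\{s,t,s_0\}$, write $w=w_Jw'$, and fix a reduced word $g'$ for $w'$.

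\textbf{Normalising the front moves.} A front move $p(x,y)g\to p(y,x)g$ with $x,y\in J$ has $\operatorname{ev}(g)=w_{\{x,y\}}w=vw'$, where $w_J=w_{\{x,y\}}v$ with $l(w_J)=l(w_{\{x,y\}})+l(v)$. Choose a Matsumoto homotopy $\nu\colon g\to h_vg'$, with $h_v$ a reduced word for $v$. You may then replace the move by $p(x,y)h_vg'\to p(y,x)h_vg'$. The discrepancy is a ladder of commuting squares, because each step of $\nu$ is disjoint from the front block; these squares are inessential.

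\textbf{Normalising the segments between front moves.} Between consecutive front moves the triangle keeps a fixed first letter $x$. It now runs from $xh_1g'$ to $xh_2g'$, where $h_1,h_2$ are reduced words of $xw_J\in W_J$. Replace this segment by $x\,\mu\,g'$, with $\mu\colon h_1\to h_2$ a Matsumoto homotopy; it uses only braid relations of $J$. The cost is one first-letter-preserving loop, which your Case~1 handles.

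\textbf{What remains.} You are left with a loop through words $hg'$ with $\operatorname{ev}(h)=w_J$. All these words are reduced because $l(w_Jw')=l(w_J)+l(w')$, and they are moved only by $J$-braid relations. This is a self-homotopy lying in a residue of type $J\in\mathcal{R}_3$.

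\textbf{A small correction to your second bullet.} After comparing with the reference route $c_s\to c_t$, the same ladder argument shows that the loop arising at a front move consists only of commuting squares and Case~1 loops. So no rank-$2$ residue loop is actually produced there, though allowing one would be harmless.
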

This allows us to prove the following key lemma.
\begin{lemma}
    Let $f_1,f_2 \in \operatorname{Red}(W).$ Let $\gamma_1,\gamma_2 \in H(f_1,f_2).$ We have $u_{\gamma_1} = u_{\gamma_2}$.
\end{lemma}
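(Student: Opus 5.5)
Since $u_{\gamma_2^{-1}} = u_{\gamma_2}^*$ and $u$ is multiplicative with respect to concatenation (Lemma \ref{unitaryri}), it suffices to show that $u_\gamma = \mathrm{id}_{X(f)}$ for every self-homotopy $\gamma \in H(f)$. Indeed, $\gamma_2^{-1}\gamma_1 \in H(f_1)$, so $u_{\gamma_2}^*u_{\gamma_1} = u_{\gamma_2^{-1}\gamma_1} = \mathrm{id}$, and unitarity of $u_{\gamma_2}$ then gives $u_{\gamma_1} = u_{\gamma_2}$.

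By Theorem \ref{coherence}, $\gamma$ decomposes into self-homotopies that are either inessential or lie in a residue of type $I\in\mathcal{R}_3$. A decomposition has the form $\gamma = \gamma_1\gamma_3^{-1}\gamma_3\gamma_2$, and $u_{\gamma_3^{-1}\gamma_3} = u_{\gamma_3}^*u_{\gamma_3} = \mathrm{id}$. Hence $u_\gamma$ is a product of conjugates of the $u$'s of the pieces, say $u_{\gamma_3}^{-1}u_{\delta}u_{\gamma_3}$, and it is enough to show that $u_\delta=\mathrm{id}$ for each elementary piece $\delta$.

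\textbf{Inessential self-homotopies.} For $g^{-1}g$ we have $u_{g^{-1}g}=u_g^*u_g=\mathrm{id}$. For the square, the two elementary moves act on disjoint tensor factors of $X(f_1)\otimes_B X(p(s,t))\otimes_B X(h)\otimes_B X(p(s',t'))\otimes_B X(f_2)$. The operators $\mathrm{id}\otimes u_{s,t}\otimes\mathrm{id}\otimes\mathrm{id}\otimes\mathrm{id}$ and $\mathrm{id}\otimes\mathrm{id}\otimes\mathrm{id}\otimes u_{s',t'}\otimes\mathrm{id}$ therefore commute, and going around the square yields the identity. Each $u_{s,t}$ is unitary with $u_{t,s}=u_{s,t}^*$, since both are defined through $X_{\{s,t\}}(s\vee t)$.

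\textbf{Self-homotopies in a residue of type $I\in\mathcal{R}_3$.} Such a homotopy has the form $\mathrm{id}_{X(g_1)}\otimes u_{\delta'}\otimes \mathrm{id}_{X(g_2)}$, where $\delta'$ is a self-homotopy of a reduced word $h_0$ of $W_I$ using only braid moves in $I$. So we may assume $f=h_0\in\operatorname{Red}(W_I)$ with $w=\operatorname{ev}(h_0)$. We use the expected Bruhat decomposition of $A_I$ with $E_I$ and Proposition \ref{fock}. For every reduced word $h$ of $w$ in $W_I$, define the unitary $v_h : X(h)\to X_I(w)$ by $a_1\eta\otimes\cdots\otimes a_n\eta\mapsto a_1\cdots a_n\eta_I$, identifying each $X(s)$ with $X_I(s)$ via the preceding lemma (this uses the compatibility of $E_{\{s\}}$ with $E_I$).

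The key claim is that for an elementary move $h=k_1p(s,t)k_2\to h'=k_1p(t,s)k_2$ with $s,t\in I$, we have $v_{h'}\circ u = v_h$. To check this on elementary tensors, apply the characterizing formula for $u_{s,t}$: the relation $a_1a_2\cdots=\sum_i b_{1,i}b_{2,i}\cdots$ holds in $A$, hence in $A_I$, so both sides send the tensor to the same product times $\eta_I$. The identifications through $X_{\{s,t\}}$ and $X_I$ agree, again by coincidence of the conditional expectations on $A_{\{s,t\}}\cap A_I$. Composing along $\delta'$ then gives $v_{h_0}u_{\delta'}=v_{h_0}$, so $u_{\delta'}=\mathrm{id}$.

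\textbf{Main obstacle.} The main obstacle is this last compatibility check. It requires verifying carefully that all the Hilbert module identifications (through $X_{\{s\}}$, $X_{\{s,t\}}$ and $X_I$) are coherent, and that the tensor-product isomorphism of Proposition \ref{fock} really is multiplication. This is exactly where the rank-three hypothesis enters: we need a single GNS module $X_I$ in which all braid moves of the residue are simultaneously realized as the identity on products.
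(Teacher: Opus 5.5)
Your proposal is correct and follows the same route as the paper. You reduce to self-homotopies via Lemma~\ref{unitaryri}, invoke Ronan's decomposition (Theorem~\ref{coherence}), and dispose of the inessential pieces. For a piece lying in a residue of type $I\in\mathcal{R}_3$, you transport every $X(h_k)$ to the single module $X_I(w)$ by the multiplication unitaries, where each elementary move becomes the identity. Your write-up also makes explicit a few steps the paper calls obvious: the reduction from $\gamma_1,\gamma_2$ to $\gamma_2^{-1}\gamma_1$, the commuting-factor argument for the square, and the compatibility of the identifications through $X_{\{s\}}$, $X_{\{s,t\}}$ and $X_I$.
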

\begin{proof}
Thanks to Lemma \ref{unitaryri} and the last theorem, it suffices to prove that $u_\gamma = 1$ whenever $\gamma\in H(f)$ is a self-homotopy which is inessential or lies in a residue of type $I$ for some $I\in \mathcal{R}_3.$  This is obvious when $\gamma$ is inessential. Assume that there exist $g_1,g_2 \in \operatorname{Red}(W)$ such that $\gamma$ is the composition of elementary homotopies of the form $g_1h_kg_2 \rightarrow g_1h_{k+1}g_2,$ involving only braid relations between elements of $I$, with $h_0, \ldots h_n \in \operatorname{Red}(W_I)$ representing the same element $w$ of $W_I$, such that for all $k,$ $g_1h_kg_2$ is reduced. As $\gamma$ is a self-homotopy we have $h_0 = h_n=h$. We know that $X(f) = X(g_1)\otimes_BX(h)\otimes_BX(g_2)$ and that there exists a unitary operator $U \in \mathcal{L}_B(X(h))$ such that $u_\gamma = \mathrm{id}_{X(g_1)}\otimes U \otimes \mathrm{id}_{X(g_2)}$. Moreover $U$ is the composition of unitaries $u_{\gamma_k} : X(h_k) \xrightarrow{\cong} X(h_{k+1})$ coming from elementary homotopies $\gamma_k : h_k \rightarrow h_{k+1}$ happening inside $\operatorname{Red}(W_I)$. As $w\in W_I$ and $A_I = \overline{\sum_{w\in W_{I}}L(w)}$ is an expected Bruhat decomposition, there is a unitary $V(h') : X(h')\xrightarrow{\cong} X_I(w)$ for all $h'\in \operatorname{Red}(W_I)$ such that $\operatorname{ev}(h') = w.$ By definition of the unitaries $u_{s,t}$, we know that for all $1 \leq k \leq n,$ we have $V(h_{k+1})u_{\gamma_k}V(h_k)^{-1} = \mathrm{id}_{X_I(w)}.$ This gives $V(h)UV(h)^{-1} = \mathrm{id}_{X_I(w)}$, hence $U = \mathrm{id}_{X(h)}$.
\end{proof}
If $f_1$ and $f_2$ are two reduced expressions of the same element $w\in W$, we write \[U(f_1\rightarrow f_2) =u_\gamma :X(f_1) \xrightarrow{\cong} X(f_2)\] for any homotopy $\gamma : f_1 \rightarrow f_2.$ Recall that for each $I \subset S$, each coset of $W_I\backslash W$ admits a unique element of minimal length \cite{BB05}. Denote by $^IW$ the set of these minimal length representatives so that for each $u\in W_I$ and $v\in {}^IW$ we have $l(uv) = l(u) + l(v)$. For all $I\subset S$ and $w\in W$ we will write $w = w_I{}^Iw$ with $w_I \in W_I$ and ${}^Iw\in {}^IW$.
\begin{proof}[Proof of Theorem~\ref{VV}]
Let $I \in \mathcal{R}_2$. Define the unitary 
\[
U_{I,1}:=\bigoplus_{w\in W}U(f_w\rightarrow f_{w_I}f_{{}^Iw}):
X=\bigoplus_{w\in W}X(f_w)
\xrightarrow{\cong}
\bigoplus_{w\in W}X(f_{w_I})\otimes_B X(f_{{}^Iw}).
\]
Recall that $X_I = \bigoplus_{w\in W_I}X_I(w)$. Define the right Hilbert $B$-module \[{}^IX = \bigoplus_{w\in {}^IW}X(f_{w}).\] Define the unitary 
\[U_{I,2}: \bigoplus_{w\in W_I}X(f_{w_I}) \xrightarrow{\cong}
\bigoplus_{w\in W_I}X_I(w) = X_I 
\] as the one given by Proposition \ref{fock}. Write \[\bigoplus_{w\in W}X(f_{w_I})\otimes_B X(f_{{}^Iw}) = (\bigoplus_{w\in W_I}X(f_{w}))\otimes_B (\bigoplus_{w\in {}^IW}X(f_{w}))\] and use this decomposition to define the unitary \[U_I = (U_{I,2}\otimes \mathrm{id}_{{}^IX})U_{I,1} : X = \bigoplus_{w\in W} X(f_w) \xrightarrow{\cong} X_I\otimes_B{}^IX.\]
Let $\rho_I : A_I \rightarrow \mathcal{L}_B(X_I)$ be the left action coming from the GNS construction. Let $\pi_I : A_I\rightarrow \mathcal{L}_B(X)$ be the $*$-homomorphism defined by $\pi_I(a) = U_I^*(\rho_I(a)\otimes \mathrm{id}_{{}^IX})U_I$ for all $a\in A_I.$

Let us check that the family $(\pi_I:A_I \rightarrow\mathcal{L}_B(X))_{I\in \mathcal{R}_2}$ defines a $*$-homomorphism $\pi: A^{\max}\rightarrow \mathcal{L}_B(X)$. Let $I\subset J \in \mathcal{R}_2.$ Define the right Hilbert $B$-module \[{}^IX_J = \bigoplus_{w\in W_J\cap{}^IW}X_J(w).\] Notice that $X_J = X_I\otimes_B {}^IX_{J}$ and that for all $a\in A_I \subset A_J$ we have $\rho_J(a) = \rho_I(a)\otimes\mathrm{id}_{{}^IX_J}$. The restriction of the unitary $U_{J,1}$ is a unitary between the complemented Hilbert submodules \[{}^IV_{J,1}=U_{J,1}|_{{}^IX}:\bigoplus_{w\in {}^IW}U(f_w\rightarrow f_{w_I}f_{{}^Iw}):
{}^IX=\bigoplus_{w\in {}^IW}X(f_w)
\xrightarrow{\cong}
\bigoplus_{w\in {}^IW}X(f_{w_J})\otimes_B X(f_{{}^Jw}).\] Moreover, the restriction of the unitary $U_{J,2}$ induces a unitary \[{}^IV_{J,2}: \bigoplus_{w\in {}^IW_J}X(f_{w_J}) \xrightarrow{\cong}
\bigoplus_{w\in {}^IW_J}X_J(w) = {}^IX_J. 
\] Hence we have a unitary ${}^IV_J = ({}^IV_{J,2}\otimes \mathrm{id}_{{}^JX}){}^IV_{J,1}:{}^IX \xrightarrow{\cong} {}^IX_J\otimes_B {}^JX$. The composition \[X \xrightarrow{U_J} X_J\otimes_B {}^JX = X_I \otimes_B {}^IX_J\otimes_B{}^JX \xrightarrow{\mathrm{id}_{X_I}\otimes {}^IV_J^{-1}}X_I \otimes_B{}^IX \xrightarrow{U_I^{-1}}X\] stabilizes each submodule $X(f_w)$ and, when restricted to $X(f_w)$ is the composition of unitary operators given by Lemma \ref{unitaryri} associated to a self-homotopy of $w$. Hence by the last lemma this composition is the identity and we have $U_J = (\mathrm{id}_{X_I}\otimes {}^IV_J)U_I.$ Let $a\in A_I \subset A_J.$ Compute \begin{align*}
\pi_J(a)
&= U_I^*
   \bigl(\mathrm{id}_{X_I}\otimes {}^I V_J^*\bigr)
   \bigl(
      \rho_I(a)\otimes \mathrm{id}_{{}^I X_J}
      \otimes \mathrm{id}_{{}^J X}
   \bigr)
   \bigl(\mathrm{id}_{X_I}\otimes {}^I V_J\bigr)
   U_I \\
&= U_I^*
   \bigl(
      \rho_I(a)\otimes
      ({}^I V_J^*\,{}^I V_J)
   \bigr)
   U_I \\
&= U_I^*
   \bigl(
      \rho_I(a)\otimes \mathrm{id}_{{}^I X}
   \bigr)
   U_I \\
&= \pi_I(a).
\end{align*}
Hence we do have a unital $*$-homomorphism $\pi : A^{\max} \rightarrow \mathcal{L}_{B}(X)$ extending the family of $*$-homomorphisms $(\pi_I)_{I\in \mathcal{R}_2}$. We define the conditional expectation $E : A^{\max}\rightarrow B$ by $E(a) = \langle\eta,\pi(a)\cdot\eta\rangle_B$ where $\eta = 1_B \in B = X(f_e)$. We now show that $E(L(w)) = 0$ for all $w\in W\setminus\{e\}.$ We do so by proving that for all $w\in W,$ we have $\pi(U(w))\cdot \eta \subset X(f_w)$ by induction on $l(w) = n.$ The result is true for $w=e.$ Assume that the result holds for some $n\geq 0$. Take $w\in W$ and $s\in S$ with $l(w) = n$ and $l(sw) = n+1.$ We have \begin{align*}
\pi(U(sw))\cdot \eta
&= \pi(U(s))\cdot\bigl(\pi(U(w))\cdot \eta\bigr) \\
&= \pi(U(s))\cdot X(f_w) \\
&= \left(
    U_{\{s\}}^*
    \bigl(\rho_{\{s\}}(L(s))\otimes \mathrm{id}_{{}^{\{s\}}X}\bigr)
    U_{\{s\}}
   \right)\cdot X(f_w).
\end{align*}
As $l(sw)> l(w),$ we have $U_{\{s\}}\cdot X(f_w) \subset B\cdot\eta_s \otimes_B{}^{\{s\}}X$ through the identification $ X_{\{s\}}\otimes {}^{\{s\}}X = (B\cdot\eta_{\{s\}}\otimes_B{}^{\{s\}}X) \oplus (X(s)\otimes_B {}^{\{s\}}X).$ Moreover we have $\rho_{\{s\}}(L(s))\cdot \eta_{\{s\}} = X(s)$. This gives $\pi(U(sw))\cdot \eta = U_{\{s\}}^*(X(s)\otimes_B X(f_w)) = X(f_{sw}).$ Let $I \in \mathcal{R}_2$, the equality $E(a) = E_I(\alpha(a))$ for all $a\in A_I^{\max},$ follows as $E$ and $E_I \circ \alpha$ are two conditional expectations of the same Bruhat decomposition when restricted to $A_I^{\max}$.
\end{proof}
\begin{example}
    Let $A_1$ and $A_2$ be two unital C$^*$-algebras with a common C$^*$-subalgebra $B$ with the same unit and conditional expectations $E_k : A_k \rightarrow B$. Rewrite $A_1 = A_s$, $A_2 = A_t$ $E_1 = E_s$ and $E_2 = E_t$. Let $A$ be the maximal amalgamated free product $A_1 \star_B^m A_2$. Define $L(e) = B$, $L(s) = A_s^0 = \{a\in A_s| E_s(a) = 0\}$ and $L(t) = A_t^0$.  For all $w\in D_{\infty} = \langle s,t | s^2 = t^2 = e \rangle$ define $L(w)$ as the space of linear combinations of elements of the form $a_{s_1}\ldots a_{s_n}$ with $a_{s_{k}} \in A_{s_{k}}^0$ such that $w = s_1\ldots s_n$ is a reduced expression of $w$. Here we have $\mathcal{R}_3 = \{\emptyset, \{s\}, \{t\}\}$. All the conditions of Theorem \ref{VV} are satisfied, hence $A = \overline{\sum_{w\in W}L(w)}$ together with the conditional expectation $E : A \rightarrow B$ coming from gluing $E_1$ and $E_2$ using the last theorem is an expected Bruhat decomposition of $A$. Notice that $A^{\max} = A$ here. The Fock module $X$ coincides with the one of Voiculescu \cite{Voi85} and the reduced C$^*$-algebra is exactly the reduced amalgamated free product $A^r \cong (A_1,E_1)\star_B(A_2,E_2)$.
\end{example}
\begin{example}\label{graphproductexample}
    Let $\Gamma=(V\Gamma,E\Gamma)$ be a simplicial graph. Let 
\[
W_\Gamma
=
\left\langle
V\Gamma
\;\middle|\;
v^2=1 \text{ for all } v\in V\Gamma,\quad
uv=vu \text{ whenever } (u,v)\in E\Gamma
\right\rangle .
\]
be the associated right-angled Coxeter group. Let $(A_v)_{v\in V\Gamma}$ be a family of unital C$^*$-algebras and $\phi_v : A_v \rightarrow \mathbb{C}$ a family of GNS-faithful states. It is not hard to show that the maximal graph product C$^*$-algebra introduced in \cite{CF17} admits a natural Bruhat decomposition of type $W_\Gamma$ with a trivial Borel C$^*$-subalgebra $B = \mathbb{C}$. Gluing the states of the form $\bigotimes_{v\in I} \phi_v: \bigotimes_{v\in I} A_v \rightarrow \mathbb{C}$ for all clique $I \subset V\Gamma$ of cardinal at most $3$, the last theorem tells us that this decomposition is expected. It is not hard to show that the graph product Hilbert space of \cite{CF17} coincides with the Fock space $X$ built above and that their reduced C$^*$-algebra coincides with ours.
\end{example}

\section{A noncommutative compactification of buildings}\label{section3}

\subsection{Klisse's topological boundary}\label{31}
In this section we recall some notions and results of \cite{Kli23,Lec09,DJ99} that we will need in order to study the covariance C$^*$-algebras associated to an expected $W$-decomposition. We first recall the construction of the bordification of a rooted graph as in \cite[Section 2]{Kli23}. A graph is the data of a set $V$ of vertices and a subset $E \subset V\times V$ of edges. We assume here that $(x,x) \notin E$ for all $x\in V$ and $(y,x) \in E$ for all $(x,y) \in E$ (i.e. the graph is undirected and simplicial). We also assume the graph $K$ to be connected and the set $V$ of vertices to be countable. The set $V$ will be seen as a metric space with the graph distance (i.e. the shortest-path metric) induced by $E$. We write this distance $d_K$. A rooted graph is the data $(K,o)$ of a graph $K = (V,E)$ together with a distinguished vertex $o\in V$. For such a rooted graph, define the graph order on $V$ based at $o$ by $y \geq_o x$ if there exists a geodesic path of $K$ starting with $o$, passing by $x$ and ending at $y$. We will often write $\geq$ for $\geq_o$ in this section, but it is important to remember that this partial order depends on the choice of base vertex.
\begin{definition}\cite{Kli23}
    Let $K = (V,E)$ be a rooted graph with root $o\in V$. Let $\mathbf{x} = (x_n)\in V^\mathbb{N}$ be any sequence of vertices. We say that $\mathbf{x}$ $o$-converges if for all $y\in V$ we either have $x_n \geq y$ for $n$ large enough or $x_n \ngeq y$ for $n$ large enough. We say that $\mathbf{x}$ $o$-converges to infinity if $\mathbf{x}$ $o$-converges and $\sup_{y\leq \mathbf{x}}d_K(y,o) = \infty$. The bordification $\overline{(K,o)}$ of the rooted graph $(K,o)$ is the quotient of the set of all $o$-converging sequences of $V$ by the equivalence relation given by $\mathbf{x} \sim_o \mathbf{y}$ if \[\forall z \in V, z \leq \mathbf{x} \iff z \leq \mathbf{y}.\]
The boundary of $(K,o)$ is the subset $\partial(K,o)$ of $\overline{(K,o)}$ consisting of the equivalence classes of sequences $o$-converging to infinity.
\end{definition}
Constant sequences of $V$ clearly $o$-converge. This allows us to see vertices as elements of the bordification $\overline{(K,o)}$. One easily extends the graph order to $\overline{(K,o)}$ by stating that $\mathbf{x} \leq \mathbf{y}$ if for all $z \in V$, $z \leq \mathbf{x}$ implies $z \leq \mathbf{y}$. 

\begin{proposition}\cite{Kli23}
    Let $(K,o)$ be a connected rooted graph. Equip the bordification $\overline{(K,o)}$ with the topology generated by the subbasis of open subsets of the form \[\mathcal{U}_x = \{z\in \overline{(K,o)} | z \geq x\} \qquad \mathcal{U}_x^c = \{z\in \overline{(K,o)} | z \ngeq x\}\]
    Then $\overline{(K,o)}$ is a compact Hausdorff space containing $V$ as a dense subset. Moreover, when $K$ is locally finite, $V \subset \overline{(K,o)}$ inherits the discrete topology.
\end{proposition}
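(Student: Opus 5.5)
The plan is to realize $\overline{(K,o)}$ as a closed subspace of a product of two-point spaces. For each $x\in V$, define $\chi_x:\overline{(K,o)}\to\{0,1\}$ by $\chi_x(\mathbf{z})=1$ iff $x\le\mathbf{z}$. This is well defined on equivalence classes. Consider the map $\Phi:\overline{(K,o)}\to\{0,1\}^V$ given by $\Phi(\mathbf{z})=(\chi_x(\mathbf{z}))_{x\in V}$.

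By the very definition of $\sim_o$, $\Phi$ is injective. The subbasic sets $\mathcal{U}_x$ and $\mathcal{U}_x^c$ are exactly the preimages of the cylinder sets $\{\epsilon_x=1\}$ and $\{\epsilon_x=0\}$. So the given topology is precisely the initial topology induced by $\Phi$, and $\Phi$ is a homeomorphism onto its image. Hausdorffness follows at once, since $\{0,1\}^V$ is Hausdorff and $\Phi$ is injective.

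\textbf{Compactness.} It suffices to show that $\Phi(\overline{(K,o)})$ is closed in the compact space $\{0,1\}^V$. Since $V$ is countable, $\{0,1\}^V$ is metrizable, so sequential arguments are allowed. First note that for a vertex sequence $\mathbf{x}=(x_n)$, $o$-convergence means exactly that $\Phi(x_n)$ converges coordinatewise. The limit $\epsilon$ then satisfies $\epsilon_y=1\iff y\le\mathbf{x}$, that is, $\epsilon=\Phi([\mathbf{x}])$.

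Now take points $\mathbf{z}_k$ of the bordification with $\Phi(\mathbf{z}_k)\to\epsilon$. Enumerate $V=\{v_1,v_2,\dots\}$. Each $\mathbf{z}_k$ is represented by a vertex sequence, so for each $k$ I pick a vertex $y_k$ far enough along that sequence that $\chi_{v_i}(y_k)=\chi_{v_i}(\mathbf{z}_k)$ for all $i\le k$; this is possible because each coordinate eventually stabilizes. Then $\Phi(y_k)\to\epsilon$ coordinatewise. Hence $(y_k)$ $o$-converges and $\epsilon=\Phi([(y_k)])$, which gives closedness.

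The same diagonal argument applied to a single point shows that $V$ is dense. For $\mathbf{z}=[(x_n)]$, we have $\Phi(x_n)\to\Phi(\mathbf{z})$, so every basic neighbourhood (a finite intersection of subbasic sets) of $\mathbf{z}$ contains some vertex $x_n$.

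\textbf{Discreteness for locally finite $K$.} Fix $x\in V$ with $d_K(o,x)=n$. I will show that
\[
\{x\}=\mathcal{U}_x\cap\bigcap_{y}\mathcal{U}_y^c,
\]
where $y$ ranges over the neighbours of $x$ at distance $n+1$ from $o$. By local finiteness this is a finite intersection, hence open.

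Clearly $x$ lies in this set. Conversely, let $\mathbf{z}=[(x_m)]$ lie in it, so $x\le x_m$ for $m$ large and $y\not\le x_m$ for every such neighbour $y$ and $m$ large. If $x_m\ne x$ for infinitely many $m$, then a geodesic from $o$ through $x$ to $x_m$ passes, right after $x$, through one of these neighbours $y$, giving $y\le x_m$, a contradiction. Hence $x_m=x$ eventually, and $\mathbf{z}=x$.

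\textbf{Main obstacle.} The delicate point is the closedness step. One must make sure that the limit of points of the bordification, which are themselves equivalence classes of sequences, is again represented by a single vertex sequence. This is exactly where the diagonal choice of $y_k$, and hence the countability of $V$, is needed.
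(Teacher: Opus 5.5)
Your proof is correct. The paper gives no argument of its own here; it imports the statement from \cite{Kli23}. Your embedding $\Phi\colon\overline{(K,o)}\hookrightarrow\{0,1\}^V$ is therefore a complete, self-contained proof, and it uses countability of $V$ exactly where it is needed, namely to reduce closedness of the image to a sequential, diagonal argument.

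One step is left implicit. The paper defines $\mathcal{U}_x$ through the extended order: $\mathbf{z}\ge x$ means $w\le x\Rightarrow w\le\mathbf{z}$ for all $w\in V$. You need this set to equal $\{\mathbf{z} : z_n\ge x \text{ eventually}\}$, which is the description your coordinate-stabilization argument uses. The equality follows from transitivity of $\le_o$ on $V$, which in turn follows from the triangle inequality for $d_K$.

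Your route also proves somewhat more than is stated, and some of this extra is used later in the paper without proof.
\begin{itemize}
\item $\overline{(K,o)}$ is metrizable. This is invoked in Proposition~\ref{toptop} and Theorem~\ref{capraceklisse}.
\item $\Phi(\overline{(K,o)})$ is exactly the closure of $\Phi(V)$ in $\{0,1\}^V$. This is also the image of the Gelfand spectrum of $\mathcal{D}(K,o)$ under $\chi\mapsto(\chi(P_x))_{x\in V}$, because the characters $T\mapsto\langle T\delta_y,\delta_y\rangle$ for $y\in V$ are dense in that spectrum. Proposition~\ref{CT} then becomes almost immediate.
\item In the locally finite case, every vertex is an isolated point of the whole bordification. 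So $V$ is open in $\overline{(K,o)}$, not merely discrete in the subspace topology.
\end{itemize}
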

Let $x \in V$. Let $P_x \in B(l^2V)$ be the orthogonal projection on the Hilbert subspace generated by the vectors $\delta_y$ with $y \geq x$. Let $\mathcal{D}(K,o)$ be the commutative C$^*$-subalgebra of $B(l^2V)$ generated by the projections $P_x$ for $x\in V$.
\begin{proposition}\cite{Kli23}\label{CT}
 There is a $*$-isomorphism $C(\overline{(K,o)}) \cong \mathcal{D}(K,o)$.
\end{proposition}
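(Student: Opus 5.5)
The plan is to use Gelfand duality. The C$^*$-algebra $\mathcal{D}(K,o)$ is commutative and unital: it contains $P_o=1$, since every vertex satisfies $y\geq o$. So it is enough to identify its spectrum with $\overline{(K,o)}$ by a homeomorphism.

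\textbf{Step 1: the multiplicative structure of the generators.} I would first show that products of the projections $P_x$ are again projections onto ``up-sets''. For a finite set $F\subset V$,
\[\prod_{x\in F}P_x \quad\text{and}\quad \prod_{x\in F}P_x\prod_{y\in G}(1-P_y)\]
are diagonal projections. They project onto the span of the $\delta_z$ with $z\geq x$ for all $x\in F$ and $z\ngeq y$ for all $y\in G$. Consequently $\mathcal{D}(K,o)$ is the closed span of such projections. It is in particular an AF-type commutative algebra of diagonal operators on $l^2V$, with totally disconnected spectrum.

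\textbf{Step 2: the map from the bordification to characters.} For $\mathbf{x}\in\overline{(K,o)}$ I would define $\chi_{\mathbf{x}}$ by $\chi_{\mathbf{x}}(P_y)=1$ if $y\leq\mathbf{x}$ and $0$ otherwise. To see that this extends to a character, take a representative sequence $(x_n)$. Each $x_n$ gives the vector state $T\mapsto\langle\delta_{x_n},T\delta_{x_n}\rangle$, which is a character on the diagonal algebra $\mathcal{D}(K,o)$. These characters satisfy $\chi_{x_n}(P_y)=1_{x_n\geq y}$. Since $(x_n)$ $o$-converges, this converges for each $y$, hence on the dense $*$-algebra spanned by the finite products of Step 1. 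By uniform boundedness the limit extends to a character $\chi_{\mathbf{x}}$. The limit depends only on the equivalence class of $(x_n)$, by the very definition of $\sim_o$, and distinct classes give distinct characters. So the map $\mathbf{x}\mapsto\chi_{\mathbf{x}}$ is well defined and injective.

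\textbf{Step 3: surjectivity.} A character $\chi$ is determined by the set $D=\{y:\chi(P_y)=1\}$. I need to produce an $o$-converging sequence whose set of lower bounds is exactly $D$. Enumerate $V=\{v_1,v_2,\dots\}$ using countability. For each $n$, the element
\[Q_n=\prod_{y\in D,\,y\in\{v_1,\dots,v_n\}}P_y\prod_{y\notin D,\,y\in\{v_1,\dots,v_n\}}(1-P_y)\]
satisfies $\chi(Q_n)=1$, so $Q_n\neq0$. Hence there is a vertex $x_n$ with $\langle\delta_{x_n},Q_n\delta_{x_n}\rangle=1$. By construction $x_n\geq y$ iff $y\in D$, for all $y\in\{v_1,\dots,v_n\}$. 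The sequence $(x_n)$ therefore $o$-converges with lower set $D$, and $\chi=\chi_{\mathbf{x}}$. \textbf{This is the main obstacle}: the point is that nonvanishing of $Q_n$ in $\mathcal{D}(K,o)$ is witnessed by an actual vertex, which holds precisely because $\mathcal{D}(K,o)$ acts diagonally on $l^2V$.

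\textbf{Step 4: the homeomorphism.} The weak$^*$ topology on the spectrum is generated by the functions $\chi\mapsto\chi(P_y)\in\{0,1\}$. The preimages of $1$ and of $0$ under these functions are exactly the subbasic sets $\mathcal{U}_y$ and $\mathcal{U}_y^c$. So the bijection of Steps 2 and 3 is a homeomorphism. The compactness and Hausdorffness already stated in the previous proposition make this automatic once continuity in one direction is checked. Gelfand duality then gives $C(\overline{(K,o)})\cong\mathcal{D}(K,o)$, with $\mathbf{1}_{\mathcal{U}_x}\mapsto P_x$.
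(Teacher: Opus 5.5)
Your argument is correct. The paper itself gives no proof of this proposition: it is quoted from \cite{Kli23}, immediately after the (also quoted) fact that $\overline{(K,o)}$ is compact Hausdorff with $V$ dense, so the comparison is with the natural argument that fact sets up, which is not the one you use.

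\textbf{The shorter route.} Given that preceding proposition, restriction $f\mapsto f|_V$ is an injective, hence isometric, $*$-homomorphism $C(\overline{(K,o)})\to\ell^\infty(V)\subset B(l^2V)$, because $V$ is dense. Each $\mathbf{1}_{\mathcal{U}_x}$ is continuous, since $\mathcal{U}_x$ is clopen. These functions separate points by the definition of $\sim_o$, and $\mathbf{1}_{\mathcal{U}_o}=1$, so by Stone--Weierstrass they generate $C(\overline{(K,o)})$. Finally, $\mathbf{1}_{\mathcal{U}_x}|_V$ acts on $l^2V$ exactly as $P_x$.

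\textbf{What your route does instead.} You compute the Gelfand spectrum of $\mathcal{D}(K,o)$ directly. Steps 2--3 identify characters with classes of $o$-convergent sequences. Step 3 is in effect the weak$^*$ density of point evaluations in the spectrum, made constructive by the diagonal action and the countability of $V$. Step 4 then matches subbases.

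\textbf{Comparison.} Your proof is longer but self-contained. It does not use the preceding proposition, and it yields compactness and Hausdorffness of $\overline{(K,o)}$ as a byproduct. So your appeal to them in Step 4 is not needed: matching subbases already gives a homeomorphism.

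\textbf{Two details worth a line in a write-up.}
\begin{enumerate}
\item The weak$^*$ topology on the spectrum coincides with the initial topology of the maps $\chi\mapsto\chi(P_y)$. This holds because the $P_y$ generate $\mathcal{D}(K,o)$ and characters are contractive.
\item The subbasic set $\mathcal{U}_y$ is defined via the extended order: $\mathbf{x}\geq y$ iff every $z\leq y$ satisfies $z\leq\mathbf{x}$. It equals $\{\mathbf{x} : x_n\geq y \text{ eventually}\}$, and this uses transitivity of $\leq_o$ on $V$.
\end{enumerate}
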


Let $(W,S)$ be a Coxeter system. Its Cayley graph (also denoted $W$) is a connected rooted graph, the root being the unit $e$. The graph order on this graph coincides with the right weak order on $W$ \cite{BB05}. We denote simply by $\overline{(W,S)}$ and $\partial(W,S)$ the bordification and boundary of the Cayley graph. When $S$ is finite, Klisse showed that the natural left action of $W$ on its Cayley graph extends to a continuous action on the compact Hausdorff space $\overline{(W,S)}$ \cite[Theorem 3.3]{Kli23}. The following result will be central in our approach. 
\begin{theorem}\label{coxexact}
Assume that $S$ is finite. The induced action $W \curvearrowright \overline{(W,S)}$ is amenable.
\end{theorem}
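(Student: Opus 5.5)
This is a recalled result (Lécureux, Klisse), so the plan is to reduce it to known facts about Coxeter groups rather than build an amenability witness from scratch. The first step is to identify $\overline{(W,S)}$ with a compactification already studied in the literature. By Proposition \ref{CT}, $C(\overline{(W,S)})$ is the C$^*$-algebra $\mathcal{D}(W,e)$ generated by the projections $P_w$ onto $\overline{\operatorname{span}}\{\delta_v \mid v\geq w\}$, where $\geq$ is the right weak order. The cone $\{v \mid v \geq w\}$ is the set of $v$ with $l(v)=l(w)+l(w^{-1}v)$. Equivalently, it is the intersection of the half-spaces (roots) containing $w$ but not $e$. Hence $\mathcal{D}(W,e)$ is generated by the indicator functions of half-spaces of the Davis complex (roots of $W$) intersected appropriately. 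From this I would identify $\overline{(W,S)}$ $W$-equivariantly with the combinatorial (Roller/Caprace--Lécureux type) compactification of $W$ in the space $2^{\Phi}$ of subsets of roots. This is the compactification for which \cite{Lec09} proves amenability of the action. Klisse \cite[Theorem 3.3 and Section 4]{Kli23} also records amenability directly.

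If I instead have to prove amenability myself, I would give an explicit approximately invariant family of probability measures. Following \cite{DJ99}, I would build maps $\mu_n : \overline{(W,S)} \to \mathrm{Prob}(W)$, continuous and with $\sup_{z}\|\mu_n(gz)-g\mu_n(z)\|_1\to 0$ for each $g\in W$. These come from combinatorial geodesics: for $z\in\overline{(W,S)}$ represented by an $e$-converging sequence $(x_k)$, consider the set $\Sigma_n(z)$ of elements $v\leq z$ with $l(v)\leq n$ that are maximal in an appropriate sense. These are ``points at distance about $n$ along all geodesics from $e$ towards $z$''. I would take $\mu_n(z)$ to be the uniform average over $k\le n$ of uniform measures on such level sets, in the spirit of the Dranishnikov--Januszkiewicz proof of exactness. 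The decisive finiteness input is the following. Because $S$ is finite, $W$ has finite ``width'': the set of elements between $e$ and $z$ at a given length, which intersect all geodesic paths, has uniformly bounded cardinality. This follows from the finiteness of the number of pairwise-nested root configurations (Brink--Howlett finiteness of small roots / Niblo--Reeves). Continuity of $\mu_n$ holds because $\{v\leq z\}$ depends on $z$ through the subbasis $\mathcal{U}_v$, so it is locally constant for $v$ of bounded length. Invariance for $g=s\in S$ reduces to comparing geodesics from $e$ and from $s^{-1}$ to $z$. These differ by one wall, so the level sets shift by at most one step and the averaged measures differ by $O(1/n)$.

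The main obstacle is the uniform bound in this comparison, namely showing that the base change $e\mapsto s$ perturbs the level sets only boundedly, uniformly in $z\in\partial(W,S)$. I would handle it with the Brink--Howlett small-roots automaton, which gives a finite state description of the order ideals $\{v\le z\}$. In practice, I expect the cleanest route is to cite \cite{Lec09} (amenability on the combinatorial compactification) together with the identification in the first step. That identification is itself the real work, and it is routine only modulo checking that the subbasis opens $\mathcal{U}_w$ correspond to finite intersections of root cylinder sets.
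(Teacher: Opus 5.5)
Your first route is the paper's own: the paper gives no argument for this statement and simply attributes it to L\'ecureux \cite{Lec09} and Klisse \cite{Kli23}, which is where your proposal ultimately rests too. Two points in your write-up are nevertheless wrong.

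First, the inclusion you actually establish points the wrong way. You write $\{v\geq w\}$ as the intersection of the $l(w)$ roots containing $w$ but not $e$. This shows $\mathcal{D}(W,e)\subseteq C^*(\mathbf{1}_\alpha : \alpha\in\Phi)$, which yields an equivariant surjection from the root compactification $\overline{W}^{\Phi}$ onto $\overline{(W,S)}$. Amenability pulls back along equivariant continuous maps but does not pass to quotients, so this cannot transport L\'ecureux's theorem. What you need is a map $\overline{(W,S)}\to\overline{W}^{\Phi}$, i.e.\ that every root indicator lies in $\mathcal{D}(W,e)$. That is the genuinely non-trivial direction: it is a change-of-basepoint statement, the thin case of Lemma \ref{convconv}. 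So the check you single out as the remaining work (subbasic opens $\mathcal{U}_w$ being finite intersections of root sets) is the easy and insufficient one. The needed direction is immediate once you use the continuity of the $W$-action on $\overline{(W,S)}$ \cite[Theorem 3.3]{Kli23}, quoted just before the statement. Every root is a translate $g\alpha_s$ of some $\alpha_s=\{v : v\geq s\}$, and $\mathbf{1}_{g\alpha_s}=\alpha_g(P_s)\in C(\overline{(W,S)})$.

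Second, the self-contained fallback rests on a false finiteness claim: the sets $\{v\leq z : l(v)=n\}$ are not uniformly bounded. Take $\tilde A_1\times\tilde A_1=\langle a,b\rangle\times\langle c,d\rangle$ and let $z$ be the limit of $(ab)^n(cd)^n$. Each product of a length-$k$ prefix of $abab\cdots$ with a length-$(n-k)$ prefix of $cdcd\cdots$ lies below $z$. This gives $n+1$ elements of length $n$, all on geodesics from $e$ towards $z$, and no element of positive length lies on all of these geodesics. So the bounded-cardinality input behind your $O(1/n)$ averaging estimate is not available under either reading of your level sets. The finiteness that does hold for finitely generated Coxeter groups is a uniform bound on families of pairwise crossing walls, equivalently the finite-dimensionality of the Niblo--Reeves cube complex. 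An explicit construction would have to be organised around that.
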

Dranishnikov and Januszkiewicz \cite{DJ99} only proved the exactness of $W$. It was Lécureux \cite{Lec09}, and then Klisse \cite{Kli23}, who refined their result by giving a more pleasant description of a space on which $W$ acts amenably.

\subsection{The covariance \texorpdfstring{C$^*$-algebras}{C*-algebras}}
We fix a Coxeter system $(W,S)$. We consider the right weak order on $W$ (see \cite[Section 3.1.]{BB05}) and denote it simply by $\leq$. Recall that for all $u,v\in W$, we have $u\leq v$ if and only if there is a $w\in W$ such that $v = uw$ with $l(v) = l(u) + l(w)$ (i.e. $u$ is a prefix of $v$).

We fix a unital C$^*$-algebra $A$ together with an expected $W$-decomposition $A = \overline{\sum_{w\in W}L_{i,j}(w)}$ with conditional expectations $E_i : A \rightarrow B_i$ and index set $\mathscr{I}$. Recall from last section that the reduced C$^*$-algebra $A^r = \overline{\sum_{w\in W}F_{i,j}(w)}$ is a C$^*$-subalgebra of $\mathcal{L}_\mathcal{P}(\mathcal{X})$ where $\mathcal{X}$ is the right Hilbert $\mathcal{P}$-module given by \[\mathcal{X} = \bigoplus_{w\in W}\mathcal{X}_i(w).\]
for an arbitrary choice of $i\in \mathscr{I}$. Here $\mathcal{X}_i(w) = \bigoplus_{j\in \mathscr{I}}X_{i,j}(w)\otimes_{B_j}\mathcal{P}$ for all $i\in \mathscr{I}$.

Let $i\in \mathscr{I}$. Let $p_{i,w} \in \mathcal{L}_{\mathcal{P}}(\mathcal{X})$ be the orthogonal projection on the Hilbert submodule \[\mathcal{X}_{i,\geq w} = \bigoplus_{u\geq w} \mathcal{X}_i(u)\]
and let $q_{i,w} \in \mathcal{L}_{\mathcal{P}}(\mathcal{X})$ be the orthogonal projection on the Hilbert submodule \[\mathcal{X}_{i,w+} = \bigoplus_{w \cdot u \text{ reduced}} \mathcal{X}_i(u).\]
Notice that for all $i\in \mathscr{I}$ we have $p_{i,e} = q_{i,e} = \mathrm{id}_\mathcal{X}$ and for all $s\in S$ we have $ q_{i,s} = p_{i,s}^\perp$.
Let $\Delta_0(i)$ be the C$^*$-subalgebra of $\mathcal{L}_\mathcal{P}(\mathcal{X})$ generated by the projections $p_{i,w}$ for $w\in W$. Let $\Delta(i)$ be the C$^*$-subalgebra of $\mathcal{L}_\mathcal{P}(\mathcal{X})$ generated by the subset $\Delta_0(i) \cup B_i$. Notice that for all $u,v\in W$ we have \[
p_{i,u} p_{i,v} =
\begin{cases}
p_{i,u \vee v} & \text{if } u \vee v < \infty, \\
0 & \text{otherwise.}
\end{cases}
\]
Here $u \vee v$ denotes (when it exists) the join of $u$ and $v$ for the right weak order, i.e. the smallest word of $W$ beginning both with $u$ and $v$.
\begin{proposition}\label{cw}
Fix an $i\in \mathscr{I}$. Assume that for all $s\in S$ and $j \in \mathscr{I}$ the left action of $B_j$ on $\mathcal{X}_j(s)$ is faithful. Then there are $*$-isomorphisms \[ \Delta_0(i) \cong \mathcal{D}(W,e), \qquad \Delta(i)\cong \mathcal{D}(W,e)\otimes B_i.\]
\end{proposition}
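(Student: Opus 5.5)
The idea is to realise both $\Delta_0(i)$ and $\Delta(i)$ as images of injective $*$-homomorphisms defined on diagonal algebras over $W$. For this I use the decomposition $\mathcal{X}=\bigoplus_{y\in W}\mathcal{X}_i(y)$. Each projection $p_{i,w}$ is diagonal for this decomposition: it acts as the identity on $\mathcal{X}_i(y)$ when $y\geq w$ and as $0$ otherwise. In $\ell^2W$, the projection $P_w$ generating $\mathcal{D}(W,e)$ is likewise the multiplication operator by the indicator function $1_{\geq w}$. So $\mathcal{D}(W,e)$ sits inside $\ell^\infty(W)$, acting diagonally on $\ell^2W$, and the natural candidate map is
\[\Phi:\ell^\infty(W)\rightarrow \mathcal{L}_\mathcal{P}(\mathcal{X}),\qquad \Phi(f)=\bigoplus_{y\in W}f(y)\,\mathrm{id}_{\mathcal{X}_i(y)}.\]
This is clearly a unital $*$-homomorphism with $\Phi(1_{\geq w})=p_{i,w}$. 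Hence $\Phi(\mathcal{D}(W,e))=\Delta_0(i)$, and $\Phi$ restricted to $\mathcal{D}(W,e)$ is an isomorphism as soon as $\Phi$ is injective, i.e. as soon as $\mathcal{X}_i(y)\neq 0$ for every $y\in W$.

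The key step, which I expect to be the main obstacle, is a stronger statement proved by induction on $l(w)$: \emph{for every $j\in\mathscr{I}$ and $w\in W$, the left action of $B_j$ on $\mathcal{X}_j(w)$ is faithful}. Faithfulness in particular gives $\mathcal{X}_j(w)\neq 0$, since $B_j$ is unital.
\begin{itemize}
\item For $w=e$, the module $\mathcal{X}_j(e)=X_{j,j}(e)\otimes_{B_j}\mathcal{P}\cong B_j$ carries the faithful left regular action.
\item For $w=s$ this is exactly the hypothesis.
\item For the induction step, take $l(sw)>l(w)$. Lemma \ref{xxxxxxx} gives
\[\mathcal{X}_j(sw)\cong\bigoplus_{k\in\mathscr{I}}X_{j,k}(s)\otimes_{B_k}\mathcal{X}_k(w),\]
and this isomorphism is compatible with the left action of $B_j$, because $L_{j,j}(e)L_{j,k}(s)\subset L_{j,k}(s)$ by (OWD2). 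By the induction hypothesis $B_k$ acts faithfully on $\mathcal{X}_k(w)$. Lemma \ref{ZERO} then says that $T\mapsto T\otimes\mathrm{id}$ embeds $\mathcal{L}_{B_k}(X_{j,k}(s))$ into $\mathcal{L}(X_{j,k}(s)\otimes_{B_k}\mathcal{X}_k(w))$. Therefore if $b\in B_j$ acts as $0$ on $\mathcal{X}_j(sw)$, it acts as $0$ on every $X_{j,k}(s)$, hence on $\mathcal{X}_j(s)\cong\bigoplus_k X_{j,k}(s)\otimes_{B_k}\mathcal{P}$ (using Lemma \ref{box}). The hypothesis then forces $b=0$.
\end{itemize}
One should also check along the way that the identification in Lemma \ref{xxxxxxx} intertwines the left $B_j$-actions; this is immediate from the formula $a_1\eta\otimes a_2\eta\mapsto a_1a_2\eta$.

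For $\Delta(i)$, note first that $\lambda(B_i)\cong B_i$, because $B_i$ acts faithfully on $X_{i,i}(e)\cong B_i$. Moreover each $b\in B_i$ preserves every $X_{i,j}(y)$ by (OWD2), so $\lambda(b)$ commutes with all the $p_{i,w}$ and is diagonal for the decomposition. I therefore define
\[\Psi:\ell^\infty(W,B_i)\rightarrow\mathcal{L}_\mathcal{P}(\mathcal{X}),\qquad (b_y)_y\mapsto\bigoplus_y\lambda(b_y)|_{\mathcal{X}_i(y)}.\]
This is a $*$-homomorphism. Its norm equals $\sup_y\|b_y\|$, because each restriction $B_i\rightarrow\mathcal{L}(\mathcal{X}_i(y))$ is injective by the key step and hence isometric; so $\Psi$ is injective.

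Since $\mathcal{D}(W,e)$ is commutative, the minimal tensor product $\mathcal{D}(W,e)\otimes B_i$ embeds in $C_b(W,B_i)\subset\ell^\infty(W,B_i)$ via $f\otimes b\mapsto (f(y)b)_y$. Under $\Psi$, the element $f\otimes b$ is sent to $\Phi(f)\lambda(b)$. Thus $\Psi(\mathcal{D}(W,e)\otimes B_i)$ is a C$^*$-algebra containing $\Delta_0(i)$ and $\lambda(B_i)$. It is generated by them, since $\mathcal{D}(W,e)\otimes B_i$ is generated by $\mathcal{D}(W,e)\otimes 1$ and $1\otimes B_i$. Hence it equals $\Delta(i)$, and $\Psi$ restricts to the isomorphism $\mathcal{D}(W,e)\otimes B_i\cong\Delta(i)$.
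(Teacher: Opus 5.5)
Your proof is correct. It reaches the result by a different route from the paper's for the \emph{existence} of the maps. The \emph{injectivity} ingredient is the same as the paper's: faithfulness of $B_i$ on every $\mathcal{X}_i(w)$, proved by the same induction via Lemma \ref{xxxxxxx} and Lemma \ref{ZERO}.

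The paper produces the maps abstractly:
\begin{itemize}
\item it uses Klisse's universal presentation of $\mathcal{D}(W,e)$ \cite[Theorem 2.13]{Kli23} (projections with $P_uP_v=P_{u\vee v}$ or $0$) to get a surjection $\mathcal{D}(W,e)\twoheadrightarrow\Delta_0(i)$;
\item it uses the universal property of $\otimes_{\max}$, together with the fact that $B_i$ commutes with the $p_{i,w}$, to get $\mathcal{D}(W,e)\otimes_{\max}B_i\twoheadrightarrow\Delta(i)$;
\item it uses nuclearity of the commutative factor to pass to the minimal tensor product.
\end{itemize}
It then checks injectivity by viewing an element as some $f\in C(\overline{(W,S)},B_i)$, choosing $w\in W$ with $f(w)\neq 0$ (by density of $W$), and noting that the restriction of $\chi_i(f)$ to $\mathcal{X}_i(w)$ is nonzero.

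You instead observe that both sides are concretely diagonal: $\mathcal{D}(W,e)$ with respect to $\ell^2W=\bigoplus_y\mathbb{C}\delta_y$, and $\Delta_0(i)$ with respect to $\mathcal{X}=\bigoplus_y\mathcal{X}_i(y)$. So you can write the maps down directly as block-diagonal $*$-homomorphisms on $\ell^\infty(W)$ and $\ell^\infty(W,B_i)$. Existence is then automatic, and injectivity reduces to nonvanishing, respectively faithfulness, of each block. What this buys:
\begin{itemize}
\item you avoid the universal presentation of $\mathcal{D}(W,e)$ and any appeal to $\otimes_{\max}$;
\item you make explicit the fact the paper uses tacitly, namely that $\chi_i(f)$ acts on $\mathcal{X}_i(w)$ as $\lambda(f(w))$;
\item you show that the first isomorphism $\Delta_0(i)\cong\mathcal{D}(W,e)$ only needs $\mathcal{X}_i(y)\neq 0$ for all $y\in W$.
\end{itemize}
The paper's route exhibits $\psi_i$ directly as the map out of a universal object, which is how it is used later for covariant representations. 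Your $\Phi$ is of course the same map, since it sends each $P_w$ to $p_{i,w}$.

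One small correction of justification. The embedding $\mathcal{D}(W,e)\otimes B_i\hookrightarrow\ell^\infty(W,B_i)$ holds because the minimal tensor product is spatial. For a faithful representation $B_i\subset B(H)$, elements act block-diagonally on $\ell^2W\otimes H=\bigoplus_y H$, with norm equal to the supremum of the block norms. It is not a consequence of $\mathcal{D}(W,e)$ being commutative; commutativity only makes the choice of tensor norm irrelevant.
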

\begin{proof}
 By \cite[Theorem 2.13]{Kli23}, $\mathcal{D}(W,e)$ is the universal C$^*$-algebra generated by projections $P_w$ such that for all $u,v \in W$ we have $P_{u} P_{v} =
\begin{cases}
P_{u \vee v} & \text{if } u \vee v < \infty, \\
0 & \text{otherwise}.
\end{cases}$
Hence there is a surjective $*$-homomorphism $ \psi_i: \mathcal{D}(W,e) \twoheadrightarrow \Delta_0(i)$.
Moreover, every element of $B_i$ commutes with the projections $p_{i,w}$ for $w\in W$. 
Hence there is a surjective $*$-homomorphism $\chi_i :\mathcal{D}(W,e)\otimes_{\max}B_i \twoheadrightarrow \Delta(i)$ being the identity on $B_i$ and sending each $P_w$ to $p_{i,w}$. Moreover $\mathcal{D}(W,e)$ is commutative, thus nuclear. Hence we actually have a $*$-homomorphism defined on the minimal tensor product $\chi_i : \mathcal{D}(W,e)\otimes B_i \rightarrow \Delta(i)$. 

We prove the injectivity of $\chi_i$, the injectivity of $\psi_i$ follows automatically. Let $f\in C(\overline{(W,S)},B_i)$ be a positive nonzero element. There exists a $w\in W$ such that $f(w) \neq 0.$ We assume that the left action of $B_j$ on each $\mathcal{X}_j(s)$ is faithful. An easy induction on the length of $w$ using Lemma \ref{xxxxxxx} and Lemma \ref{ZERO} shows that the left action of $B_i$ on $\mathcal{X}_i(w)$ is faithful for all $w\in W$. This implies that the restriction of $\chi_i(f)$ to the Hilbert subspace $\mathcal{X}_i(w)$ is nonzero. Hence $\chi_i(f) \neq 0$.
\end{proof}
As the last proposition will be fundamental in our approach, we shall check that the nondegeneracy condition holds in the case of groups.
\begin{proposition}\label{nondeggroup}
    Let $A = \overline{\bigoplus_{w\in W}F_{i,j}(w)}$ be the expected $W$-decomposition coming from the $W$-decomposition of a discrete group $G = \biguplus_{w\in W}C_{i,j}(w)$ with index set $\mathscr{I}$. Let $B_i = C^*_r(H_i)$ be the Borel C$^*$-subalgebras. For all $i\in \mathscr{I}$ and $s\in S,$ the left action of $B_i$ on $\mathcal{X}_i(s)$ is faithful.
\end{proposition}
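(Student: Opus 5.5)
We reduce the statement to an explicit description of $\mathcal{X}_i(s)$ as an $\ell^2$-module and then use the faithfulness of the left regular representation. Recall from the proof of Proposition \ref{finitogroup} that $X_{i,j}(s) \cong l^2(C_{i,j}(s)/H_j, B_j)$ as right $B_j$-modules, with $B_j = C^*_r(H_j)$. First we make the left action of $B_i = C^*_r(H_i)$ on this module explicit. For each $j$ with $C_{i,j}(s)\neq\emptyset$ we pick a representative $g \in C_{i,j}(s)$. By (Wdec2), $H_i g H_j \subset C_{i,j}(s)$, and by Proposition \ref{calcul} (or directly, since $E_j$ is the restriction to $H_j$), the vector $\lambda(g)\eta_j$ generates a right $B_j$-submodule isomorphic to $\overline{C^*_r(H_i)\lambda(g)\eta_j B_j}$. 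Concretely, $X_j = L^2(C^*_r(G),E_j)$ is identified with $l^2(G/H_j)\otimes$-type module $l^2(G/H_j,B_j)$, and $X_{i,j}(s)$ is the part supported on the cosets $C_{i,j}(s)/H_j$. The set $C_{i,j}(s)/H_j$ is stable under left multiplication by $H_i$, since $H_iC_{i,j}(s)=C_{i,j}(s)$, and $B_i$ acts through the induced permutation representation of $H_i$.

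Next we pick, using (Wdec3), some $j$ with $C_{i,j}(s)\neq \emptyset$ and a coset $gH_j$ in it. The $H_i$-orbit of $gH_j$ is $H_i/(H_i\cap gH_jg^{-1})$. Let $K = H_i\cap gH_jg^{-1}$. The submodule of $X_{i,j}(s)$ spanned by this orbit, tensored up to $\mathcal{P}$, contains (after compressing by $1_j$) a copy of the right $C^*_r(gH_jg^{-1})\cong C^*_r(H_j)$-module $l^2(H_i/K)\otimes_{C^*_r(K)}$-type induced module. More cleanly, the left $B_i$-action on $\overline{B_i\lambda(g)\eta_jB_j}$ is unitarily the induced representation $\operatorname{Ind}_K^{H_i}$ of the restriction to $K$ of a faithful representation of $C^*_r(gH_jg^{-1})$. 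Taking a faithful representation of $B_j$ on a Hilbert space $\mathcal{H}$ and forming the interior tensor product, the action of $B_i$ becomes the representation of $C^*_r(H_i)$ on $l^2(H_i/K)\otimes$-induced space $\operatorname{Ind}_K^{H_i}(\sigma|_K)$, where $\sigma$ is (conjugate to) the left regular representation of $gH_jg^{-1}$.

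The key step is then the standard fact that the restriction of the left regular representation of a group to a subgroup $K$ is a multiple of $\lambda_K$, and induction of $\lambda_K$ to $H_i$ is $\lambda_{H_i}$. Hence the representation of $B_i$ obtained above is a multiple of the left regular representation of $H_i$, which is faithful on $C^*_r(H_i)$. Faithfulness on a submodule implies faithfulness on $\mathcal{X}_i(s)$, since an operator $b$ acting as zero on $\mathcal{X}_i(s)$ acts as zero on this submodule, and an interior tensor product with a faithful representation detects nonzero operators as in Lemma \ref{ZERO}.

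The main obstacle is the bookkeeping identification of the $B_i$-action on $\overline{B_i\lambda(g)\eta_jB_j}\otimes_{B_j}\mathcal{H}$ with the induced representation. I would do this concretely. Take $\mathcal{H} = l^2(H_j)$ with $B_j$ acting regularly. Then $X_j\otimes_{B_j} l^2(H_j)\cong l^2(G)$, with $C^*_r(G)$ acting by $\lambda_G$, via $\lambda(x)\eta_j\otimes\delta_h\mapsto \delta_{xh}$; this map is isometric because $E_j$ is the canonical expectation. Under this identification the relevant submodule becomes $l^2(H_igH_j)\subset l^2(G)$, which is $\lambda_G(H_i)$-invariant. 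The restriction of $\lambda_G|_{H_i}$ to $l^2(H_igH_j)$ is a direct sum of copies of $\lambda_{H_i}$, since $H_i$ acts freely on $H_igH_j$ by left multiplication. This gives faithfulness of $C^*_r(H_i)$ immediately and bypasses induced representations altogether.
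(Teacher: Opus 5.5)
Your argument is correct and is essentially the paper's. You tensor with the faithful representation $l^2H_j$ (via Lemma \ref{ZERO}) to land in $l^2C_{i,j}(s)\subset l^2G$, pick $g\in C_{i,j}(s)$ using (Wdec3), and observe that $H_i$ acts freely on the invariant set $H_igH_j$, so $B_i$ acts by a multiple of $\lambda_{H_i}$; the paper does the same with the single orbit $H_ig$, intertwined with $l^2H_i$ by right multiplication by $g$. As you note yourself, the induced-representation discussion in your middle paragraphs is unnecessary.
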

\begin{proof}
We have $\mathcal{L}_{\mathcal{P}}(\mathcal{X}_i(s)) \cong \prod_{j\in \mathscr{I}}\mathcal{L}_{B_j}(X_{i,j}(s)).$ For all $j \in \mathscr{I},$ let $\lambda_j : B_j \hookrightarrow B(l^2H_j)$ be the left regular representation. Use $\lambda_j$ and Lemma \ref{ZERO} to embed $\mathcal{L}_{B_j}(X_{i,j}(s))$ into $B(X_{i,j}(s)\otimes_{B_j}l^2H_j)$. By identifying $X_{i,j}(s)\otimes_{B_j}l^2H_j$ with $l^2C_{i,j}(s)$, we get a faithful $*$-homomorphism $\mathcal{L}_{\mathcal{P}}(\mathcal{X}_i(s)) \hookrightarrow B(\bigoplus_{j \in \mathscr{I}}l^2C_{i,j}(s)).$ Under this identification, the left action of $B_i=C^*_r(H_i)$ on $\mathcal{X}_i(s)$ is sent by $\iota$ to the representation induced by left multiplication of $H_i$ on $\coprod_{j\in\mathscr{I}}C_{i,j}(s)$. By (Wdec3), there exists a $j\in\mathscr{I}$ such that $C_{i,j}(s)$ is nonempty. Let $g\in C_{i,j}(s)$. We know that $l^2(H_ig)\subseteq l^2C_{i,j}(s)$ is invariant under the left action of $H_i$. The unitary given by right multiplication by $g$ intertwines the left regular representation of $H_i$ on $l^2H_i$ with its left action on $l^2(H_ig)$. Hence the left action of $B_i$ on $l^2(H_ig)$ is faithful.
\end{proof}
Let $i,j\in \mathscr{I}$ and $s\in S$. Let $a\in F_{i,j}(s)$. Write $a^\dagger = p_{i,s}ap_{j,s}^\perp$ and $\partial(a) = p_{i,s}ap_{j,s}$ (we should always remember to specify the indices $i$ and $j$ as well as the generator $s\in S$ when considering these operators). It is easy to see that $p_{i,s}^\perp ap_{j,s}^\perp = 0.$ This gives the following decomposition \[a = \partial(a) + a^\dagger + ((a^*)^\dagger)^*.\] Elements of the form $\partial(a)$ are called elementary diagonal operators of type $(i,j,s)$. Elements of the form $a^\dagger$ and $(a^\dagger)^*$ are called respectively creation and annihilation operators of type $(i,j,s)$.
\begin{proposition}
    Let $w\in W, w\neq e$. Let $w = s_1 \ldots s_n$, $s_i \in S$, be a reduced expression of $w$. Let $i_0, \ldots i_n \in \mathscr{I}$. For all $1\leq k \leq n$ let $a_k \in F_{i_{k-1},i_{k}}(s_k)$. Let $a = a_1\ldots a_n \in F_{i_0,i_n}(w)$. We have \[a_1^\dagger\ldots a_n^\dagger = aq_{i_{n},w}\]
\end{proposition}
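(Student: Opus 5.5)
The plan is an induction on $n=l(w)$. The only input is how an element of $F_{i,j}(v)$ acts on the graded pieces $\mathcal{X}_j(u)$ when the product $vu$ is reduced. I first isolate this as an auxiliary claim.

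\emph{Claim.} Let $v,u\in W$ with $l(vu)=l(v)+l(u)$ and let $b\in F_{i,j}(v)$. Then $b\,\mathcal{X}_j(u)\subset \mathcal{X}_i(vu)$.

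To prove the claim, I would argue componentwise over the index $k\in\mathscr{I}$. For $x\in L_{i,j}(v)$ and $y\in L_{j,k}(u)$, write $v=s_1\ldots s_m$ as a reduced expression. Applying (OWD3) repeatedly, using at each step that the length increases, gives $xy\in L_{i,k}(vu)$. Hence $\lambda(x)(y\eta_k)=xy\eta_k\in X_{i,k}(vu)$. Passing to closures and tensoring with $\mathcal{P}$ gives the claim. This is essentially the content of Lemma \ref{xxxxxxx}, where $\lambda(x)$ acts as $x\eta\otimes\mathrm{id}$. Since $\mathcal{X}=\bigoplus_u\mathcal{X}_j(u)$ orthogonally, the projection $q_{j,v}$ is the identity on each $\mathcal{X}_j(u)$ with $vu$ reduced and zero on the other summands. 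So it suffices to check identities summand by summand.

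\emph{Base case $n=1$.} Let $a\in F_{i,j}(s)$. Since $q_{j,s}=p_{j,s}^\perp$, we have $a^\dagger=p_{i,s}\,a\,q_{j,s}$. Take a summand $\mathcal{X}_j(u)$ with $su$ reduced. By the Claim, $a$ maps it into $\mathcal{X}_i(su)$, and $su\geq s$, so $p_{i,s}$ acts as the identity on the image. Hence $a^\dagger=a\,q_{j,s}$, which is the statement for $n=1$.

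\emph{Inductive step.} Put $w'=s_2\ldots s_n$ and $a'=a_2\ldots a_n\in F_{i_1,i_n}(w')$. By induction $a_2^\dagger\ldots a_n^\dagger=a'q_{i_n,w'}$, so by the base case
\[
a_1^\dagger\ldots a_n^\dagger=a_1\,q_{i_1,s_1}\,a'\,q_{i_n,w'}.
\]
I evaluate both sides on a summand $\mathcal{X}_{i_n}(u)$.

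If $w'u$ is not reduced, both sides vanish there. The right-hand side $a\,q_{i_n,w}$ also vanishes, because $wu$ reduced would force $w'u$ reduced.

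If $w'u$ is reduced, the Claim gives $a'\,\mathcal{X}_{i_n}(u)\subset\mathcal{X}_{i_1}(w'u)$. The projection $q_{i_1,s_1}$ then keeps this piece exactly when $s_1w'u=wu$ is reduced and kills it otherwise. Since $l(w)=l(w')+1$, the condition "$s_1(w'u)$ reduced" is the same as "$wu$ reduced". So on $\mathcal{X}_{i_n}(u)$ the left-hand side equals $a_1a'=a$ when $wu$ is reduced and $0$ otherwise. That is exactly $a\,q_{i_n,w}$.

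I expect the main obstacle to be the Claim. Everything else is bookkeeping with orthogonal projections on a graded module. The delicate point is making sure that the left action on $\mathcal{X}$, which is defined via the GNS modules $X_k$, really respects the grading $\bigoplus_u\mathcal{X}_j(u)$ with the shift $u\mapsto vu$. I would handle this by the density argument with $L_{j,k}(u)\eta_k$ together with (OWD3), as sketched above.
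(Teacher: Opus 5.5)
Your proof is correct and follows essentially the paper's route: an induction on $n$ that checks the identity on each graded piece $\mathcal{X}_{i_n}(u)$. The only difference is cosmetic. You peel off $a_1^\dagger$ and apply the inductive hypothesis to $a_2^\dagger\cdots a_n^\dagger$, whereas the paper peels off $a_n^\dagger$ acting on vectors $b\eta_j$ with $b\in F_{i_n,j}(u)$. Your explicit grading-shift claim via (OWD3) spells out the step the paper leaves implicit.
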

\begin{proof}
Let $u \in W$. Let $j \in \mathscr{I}$ and $b \in F_{i_n,j}(u)$. Then
\[
a_1^\dagger \cdots a_n^\dagger \cdot (b\eta_j)
=
\begin{cases}
0 & \text{if } s_n \cdot u \text{ is not reduced}, \\
a_1^\dagger \cdots a_{n-1}^\dagger (a_n b \eta_j)
  & \text{if } s_n \cdot u \text{ is reduced}.
\end{cases}
\]
The result follows by an induction on $n$.
\end{proof}
We write $a^\dagger = aq_{j,w}$ for all $a\in F_{i,j}(w)$ and $w\in W$. In particular, for all  $i \in \mathscr{I}$ and $b\in B_i$, we have $b^\dagger = b$. Notice that when $u,v \in W$ are such that $u\cdot v$ is a reduced expression we have $a_1^\dagger a_2^\dagger = (a_1a_2)^\dagger$ for all $a_1 \in F_{i,j}(u), a_2 \in F_{j,k}(v)$ with $i,j,k \in \mathscr{I}.$
\begin{proposition}\label{QW}
    Let $i,j,k \in \mathscr{I}$. Let $w\in W$. For all $a\in F_{i,j}(w),b\in F_{i,k}(w)$ we have \[(a^\dagger)^*b^\dagger = \delta_{j,k}E_j(a^*b)q_{j,w}.\]
\end{proposition}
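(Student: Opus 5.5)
To prove Proposition \ref{QW}, I would compare both sides on the generating vectors of $\mathcal{X}$. By the previous proposition, $a^\dagger = a q_{j,w}$ and $b^\dagger = b q_{k,w}$, so $(a^\dagger)^* b^\dagger = q_{j,w}a^*b\,q_{k,w}$. By Lemma \ref{box}, operators on $\mathcal{X}$ are families of operators on the $X_l\otimes_{B_l}\mathcal{P}$. So it suffices to evaluate both sides on vectors $c\eta_l$ with $c \in F_{k,l}(u)$, $l\in\mathscr{I}$, $u\in W$. These span a dense submodule of $\mathcal{X}$ by the decomposition $\mathcal{X}=\bigoplus_u \mathcal{X}_k(u)$.

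\textbf{Case 1: $w\cdot u$ is not reduced.} Then $q_{k,w}(c\eta_l)=0$, so the left side vanishes. On the right side, $E_j(a^*b)$ (zero when $j\neq k$) is an element of $B_j=B_k$, and $q_{j,w}$ kills $\mathcal{X}_j(u)$; here $\mathcal{X}_j(u)=\mathcal{X}_k(u)$ when $j=k$. So the right side vanishes as well. The point to check is that $B_j$ commutes with $q_{j,w}$, which is clear from (OWD2): left multiplication by $B_j$ preserves each $F_{j,l}(u)$.

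\textbf{Case 2: $w\cdot u$ is reduced.} We must show
\[ \langle d\eta_{l'}, a^*bc\,\eta_l\rangle \quad\text{agrees with}\quad \langle d\eta_{l'}, \delta_{j,k}E_j(a^*b)c\,\eta_l\rangle \]
for test vectors $d\eta_{l'}$ with $d\in F_{j,l'}(u')$ and $w u'$ reduced. This gives $q_{j,w}a^*bq_{k,w}=\delta_{j,k}E_j(a^*b)q_{j,w}$. Note that $bc\in F_{i,l}(wu)$ and $ad\in F_{i,l'}(wu')$. The pairing becomes $\delta_{l,l'}E_l((ad)^*bc)$, and Proposition \ref{calcul} gives the following.
\begin{itemize}
\item It vanishes unless $u=u'$ and $l=l'$.
\item When $u=u'$, write $a,b$ as sums of products along reduced expressions of $w$, and $c,d$ along a reduced expression of $u$. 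Concatenating gives reduced expressions of $wu$.
\end{itemize}
The nested formula of Proposition \ref{calcul} then says the following. The intermediate index at the junction (namely $j$ versus $k$) must agree, otherwise the result is $0$. When it agrees, the inner part equals exactly $E_j(a^*b)$ in the case $u\neq e$, giving $E_l(d^*E_j(a^*b)c)$. This equals the right-hand pairing. The case $u=e$ is immediate from the definitions.

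The main obstacle is the bookkeeping in Case 2. Proposition \ref{calcul} is stated for single products $x_1\cdots x_n$, so I would first reduce by linearity, using (OWD3), to such elementary products. I must also verify that the index-mismatch clause yields the factor $\delta_{j,k}$ even when $a$ and $b$ individually involve different intermediate indices. That is handled by grouping the nested expectation at the position $n=l(w)$: the inner $n$ levels reproduce $E_{i_n}(a^*b)$ with $i_n=j$ for $a$ and $i_n=k$ for $b$.

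Alternatively, I could use the unitary $\mathcal{X}_i(wu)\cong\bigoplus_{j} X_{i,j}(w)\otimes_{B_j}\mathcal{X}_j(u)$ from Lemma \ref{xxxxxxx}. Under it, $b^\dagger$ acts as $\xi\mapsto b\eta_k\otimes\xi$ from $\mathcal{X}_k(u)$ into the $k$-summand, with $w u$ reduced. The identity then reduces to $\langle a\eta_j,b\eta_k\rangle=\delta_{j,k}E_j(a^*b)$, where the Kronecker delta comes from orthogonality of the distinct summands $j\neq k$. I would present this cleaner tensor-product version as the proof, with Proposition \ref{calcul} justifying the inner product.
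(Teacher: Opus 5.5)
Your proposal is correct, but it takes a different route from the paper.

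The paper argues by induction on $l(w)$:
\begin{itemize}
\item For $w=s$, it writes $a^*b=E_j(a^*b)+(a^*b-E_j(a^*b))$ and uses that $p_{j,s}^\perp(\cdot)p_{j,s}^\perp$ annihilates $F_{j,j}(s)$. When $j\neq k$, it annihilates $a^*b\in F_{j,k}(s)$ directly.
\item For $w=su$, it factors $a=a_1a_2$, $b=b_1b_2$ with $a_1,b_1$ of degree $s$. It applies the length-one case and uses $p_{j,s}^\perp b_2q_{k',u}=b_2q_{k',w}$.
\item Only at the end does it call on Proposition \ref{calcul}, to recombine the nested expectation into $E_k(a^*b)$.
\end{itemize}

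You avoid the induction in one of two ways. Either you compute the coefficients $E_l((ad)^*bc)$ with a single application of Proposition \ref{calcul} to the concatenated reduced word of $wu$. Or, in your preferred version, you transport everything through the unitary of Lemma \ref{xxxxxxx}. There $b^\dagger|_{\mathcal{X}_k(u)}$ is literally the creation map $\xi\mapsto b\eta_k\otimes\xi$, so the identity becomes the standard relation $T_a^*T_b\xi=\langle a\eta_j,b\eta_k\rangle\xi$.

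Your version presents the formula as a Fock-module creation/annihilation relation. It also makes the origin of $\delta_{j,k}$ transparent: it is the orthogonality of the summands indexed by the intermediate index, which is the index-mismatch clause of Proposition \ref{calcul} packaged into Lemma \ref{xxxxxxx}. The paper's argument stays at the level of operators and projections, and needs only the length-one computation plus the final recombination.

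When writing it up, make three points explicit.
\begin{itemize}
\item $(a^\dagger)^*$ restricted to $\mathcal{X}_i(wu)$ is the adjoint of $\zeta\mapsto a\eta_j\otimes\zeta$. This holds because $a^\dagger$ maps the mutually orthogonal pieces $\mathcal{X}_j(u')$ into the mutually orthogonal pieces $\mathcal{X}_i(wu')$.
\item $B_j$ preserves each $\mathcal{X}_j(u)$, so the right-hand side has range in that of $q_{j,w}$. This is what justifies testing only against $d\eta_{l'}$ with $w\cdot u'$ reduced.
\item $E_j(a^*b)$ itself need not vanish when $j\neq k$. For instance, take the trivial group acting on a thick building, and $a=b=1\in F_{i,j}(s)\cap F_{i,k}(s)$ for two distinct $s$-neighbours $x_j,x_k$ of $x_i$. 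So in Case 1 it is the factor $\delta_{j,k}$, not the expectation, that kills the right-hand side.
\end{itemize}
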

\begin{proof}
    We proceed by induction on $l(w)$. The proposition is obvious for $a,b \in B$. Let $s\in S$. Let $a\in F_{i,j}(s),b\in F_{i,k}(s)$. Assume first that $j = k$. Compute \[(a^\dagger)^*b^\dagger = p_{j,s}^\perp a^* b p_{j,s}^\perp = p_{j,s}^\perp (a^*b-E_j(a^*b))p_{j,s}^\perp + E_j(a^*b)p_{j,s}^\perp= E_j(a^*b)p_{j,s}^\perp.\] Assume now that $j\neq k.$ In that case we must have $a^*b \in F_{j,k}(s)$ by (OWD2). This implies $(a^\dagger)^*b^\dagger = p_{j,s}^\perp a^* b p_{k,s}^\perp = 0$.
    
    Let $n\geq 1$ be an integer. Assume that the proposition is true for all elements of $W$ of length $n$. Let $w\in W$ be of length $n+1$. Write $w = su$ for $s\in S$ and $u\in W$ of length $n$. Let $a = a_1a_2 \in F_{i,k}(w), b= b_1b_2 \in F_{i,k'}(w)$ with $a_1 \in F_{i,j}(s)$, $b_1 \in F_{i,j'}(s)$, $a_2 \in F_{j,k}(u)$ and $b_2 \in F_{j',k'}(u)$. We first assume $j = j'$. Compute \[(a^\dagger)^*b^\dagger = (a_2^\dagger)^* (a_1^\dagger)^*b_1^\dagger b_2^\dagger =  (a_2^\dagger)^* E_j(a_1^*b_1)p_{j',s}^\perp b_2q_{k',u} = (a_2^\dagger)^* E_j(a_1^*b_1) b_2q_{k',w}.\] The last equality comes from the fact that for all $v\in W$ we have $l(uv) = l(u)+l(v)$ and $l(suv) = 1+ l(uv)$ if and only if $l(wv) = l(w) + l(v)$. Let $b_2' = E_j(a_1^*b_1)b_2\in F_{j,k'}(u)$. By the induction hypothesis we have \[(a^\dagger)^*b^\dagger = (a_2^\dagger)^* (b_2')^\dagger q_{k',w} = \delta_{k,k'}E_k(a_2^*b_2')q_{k',u}q_{k,w} = \delta_{k,k'}E_k(a_2^*E_j(a_1^*b_1)b_2)q_{k,w} = \delta_{k,k'}E_k(a^*b)q_{k,w},\] by using Proposition \ref{calcul}. We now assume $j\neq j'$. As $L_{j,j'}(e) = 0$, we know that $a_1^*b_1 \in L_{j,j'}(s)$ and $(a_1^\dagger)^*b_1^\dagger = p_{j,s}^\perp a_1^* b_1 p_{j',s}^\perp = 0.$ This implies $(a^\dagger)^*b^\dagger = 0.$ This concludes the proof as Proposition \ref{calcul} gives $E_k(a^*b) = 0$ whenever $k=k'$.
\end{proof}
\begin{lemma}\label{qwqw}
Let $w\in W\setminus\{e\}$. There are $w_1, \ldots w_k \in W$ such that for all $u\in W$, \[ w\cdot u \text{ is reduced} \iff \forall  1\leq i \leq k, u \text{ does not begin with } w_i.\]
\end{lemma}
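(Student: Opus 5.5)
The plan is to show that the set of ``bad'' $u$ (those for which $w\cdot u$ is not reduced) is an up-set for the right weak order $\leq$ with finitely many minimal elements. The elements $w_1,\ldots,w_k$ will be these minimal elements, and then the equivalence in the statement is immediate.

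\emph{Step 1: the bad set is an up-set.} If $u\leq u'$, write $u'=uv$ with $l(u')=l(u)+l(v)$. If $w\cdot u'$ were reduced, then its prefix $w\cdot u$ would be reduced too. Hence badness of $u$ implies badness of every $u'\geq u$. It follows that $w\cdot u$ is reduced if and only if $u$ lies above no minimal bad element. So it remains to show that there are only finitely many minimal bad elements.

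\emph{Step 2: reduction to inversion sets.} For $x\in W$ let $N(x)=\{t \text{ reflection} \mid l(tx)<l(x)\}$ be its set of left inversions. The standard criterion is that $l(wu)=l(w)+l(u)$ if and only if $N(w^{-1})\cap N(u)=\emptyset$. The set $N(w^{-1})$ is finite, with exactly $l(w)$ elements $t_1,\ldots,t_m$, and each $\{u \mid t\in N(u)\}$ is itself an up-set. So the bad set is the finite union over $j$ of the up-sets $\{u\mid t_j\in N(u)\}$. It therefore suffices to prove that, for a single reflection $t$, the up-set $\{u \mid t\in N(u)\}$ has finitely many minimal elements.

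Alternatively one can argue directly with the exchange condition. If $u=u's$ is minimal bad, then $w\cdot u'$ is reduced while $wu's$ is not. The exchange condition then deletes a letter inside $w$, which shows that the reflection $u'su'^{-1}$ lies in $N(w^{-1})$. Thus the last inversion of $u$ is one of the $t_j$.

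\emph{Step 3: the main obstacle.} Geometrically, $\{u \mid t\in N(u)\}$ is the set of chambers of the Coxeter complex separated from $e$ by the wall $H_t$. A minimal such chamber $u$ has $t$ as its last inversion, and every proper prefix of $u$ lies on the same side of $H_t$ as $e$. I expect the finiteness of such $u$ for a fixed $t$ to be the only nontrivial point.

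What I would try first is the Brink--Howlett finiteness of elementary (``small'') roots. Consider a minimal $u$ and a reduced expression for it. Along this expression, the root $\alpha_t$ pulled back to each intermediate chamber does not dominate any other wall crossed afterwards. This should bound $l(u)$ in terms of $l(w)$ and the finite number of elementary roots. Once $l(u)$ is bounded, finiteness follows when $S$ is finite, which is the standing assumption in the later applications.

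Failing that, I would invoke directly the known fact from \cite{Kli23} (via \cite{DJ99}) that the projections $q_{w}$ lie in the algebra generated by finitely many $p_u$. This is equivalent to the statement at hand, and it would let me read off the $w_i$.
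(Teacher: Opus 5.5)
\textbf{There is a genuine gap: Step 3, which carries the whole content of the lemma, is not proved.}

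Your Steps 1 and 2 are correct. The bad set $\{u \mid w\cdot u \text{ not reduced}\}$ is an up-set for the right weak order. It is the union of the half-spaces $U_t=\{u\mid t\in N(u)\}$ over the $l(w)$ reflections $t\in N(w^{-1})$. A minimal bad element is minimal in one of these $U_t$.

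This only moves the content of the lemma into Step 3, and Step 3 is left unproved.
\begin{itemize}
\item The Brink--Howlett paragraph is a heuristic (``this should bound $l(u)$''). No mechanism is given by which finiteness of elementary roots bounds the length of a minimal element of $U_t$, and $\alpha_t$ itself is in general not elementary.
\item The fallback assumes a statement you yourself call equivalent to the lemma. It can be extracted from the continuity of $W\curvearrowright\overline{(W,S)}$ in \cite{Kli23}, since the good set is $w^{-1}D(w)$ and is therefore clopen. You would still need an argument to read off the $w_i$: for instance, that the minimal elements of an up-set lying in the Boolean algebra generated by $D(x_1),\ldots,D(x_n)$ are among the joins $\bigvee_{i\in I}x_i$. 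That route also requires $S$ finite, whereas the lemma holds for every Coxeter system.
\item Passing to single half-spaces makes the problem harder, not easier. Already in type $A_2$, the half-space $U_{srs}$ has two minimal elements, $sr$ and $rs$.
\end{itemize}

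\textbf{The missing idea} is the semilattice property of the right weak order: two elements with a common upper bound have a least one, $x\vee y$. With it the lemma is a short induction on $l(w)$, which is how the paper argues.

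Write $w=vs$ with $s\in S$ and $l(v)=l(w)-1$, and let $x_1,\ldots,x_k$ be the elements attached to $v$. Then $(vs)\cdot u$ is reduced iff $s\cdot u$ and $v\cdot(su)$ are both reduced. Equivalently, $u\ngeq s$ and $su\ngeq x_i$ for all $i$.

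When $u\ngeq s$, the element $su$ begins with $s$. Hence $su\geq x_i$ iff $su\geq s\vee x_i$ iff $u\geq s(s\vee x_i)$. If $s\vee x_i$ does not exist, $su\geq x_i$ never happens.

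So for $w$ one can take $s$ together with the elements $s(s\vee x_i)$. This gives at most $l(w)$ elements, uses no root-system input, and needs no finiteness assumption on $S$.
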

\begin{proof}
We proceed by induction on $l(w)$. For all $s\in S$, $u\in W$, the expression $s\cdot u$ is reduced if and only if $u$ does not begin with  $s$. Let $n\geq 1$ be an integer. Assume the lemma holds for all elements $w\in W$ of length $n$. Let $w\in W$ be of length $n+1$. Write $w = vs$ with $s\in S, v\in W$ and $l(v) = n$. Take $w_1, \ldots w_k \in W$ so that the proposition holds for $v$. For all $u \in W$, \[
\begin{aligned}
(vs)\cdot u \text{ is reduced}
&\iff
\begin{cases}
s \cdot u \text{ is reduced, and} \\
v \cdot (su) \text{ is reduced}
\end{cases} \\
&\iff
\begin{cases}
s \cdot u \text{ is reduced, and} \\
\text{for all } 1 \leq i \leq k,\ (su) \text{ does not begin with } w_i
\end{cases} \\
&\iff
\begin{cases}
u \text{ does not begin with } s,\ \text{and} \\
\text{for all } 1 \leq i \leq k,\ su \text{ does not begin with } s \vee w_i
\end{cases} \\
&\iff
\begin{cases}
u \text{ does not begin with } s,\ \text{and} \\
\text{for all } 1 \leq i \leq k,\ u \text{ does not begin with } s(s \vee w_i).
\end{cases}
\end{aligned}
\]Here we take the convention that the conditions $su \geq s\vee w_i$ and $u\geq s(s\vee w_i)$ are false when $s\vee w_i = \infty$.
\end{proof}
In particular, for all $i\in \mathscr{I}$ and $w\in W$, we have $q_{i,w} = p_{i,w_1}^\perp \ldots p_{i,w_k}^\perp$, thus $q_{i,w} \in \Delta_0(i)$.
\begin{definition}
Let $i,j \in \mathscr{I}$. A path operator from $i$ to $j$ is either an element of some $B_i$ in the case when $i = j$ or an
operator of the form
\[
f_1\cdots f_n,
\]
where $n\geq 1$, $s_1,\ldots,s_n\in S$, and
$i_0,\ldots,i_n\in\mathscr{I}$ are such that $i = i_0, j = i_n$ and where, for
each $1\leq k\leq n$, the operator $f_k$ is the creation,
annihilation, or elementary diagonal operator associated with an
element $a_k\in F_{i_{k-1},i_k}(s_k)$. Such an operator is called based at $i$ if $i = j$ (i.e. it is a path operator from $i$ to $i$). The covariance C$^*$-algebra based at $i$ is the C$^*$-subalgebra $\mathscr{C}(i) \subset \mathcal{L}_\mathcal{P}(\mathcal{X})$ generated by all path operators based at $i$. The global covariance C$^*$-algebra is the C$^*$-subalgebra $\mathscr{C} \subset \mathcal{L}_\mathcal{P}(\mathcal{X})$ generated by all path operators with arbitrary source and target.
\end{definition}
The first thing to notice is that $A^r \subset \mathscr{C}(i)$. As $A^r = \overline{\sum_{w\in W}F_{i,i}(w)},$ we know that $A^r$ is spanned by elements of $B_i$ and elements of the form $a_1\ldots a_n$ with $n\geq 1$, $s_1,\ldots,s_n\in S$, and $i_0,\ldots,i_{n}\in\mathscr{I}$ such that $i_0 = i_n = i$ and $a_k\in F_{i_{k-1},i_k}(s_k).$ Every such $a_k\in F_{i_{k-1},i_k}(s_k)$ is a sum of a creation, an annihilation and an elementary diagonal operator of type $(i_{k-1},i_k,s_k)$. We let $\mathcal{E}(i)$ be the set of path operators based at $i$ so that \[
\mathscr{C}(i)=\overline{\operatorname{span}}\,\mathcal{E}(i).
\]

Let $i_0, \ldots i_n\in \mathscr{I}$. Take a path operator of the form $T =f_1 \cdots f_n$, where each $f_k$ is a creation, annihilation, or elementary diagonal operator of type $(i_{k-1},i_k,s_k)$. Define $\epsilon_1 \cdots \epsilon_n = w \in W$ to be the degree of the operator $T$, where for all $1 \leq k \leq n$, we set
\[
\epsilon_k =
\begin{cases}
s_k & \text{if } f_k \text{ is a creation or annihilation operator}, \\
e   & \text{if } f_k \text{ is an elementary diagonal operator}.
\end{cases}
\]
An induction on $n$ proves the following proposition.
\begin{proposition}
    If $T =f_1 \cdots f_n \in\mathcal{L}_\mathcal{P}(\mathcal{X})$ is a path operator of degree $w$ then for all $j \in \mathscr{I}$ and $u \in W$, we have $T(X_{i_n,j}(u)) \subset X_{i_0,j}(wu)$.
\end{proposition}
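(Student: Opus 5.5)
The proof is by induction on the number $n$ of factors of the path operator, so everything reduces to the case of a single creation, annihilation or elementary diagonal operator. Write $T=f_1T'$, where $T'=f_2\cdots f_n$ is a path operator from $i_1$ to $i_n$ of degree $w'=\epsilon_2\cdots\epsilon_n$, so that $w=\epsilon_1w'$. By the induction hypothesis, $T'(X_{i_n,j}(u))\subset X_{i_1,j}(w'u)$. It is therefore enough to prove the case $n=1$: for $a\in F_{i_0,i_1}(s)$, every $v\in W$ and every $j\in\mathscr{I}$, the operator $f_1$ maps $X_{i_1,j}(v)$ into $X_{i_0,j}(\epsilon_1v)$. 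The case $T\in B_i$ (with $n=0$) is immediate from (OWD2).

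Two facts will be used throughout. First, by Proposition~\ref{fock}, for each fixed base index $k$ the decomposition $X_j=\bigoplus_{w}X_{k,j}(w)$ is orthogonal. Second, $p_{k,s}$ restricted to $X_j$ is the projection onto $\bigoplus_{v\geq s}X_{k,j}(v)$, that is, onto the $v$ with $l(sv)<l(v)$.

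\textbf{Creation.} Take $a^\dagger=p_{i_0,s}ap_{i_1,s}^\perp$. If $l(sv)<l(v)$, then $p_{i_1,s}^\perp$ kills $X_{i_1,j}(v)$. If $l(sv)>l(v)$, then for $x\in L_{i_1,j}(v)$ we have $ax\in L_{i_0,j}(sv)$ by (OWD3), since $l(sv)>l(v)$. By continuity, $a$ maps $X_{i_1,j}(v)$ into $X_{i_0,j}(sv)$, and $p_{i_0,s}$ fixes this space because $sv\geq s$.

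\textbf{Elementary diagonal.} Take $\partial(a)=p_{i_0,s}ap_{i_1,s}$. We only need $v$ with $l(sv)<l(v)$. Then (OWD3) gives $ax\in L_{i_0,j}(sv)+L_{i_0,j}(v)$. Since $l(sv)<l(v)$, the element $sv$ does not begin with $s$, so $p_{i_0,s}$ kills the $sv$ component and keeps the $v$ component. Hence the image lies in $X_{i_0,j}(v)=X_{i_0,j}(\epsilon_1v)$ with $\epsilon_1=e$.

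\textbf{Annihilation.} The annihilation operator is $(b^\dagger)^*$ with $b\in F_{i_1,i_0}(s)$. By the creation case, $b^\dagger$ maps $X_{i_0,j}(v')$ into $X_{i_1,j}(sv')$ for every $v'$, and it vanishes unless $l(sv')>l(v')$. Orthogonality of the decompositions then gives, for $\xi\in X_{i_1,j}(v)$ and $\zeta\in X_{i_0,j}(v')$,
\[\langle (b^\dagger)^*\xi,\zeta\rangle=\langle\xi,b^\dagger\zeta\rangle=0\]
unless $sv'=v$. Consequently $(b^\dagger)^*\xi\in X_{i_0,j}(sv)$. Adjointability together with the orthogonal direct sum ensures that the image really lies in that summand and not merely in its closure.

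The only delicate point is the orthogonality bookkeeping. The projections $p_{i_0,s}$ and $p_{i_1,s}$ are taken relative to different base indices, so one must check that each is compatible with the decomposition it acts on. This is guaranteed by Proposition~\ref{fock}, which applies for each base index separately, together with Proposition~\ref{calcul}. Once this is in place, the induction closes, because $\epsilon_1(w'u)=(\epsilon_1w')u=wu$ holds in $W$.
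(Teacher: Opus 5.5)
Your proof is correct and follows the paper's approach: the paper's proof is just the remark that the statement follows by induction on $n$. You supply the omitted one-factor cases: creation and diagonal via (OWD3) together with the orthogonal decomposition of Proposition~\ref{fock} for the relevant base index, and annihilation by adjoint duality. As a side remark, the annihilation case can also be done directly like the diagonal one. The annihilation summand of $a\in F_{i_0,i_1}(s)$ is $((a^*)^\dagger)^*=p_{i_0,s}^\perp\, a\, p_{i_1,s}$, and for $v\geq s$ the projection $p_{i_0,s}^\perp$ keeps only the $X_{i_0,j}(sv)$-component of $a\cdot X_{i_1,j}(v)$.
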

Let $w \in W$. Define $\mathcal{E}_w(i)$ to be the set of path operators based at $i$ of degree $w$. Let $\mathscr{C}_w(i) = \overline{\operatorname{span}}\,\mathcal{E}_w(i)$ be the set of operators of $\mathscr{C}(i)$ of degree $w$. We call $\mathcal{D}(i) = \mathscr{C}_e(i)$ the diagonal C$^*$-subalgebra based at $i$.

\begin{definition}\cite{Exe97}
Let $\Gamma$ be a discrete group. Let $\mathcal{A}$ be a C$^*$-algebra. A $\Gamma$-grading of $\mathcal{A}$ is the data of a family of closed linear subspaces $(\mathcal{A}_g)_{g\in \Gamma}$ that are in direct sum and such that \begin{enumerate}[label=(\roman*)] 
\item $\mathcal{A} = \overline{\bigoplus_{g\in \Gamma}\mathcal{A}_g}$
\item $\forall g,h\in \Gamma, \mathcal{A}_g\mathcal{A}_h \subset \mathcal{A}_{gh}$
\item $\forall g\in \Gamma, \mathcal{A}_g^* = \mathcal{A}_{g^{-1}}$
\end{enumerate}
\end{definition}
Such a grading is called topological if there exists a conditional expectation $\mathcal{E} : \mathcal{A} \rightarrow \mathcal{A}_e$ such that $\mathcal{E}(\mathcal{A}_g) = 0$ for all $g\in \Gamma \setminus \{e\}$.
\begin{proposition}\label{cgraded}
There is a faithful conditional expectation
    $F : \mathscr{C}(i) \rightarrow \mathcal{D}(i)$ such that $F(\mathscr{C}_w(i)) = 0$ for all $w\in W, w\neq e$. Moreover, for all $u,v\in W,$ we have $\mathscr{C}_u(i)\mathscr{C}_v(i) \subset \mathscr{C}_{uv}(i)$ and $\mathscr{C}_u(i)^* = \mathscr{C}_{u^{-1}}(i)$. Hence
    $\mathscr{C}(i) = \overline{\bigoplus_{w\in W} \mathscr{C}_w(i)}$ is a topological $W$-grading for $\mathscr{C}(i)$. 
\end{proposition}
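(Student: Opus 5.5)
The plan is to construct $F$ spatially, from the $W$-grading of the Fock module based at $i$. Fix $i$ and use the decomposition $\mathcal{X}=\bigoplus_{u\in W}\mathcal{X}_i(u)$, writing $P_u$ for the projection onto $\mathcal{X}_i(u)$.

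\textbf{Step 1: degree and grading.} By the preceding proposition, a path operator $T$ of degree $w$ based at $i$ maps $\mathcal{X}_i(u)$ into $\mathcal{X}_i(wu)$ (sum the statement over $j$). Hence $P_{wu}TP_u=TP_u$ for all $u$, and the same holds for every $T\in\mathscr{C}_w(i)$ by linearity and continuity. The algebraic relations are then immediate. A product of path operators based at $i$ of degrees $u$ and $v$ is a concatenated path operator based at $i$ of degree $uv$, which gives $\mathscr{C}_u(i)\mathscr{C}_v(i)\subset\mathscr{C}_{uv}(i)$. Taking adjoints exchanges creation and annihilation operators of the same type, reverses both the order and the path, and sends diagonal operators $\partial(a)$ to $\partial(a^*)$ with $a^*\in F_{i_k,i_{k-1}}(s_k)$. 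So the adjoint of a path operator of degree $\epsilon_1\cdots\epsilon_n$ is a path operator of degree $\epsilon_n\cdots\epsilon_1=w^{-1}$ (the $\epsilon_k$ are involutions or $e$), which gives $\mathscr{C}_u(i)^*=\mathscr{C}_{u^{-1}}(i)$. Since $\mathcal{E}(i)=\bigcup_w\mathcal{E}_w(i)$, the algebraic sum of the $\mathscr{C}_w(i)$ is dense in $\mathscr{C}(i)$.

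\textbf{Step 2: construction of $F$.} Define $F(T)=\sum_{u\in W}P_uTP_u$ as a strictly convergent block-diagonal compression. Concretely, $F(T)$ is the adjointable operator acting as $P_uTP_u$ on each $\mathcal{X}_i(u)$. This operator is well defined with $\|F(T)\|\le\|T\|$, since $\sup_u\|P_uTP_u\|\le\|T\|$ and the blocks are orthogonal. The map $F$ is a contractive, completely positive, idempotent map on $\mathcal{L}_\mathcal{P}(\mathcal{X})$. On $\mathscr{C}_w(i)$ it equals the identity if $w=e$ and $0$ otherwise, because $P_uTP_u=0$ when $wu\neq u$. By density from Step 1, $F$ maps $\mathscr{C}(i)$ onto $\mathcal{D}(i)$, which is a C$^*$-algebra by Step 1. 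It also restricts to the identity there, so it is a conditional expectation by Tomiyama.

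\textbf{Step 3: faithfulness and direct sum.} If $F(T^*T)=0$, then $P_uT^*TP_u=0$ for all $u$, hence $TP_u=0$ for all $u$, and so $T=0$ because the $\mathcal{X}_i(u)$ span $\mathcal{X}$. For the directness of the sum, I would also consider the maps $F_w(T)=F(T\,x)$ for $x$ ranging over $\mathscr{C}_{w^{-1}}(i)$. A more direct route is to note that $T\mapsto (P_{wu}TP_u)_u$ kills $\mathscr{C}_v(i)$ for $v\neq w$ whenever $vu\neq wu$, which holds for all $u$. Thus a finite sum $\sum_w T_w=0$ gives $T_w P_u=P_{wu}(\sum_v T_v)P_u=0$ for every $u$, so each $T_w=0$. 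This yields a $W$-grading in Exel's sense, and $F$ makes it topological.

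\textbf{Main obstacle.} The main obstacle is Step 2: checking that the block-diagonal compression really defines an adjointable operator on the Hilbert $\mathcal{P}$-module, and that it preserves $\mathscr{C}(i)$. For adjointability, the compression is bounded and has the compression of $T^*$ as adjoint, since the submodules $\mathcal{X}_i(u)$ are complemented. For preservation, I rely on the density of the graded algebraic sum together with continuity of $F$.
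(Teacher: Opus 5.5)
Your proposal is correct and is essentially the paper's argument. The paper also defines $F(T)=\sum_{w}p_{=w}Tp_{=w}$ as the block-diagonal compression along $\mathcal{X}=\bigoplus_{w\in W}\mathcal{X}_i(w)$, notes that it kills path operators of nontrivial degree and fixes those of degree $e$, and applies Tomiyama's theorem; the only differences are that the paper realizes the compression as a strongly convergent sum on the Hilbert space $\mathcal{X}\otimes_{\mathcal{P}}H$ (for a faithful representation $\mathcal{P}\subset B(H)$) rather than inside $\mathcal{L}_{\mathcal{P}}(\mathcal{X})$, and that it cites Exel's Theorem~3.3 for the independence of the $\mathscr{C}_w(i)$, whereas you prove that independence and the faithfulness of $F$ directly.
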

\begin{proof}
Let $\mathcal{P} \hookrightarrow B(H)$ be a faithful representation of $\mathcal{P}$ on a Hilbert space. Consider the induced embedding $\iota : \mathcal{L}_\mathcal{P}(\mathcal{X}) \hookrightarrow B(\mathcal{X}\otimes_\mathcal{P}H)$. We have \[ \mathcal{X}\otimes_\mathcal{P} H = \bigoplus_{w\in W}\mathcal{X}_i(w)\otimes_\mathcal{P} H.\] For all $w\in W$ let $p_{=w}$ be the orthogonal projection on the Hilbert subspace $\mathcal{X}_i(w)\otimes_{\mathcal{P}} H$ so that $(p_{=w})$ forms a family of orthogonal projections satisfying $\sum_{w\in W}p_{=w} = 1$ for the strong topology on $\mathcal{X}\otimes_\mathcal{P} H$. Define the conditional expectation $F : \iota(\mathscr{C}(i)) \rightarrow \iota(\mathcal{D}(i))$ by the formula \[F(T) = \sum_{w\in W}p_{=w}Tp_{=w}, \text{ for all } T\in \iota(\mathscr{C}(i)),\] where the sum converges in the strong topology. $F$ is idempotent and contractive. Hence by Tomiyama's theorem \cite[Theorem 1.5.10]{BO08}, $F$ is a conditional expectation onto its image. For every elementary operator $f \in \mathcal{E}_w(i)$ for $w\in W$, we have $F(\iota(f)) = 0$ if $w\neq e$ and $F(\iota(f)) = \iota(f)$ if $w = e$.  
This implies $F(\iota(\mathscr{C}_w(i))) = 0$ for all $w\neq e$ and $F(\iota(\mathscr{C}(i))) = \iota(\mathcal{D}(i))$. The two other formulas are obvious. Conclude by \cite[Theorem 3.3.]{Exe97}.
\end{proof}
\begin{remark}
    As the product of two path operators with arbitrary sources and targets is not necessarily a path operator, in general we have no way to understand the global covariance C$^*$-algebra $\mathscr{C}$. This is why we focus on the smaller C$^*$-subalgebras $\mathscr{C}(i) \subset \mathscr{C}.$
\end{remark}
We now assume that $S$ is finite. We also assume that for all $s\in S$ and $j \in \mathscr{I}$ the left action of $B_j$ on $\mathcal{X}_j(s)$ is faithful. By Proposition \ref{cw}, we know that $\Delta_0(i) \cong C(\overline{(W,S)})$. 
For all $i\in \mathscr{I},$ recall that $\psi_i : C(\overline{(W,S)}) \xrightarrow{\cong} \Delta_0(i)$ is the $*$-isomorphism sending each $P_w$ to $p_{i,w}$. Consider the left action of $W$ on $\overline{(W,S)}$ given by \cite[Theorem 3.3]{Kli23} and the induced left action $ \alpha :W \rightarrow \mathrm{Aut}(C(\overline{(W,S)}))$ defined for all $\phi \in C(\overline{(W,S)})$ and $w,u\in W$ by $\alpha_w(\phi)(u) = \phi(w^{-1}u)$ (by a slight abuse of notation, we will also write $\alpha$ for the induced action on some $\Delta_0(i)$). 
\begin{proposition}\label{CWDELTA}
Let $i,j \in \mathscr{I}$. Let $f \in C(\overline{(W,S)})$. Let $w\in W$. Let $T$ be a path operator from $i$ to $j$ of degree $w$. We have $T\cdot \psi_j(f) = \psi_i(\alpha_w(f)) \cdot T$. In particular, the elements of $\mathcal{D}(i)$ commute with the elements of $\Delta_0(i)$.
\end{proposition}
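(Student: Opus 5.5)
Since $\psi_j$ is a $*$-isomorphism and $\alpha_w$ is a $*$-automorphism, both sides of the identity are linear and continuous in $f$. The set of $f$ satisfying $T\psi_j(f)=\psi_i(\alpha_w(f))T$ is therefore a closed subspace, and it is closed under products. Indeed, if $f,g$ work then
\[T\psi_j(fg)=\psi_i(\alpha_w(f))\,T\,\psi_j(g)=\psi_i(\alpha_w(f))\,\psi_i(\alpha_w(g))\,T,\]
and the same argument with $T^*$ (a path operator from $j$ to $i$ of degree $w^{-1}$) handles adjoints. Hence it suffices to check the identity on the generators $f=P_v$, $v\in W$.

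The relation is also multiplicative in $T$. If $T_1$ goes from $i$ to $k$ with degree $w_1$ and $T_2$ goes from $k$ to $j$ with degree $w_2$, and each satisfies the relation, then
\[T_1T_2\psi_j(f)=T_1\psi_k(\alpha_{w_2}f)T_2=\psi_i(\alpha_{w_1w_2}f)T_1T_2.\]
So we reduce to a single creation, annihilation or elementary diagonal operator of type $(i,j,s)$. Annihilation operators are adjoints of creation operators of type $(j,i,s)$, so they follow from the creation case by taking adjoints. Elements of $B_i$ (degree $e$, from $i$ to $i$) commute with all $p_{i,w}$, as already noted in Proposition~\ref{cw}.

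The core step is to compute $\alpha_s(P_v)$ inside $C(\overline{(W,S)})\cong\mathcal{D}(W,e)$, viewed as projections on $\ell^2W$. The operator $P_v$ is the indicator of $\{u: u\ge v\}$, so $\alpha_s(P_v)$ is the indicator of $\{u : s^{-1}u\ge v\}=\{u: su\ge v\}$. I would describe this set in the cases $l(sv)>l(v)$ and $l(sv)<l(v)$, using the combinatorics of the weak order (as in Lemma~\ref{qwqw} and the join $s\vee v$). This expresses $\alpha_s(P_v)$ as an explicit polynomial in $P_s$, $P_{sv}$ and $P_{s(s\vee v)}$. The expected outcomes are:
\begin{itemize}
\item $\alpha_s(P_v)=P_{sv}$ if $sv>v$;
\item $\alpha_s(P_v)=P_s^\perp + P_{sv}$-type combinations if $v\ge s$.
\end{itemize}

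I would then verify the identity for a single operator of type $(i,j,s)$ by evaluating both sides on the graded pieces $X_{j,k}(u)\otimes\mathcal{P}$ of $\mathcal{X}$. On these pieces, $p_{j,v}$ acts as the scalar $[u\ge v]$. A creation operator $a^\dagger=p_{i,s}ap_{j,s}^\perp$ maps $X_{j,k}(u)$ with $su$ reduced into $X_{i,k}(su)$, so both sides act on the piece $u$ by $[su\ge v']$ versus $[u\ge v]$. A diagonal operator $\partial(a)$ preserves degree and is supported where $u\ge s$; by the degree proposition, $a$ sends $X_{j,k}(u)$ into $X_{i,k}(su)\oplus X_{i,k}(u)$, and $\partial(a)$ keeps the $X_{i,k}(u)$ component. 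For diagonal operators ($w=e$) the claim is that $\partial(a)$ commutes with $p_{\cdot,v}$ across indices, i.e.\ that $[u\ge v]$ is the same on source and target pieces, which is immediate since the degree is unchanged.

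The main obstacle is that the identity must hold with $\psi_i(\alpha_s(P_v))$, a genuinely different projection, in the creation case. This requires the pointwise identity $[su\ge v]$ (the value of $\alpha_s P_v$ at $su$, namely $[s\cdot su\ge v]=[u\ge v]$) on the support $\{u : su \text{ reduced}\}$. Writing $x=su$, $\alpha_s(P_v)$ evaluated at $x$ equals $P_v(sx)=[u\ge v]$. This is exactly what is needed, so the identity holds provided $\psi_i$ realizes the functional calculus pointwise on degrees. I would justify that realization through the faithful embedding of Proposition~\ref{cw}: an element of $\Delta_0(i)$ acts on $\mathcal{X}_i(u)$ as the scalar given by its value at $u\in W\subset\overline{(W,S)}$.

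The final claim is the case $w=e$, $i=j$: since $\alpha_e=\mathrm{id}$, every element of $\mathcal{E}_e(i)$ commutes with $\Delta_0(i)$, and this passes to the closed span $\mathcal{D}(i)$.
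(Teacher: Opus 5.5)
Your argument is correct and is essentially the paper's: the decisive step is that $\psi_i(g)$ acts on $\mathcal{X}_i(x)$ as the scalar $g(x)$ (true on the generators $p_{i,v}$, hence everywhere by continuity, so faithfulness is not needed), combined with the degree property $T(\mathcal{X}_j(u))\subset\mathcal{X}_i(wu)$; this handles an arbitrary path operator of degree $w$ at once, so your reduction to single elementary operators is superfluous. The explicit computation of $\alpha_s(P_v)$ is likewise unnecessary and should be dropped, because your first bullet is false: for $v=e$ it would give $\alpha_s(1)=P_s$, whereas the correct formula when $l(sv)>l(v)$ is $\alpha_s(P_v)=P_{sv}+P_s^\perp P_{s(s\vee v)}$, with the second term vanishing if $s\vee v$ does not exist.
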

\begin{proof}
    It suffices to prove the formula for all $f = P_{u}$, $u\in W$. The element $\psi_i(\alpha_w(P_{u})) \in \Delta_0(i)$ is the projection on the submodules $\mathcal{X}_i(v)$ for $v\in W$ such that $w^{-1}v \geq u$. The result follows as $T$ sends each submodule $\mathcal{X}_j(v)$ to the submodule $\mathcal{X}_i(wv)$.
\end{proof}
The following lemma will be essential in our understanding of $\mathscr{C}(i)$. It shows that each elementary operator can be understood by its action on a specific submodule of the form $\mathcal{X}_i(p)$, for a $p\in W$. For every $T\in \mathcal{L}_\mathcal{P}(\mathcal{X})$ and submodule $Y\subset X_j$ we denote by $T|_Y :  Y \rightarrow X_j$ the restriction of $T$ to $Y$ (see Proposition \ref{box}). For all $u\in W$ we write \[D(u) = \{w\in W| w\geq u\}\] for the set of words of $W$ beginning with $u$. For every subset $D \subset W$, we let $P_D \in B(l^2W)$ denote the orthogonal projection onto the Hilbert subspace $l^2D$.
\begin{lemma}\label{TWW}
Let $i,j \in \mathscr{I}$. Let $T\in \mathscr{C}$ be a path operator from $i$ to $j$. There exist $u\in W$ and a subset $D \subset W$ (which should be thought of as the domain of the operator $T$) such that \begin{enumerate}[label=(\roman*)] 
    \item $D \subset D(u),$
    \item $D$ is a clopen subset of $\overline{(W,S)}$, in other words $P_{D} \in C(\overline{(W,S)})$,
    \item for all $w \in D$ and $v\in W,$ if $u \leq v \leq w$ then $v \in D$,
\end{enumerate} such that for all $w \in W$, if $w\notin D$ then $T|_{\mathcal{X}_j(w)} = 0$ and if $w\in D$ is written as $w = u\cdot v$ with $v\in W$ such that $l(w) = l(u)+l(v)$ then $T$ acts on each summand of the Hilbert module \[\mathcal{X}_j(uv) \cong \bigoplus_{k\in \mathscr{I}}X_{j,k}(u)\otimes_{B_k}\mathcal{X}_k(v)\] as $T|_{X_{j,k}(u)}\otimes \mathrm{id}_{\mathcal{X}_k(v)}$ (recall that $T(X_{j,k}(u)) \subset X_{i,k}(zu)$, $z$ being the degree of $T$).
\end{lemma}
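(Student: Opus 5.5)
Let me write $T = f_1\cdots f_n$, with $f_k$ the creation, annihilation or elementary diagonal operator attached to $a_k \in F_{i_{k-1},i_k}(s_k)$. I would argue by induction on $n$. The statement to carry along is slightly strengthened: the set $D$ is cut out by finitely many conditions of the form ``$w \geq x$'' or ``$w \ngeq x$''. This makes $P_D$ a finite product of the generators $P_x$ and $P_x^\perp$, so $P_D$ lies in $C(\overline{(W,S)})$ automatically, which is (ii).

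The base cases are $n=0$ and $n=1$.
\begin{itemize}
\item For $n=0$, $T=b\in B_i$ and we take $u=e$, $D=W$; $b$ acts on $\mathcal{X}_j(v)\cong B_j\otimes_{B_j}\mathcal{X}_j(v)$ by left multiplication.
\item For a creation operator $a^\dagger = a q_{j,s}$, take $u=e$ and $D=\{w : w\ngeq s\}$. On $\mathcal{X}_j(v)$ with $s\cdot v$ reduced, $a^\dagger$ acts as $a\otimes \mathrm{id}$ under the identification of Lemma \ref{xxxxxxx}.
\item For an annihilation operator $(a^\dagger)^*$, or an elementary diagonal operator $\partial(a)=p_{i,s}ap_{j,s}$, take $u=s$ and $D=D(s)$. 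On $\mathcal{X}_j(sv)\cong\bigoplus_k X_{j,k}(s)\otimes_{B_k}\mathcal{X}_k(v)$ with $sv$ reduced, $a$ acts only through the first tensor factor. Indeed $a\cdot X_{j,k}(s)\subset X_{i,k}(e)\oplus X_{i,k}(s)$ by (OWD3), so $a$ acts as $a|_{X_{j,k}(s)}\otimes \mathrm{id}$, and its two components are exactly $(a^\dagger)^*$ and $\partial(a)$ restricted there.
\end{itemize}
Condition (iii) is immediate in each case.

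For the induction step, write $T = f\,T'$, where $T'$ has data $(u',D')$ and degree $z'$, and $f$ is elementary with data $(u_f,D_f)$. A vector in $\mathcal{X}_j(w)$ with $w\in D'$ is sent by $T'$ into $\mathcal{X}_{\cdot}(z'w)$. I would then set
\[D=\{w\in D' : z'w\in D_f\},\qquad u = u'\vee (z'^{-1}\text{-translate of } u_f),\]
where the second entry is made precise as follows. If $f$ is a creation operator, $u=u'$ works, with the condition $z'w\ngeq s$ added. If $f$ is an annihilation or diagonal operator, we need $z'w\geq s$. Two cases arise:
\begin{itemize}
\item If $z'u'\ge s$ already, take $u=u'$.
\item Otherwise set $u=u'\cdot t$ for the shortest suffix $t$ making $z'u't \geq s$, with $z'u'$ reduced and $l(z'u')=l(z')+l(u')$ as appropriate.
\end{itemize}
Using Proposition \ref{CWDELTA}, the condition $z'w\ge s$ is $w\in \alpha_{z'^{-1}}(D(s))$. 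By Klisse's action \cite[Theorem 3.3]{Kli23}, that set is a clopen set expressible with finitely many generators, and intersecting it with $D'$ preserves (ii).

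Factorization on $D$ then comes from associativity of the tensor decomposition. On $\mathcal{X}_j(uv)\cong \bigoplus_k X_{j,k}(u)\otimes \mathcal{X}_k(v)$, write $u=u'u''$. Then $T'$ acts as $T'|_{X(u')}\otimes\mathrm{id}$, and hence as $(T'|_{X(u')}\otimes \mathrm{id}_{X(u'')})\otimes\mathrm{id}_{\mathcal{X}(v)}$. The image lies in $X_{\cdot}(z'u)\otimes \mathcal{X}(v)$, which contains the relevant letter $s$ at the front, so $f$ acts on the first factor only. Convexity (iii) follows because both $D'$ and $\{w: z'w\ge s\}$ intersected with $D(u)$ are order-convex above $u$: if $u\le v\le w$ and $z'u\ge s$ with $z'u$ reduced, then $z'v\ge s$.

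The main obstacle is the bookkeeping of when $z'w$ is a reduced product, since annihilation operators shorten words. The choice of $u$ must guarantee that every $w\in D(u)$ has the length additivity needed to split off $X(u)$ uniformly. I would first try to handle this by insisting in the induction that $z'$ acts on $u'$ ``cleanly'', meaning $T'$ maps $X_{j,k}(u')$ into $X_{i,k}(z'u')$ with $l(z'u'v)=l(z'u')+l(v)$ for all $u'v\in D'$. This is exactly what the annihilation and diagonal cases require in order to stay in the leftmost tensor factor.
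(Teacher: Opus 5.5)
Your argument is sound in outline, but it runs the induction in the opposite direction from the paper, and this moves the difficulty. The paper appends the new elementary operator on the right, $Tf_{n+1}$, so that factor acts first on vectors. Its constraint is then on $w$ itself, and the old domain is pulled back by a single generator only. The new data are explicit and do not involve the degree: $u'=s\vee u$ with $D\cap D(s)$ for a diagonal, $u'=s(s\vee u)$ with $sD\setminus D(s)$ for a creation, and $u'=su$ with $sD\cap D(su)$ for an annihilation. The length-additivity needed for the factorisation then comes for free. By prepending $f$, you make the new factor see $z'w$, where $z'$ is the whole degree of $T'$ and may shorten words. So the hypothesis you isolate at the end is genuinely needed: (I) for every $w=u'\cdot v\in D'$, the product $z'u'\cdot v$ is reduced. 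This is the load-bearing step of your route, and you only propose it. You need to check that it propagates through the induction; it does, as follows.

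\emph{Creation of type $s$.} For $w=u'v\in D$, both $z'u'\cdot v$ and $s\cdot z'u'v$ are reduced, so $sz'u'\cdot v$ is reduced. Moreover, (I) gives $z'x\le z'w$ whenever $u'\le x\le w\in D'$. This is exactly what (iii) needs for the condition $z'w\ngeq s$; your convexity sentence only treats $z'w\geq s$.

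\emph{Annihilation or diagonal of type $s$.} Put $r=s\vee z'u'$; if it does not exist, $T=0$. Put $c=(z'u')^{-1}r$. By (I), for $w=u'y\in D'$ one has $z'w\geq s$ iff $y\geq c$ iff $w\geq u'c$. So you may take $u=u'c$ and $D=D'\cap D(u'c)$, with $D=\emptyset$ if $u'\cdot c$ is not reduced. The new $zu$ equals $r$ or $sr$, and both are reduced against $v$ because $z'u'\cdot cv$ is.

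\emph{Two corrections.} What this uses is that $z'u'\cdot c\cdot v$ is reduced, not that $l(z'u')=l(z')+l(u')$. The latter fails in general: a single annihilation of type $s'$ has $z'=u'=s'$. Likewise, your heuristic $u=u'\vee z'^{-1}u_f$ is wrong. A creation of type $s$ followed by a diagonal of type $t$ with $m_{s,t}=3$ has $u=ts$, not $st$. Your precise recipe, however, is correct.

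\emph{A small bonus.} Once (I) holds, $D$ is always $D'$ cut by a single cone $D(u'c)$ or its complement. So (ii) is immediate without appealing to \cite[Theorem 3.3]{Kli23}.
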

\begin{proof}
We proceed by induction on the number $n$ of elementary operators appearing in the decomposition of $T$. Assume $n=1$. If $T$ is an elementary diagonal or annihilation operator of type $s$ then let $u=s$ and $D = D(s)$ so that $P_{D} = P_{s} \in C(\overline{(W,S)})$. If $T$ is a creation operator then let $u=e$ and $D = \{w\in W | l(sw) > l(w)\}$ so that $P_{D} = P_{s}^\perp \in C(\overline{(W,S)})$. 

Take $n\geq 1$ and assume the result holds for every path operator of length $n$.  Let $f_1,\ldots f_{n+1}$ be elementary diagonal, creation or annihilation operators, each $f_k$ being of type $(i_{k-1},i_k, s_k)$. Write $i_{n+1} = j$. Take $u\in W$ and $D\subset D(u)$ so that the lemma holds for the operator $T = f_1\ldots f_n$. We want to construct suitable $u'\in W$ and $D' \subset D(u')$ so that the lemma holds for the operator $Tf_{n+1} = f_1\ldots f_{n+1}$. Assume that $f_{n+1}$ is elementary diagonal. If $s_{n+1}\vee_Ru$ does not exist then $f_1\ldots f_{n+1} = 0$, otherwise  we claim that $u' = s_{n+1}\vee_Ru$ and $D' = D\cap D(s_{n+1})$ do the job. Indeed, we clearly have $D' \subset D(u')$ and $P_{D'} = P_{D}P_{s_{n+1}} \in C(\overline{(W,S)})$. Condition (iii) is also obvious. Let $w\in W$. If $w\notin D'$ then $Tf_{n+1}|_{\mathcal{X}_j(w)} = 0.$ Assume that $w\in D'$. Write $w = u' \cdot v$ for some $v \in W$ such that $l(w) = l(u') + l(v).$ Write also $u' = u\cdot t$ for some $t \in W$ such that $l(u') = l(u) + l(t)$. By applying condition (iii) of $D$, the inequality $u \leq u' \leq w \in D$ gives $u' \in D$. Hence the induction hypothesis gives \[T|_{X_{j,k}(u)\otimes \mathcal{X}_k(tv)} = T|_{X_{j,k}(u)}\otimes \mathrm{id}_{\mathcal{X}_k(tv)}\]
for all $k \in \mathscr{I}$. This proves the claim as we have \[f_{n+1}|_{X_{j,k}(s_{n+1})\otimes \mathcal{X}_k(s_{n+1}w) } = f_{n+1}|_{X_{j,k}(s_{n+1})}\otimes \mathrm{id}_{\mathcal{X}_k(s_{n+1}w)}.\]
Assume that $f_{n+1}$ is a creation operator. We can assume that $s_{n+1}\vee_Ru$ exists otherwise $f_1\ldots f_{n+1} = 0$. Then by the same reasoning as before, $u' = s_{n+1}(s_{n+1}\vee_Ru)$ and $D' = (s_{n+1}D)\setminus D(s_{n+1})$ do the job. Moreover $P_{D'} = P_{s_{n+1}D}P_{s_{n+1}}^\perp$ and $P_{s_{n+1}D} = \alpha_{s_{n+1}}(P_{D}) \in C(\overline{(W,S)})$. Assume that $f_{n+1}$ is an annihilation operator. We can assume that $s_{n+1}\cdot u$ is reduced otherwise $f_1\ldots f_{n+1} = 0$. For all $w\in W$ we have \[  \begin{cases}
w \geq s_{n+1} \text{ and} \\
s_{n+1} w \in D
\end{cases} 
\iff
\begin{cases}
w\geq s_{n+1}u \text{ and} \\
w\in s_{n+1}D
\end{cases} 
\] Hence $u' = s_{n+1}u$ and $D' = (s_{n+1}D)\cap D(u')$ do the job. 
\end{proof}
It turns out that operators of the form $a_1^\dagger (a_2^\dagger)^*$ for some $a_1,a_2 \in F_{i,j}(w)$ can be identified with compact operators on some specific Hilbert module. First let us recall a classical result. \begin{proposition}[{\cite[Proposition 4.6.3]{BO08}}]\label{BOBO}
Let $\mathcal{A}$ and $\mathcal{B}$ be two unital C$^*$-algebras. Let $\mathcal{H}$ be a Hilbert $\mathcal{A}$-module. Let $\pi\colon \mathcal{A}\to \mathcal{B}$ be a $*$-homomorphism and
$\tau\colon \mathcal{H}\to \mathcal{B}$ be a linear map such that
$
\tau(x)^*\tau(y)=\pi(\langle x,y\rangle) $ for every $x,y\in\mathcal{H}$. Then the $*$-homomorphism
$\sigma_\tau\colon \mathcal{K}_\mathcal{A}(\mathcal{H})\longrightarrow \mathcal{B},
$ defined by
\[
\sum_{i=1}^n \theta_{x_i,y_i}
\longmapsto
\sum_{i=1}^n \tau(x_i)\tau(y_i)^*,
\]
is continuous and satisfies $ \sigma_\tau(f)\tau(x)=\tau(fx)$ for every $f\in\mathcal{K}_{\mathcal{A}}(\mathcal{H})$ and $x\in\mathcal{H}$. Moreover, $\sigma_\tau$ is injective whenever $\pi$ is injective.
\end{proposition}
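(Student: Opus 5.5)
We prove Proposition~\ref{BOBO} by the standard route. We first show that $\sigma_\tau$ is well defined and $*$-preserving on the algebraic span $\mathcal{F}$ of the rank-one operators $\theta_{x,y}$. Then we establish isometry when $\pi$ is injective and contractivity in general. Finally, we extend by continuity.

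\emph{Well-definedness and the module identity.} Let $T=\sum_i \theta_{x_i,y_i}\in\mathcal{F}$. For any $z\in\mathcal{H}$ we compute
\[
\Big(\sum_i\tau(x_i)\tau(y_i)^*\Big)\tau(z)=\sum_i\tau(x_i)\pi(\langle y_i,z\rangle).
\]
We need $\tau(x)\pi(a)=\tau(xa)$. This follows from
\[
\|\tau(xa)-\tau(x)\pi(a)\|^2=\big\|\pi\big(\langle xa-xa',\,xa-xa'\rangle\big)\big\|=0,
\]
obtained by expanding $(\tau(xa)-\tau(x)\pi(a))^*(\tau(xa)-\tau(x)\pi(a))$ with the hypothesis. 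Hence $\sigma(T)\tau(z)=\tau(Tz)$ on $\mathcal{F}$. The product $\sigma(T)$ is determined on the closed span of $\tau(\mathcal{H})\mathcal{B}$. To see that $\sigma(T)$ itself depends only on $T$, we use the identity
\[
\|\sigma(T)\|^2=\|\sigma(T)^*\sigma(T)\|.
\]
Expanding $\sigma(T)^*\sigma(T)=\sum_{i,j}\tau(y_i)\pi(\langle x_i,x_j\rangle)\tau(y_j)^*$ shows that $\sigma(T)^*\sigma(T)$ has the form $\sum\tau(y_i)\tau(y'_i)^*$. Therefore $\sigma(T)$ vanishes as soon as it vanishes on $\tau(\mathcal{H})$, because then $\sigma(T)^*\sigma(T)\sigma(T)^*\sigma(T)=0$. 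If $T=0$, then $\sigma(T)\tau(z)=\tau(Tz)=0$, which proves well-definedness. Multiplicativity and $*$-preservation on $\mathcal{F}$ are then direct computations using $\theta_{x,y}\theta_{x',y'}=\theta_{x\langle y,x'\rangle,y'}$.

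\emph{Norm estimate.} This is the main step. Pass to the quotient $\mathcal{A}/\ker\pi$ and the Hilbert module $\mathcal{H}/\mathcal{H}\ker\pi$. This reduces the general case to $\pi$ injective, since the quotient map on compacts is contractive. So assume $\pi$ is injective, hence isometric. For $T\in\mathcal{F}$, write the operator $\sigma(T)$ restricted to $\overline{\tau(\mathcal{H})\mathcal{B}}$, and compare $\|T\|$ with $\sup\{\|\tau(Tz)\| : \|\tau(z)\|\le1\}$. Since
\[
\|\tau(z)\|^2=\|\pi(\langle z,z\rangle)\|=\|z\|^2,
\]
$\tau$ is isometric. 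It follows that
\[
\|T\|=\sup_{\|z\|\le1}\|Tz\|=\sup\|\sigma(T)\tau(z)\|.
\]
We then need this supremum to equal $\|\sigma(T)\|$. This holds because $\sigma(T)=\sigma(T)e$ for the element $e=\sigma(u_\lambda)$ built from an approximate unit $u_\lambda$ of $\mathcal{K}_\mathcal{A}(\mathcal{H})$ in $\mathcal{F}$, giving $\sigma(T)=\lim\sigma(Tu_\lambda)$. The range of $\sigma(Tu_\lambda)$ is controlled by its action on $\tau(\mathcal{H})$, via the $C^*$-identity above. Thus $\sigma$ is isometric on $\mathcal{F}$ and extends to an injective $*$-homomorphism $\sigma_\tau$.

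\emph{Extension.} By density of $\mathcal{F}$ in $\mathcal{K}_\mathcal{A}(\mathcal{H})$, the identity $\sigma_\tau(f)\tau(x)=\tau(fx)$ passes to the limit. The general continuity follows from the reduction above, and injectivity follows when $\pi$ is injective. The main obstacle is the norm estimate, specifically justifying that $\|\sigma(T)\|$ is computed on $\tau(\mathcal{H})$. As a fallback, we would argue instead that $\sigma$ on $\mathcal{F}$ is a $*$-homomorphism from a dense $*$-subalgebra of the $C^*$-algebra $\mathcal{K}_\mathcal{A}(\mathcal{H})$ which is an inductive limit of matrix-type algebras $\theta$-spanned by finitely many vectors, and hence automatically contractive there.
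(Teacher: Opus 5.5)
The paper does not prove this statement; it quotes it from \cite{BO08}. Your well-definedness part is correct and clean. The identity $\tau(xa)=\tau(x)\pi(a)$ holds (your displayed computation is garbled, but expanding $(\tau(xa)-\tau(x)\pi(a))^*(\tau(xa)-\tau(x)\pi(a))$ with the hypothesis does give $0$). Together with $\sigma(T)\tau(z)=\tau(Tz)$ and the trick that $\sigma(T)\tau(\mathcal{H})=0$ forces $(\sigma(T)^*\sigma(T))^2=0$, this makes $\sigma$ a well-defined $*$-homomorphism on the finite-rank operators $\mathcal{F}$. For injective $\pi$ you also correctly get $\|T\|=\sup_{\|z\|\le 1}\|\sigma(T)\tau(z)\|\le\|\sigma(T)\|$.

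The gap is the reverse inequality $\|\sigma(T)\|\le\|T\|$. Continuity, the extension to $\mathcal{K}_\mathcal{A}(\mathcal{H})$, and your quotient reduction all depend on it, and it is never proved.
\begin{itemize}
\item The approximate-unit step is circular. Bounding $\|\sigma(Tu_\lambda)\|=\|\sigma(T)\sigma(u_\lambda)\|$ requires exactly the norm control of $\sigma$ on $\mathcal{F}$ that is at stake. Your $C^*$-identity argument only gives the qualitative fact that $\sigma(T)=0$ when it kills $\tau(\mathcal{H})$, not a norm estimate.
\item Testing on single vectors $\tau(z)$ is in any case too weak. You must test on finite sums $\sum_j\tau(z_j)b_j$. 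The bound $\|\sum_j\tau(Tz_j)b_j\|\le\|T\|\,\|\sum_j\tau(z_j)b_j\|$ then needs the matrix inequality $[\langle Tz_j,Tz_k\rangle]\le\|T\|^2[\langle z_j,z_k\rangle]$ in $M_n(\mathcal{A})$. Equivalently, for $\mathcal{B}\subset B(K)$, identify $\overline{\tau(\mathcal{H})K}$ with $\mathcal{H}\otimes_\pi K$ so that $\sigma(T)$ becomes $T\otimes 1$; this is the standard argument.
\item Your fallback does not work as stated. A $*$-homomorphism on a dense $*$-subalgebra need not be contractive. The ``matrix-type'' subalgebras $\{\sum_{i,j}\theta_{x_ia_{ij},x_j}\}$ are generally not closed: for $\mathcal{H}=\mathcal{A}=C[0,1]$ and $x=t$ you get multiplication by $t^2C[0,1]$. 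So automatic continuity for $*$-homomorphisms between $C^*$-algebras does not apply.
\end{itemize}

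The missing estimate follows from what you already have, with no quotient reduction. Let $S=T^*T=\sum_i\theta_{x_i,y_i}\in\mathcal{F}$. Then $S^k=\sum_i\theta_{S^{k-1}x_i,y_i}$. By multiplicativity and well-definedness, $\|\sigma(S)^k\|=\|\sigma(S^k)\|\le\|S\|^{k-1}\sum_i\|x_i\|\,\|y_i\|$, using $\|\tau(z)\|^2=\|\pi(\langle z,z\rangle)\|\le\|z\|^2$. Since $\sigma(S)$ is self-adjoint, $\|\sigma(S)\|=\lim_k\|\sigma(S)^k\|^{1/k}\le\|S\|$. Hence $\|\sigma(T)\|^2=\|\sigma(T^*T)\|\le\|T\|^2$ for every $\pi$. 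This is the concrete form of the fact that would rescue your fallback: $\mathcal{F}$ is a two-sided ideal, hence spectrally invariant in $\mathcal{K}_\mathcal{A}(\mathcal{H})$. Combined with your lower bound in the injective case, this gives continuity, an isometric (hence injective) extension when $\pi$ is injective, and $\sigma_\tau(f)\tau(x)=\tau(fx)$ by density.
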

 Recall that there is a $*$-isomorphism $\mathcal{K}_{\mathcal{P}}(\mathcal{X}_i(w)) \cong \bigoplus^{c_0}_{j\in \mathscr{I}}\mathcal{K}_{B_j}(X_{i,j}(w)).$
\begin{proposition}
 Let $i \in \mathscr{I}$. Let $w\in W$. There is an injective $*$-homomorphism $\sigma_{i,w} : \mathcal{K}_{\mathcal{P}}(\mathcal{X}_i(w)) \rightarrow \mathscr{C}(i)$, given by \[\sigma_{i,w}(\theta_{a_1\eta_j,a_2\eta_k}) = \delta_{j,k} a_1^\dagger (a_2^\dagger)^*\] for all $a_1 \in F_{i,j}(w), a_2 \in F_{i,k}(w)$, and $j,k \in \mathscr{I}.$ 
 
 Let $i \in \mathscr{I}, w\in W$ and $f \in \mathcal{K}_{\mathcal{P}}(\mathcal{X}_i(w))$. Then for all $u\in W$ and $k \in \mathscr{I}$, $\sigma(f)|_{X_{i,k}(u)}$ is zero if $u\ngeq w$. If $u\geq w$ is of the form $u = w\cdot v$ with $l(u) = l(w) + l(v)$ then for all $l\in \mathscr{I}$, we have $\sigma(f)|_{X_{i,k}(w)\otimes X_{k,l}(v)} = f|_{X_{i,k}(w)}\otimes \mathrm{id}_{X_{k,l}(v)}.$
\end{proposition}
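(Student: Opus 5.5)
We will apply Proposition \ref{BOBO} with $\mathcal{A} = \mathcal{P}$, $\mathcal{H} = \mathcal{X}_i(w)$ and $\mathcal{B} = \mathcal{L}_\mathcal{P}(\mathcal{X})$. The map $\tau$ is defined on the dense subspace spanned by the vectors $(a\eta_j)\otimes 1_j$, for $a\in F_{i,j}(w)$ and $j\in\mathscr{I}$, by
\[
\tau\bigl((a\eta_j)\otimes p\bigr) = a^\dagger\, p ,
\]
where $p\in\mathcal{P}$ acts on $\mathcal{X}$ on the right. Since $\mathcal{P}$ is central in $\mathcal{L}_\mathcal{P}(\mathcal{X})$ only blockwise, we use Lemma \ref{box} and let $b\in B_j$ act via its embedding as an element of $B_j\subset \Delta(j)$. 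The key identity is Proposition \ref{QW}: for $a\in F_{i,j}(w)$ and $b\in F_{i,k}(w)$ we have $(a^\dagger)^*b^\dagger = \delta_{j,k}E_j(a^*b)q_{j,w}$. Note that $\langle a\eta_j,b\eta_k\rangle = \delta_{j,k}E_j(a^*b)$, viewed in $\mathcal{P}$.

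The homomorphism $\pi:\mathcal{P}\to\mathcal{L}_\mathcal{P}(\mathcal{X})$ should therefore be $\pi\bigl((b_j)_j\bigr) = \sum_j b_j q_{j,w}$, interpreted via Lemma \ref{box}. The first step is to check that this $\pi$ is a $*$-homomorphism. This requires that the $B_j$ commute with $q_{j,w}$, which holds because $q_{j,w}\in\Delta_0(j)$ and $B_j$ preserves each $\mathcal{X}_j(u)$. It also requires that the projections $q_{j,w}$ for different $j$ be orthogonal, which we expect because they live in different blocks $X_j$ of Lemma \ref{box}. One must be careful here: $q_{j,w}$ is defined with respect to the decomposition based at $j$, and the block structure has to be tracked. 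We will then verify $\tau(x)^*\tau(y) = \pi(\langle x,y\rangle)$ using Proposition \ref{QW}, which also shows that $\tau$ is well defined and bounded on the dense subspace and hence extends.

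Proposition \ref{BOBO} then produces $\sigma_\tau = \sigma_{i,w}$ with $\sigma_{i,w}(\theta_{a_1\eta_j,a_2\eta_k}) = \tau(a_1\eta_j)\tau(a_2\eta_k)^* = a_1^\dagger(a_2^\dagger)^*$. The factor $\delta_{j,k}$ appears because $\theta_{a_1\eta_j,a_2\eta_k}$ vanishes for $j\neq k$ in $\bigoplus^{c_0}\mathcal{K}_{B_j}$. The image lies in $\mathscr{C}(i)$ since $a_1^\dagger(a_2^\dagger)^*$ is a product of creation and annihilation path operators from $i$ through $j$ back to $i$. Injectivity follows from the injectivity of $\pi$, which we will obtain from the faithfulness assumption on the left actions of $B_j$ exactly as in the proof of Proposition \ref{cw}: the element $b_jq_{j,w}$ is nonzero on $\mathcal{X}_j(e)\subset \operatorname{ran} q_{j,w}$ whenever $b_j\ne0$. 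This faithfulness is where we expect the main obstacle, since one has to pin down that $\pi$ is genuinely injective given how the blocks and the based-at-$j$ projections interact.

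For the restriction formula, it suffices by continuity and linearity to treat $f=\theta_{a_1\eta_k,a_2\eta_k}$. By Proposition \ref{QW} and the description of $a^\dagger = aq_{k,w}$, the operator $(a_2^\dagger)^*$ kills $X_{i,k'}(u)$ unless $u\geq w$. For $u = w\cdot v$ and a vector $x\otimes y\in X_{i,k}(w)\otimes X_{k,l}(v)$, Lemma \ref{TWW} applied to the annihilation path operator shows that $(a_2^\dagger)^*(x\otimes y) = \langle a_2\eta_k,x\rangle y$. Applying $a_1^\dagger$, which is a creation operator acting by left multiplication on a reduced concatenation, gives $a_1\eta_k\langle a_2\eta_k,x\rangle\otimes y = f(x)\otimes y$, which is the required formula.
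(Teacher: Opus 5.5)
Your overall route is the paper's: Proposition~\ref{BOBO} with $\tau(a\eta_j)=a^\dagger$ and $\pi(b1_j)=bq_{j,w}$, with Proposition~\ref{QW} as the key identity. However, the step where you make $\pi$ a $*$-homomorphism on all of $\mathcal{P}$ fails, and the justification you give for it is false.

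The projection $q_{j,w}$ does not live in the block $X_j$ of Lemma~\ref{box}. It projects onto $\bigoplus_{w\cdot u\text{ reduced}}\mathcal{X}_j(u)$, and $\mathcal{X}_j(u)=\bigoplus_{l}X_{j,l}(u)\otimes_{B_l}\mathcal{P}$ meets every block; already $q_{j,e}=q_{k,e}=\mathrm{id}_{\mathcal{X}}$. Nor are $q_{j,w}$ and $q_{k,w}$ orthogonal when $X_{i,j}(w)$ and $X_{i,k}(w)$ are both nonzero. For a group acting on a building, $q_{j,w}q_{k,w}$ is the projection onto the chambers $x$ for which $w\cdot\delta(x_j,x)$ and $w\cdot\delta(x_k,x)$ are both reduced. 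This contains $x=x_j$ as soon as $w\cdot\delta(x_k,x_j)$ is reduced, which already happens for suitable groups acting on trees. So $(b_j)_j\mapsto\sum_j b_jq_{j,w}$ is not multiplicative, and Proposition~\ref{BOBO} cannot be applied to $\mathcal{X}_i(w)$ in one stroke.

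The statement itself signals this. We have $\theta_{a_1\eta_j,a_2\eta_k}=0$ for $j\neq k$, whereas $a_1^\dagger(a_2^\dagger)^*=a_1q_{j,w}q_{k,w}a_2^*$ need not vanish. That is impossible if $\pi$ is a homomorphism and $\tau(x\cdot 1_j)=\tau(x)\pi(1_j)$, which is why the $\delta_{j,k}$ is inserted by hand. If instead you read ``$p$ acts on the right'' literally, then $\tau(a\eta_j\otimes 1_j)=a^\dagger e_j$ with $e_j$ the block projection. You would then obtain $a_1^\dagger e_j(a_2^\dagger)^*$, which is not the stated map and kills $X_{i,k}(w)\otimes X_{k,l}(v)$ for $l\neq k$, contradicting the second part. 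The paper's one-line proof is compressed at exactly this point: its $\pi$ is blockwise, hence multiplicative, but then no longer matches $(a^\dagger)^*b^\dagger$ off the $j$-th block.

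The fix is the device the paper uses later in Lemma~\ref{compactDelta}: handle one summand of $\mathcal{K}_{\mathcal{P}}(\mathcal{X}_i(w))\cong\bigoplus^{c_0}_{j}\mathcal{K}_{B_j}(X_{i,j}(w))$ at a time. For fixed $j$, apply Proposition~\ref{BOBO} with $\mathcal{A}=B_j$, $\mathcal{H}=X_{i,j}(w)$, $\pi_j(b)=bq_{j,w}\in\mathcal{L}_{\mathcal{P}}(\mathcal{X})$ and $\tau_j(a\eta_j)=a^\dagger$.
\begin{itemize}
\item $\pi_j$ is a genuine $*$-homomorphism, because $B_j$ commutes with $q_{j,w}$.
\item $\pi_j$ is injective without any faithfulness hypothesis, since $q_{j,w}$ dominates the projection onto $\mathcal{X}_j(e)$, where $B_j$ acts by left multiplication on $X_{j,j}(e)\cong B_j$. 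So injectivity is not the obstacle you anticipated.
\item The relation $\tau_j(x)^*\tau_j(y)=\pi_j(\langle x,y\rangle)$ is Proposition~\ref{QW} with $j=k$.
\end{itemize}
This yields injective $*$-homomorphisms $\sigma_j$ into $\mathscr{C}(i)$. For $j\neq k$, Proposition~\ref{QW} gives $(a^\dagger)^*b^\dagger=0$ for $a\in F_{i,j}(w)$ and $b\in F_{i,k}(w)$, hence $\sigma_j(f)\sigma_k(g)=0$ for all $f,g$. The images are therefore mutually orthogonal C$^*$-subalgebras, so $\sum_j\sigma_j$ extends to an injective $*$-homomorphism on the $c_0$-sum, and this is $\sigma_{i,w}$. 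The orthogonality across $j$ comes from the $\delta_{j,k}$ in Proposition~\ref{QW}, not from the projections $q_{j,w}$. Your verification of the restriction formula is fine; it also follows directly from Proposition~\ref{QW}.
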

\begin{proof}
Let $\pi : \mathcal{P} \rightarrow \mathcal{L}_{\mathcal{P}}(\mathcal{X}) \cong \prod_{j \in \mathscr{I}}\mathcal{L}_{B_j}(X_j)$ be the injective $*$-homomorphism defined by $\pi((b_j)_{j\in \mathscr{I}}) = (b_jq_{j,w})_{j\in \mathscr{I}}$, where each $q_{j,w}$ is seen as an element of $\mathcal{L}_{B_j}(X_j)$. Let $i\in \mathscr{I}$. Let $w\in W.$ Define the linear map $\tau_{i,w} : \mathcal{X}_i(w) \rightarrow \mathscr{C}(i)$ by $\tau_{i,w} (a\eta_j) = a^\dagger = aq_{j,w}$ for all $a\in F_{i,j}(w)$ and $j\in \mathscr{I}.$ Let $j,k\in \mathscr{I}$ and $a_1\in F_{i,j}(w), a_2 \in F_{i,k}(w)$, Proposition \ref{QW} gives us \[\tau_{i,w}(a_1)^*\tau_{i,w}(a_2) = (a_1^\dagger)^*a_2^\dagger = \delta_{j,k}E_j(a_1^*a_2)q_{k,w}.\] The result then follows from Proposition \ref{BOBO}. The second part of the proposition is easily checked on each operator of the form $a_1^\dagger (a_2^\dagger)^*$.
\end{proof}
This allows us to prove the following fundamental result.
\begin{proposition}\label{pw}
    Assume that the expected $W$-decomposition $A = \overline{\sum_{w\in W}L_{i,j}(w)}$ is of finite type. Then for all $i\in \mathscr{I}$ and $w\in W$, we have $p_{i,w}\in \mathscr{C}(i).$ Hence we have $\Delta_0(i),\Delta(i) \subset \mathscr{C}(i).$
\end{proposition}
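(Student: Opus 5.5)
The plan is to realize $p_{i,w}$ as the image under $\sigma_{i,w}$ of the identity operator of $\mathcal{X}_i(w)$. Finite type (Proposition \ref{finito}(ii)) says that $\mathcal{X}_i(w)$ is finitely generated as a right $\mathcal{P}$-module, which is equivalent to $\mathrm{id}_{\mathcal{X}_i(w)} \in \mathcal{K}_{\mathcal{P}}(\mathcal{X}_i(w))$. So the candidate is
\[
\sigma_{i,w}(\mathrm{id}_{\mathcal{X}_i(w)}) \in \mathscr{C}(i).
\]
This lies in $\mathscr{C}(i)$ because each $a_1^\dagger (a_2^\dagger)^*$, with $a_1 \in F_{i,j}(w)$ and $a_2 \in F_{i,j}(w)$, is a path operator based at $i$. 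Indeed, choosing a reduced expression $w = s_1\cdots s_n$, we can write each $a_k$ as a sum of products of elements of $F_{i_{l-1},i_l}(s_l)$ ending at $j$. The identity $a^\dagger = a_1^\dagger \cdots a_n^\dagger$ then presents $a_1^\dagger(a_2^\dagger)^*$ as a sum of products of creation operators followed by annihilation operators, forming a loop from $i$ to $j$ and back to $i$. For $w = e$ there is nothing to prove, since $p_{i,e} = \mathrm{id} = 1_{B_i} \in B_i$. In fact $1_{B_i}$ acts as the identity on $X_i$ and as $0$ on the other $X_j$ components; so I read $p_{i,w}$ here, under the isomorphism of Lemma \ref{box}, as living in the $i$-th corner, which is where $\mathscr{C}(i)$ acts nontrivially. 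The identifications should be made consistently with this convention.

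The second step is to identify $\sigma_{i,w}(\mathrm{id}_{\mathcal{X}_i(w)})$ with $p_{i,w}$. For this I use the second part of the preceding proposition, applied with $f = \mathrm{id}$. On $X_{i,k}(u)$ with $u \ngeq w$, the operator $\sigma(f)$ vanishes. When $u = w\cdot v$ is reduced, Lemma \ref{xxxxxxx} decomposes $X_{i,l}(u) \cong \bigoplus_k X_{i,k}(w)\otimes_{B_k}X_{k,l}(v)$. On each summand $\sigma(f)$ acts as $\mathrm{id}\otimes\mathrm{id}$, so it is the identity on $X_{i,l}(u)$. Hence $\sigma_{i,w}(\mathrm{id})$ is exactly the projection onto $\bigoplus_{u\geq w}\mathcal{X}_i(u)$, which is $p_{i,w}$.

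For the remaining claims, $\Delta_0(i)$ is generated by the $p_{i,w}$, so $\Delta_0(i) \subset \mathscr{C}(i)$. Also $B_i \subset \mathscr{C}(i)$, because elements of $B_i$ are path operators by definition, and therefore $\Delta(i) \subset \mathscr{C}(i)$.

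The main obstacle I expect is bookkeeping rather than anything conceptual. One must check carefully that $\sigma_{i,w}$ really takes values in $\mathscr{C}(i)$ and not merely in $\mathscr{C}$. One must also confirm that the formula on the $\theta$'s extends to $\mathrm{id}$ by norm continuity: $\mathrm{id}$ is a finite sum of rank-one operators $\theta_{x,y}$ with $x,y$ in the completions, and these can be approximated by elements $a\eta_j$ with $a \in F_{i,j}(w)$. Closedness of $\mathscr{C}(i)$ handles this approximation.
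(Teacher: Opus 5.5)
Your argument is the paper's: finite type puts $\mathrm{id}_{\mathcal{X}_i(w)}$ in $\mathcal{K}_{\mathcal{P}}(\mathcal{X}_i(w))$, so $p_{i,w}=\sigma_{i,w}(\mathrm{id}_{\mathcal{X}_i(w)})\in\mathscr{C}(i)$, and your Step 2 (the second half of the preceding proposition together with Lemma \ref{xxxxxxx}) correctly supplies the identification that the paper only asserts. You should delete the aside about the $i$-th corner, because it is false: $B_i$ acts on $\mathcal{X}$ through $\lambda$ and $1_{B_i}=1_A$ by (OWD2), so $1_{B_i}=\mathrm{id}_{\mathcal{X}}=p_{i,e}$ acts as the identity on every $X_j$ (the map $b\mapsto b1_i$ describes $B_i$ inside the coefficient algebra $\mathcal{P}$, not its left action), and $p_{i,w}$ is by definition a projection on all of $\mathcal{X}$, which is exactly what your own Step 2 computation over all $l\in\mathscr{I}$ produces.
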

\begin{proof}
    As the decomposition is of finite type, we know that $\mathrm{id}_{\mathcal{X}_i(w)} \in \mathcal{K}_{\mathcal{P}}(\mathcal{X}_i(w))$. This implies $p_{i,w} = \sigma_{i,w}(\mathrm{id}_{\mathcal{X}_i(w)}) \in \mathscr{C}(i).$
\end{proof}
\begin{remark}
In the case of a finite type expected Bruhat decomposition, $\mathscr{C} \subset \mathcal{L}_B(\mathcal{X})$ is just the C$^*$-algebra generated by the reduced C$^*$-algebra $A^r$ together with the projections $p_s$ for $s\in S$. Assume that $W$ is right-angled and the decomposition comes from a graph product C$^*$-algebra structure on $A$ as in Example \ref{graphproductexample}.  $\mathscr{C}$ thus coincides with the universal C$^*$-algebra introduced in \cite{Kli25} for graph product C$^*$-algebras, in the particular case of finite-dimensional vertex algebras.
\end{remark}
Whenever $A = \overline{\sum_{w\in W}L_{i,j}(w)}$ is a $W$-decomposition of a unital C$^*$-algebra $A$ and there may be an ambiguity on the underlying C$^*$-algebra, we denote by $\mathscr{C}(A)$ and $\mathscr{C}(A)(i)$ respectively the global covariance C$^*$-algebra and the covariance C$^*$-algebra based at $i$ associated to the decomposition of $A$.
\begin{proposition}\label{covtensor}
     Let $Q$ be a unital C$^*$-algebra. Let $A \otimes Q = \overline{\sum_{w\in W}G_{i,j}(w)}$ together with conditional expectations $E_i\otimes \mathrm{id}_Q : A\otimes Q \rightarrow B_i\otimes Q$ be the expected $W$-decomposition given by Proposition \ref{tensor}. There is a natural $*$-isomorphism $\mathscr{C}(A \otimes Q)(k) \cong  \mathscr{C}(A)(k)\otimes Q$ for all $k\in \mathscr{I}.$
\end{proposition}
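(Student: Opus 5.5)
The plan is to realize everything on the exterior tensor product given by Proposition \ref{tensor}. Write $\mathcal{P}_Q = \prod_{j\in\mathscr{I}}(B_j\otimes Q)$ and let $\mathcal{Y}$ be the Fock module of the decomposition of $A\otimes Q$. By Proposition \ref{tensor} we have $Y_j \cong X_j\otimes_{\mathrm{ext}}Q$, compatibly with the decompositions $Y_{i,j}(w)\cong X_{i,j}(w)\otimes_{\mathrm{ext}}Q$. Via Lemma \ref{box}, $\mathcal{L}_{\mathcal{P}_Q}(\mathcal{Y})\cong\prod_j \mathcal{L}_{B_j\otimes Q}(X_j\otimes_{\mathrm{ext}}Q)$. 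Consider the $*$-homomorphism
\[
\Phi:\mathcal{L}_{\mathcal{P}}(\mathcal{X})\otimes Q\longrightarrow \mathcal{L}_{\mathcal{P}_Q}(\mathcal{Y}),\qquad T\otimes q\mapsto (T\otimes q),
\]
defined componentwise by $(T_j)_j\otimes q\mapsto (T_j\otimes q)_j$, with $q$ acting by left multiplication on $Q$. For each $j$, the map $\mathcal{L}_{B_j}(X_j)\otimes Q\to\mathcal{L}_{B_j\otimes Q}(X_j\otimes_{\mathrm{ext}}Q)$ is injective on the minimal tensor product, by the standard fact for exterior tensor products (represent faithfully and use injectivity of spatial tensor products). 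I will not need injectivity of $\Phi$ on all of $\mathcal{L}_\mathcal{P}(\mathcal{X})\otimes Q$, only on $\mathscr{C}(A)(k)\otimes Q$. For that I intend to use the fact that elements of $\mathscr{C}(A)(k)$ act only on the $k$-th block, together with Lemma \ref{box}: $\mathscr{C}(A)(k)$ embeds in $\prod_j\mathcal{L}_{B_j}(X_j)$, and, since minimal tensor products are injective, I will check injectivity coordinatewise. An element of $(\prod_j M_j)\otimes Q$ that vanishes in each coordinate $M_j\otimes Q$ is zero, because slice maps are compatible with the coordinate projections.

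The second step is to match generators. For $a\in L_{i,j}(s)$ and $q\in Q$, the projections $p_{i,w}$ in the $A\otimes Q$ picture are $\Phi(p_{i,w}\otimes 1)$, since $\mathcal{Y}_{i,\ge w}\cong\mathcal{X}_{i,\ge w}\otimes_{\mathrm{ext}}Q$. Hence
\[
(a\otimes q)^\dagger=\Phi(a^\dagger\otimes q),\qquad \partial(a\otimes q)=\Phi(\partial(a)\otimes q),
\]
and the same holds for annihilation operators. A path operator for $A\otimes Q$ built from elementary tensors $a_k\otimes q_k$ is therefore $\Phi(T\otimes q_1\cdots q_n)$, where $T$ is the corresponding path operator for $A$. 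Conversely, $\Phi(T\otimes q)$ equals the path operator obtained by putting $q$ in the first factor and $1$ in the others. Elements $b\otimes q\in B_k\otimes Q$ are covered directly. Since $G_{i,j}(s)$ is spanned by elementary tensors and the operators $\dagger$ and $\partial$ are linear, the images of both generating sets have the same closed span. It follows that $\Phi(\mathscr{C}(A)(k)\otimes Q)=\mathscr{C}(A\otimes Q)(k)$, and $\Phi$ is injective there, which gives the isomorphism.

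The main obstacle is the injectivity of $\Phi$ restricted to $\mathscr{C}(A)(k)\otimes Q$. The cleanest route I see is to fix a faithful representation $\mathcal{P}\subset B(H)$ and a faithful $Q\subset B(K)$. Then $\mathcal{L}_\mathcal{P}(\mathcal{X})\subset B(\mathcal{X}\otimes_\mathcal{P}H)$, and $\mathcal{Y}\otimes_{\mathcal{P}_Q}(H\otimes K)$ identifies with $(\mathcal{X}\otimes_\mathcal{P}H)\otimes K$, carrying $\Phi(T\otimes q)$ to the spatial tensor $T\otimes q$. Some care is needed because $\mathcal{P}_Q=\prod(B_j\otimes Q)$ is not $\mathcal{P}\otimes Q$ when $\mathscr{I}$ is infinite. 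I plan to circumvent this by working with one block $j$ at a time, using $B_j\otimes Q\subset B(H_j\otimes K)$. The spatial representation is then faithful on the minimal tensor product, and this settles injectivity.
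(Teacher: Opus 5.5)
Your proposal is correct and follows the paper's route. Like the paper, you identify each path operator for $A\otimes Q$ built from elementary tensors with the image of $T\otimes q$ under the block-diagonal map $\mathcal{L}_{\mathcal{P}}(\mathcal{X})\otimes Q\to\prod_{j}\mathcal{L}_{B_j\otimes Q}(X_j\otimes_{\mathrm{ext}}Q)\cong\mathcal{L}_{\mathcal{P}^Q}(\mathcal{Y})$, and your blockwise spatial argument supplies the injectivity that the paper simply asserts. One correction: elements of $\mathscr{C}(A)(k)$ do not act only on the $k$-th block (already $1\in B_k$ acts as the identity on every $X_j$), but your coordinatewise argument over all $j\in\mathscr{I}$ never actually uses that claim.
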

\begin{proof}
Recall from Proposition \ref{tensor} that the right $B_j\otimes Q$-modules associated to the decomposition $A \otimes Q = \overline{\sum_{w\in W}G_{i,j}(w)}$ are given by $Y_j = \bigoplus_{w\in W}Y_{i,j}(w)$ for all $j\in \mathscr{I}$, with $Y_{i,j}(w) \cong X_{i,j}(w) \otimes_{\mathrm{ext}}Q$. Let \[\mathcal{P}^Q
=
\prod_{i\in\mathscr{I}}(B_i\otimes Q),\qquad \mathcal{Y} = \bigoplus_{j\in \mathscr{I}}Y_j\otimes_{B_j\otimes Q} (\mathcal{P}^Q),\] so that $\mathcal{L}_{\mathcal P^Q}(\mathcal Y) \cong \prod_{j\in \mathscr{I}}\mathcal{L}_{B_j\otimes Q}(Y_j).$ Under this identification, $\mathscr{C}(A \otimes Q)(k)$ is the C$^*$-subalgebra of $\mathcal{L}_{\mathcal P^Q}(\mathcal Y)$ generated by path operators based at $k$ being the composition of creation and elementary diagonal operators respectively of the form $a^\dagger \otimes q$ and $\partial(a)\otimes q$ for some $a\in L_{i,j}(s)$ and $q\in Q$.  Hence it is just the image of $ \mathscr{C}(A)(k)\otimes Q$ by the embedding \[
\mathcal{L}_{\mathcal{P}}(\mathcal{X})\otimes Q
\longrightarrow
\prod_{j\in\mathscr{I}}
\mathcal{L}_{B_j\otimes Q}
\bigl(X_j\otimes_{\mathrm{ext}}Q\bigr)
\cong
\mathcal{L}_{\mathcal{P}^Q}(\mathcal{Y}).
\]
\end{proof}
\subsection{The case of a group}
Let $X$ be a locally finite building. We assume that $S$ is finite (hence $X$ is necessarily countable). We see $X$ as a graph in the sense of Section \ref{31} whose set of vertices is $X$ and such that two vertices $x,y \in X$ are adjacent if $\delta(x,y) \in S$. For all $o\in X$, we denote by $\Omega(o)$ the bordification of the rooted graph $(X,o).$
\begin{lemma}\label{convW} Let $o \in X$. Let $\mathbf{x} =(x_n)$ be a sequence with values in $X$ which $o$-converges. Then the sequence $\delta(o,x_n)$ converges in the rooted graph $(W,e)$. Moreover, $(x_n)$ $o$-converges to infinity if and only if $\delta(o,x_n)$ converges to infinity in $\overline{(W,S)}$. \end{lemma}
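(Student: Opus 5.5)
The plan is to compare the graph order on the rooted graph $(X,o)$ with the right weak order on $(W,e)$ via the map $\delta(o,\cdot)$. The key combinatorial fact I would establish first is the following: for $x,y\in X$, we have $y\geq_o x$ if and only if $d(o,y)=d(o,x)+d(x,y)$, and in a building this holds exactly when $\delta(o,y)=\delta(o,x)\delta(x,y)$ with $l(\delta(o,y))=l(\delta(o,x))+l(\delta(x,y))$. This uses that galleries are minimal precisely when their type is a reduced word, and that $d(x,y)=l(\delta(x,y))$. In particular, $y\geq_o x$ implies $\delta(o,y)\geq\delta(o,x)$ in the weak order. Conversely, given $y$ and a prefix $u\leq\delta(o,y)$, there is a \emph{unique} chamber $x$ with $\delta(o,x)=u$ and $y\geq_o x$: it is the chamber at position $u$ on any minimal gallery from $o$ to $y$. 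Uniqueness holds because, in a building, the chamber on a minimal gallery of prescribed reduced type is determined (projections onto residues / convexity, as in \cite{AB08}).

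With this, fix $u\in W$ and consider the finite set $X_u(o)=\{x\in X\mid \delta(o,x)=u\}$, which is finite by local finiteness. Then
\[
\delta(o,x_n)\geq u \iff \exists\, x\in X_u(o),\ x_n\geq_o x,
\]
and by uniqueness the $x$ is unique when it exists. Since $(x_n)$ $o$-converges, for each of the finitely many $x\in X_u(o)$ the statement $x_n\geq_o x$ is eventually constant. Hence the disjunction is eventually constant, which gives $o$-convergence of $(\delta(o,x_n))$ in $(W,e)$. Finiteness of $X_u(o)$ is exactly the point where local finiteness is needed, and it is also what makes the quantifier swap legitimate.

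For the second assertion, I would show that $\{u\in W \mid u\leq \delta(o,\mathbf{x})\}$ is exactly $\{\delta(o,y)\mid y\leq\mathbf{x}\}$. The inclusion $\supseteq$ follows from the first step. For $\subseteq$, if $\delta(o,x_n)\geq u$ eventually, then by pigeonhole on the finite set $X_u(o)$ and eventual constancy, a single $y\in X_u(o)$ satisfies $y\leq x_n$ eventually, so $y\leq\mathbf{x}$. Since $d_X(o,y)=l(\delta(o,y))$, the suprema $\sup_{y\leq\mathbf{x}}d(o,y)$ and $\sup_{u\leq\delta(o,\mathbf{x})}l(u)$ coincide. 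Therefore one is infinite if and only if the other is, which gives the equivalence of convergence to infinity.

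The main obstacle I expect is the first step: proving rigorously that $y\geq_o x$ is equivalent to the length-additive factorization of $\delta$, and that the intermediate chamber of prescribed $W$-distance is unique. I would first try to derive this from (Bui2) and (Bui3) by induction on $l(\delta(o,y))$. If that gets messy, I would cite the standard facts that minimal galleries correspond to reduced types and that chambers on minimal galleries between $o$ and $y$ are determined by their $W$-distance from $o$ \cite{AB08}.
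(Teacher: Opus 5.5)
Your proposal is correct and follows essentially the same argument as the paper. In both, $\delta(o,z)\geq u$ is characterized by the existence of a chamber $y$ with $\delta(o,y)=u$ and $z\geq_o y$, local finiteness makes the set of such $y$ finite, and $o$-convergence is applied to each of these finitely many conditions. Your explicit identification of $\{u\mid u\leq \delta(o,\mathbf{x})\}$ with $\{\delta(o,y)\mid y\leq \mathbf{x}\}$ spells out the ``to infinity'' part that the paper leaves implicit. The uniqueness of the intermediate chamber is true but not needed anywhere in your argument.
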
 \begin{proof} Let $w\in W$. For all $z\in X$, $\delta(o,z) \geq w$ if and only if there exists a geodesic in $X$ starting at $o$, passing by some chamber $y \in X$ such that $\delta(o,y) = w$ and ending at $z$. Hence $\delta(o,x_n) \geq w$ for big enough $n$ if and only if for all big enough $n$, there exists a $y_n \in X$ such that $\delta(o,y_n) = w$ and $x_n \geq_o y_n$. However, as $X$ is locally finite, there are only finitely many $y \in X$ such that $\delta(o,y) = w$. Hence, as $\mathbf{x}$ is $o$-convergent, $\delta(o,x_n) \geq w$ for big enough $n$ if and only if there exists a $y\in X$ such that $\delta(o,y) = w$ and such that $\mathbf{x} \geq_o y$. Assume that this is not the case, this means that for all $y\in X$ such that $\delta(o,y) = w$ we have $\mathbf{x} \ngeq_o y$. This implies $\delta(o,x_n) \ngeq w$ for big enough $n$. \end{proof} We endow $X$ with the graph metric $d$. Notice that we have $d(x,y) = l(\delta(x,y))$ for all $x,y\in X$. Moreover, for all $o,x,y\in X,$ we have \[x\geq_oy \iff d(o,x) = d(o,y) + d(y,x).\] In other words, $x\geq_oy$ if and only if the expression $\delta(o,y)\cdot \delta(y,x)$ is reduced in $W$, in which case we have $\delta(o,x) = \delta(o,y)\delta(y,x).$  For all $o,y \in X$ we define \[
U(o,y)
=
\{x\in X\mid x\geq_o y\}.
\] Recall that the Boolean algebra generated by a family of subsets of $X$ is the set of all subsets of $X$ obtained by taking arbitrary finite intersection, union and complements of these subsets. For all $u,v \in W$ we will say that $u \cdot v$ is reduced if $l(uv) = l(u) + l(v)$.
\begin{lemma}\label{convconv}
Let $o,o' \in X$. Let $y\in X$. $U(o',y)$ belongs to the Boolean algebra generated by the subsets of the form $U(o,z)$ for $z\in X$. In other words, the condition $x \geq_{o'}y$, on an arbitrary $x\in X$, can be formulated using a finite combination of conditions of the form $x \geq_o z$ for some $z\in X$.
\end{lemma}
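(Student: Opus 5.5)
The plan is to reduce to the case where the two base chambers are adjacent, and then settle that case with the gate property of panels and local finiteness. Write $\mathcal{B}(o)$ for the Boolean algebra generated by the sets $U(o,z)$, $z\in X$. The statement is that $\mathcal{B}(o')\subset\mathcal{B}(o)$ for all $o,o'\in X$. This relation is transitive and $X$ is gallery-connected, so by induction along a minimal gallery from $o$ to $o'$ it suffices to treat $\delta(o,o')=s\in S$. Let $P$ be the $s$-panel containing $o$ and $o'$. By local finiteness $P$ is finite.

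\textbf{Step 1: the three regions of $X$.} Let $\operatorname{proj}_P$ be the projection onto $P$, which exists by the gate property of residues. Every $x\in X$ falls in exactly one of three cases.
\begin{enumerate}[label=(\alph*)]
\item $\operatorname{proj}_P x=o'$. Then $d(o,x)=d(o',x)+1$, and this happens exactly when $x\in U(o,o')$.
\item $\operatorname{proj}_P x=o$. Then $d(o',x)=d(o,x)+1$, and this happens exactly when $x\in U(o',o)$. This set equals $X\setminus\bigcup_{z\in P\setminus\{o\}}U(o,z)$, a finite union, so it lies in $\mathcal{B}(o)$.
\item $\operatorname{proj}_P x$ is neither $o$ nor $o'$. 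Then $d(o,x)=d(o',x)$, and this region is the complement of the first two.
\end{enumerate}
So all three regions belong to $\mathcal{B}(o)$, and on each of them $d(o',x)=d(o,x)+c$ with $c\in\{-1,+1,0\}$ constant.

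\textbf{Step 2: expressing $U(o',y)$ region by region.} We have $x\geq_{o'}y$ if and only if $d(o',x)=d(o',y)+d(y,x)$. The constant $d(o',y)$ equals $d(o,y)+c_y$, where $c_y$ depends only on the region of $y$. On each region of $x$ the condition therefore becomes $d(o,x)=d(o,y)+d(y,x)+c$ for an explicit constant $c\in\{-2,\dots,2\}$.
\begin{itemize}
\item When $c=0$, this is exactly $x\geq_o y$.
\item When $c\neq 0$, we replace $y$ by a nearby chamber. If $y$ and $x$ lie in the same region, the condition is again $x\geq_o y$. If $x$ lies in region (b) and $y$ in region (a), geodesics from $o'$ to $x$ pass through $o$, so $x\geq_{o'}y$ should be equivalent to $x\geq_o \operatorname{proj}_{P'}$-type chambers near $y$.
\end{itemize}
In every case the plan is to exhibit $x\geq_{o'}y$ as a finite Boolean combination of conditions $x\geq_o z$, where $z$ ranges over $y$ and over the finitely many chambers of $P$ and of the panels through $y$.

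\textbf{Step 3: an alternative using Lemma \ref{qwqw}.} A cleaner way to organize Step 2 uses $x\geq_{o'}y\iff \delta(o',y)\cdot\delta(y,x)$ is reduced. By Lemma \ref{qwqw} this holds if and only if $\delta(y,x)$ does not begin with any of finitely many $w_1,\dots,w_k$. By the gate property, $\delta(y,x)\geq w_i$ if and only if $x\geq_y z$ for some $z$ with $\delta(y,z)=w_i$. There are finitely many such $z$ by local finiteness. Hence $U(o',y)$ is a finite Boolean combination of sets $U(y,z)$. What remains is to show that, for the specific $z$ occurring here, $U(y,z)\in\mathcal{B}(o)$. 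For $z$ lying on a geodesic from $o$ through $y$ this is $U(y,z)\cap U(o,y)=U(o,z)\cap U(o,y)$, and the region decomposition of Step 1 handles the remainder.

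\textbf{Main obstacle.} I expect the hardest part to be the bookkeeping in Step 2 (equivalently, the last part of Step 3) when $x$ and $y$ lie in different regions relative to $P$. Converting the metric identity into $\geq_o$ relations requires knowing which intermediate chambers geodesics must pass through. I would first try to resolve this using only the gate property of $P$ and of the panels through $y$, together with the identity $\delta(o,x)=\delta(o,y)\delta(y,x)$ for $x\geq_o y$.
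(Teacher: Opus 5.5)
\textbf{There is a genuine gap: the hard case is left open, and the finite test family you propose provably cannot handle it.}

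Your reduction to $\delta(o,o')=s$ and the three regions of Step~1 are correct. They mirror the paper's case split: $y$ in region (c), (b), (a) corresponds respectively to $\delta(o',y)=\delta(o,y)$, to $\delta(o',y)=s\cdot\delta(o,y)$, and to $\delta(o,y)=s\cdot\delta(o',y)$. The first two cases are easy. There $x\geq_{o'}y$ forces $x$ into the region of $y$, so $U(o',y)$ is $U(o,y)$ intersected with that region.

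The gap is the third case: $\operatorname{proj}_P y=o'$ with $x$ in region (b) or (c). Step~2 leaves this at ``replace $y$ by a nearby chamber''. The family you commit to ($y$, the chambers of $P$, and the chambers of the panels through $y$) does not suffice:
\begin{itemize}
\item Take the thin building $X=W$ of type $A_2$ ($m_{s,t}=3$), with $o=e$, $o'=s$, $y=st$.
\item Then $U(o',y)=\{st,sts,ts\}$, and your test chambers are $e,s,st,sts$.
\item The sets $U(e,z)$ they give are $W$, $\{s,st,sts\}$, $\{st,sts\}$ and $\{sts\}$.
\item None of these separates $e$ from $ts$, yet $ts\in U(o',y)$ and $e\notin U(o',y)$.
\end{itemize}

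Step~3 does not close the gap either. Proving $U(y,z)\in\mathcal{B}(o)$ for arbitrary $y,z$ is the lemma itself with $o'$ replaced by $y$, so the argument is circular. The identity $U(y,z)\cap U(o,y)=U(o,z)\cap U(o,y)$ only controls chambers $x\geq_o y$, which is the easy part. Moreover, in the hard case no relevant $z$ satisfies $z\geq_o y$ at all: $\delta(o',y)\cdot w_i$ is not reduced, so neither is $\delta(o,y)\cdot w_i$.

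\textbf{The missing idea is the join in the right weak order.}
\begin{itemize}
\item Put $b=\delta(o',y)$, so that $s\cdot b$ is reduced.
\item Suppose $x\geq_{o'}y$ but $x\ngeq_o y$. Then $\delta(o',x)=b\,\delta(y,x)$ lies above $b$. It also lies above $s$, because $x\ngeq_o y$ means $s\cdot b\,\delta(y,x)$ is not reduced. Hence $\delta(o',x)$ lies above $r=s\vee b$.
\item If this join does not exist, then $U(o',y)=U(o,y)$.
\item Otherwise some minimal gallery from $y$ to $x$ passes through a chamber $z$ with $z\geq_{o'}y$ and $\delta(o',z)=r$.
\item There are finitely many such $z$ by local finiteness. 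However, they lie at $W$-distance $b^{-1}r$ from $y$, which is unbounded in general, so they are not ``near $y$''. In the example above, $b^{-1}r=st$, and $ts$ is such a $z$.
\item By (Bui2), $\delta(o,z)\in\{r,sr\}$, i.e.\ $z$ lies in region (c) or (b). Call $F_0$ and $F_1$ the corresponding finite sets.
\end{itemize}
One then gets
\[
U(o',y)=U(o,y)\cup\bigcup_{z\in F_0}U(o,z)\cup\Bigl(R_b\cap\bigcup_{z\in F_1}U(o,z)\Bigr),
\]
where $R_b=\{x:\delta(o,x)\ngeq s\}$ is your region (b). This is exactly the paper's argument, and it is the piece your plan is missing.
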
 
\begin{proof}
   As any two chambers are connected by a minimal gallery, we may assume that $\delta(o,o') = s \in S$. We write
    \[
        a=\delta(o,y),
        \qquad
        b=\delta(o',y),
        \qquad
        c=\delta(y,x),
        \qquad
        w=\delta(o,x).
    \]
Moreover, there are two equivalences
    \[
        x\geq_o y
        \quad\Longleftrightarrow\quad
        a\cdot c \text{ is reduced}, \qquad
        x\geq_{o'}y
        \quad\Longleftrightarrow\quad
        b\cdot c \text{ is reduced}.
    \]
By (Bui2), we know that $b\in\{a,sa\}$. We distinguish three cases. If $b=a$, then $b\cdot c$ is reduced if and only if $a\cdot c$ is reduced. Hence we are done. 

Suppose that $b=sa$ and $l(b)=l(a)+1$. In that case we know that $b\cdot c$ is reduced if and only if $a\cdot c$ is reduced and $s\nleq w$. Moreover the condition $w \geq s$ is equivalent to asking that there is a $z \in X$ such that $\delta(o,z) = s$ and $x\geq_o z$. As $X$ is locally finite there are only a finite number of such $z$ so we are done.

    It remains to consider the case when $b=sa$ and $l(b)=l(a)-1$. Equivalently, $a=s\cdot b$ and this product is reduced. Suppose first that $s$ and $b$ have no common upper bound for the right weak order. In that case we claim that $b\cdot c$ is reduced if and only if $a\cdot c$ is
    reduced. One implication is immediate. Suppose that $b\cdot c$ is reduced but $a\cdot c$ is not
    reduced. Then $bc$ is a common upper
    bound of $s$ and $b$, a contradiction.
    
    Assume now that $s$ and $b$ have a common upper bound, and let
    $r=s\vee b$. Write $r=bd$, for some $d \in W$ such that $l(r)=l(b)+l(d)$.
    Consider the sets
    \[
        F_0=
        \left\{
            z\in X :
            z\geq_{o'}y,\ 
            \delta(o',z)=r,\ 
            \delta(o,z)=r
        \right\}
    \]
    and
    \[
        F_1=
        \left\{
            z\in X :
            z\geq_{o'}y,\ 
            \delta(o',z)=r,\ 
            \delta(o,z)=sr
        \right\}.
    \]
    These sets are finite because $X$ is locally finite. We claim that
    \[
    \begin{split}
        x\geq_{o'}y
        \quad\Longleftrightarrow\quad&
        x\geq_o y\\
        &\text{or there exists }z\in F_0\text{ such that }x\geq_o z\\
        &\text{or }
        \left(
            w \ngeq s
            \text{ and there exists }z\in F_1
            \text{ such that }x\geq_o z
        \right).
    \end{split}
    \]

    Suppose first that $x\geq_{o'}y$ and that $x\ngeq_o y$. Since $b\cdot c$ is reduced and $s\cdot (bc)$ is not reduced we have $bc \geq r=s\vee b$. Write then $bc=rf$ for some $f \in W$ such that $l(bc) = l(r) + l(f)$. The reduced expressions $r=bd$ and
    $rf = bc$, give
    $c=df$ with $l(c) = l(d) + l(f)$. Choose a chamber $z$ on a geodesic from $y$ to $x$ such that $\delta(y,z)=d$. Then
    $z\geq_{o'}y$, $\delta(o',z)=r$, and $\delta(z,x)=f$. By (Bui2), we know that $\delta(o,z)\in\{r,sr\}$. If
    $\delta(o,z)=r$, then $r\cdot f = \delta(o,z)\cdot \delta(z,x)$ is reduced, so $x\geq_o z$ and
    $z\in F_0$. Assume now $\delta(o,z)=sr$. Then as $(sr)\cdot f$ is reduced, we have $x\geq_o z$ and $z\in F_1$. Moreover, as
    $r\cdot f$ is reduced we know that
    $w=\delta(o,x)=srf \ngeq s$.

    Conversely, if $x\geq_o y$, then $a\cdot c$ is reduced, and
    therefore $b\cdot c$ is reduced. Hence $x\geq_{o'}y$. Suppose that $z\in F_0$ and $x\geq_o z$. Put
    $f=\delta(z,x)$. Since $\delta(o,z)=r$, the product $r\cdot f$ is reduced. As $\delta(o',z)=r$, it follows that $x\geq_{o'}z$. Since $z\geq_{o'}y$, we obtain $x\geq_{o'}y$. Finally, suppose that $z\in F_1$, that $x\geq_o z$, and that $w\ngeq s$. Put again $f=\delta(z,x)$. Then $w=(sr)\cdot f$ is reduced. Since $w\ngeq s$, the products $s\cdot w=r\cdot f$ are reduced. As $\delta(o',z)=r$, this implies $x\geq_{o'}z$, and therefore $x\geq_{o'}y$. This proves the claim.
\end{proof}
A direct consequence of the last lemma is the following proposition.
\begin{proposition}\label{omega}
    Let $o,o' \in X$. A sequence in $X$ $o$-converges if and only if it $o'$-converges. Hence the underlying sets of $\Omega(o)$ and $\Omega(o')$ are equal. Moreover the open sets of $\Omega(o)$ and $\Omega(o')$ are the same. Hence the topological spaces $\Omega(o)$ and $\Omega(o')$ are equal.
\end{proposition}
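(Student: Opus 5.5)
We derive Proposition~\ref{omega} from Lemma~\ref{convconv}, applied once in each direction: to $(o,o')$ and to $(o',o)$. The proof is pure bookkeeping, and it proceeds in three steps.

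\emph{Step 1: convergence.} Let $\mathbf{x}=(x_n)$ be a sequence that $o$-converges, and fix $y\in X$. By Lemma~\ref{convconv}, $U(o',y)$ belongs to the Boolean algebra generated by finitely many sets $U(o,z_1),\ldots,U(o,z_m)$. Concretely, there is a Boolean formula $\Phi$ in $m$ variables such that, for all $x\in X$,
\[x\geq_{o'}y \iff \Phi\bigl(x\geq_o z_1,\ldots,x\geq_o z_m\bigr).\]
Since $\mathbf{x}$ $o$-converges, each truth value $[x_n\geq_o z_k]$ is eventually constant, because only finitely many conditions are involved. Hence $[x_n\geq_{o'}y]$ is eventually constant, and as $y$ was arbitrary, $\mathbf{x}$ $o'$-converges. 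Swapping the roles of $o$ and $o'$ gives the converse.

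\emph{Step 2: equality of the equivalence relations, hence of the underlying sets.} Now let $\mathbf{x},\mathbf{y}$ be two $o$-converging sequences with $\mathbf{x}\sim_o\mathbf{y}$. The relation $\sim_o$ says that the eventual truth values of $[\,\cdot\geq_o z]$ agree on $\mathbf{x}$ and $\mathbf{y}$ for every $z$. By the same Boolean formula, the eventual truth values of $[\,\cdot\geq_{o'} y]$ then agree as well, so $\mathbf{x}\sim_{o'}\mathbf{y}$. By symmetry, $\Omega(o)$ and $\Omega(o')$ are quotients of the same set by the same equivalence relation, so they are equal as sets.

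\emph{Step 3: topology.} First I would note that, for a point $\xi\in\Omega(o)$ represented by $\mathbf{x}$, we have $\xi\in\mathcal{U}_y$ exactly when $x_n\geq y$ for all $n$ large enough. This holds for constant sequences by definition, and for general points via the extended order. Consequently each subbasic open set $\mathcal{U}^{o'}_y$ of $\Omega(o')$ coincides with $\Phi\bigl(\mathcal{U}^{o}_{z_1},\ldots,\mathcal{U}^{o}_{z_m}\bigr)$, that is, a finite union of finite intersections of the sets $\mathcal{U}^o_{z_k}$ and their complements. Since the sets $\mathcal{U}^o_z$ are clopen in $\Omega(o)$, this set is open, indeed clopen, in $\Omega(o)$; the same holds for its complement $(\mathcal{U}^{o'}_y)^c$. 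Hence the topology of $\Omega(o')$ is coarser than that of $\Omega(o)$, and by symmetry the two topologies coincide. An alternative route is to observe that the identity map $\Omega(o)\to\Omega(o')$ is a continuous bijection between compact Hausdorff spaces; it is therefore a homeomorphism, so one inclusion of topologies suffices.

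The only delicate point is the characterization of membership in $\mathcal{U}_y$ through eventual behaviour of representatives, which is needed to transport the Boolean formula from vertices to boundary points. This follows directly from the definitions of $\sim_o$ and of the extended order on $\overline{(K,o)}$.
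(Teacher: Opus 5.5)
Your proof is correct and takes the same route as the paper, which states the proposition as a direct consequence of Lemma~\ref{convconv} without further argument. Your three steps supply the bookkeeping that phrase leaves implicit: eventual constancy of finitely many truth values, transport of $\sim_o$ via the characterization that $\xi\geq y$ if and only if $x_n\geq y$ eventually, and the observation that each subbasic set of $\Omega(o')$ is a finite Boolean combination of clopen subbasic sets of $\Omega(o)$.
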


In the rest of this article we will thus denote $\Omega(o)$ simply by $\Omega$ for any $o \in X$. Moreover, given a sequence $\mathbf{x}$ with values in $X$ we will simply say that it converges if it $o$-converges for some $o \in X$. Fix a discrete group $G$ acting on $X$ by $W$-isometries. We let $\mathscr{I} = G\backslash X$ denote the set of orbits and $G = \biguplus_{w\in W}C_{i,j}(w)$ the associated $W$-decomposition. Denote by $H_i = C_{i,i}(e)$ the Borel subgroups. We now follow the approach of Klisse \cite[Theorem 3.3, Theorem 3.5]{Kli23}.
\begin{proposition}\label{toptop}
The action of $G$ on $X$ extends to a continuous action on $\Omega$ by homeomorphisms. 
\end{proposition}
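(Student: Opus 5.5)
The plan is to use the basepoint independence of $\Omega$ (Proposition \ref{omega}) together with the fact that $G$ acts by graph automorphisms. Since $G$ acts by $W$-isometries, each $g\in G$ preserves $d(x,y)=l(\delta(x,y))$, so it is an automorphism of the graph $X$. Hence for all $o,x,y\in X$ we have
\[x\geq_o y \iff gx\geq_{go} gy.\]
It follows that a sequence $\mathbf{x}=(x_n)$ $o$-converges if and only if $g\mathbf{x}=(gx_n)$ $go$-converges. By Proposition \ref{omega}, this happens if and only if $g\mathbf{x}$ $o$-converges. The same transport shows that $\mathbf{x}\sim_o\mathbf{y}$ if and only if $g\mathbf{x}\sim_{go}g\mathbf{y}$. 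We need this relation to be independent of the basepoint. That follows from Proposition \ref{omega} as well: the underlying sets of $\Omega(o)$ and $\Omega(go)$ are equal as quotients, since two convergent sequences define the same point exactly when they have the same limit in the common Hausdorff topology.

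In more detail, we check that $\mathbf{x}\sim_o\mathbf{y}$ if and only if $\mathbf{x}\sim_{o'}\mathbf{y}$. By Lemma \ref{convconv}, each condition $z\leq_{o'}\mathbf{x}$ is a finite Boolean combination of conditions $z'\leq_o\mathbf{x}$. More precisely, "$x_n\geq_{o'}z$ eventually" is a Boolean combination of the statements "$x_n\geq_o z'$ eventually", and these eventual statements are well defined because $\mathbf{x}$ $o$-converges. So equal $o$-data forces equal $o'$-data. We then define $g\cdot[\mathbf{x}]=[g\mathbf{x}]$. This is a well-defined action on $\Omega$ extending the action on $X$, and each map has inverse given by $g^{-1}$.

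For continuity it suffices to show that each $g$ is continuous, since the inverse is handled by the same argument with $g^{-1}$. We show that the preimage of each subbasic open set is open. Consider the $o$-subbasic set $\mathcal{U}_y=\{\xi\geq_o y\}$. Its preimage under $g$ is
\[g^{-1}\mathcal{U}_y=\{\xi : g\xi\geq_o y\}=\{\xi:\xi\geq_{g^{-1}o}g^{-1}y\},\]
which is a subbasic open set of $\Omega(g^{-1}o)$. The same holds for the complement $\mathcal{U}_y^c$. By Proposition \ref{omega}, the topologies of $\Omega(g^{-1}o)$ and $\Omega(o)$ coincide, so these preimages are open in $\Omega$.

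The main obstacle is the identity $g^{-1}\mathcal{U}_y=\mathcal{U}^{g^{-1}o}_{g^{-1}y}$ at boundary points. On $X$ it is just equivariance of $\geq$. For general $\xi=[\mathbf{x}]$ we argue as follows. The statement $\xi\geq_o y$ means that $x_n\geq_o y$ eventually; this characterisation holds because $\xi\geq y$ was defined via $y\leq\mathbf{x}$. This in turn is equivalent to $g^{-1}x_n\geq_{g^{-1}o}g^{-1}y$ eventually, and the equivalence passes through the identification of $\Omega(o)$ with $\Omega(g^{-1}o)$ from Proposition \ref{omega}. Once this bookkeeping is done, continuity is immediate.
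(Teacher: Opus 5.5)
Your proof is correct and follows the paper's argument. In both, the $W$-isometric action gives the equivariance $x\geq_o y \iff gx\geq_{go}gy$, and together with the basepoint independence of Proposition \ref{omega} this yields both well-definedness and continuity of the extended action.

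The only real difference is in the continuity step. You check preimages of subbasic open sets, $g^{-1}\mathcal{U}_y=\{\xi:\xi\geq_{g^{-1}o}g^{-1}y\}$, whereas the paper argues with convergent sequences and invokes metrizability of $\Omega$. Your version avoids that metrizability appeal, which the paper uses without justification.
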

\begin{proof}
Let $g\in G.$ Let $\mathbf{x},\mathbf{y}$ be two sequences with values in $X$ converging to the same limit in $\Omega$. Let $z\in X$. As $G$ acts by $W$-isometries, for all $n$ we know that $gx_n \geq_o z$ if and only if $x_n \geq_{g^{-1}o} g^{-1}z$ and $gy_n \geq_o z$ if and only if $y_n \geq_{g^{-1}o} g^{-1}z$. Hence the sequences $(gx_n)$ and $(gy_n)$ converge to the same limit in $\Omega(g^{-1}o)$. By the last proposition we conclude that $(gx_n)$ and $(gy_n)$ also converge to the same limit in $\Omega$. We now show that the extension of the map $x\mapsto g\cdot x$ to $\Omega$ is continuous. As $\Omega$ is metrizable it suffices to check continuity for sequences. Let $(z^n)$ be a sequence with values in $\Omega$ converging to some $z \in \Omega$. For all $n$ we take a sequence $\mathbf{x}^n=(x^n_k)_{k\geq 0}$ with values in $X$ representing $z^n$. Let $y \in X$. For all $n$ we know that $gz^n \geq_o y$ if and only if $z^n \geq_{g^{-1}o} g^{-1}y$. By the last proposition, we know that $(z^n)$ converges to $z$ in $\Omega(g^{-1}o)$. As $z \geq_{g^{-1}o}g^{-1}y$ if and only if $gz \geq_o y$, we conclude that $(gz^n)$ converges to $gz.$
\end{proof}
Let $(Y,d)$ be a metric space. Let $y_0 \in Y$ be a base point. Consider the Banach space $C_*(Y) = C(Y,\mathbb{R})/\mathbb{R}\cdot 1_Y$ and equip it with the topology coming from the topology of uniform convergence on bounded sets on $C(Y,\mathbb{R})$. For all $f\in C(Y,\mathbb{R}),$ we let $[f] \in C_*(Y)$ be the class of $f$ modulo the constant functions. Consider the (continuous and injective) map defined by \[
\begin{aligned}
\iota\colon Y &\longrightarrow C_*(Y), \\
y &\longmapsto [f_y],
\end{aligned}
\qquad
\text{where for all } z\in Y, f_y(z)= d(y,z).
\]
\begin{definition}
The horofunction compactification $\hat{Y}$ of $Y$ is the closure of the image of $\iota$ inside $C_*(Y)$.
\end{definition}
One checks that $\hat{Y}$ is compact Hausdorff whenever $Y$ is a proper metric space (see \cite[Chapter II.8]{BH99} for more details). The building $X$ is a metric space with the graph distance (i.e. the distance given by the length of $\delta$ in $W$). Moreover it is proper as it is locally finite.
\begin{theorem}\label{capraceklisse} The map $\iota : X \rightarrow C_*(X)$ extends to a $G$-equivariant homeomorphism $\Omega\cong \hat{X}$ between the bordification $\Omega$ of $X$ seen as a rooted graph and the horofunction compactification of $X$ seen as a metric space.
\end{theorem}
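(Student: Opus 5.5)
Fix a base chamber $o\in X$ and normalize horofunctions at $o$: identify $C_*(X)$ with functions vanishing at $o$, so $\iota(y)=h_y$ with $h_y(z)=d(y,z)-d(y,o)$. Since $X$ is proper and discrete, the topology on $\hat X$ is pointwise convergence. The plan is to build a continuous bijection $\Phi:\Omega\to\hat X$ extending $\iota$ and conclude by compactness, since both spaces are compact Hausdorff.

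\emph{Step 1: convergence in $\Omega$ implies horofunction convergence.} Let $\mathbf{x}=(x_n)$ converge in $\Omega$, and fix $z\in X$. I want $d(x_n,z)-d(x_n,o)$ to be eventually constant. By Proposition~\ref{omega}, $\mathbf{x}$ also $z$-converges. Choose a minimal gallery $o=z_0,z_1,\dots,z_m=z$. Then
\[
d(x_n,z)-d(x_n,o)=\sum_k \bigl(d(x_n,z_{k+1})-d(x_n,z_k)\bigr).
\]
For adjacent chambers $z_k,z_{k+1}$ with $\delta(z_k,z_{k+1})=s$, (Bui2) shows the difference is $\pm1$ or $0$. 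The value is determined by whether $\delta(z_k,x_n)\ge s$ and whether $\delta(z_{k+1},x_n)\ge s$. Each of these conditions is eventually constant: it amounts to $x_n\geq_{z_k}y$ for one of the finitely many $y$ adjacent to $z_k$ (local finiteness), and $\mathbf{x}$ $z_k$-converges. Hence $h_{x_n}$ converges pointwise. Equivalent sequences give the same limit, because the eventual truth values are determined by $\{y: y\le \mathbf{x}\}$ with respect to each root, and Lemma~\ref{convconv} transports this between roots. This defines $\Phi$. It extends $\iota$, and its continuity follows by the same finite-condition argument: $\Phi(\xi)(z)$ depends only on finitely many subbasic clopen conditions $\xi\geq_o y$, rewritten via Lemma~\ref{convconv}.

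\emph{Step 2: surjectivity.} Any point of $\hat X$ is the limit of some $h_{x_n}$. By compactness of $\Omega$ (metrizable, since $X$ is countable), pass to a subsequence converging in $\Omega$. Then Step 1 gives $\Phi(\lim x_n)$ equal to the horofunction limit.

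\emph{Step 3: injectivity.} This is the main obstacle. I need to recover $\{y\in X: y\le_o\xi\}$ from $h=\Phi(\xi)$. The key claim is
\[
y\le_o \xi \iff h(y)=-d(o,y).
\]
Indeed, $x_n\ge_o y$ means $d(o,x_n)=d(o,y)+d(y,x_n)$, that is, $h_{x_n}(y)=-d(o,y)$. Since $h_{x_n}(y)\ge -d(o,y)$ always holds by the triangle inequality, equality in the limit is equivalent to eventual equality, because the values are integers. So $\xi\ge_o y$ iff $h(y)=-d(o,y)$, and $h$ determines $\xi$. A continuous bijection from a compact space to a Hausdorff space is a homeomorphism.

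\emph{Step 4: equivariance.} For $g\in G$, $h_{gx}$ normalized at $o$ differs from $g_*h_x$ by a constant, which vanishes in $C_*(X)$. Combined with Proposition~\ref{toptop} and density of $X$, this gives $G$-equivariance of $\Phi$.

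The delicate points are making Step 1's well-definedness rigorous and keeping the normalizations consistent. Step 3's characterization also simplifies Step 1: it shows directly that horofunction convergence forces $o$-convergence, giving an alternative route to continuity of $\Phi^{-1}$.
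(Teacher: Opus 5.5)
Your proposal is correct and follows essentially the same route as the paper: you extend $\iota$ by reducing along a minimal gallery to adjacent chambers, whose distance differences are governed by eventually constant order conditions (Lemma~\ref{convconv}, Proposition~\ref{omega}), and you get injectivity from $y\le_o\xi \iff h(y)=-d(o,y)$, which is the paper's inequality $d(x_n,o)-d(x_n,z)=d(z,o)>d(y_n,o)-d(y_n,z)$, before concluding by compactness. The differences are minor: your subsequence argument in Step 2 can be replaced by the paper's observation that the compact image of $\Omega$ is closed and contains the dense set $\iota(X)$, and you make explicit the $G$-equivariance that the paper leaves implicit.
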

\begin{proof}
As $\Omega$ is a compact space and the image of $\iota$ is dense in $\hat{X}$, it suffices to prove that $\iota$ extends to a continuous injective map $\Omega\rightarrow \hat{X}$ defined by $\iota(\mathbf{x}) = [z\mapsto \lim_{n}(d(x_n,z)-d(x_n,o))] \in C_*(X)$ for all $\mathbf{x} \in \Omega$ and $z\in X$. As $X$ is a locally finite graph the convergence on bounded subsets is equivalent to pointwise convergence.

Let us first check that $\iota$ extends to $\Omega$. Let $\mathbf{x}$ and $\mathbf{y}$ be two sequences converging to the same limit. Let $o \in X$. Let $z\in X$. We claim that the sequences $d(x_n,z)-d(x_n,o)$ and $d(y_n,z)-d(y_n,o)$ converge to the same limit when $n$ goes to infinity. As there is a minimal gallery between $o$ and $z$, it suffices to prove that for all $z_1,z_2\in X$ such that $\delta(z_1,z_2) = s \in  S$ the sequences $d(x_n,z_1)-d(x_n,z_2)$ and $d(y_n,z_1)-d(y_n,z_2)$ converge to the same value. We know that for all $n$, $d(x_n,z_1)-d(x_n,z_2)$ and $d(y_n,z_1)-d(y_n,z_2)$ belong to $\{-1,0,1\}$. Moreover, $d(x_n,z_1)-d(x_n,z_2) = d(z_1,z_2) = 1$ if and only if $x_n \geq_{z_1} z_2$ and $d(x_n,z_1)-d(x_n,z_2) = -1$ if and only if $x_n \geq_{z_2} z_1$. The same equivalences hold for $\mathbf{y}$ so the claim follows by Lemma \ref{convconv}.

We now prove the continuity of $\iota$. As $\Omega$ is metrizable it suffices to check continuity on sequences $\mathbf{x}^n \in \Omega$ converging to $\mathbf{x}\in \Omega$. This means that for all $o,z \in X$, we have $\mathbf{x} \geq_o z$ if and only if for big enough $n$ we have $\mathbf{x}^n \geq_o z$. The reasoning of the last paragraph proves $\iota(\mathbf{x})(z) = \lim_{n}\iota(\mathbf{x}^n)(z)$ for all $z\in X$. 

It remains to prove the injectivity of $\iota$. Let $\mathbf{x}\neq\mathbf{y} \in \Omega$. There exists a $z\in X$ such that $\mathbf{x}\geq_oz$ and $\mathbf{y}\ngeq_oz$. For big enough $n$ we have $d(x_n,o) - d(x_n,z) = d(z,o) >  d(y_n,o) - d(y_n,z).$ Hence $[f_{x_n}] \neq [f_{y_n}]$ in $C_*(X).$
\end{proof}
By \cite[Theorem 3.1]{CL11}, the last theorem tells us that $\Omega$ is $G$-equivariantly homeomorphic to the minimal combinatorial compactification of $X$.
Let us choose a base point $i_0\in \mathscr{I}$. We now use the $G$-equivariant identification $X = \coprod_{i\in \mathscr{I}}G/H_i $ as in Theorem \ref{CTB}. Consider the connected rooted graph $(X,H_{i_0})$ with root $o = H_{i_0}$. Let us describe algebraically the graph order on $X$. For all $i,j \in \mathscr{I}$ and $g\in G$ we denote by $[g]_{i,j}$ the only element $w\in W$ such that $g\in C_{i,j}(w)$. For all $x = gH_j \in X$ with $g \in C_{i_0,j}(w),$ we denote by $|x|_{i_0}$ the length of $[g]_{i_0,j}$, which is the distance in $X$ between $x$ and the origin $H_{i_0}$.

\begin{proposition}\label{order}
    Let $i_0 \in \mathscr{I}$. Let $o = H_{i_0}$ be the associated root. Let $i,j \in \mathscr{I}$. Let $g_1,g_2 \in G$. Let $u = [g_1]_{i_0,i} \in W$ and $v = [g_2]_{i_0,j} \in W$. Then $g_1H_i \geq_{o} g_2H_j$ for the graph order on $(X,o)$ if and only if $u \geq v$ for the right weak order and there exists $g_3 \in C_{j,i}(v^{-1}u)$ such that $g_1 = g_2g_3$.
\end{proposition}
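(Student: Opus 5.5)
The plan is to translate the graph-order condition into a statement about $W$-distances, using the identity recorded just before Lemma \ref{convconv}: for chambers $o,x,y$ we have $x\geq_o y$ if and only if $\delta(o,y)\cdot\delta(y,x)$ is reduced in $W$, in which case $\delta(o,x)=\delta(o,y)\delta(y,x)$. Here $x=g_1H_i$, $y=g_2H_j$ and $o=H_{i_0}$. By the definition of $\delta$ in Theorem \ref{CTB},
\[
\delta(o,x)=[g_1]_{i_0,i}=u,\qquad \delta(o,y)=[g_2]_{i_0,j}=v,\qquad \delta(y,x)=[g_2^{-1}g_1]_{j,i}.
\]

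For the forward direction, assume $x\geq_o y$. Then $v\cdot\delta(y,x)$ is reduced and equals $u$. So $u=v\,\delta(y,x)$ with $l(u)=l(v)+l(\delta(y,x))$, which means $v\leq u$ in the right weak order and $\delta(y,x)=v^{-1}u$. Setting $g_3=g_2^{-1}g_1$, we get $g_1=g_2g_3$ and $g_3\in C_{j,i}([g_2^{-1}g_1]_{j,i})=C_{j,i}(v^{-1}u)$.

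For the converse, assume $u\geq v$ and $g_1=g_2g_3$ with $g_3\in C_{j,i}(v^{-1}u)$. Then $\delta(y,x)=[g_2^{-1}g_1]_{j,i}=[g_3]_{j,i}=v^{-1}u$. Since $u\geq v$, the product $v\cdot(v^{-1}u)$ is reduced, so $\delta(o,y)\cdot\delta(y,x)$ is reduced, and hence $x\geq_o y$.

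The only step needing care is the identity between the graph order and reducedness of $\delta(o,y)\cdot\delta(y,x)$. The paper states it, but if it must be justified, it follows from $d=l\circ\delta$ together with one fact: concatenating a minimal gallery from $o$ to $y$ with one from $y$ to $x$ gives a gallery whose type is $\delta(o,y)$ followed by $\delta(y,x)$. Such a gallery is minimal precisely when that word is reduced, and then $\delta(o,x)$ is the product of the two, by the standard building fact that a reduced-type gallery computes $\delta$ (iterate (Bui2)). Note also that, using (Wdec2), the statement does not depend on the choice of representatives $g_1,g_2$ of their cosets.
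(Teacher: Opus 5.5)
Your proof is correct, but it takes a different route from the paper's. You reduce the statement to the identity recorded just before Lemma \ref{convconv}: $x\geq_o y$ if and only if $\delta(o,y)\cdot\delta(y,x)$ is reduced, in which case $\delta(o,x)=\delta(o,y)\delta(y,x)$. After that, the proposition is a direct translation through the dictionary $\delta(g_1H_i,g_2H_j)=[g_1^{-1}g_2]_{i,j}$ of Theorem \ref{CTB}. The only building theory you need is that minimal galleries have reduced type and that galleries of reduced type compute $\delta$, which you correctly justify by iterating (Bui2). The paper states that identity without proof, so your sketch also fills a small gap.

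The paper instead works with the group decomposition directly. For the converse, it uses (Wdec4) along reduced expressions to factor $g_2=z_1\cdots z_p$ and $g_3=z_{p+1}\cdots z_n$ with $z_k\in C_{i_{k-1},i_k}(s_k)$. It then writes down the geodesic $x_k=z_1\cdots z_kH_{i_k}$ explicitly. For the forward direction, it reads off factors $z_k=g_k'^{-1}g_{k+1}'$ from a given geodesic.

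Your version is shorter and shows that the proposition is just the graph-order/$W$-distance identity in group-theoretic language. It also makes explicit that $g_3=g_2^{-1}g_1$ and $g_3\in C_{j,i}(v^{-1}u)$; the paper's forward direction leaves this conclusion implicit. The paper's version has its own payoff: it produces an explicit factorization of $g$ into elements of the sets $C_{i_{k-1},i_k}(s_k)$ along a geodesic. That is exactly the form used in the proof of Theorem \ref{convcrossed}, where $P_{i,gH_j}\otimes\mathrm{id}_{l^2G}$ is written as a product of creation operators and their adjoints.
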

\begin{proof}
    The graph distance between $g_1H_i$ and $g_2H_j$ equals the length of $[g_1^{-1}g_2]_{i,j} \in W$. Assume first that $u \geq v$ for the right weak order and there exists a $g_3 \in C_{j,i}(v^{-1}u)$ such that $g_1 = g_2g_3$. Take reduced expressions $u = s_1\ldots s_n, v = s_1\ldots s_p$ and $v^{-1}u = s_{p+1} \ldots s_n$ and take the corresponding decompositions $g_2 = z_1\ldots z_p, g_3 = z_{p+1} \ldots z_n$ for $z_k \in C_{i_{k-1},i_{k}}(s_k), i_1\ldots i_n \in \mathscr{I}$ and $s_1 \ldots s_n\in S$ (in particular $z_1 \in C_{i_{0},i_{1}}(s_1)$ and $z_p \in C_{i_{p-1},j}(s_p)$). Define $x_0 = H_{i_0}$ and $x_k = z_1\ldots z_kH_{i_k}$ for all $1 \leq k \leq n$. The finite sequence $\mathbf{x} = (x_n)$ is the desired geodesic starting at $H_{i_0}$, passing by $g_2H_j$ and ending at $g_1H_i$. 
    
    Assume that $g_1H_i \geq_o g_2H_j$ for the graph order at $o$. This means that there is a geodesic $x_0, \ldots x_n$ in $X$ starting at $x_0 = H_{i_0}$, passing by $x_p = g_2H_j$ and ending at $x_n = g_1H_i$. For all $0 \leq k \leq n,$ write $x_k = g_k'H_{i_k}$ for some $g_k' \in C_{i_0,i_k}(w_k), i_k \in \mathscr{I}, w_k \in W.$ In particular we choose $g_0' = e, g_p' = g_2$ and $g_n' = g_1$. For all $0\leq k \leq n-1$, $d(x_k,x_{k+1}) = 1$ implies that $g_k'^{-1}g'_{k+1} \in C_{i_k,i_{k+1}}(s_k)$ for some $s_k\in S$.
    Write $z_k = g_k'^{-1}g'_{k+1}$ (in particular $z_0 = g_1'$). We have $x_k = z_0z_1\ldots z_{k-1} H_{i_k}$. Moreover as $x_0, \ldots, x_n$ is a geodesic we know that for all $k$ the expression $s_0\ldots s_k$ is reduced. In particular there are indices $p $ and $ n$ such that $p\leq n$ and $g_1H_i = z_0\ldots z_{n-1} H_{i_n}, u = s_0\ldots s_{n-1}$ and $g_2H_j = z_0\ldots z_{p-1}H_j, v = s_0\ldots s_{p-1}.$
\end{proof}
\begin{lemma}\cite[Proposition 3.1.1]{Has17}\label{FELLABS}
    Let $H \leq G$ be a pair of discrete groups. Consider the usual conditional expectation $E : C^*_r(G) \rightarrow C^*_r(H)$. Write $\pi : G \rightarrow \mathcal{U}(l^2(G/H))$ for the quasi-regular representation. For all $a\in C^*_r(G),$ write  $L(a) : b \rightarrow ab \in \mathcal{L}_{C^*_r(G)}(C^*_r(G))$. There is an isomorphism of $C^*_r(G)$-$C^*_r(G)$ correspondences \[L^2(C^*_r(G),E)\otimes_{C^*_r(H)} C^*_r(G) \cong l^2(G/H)\otimes C^*_r(G).\]
   Here the left action of each $\lambda(g)\in C^*_r(G)$ on $l^2(G/H)\otimes C^*_r(G)$ is given by $\pi(g)\otimes L(\lambda(g))$.
\end{lemma}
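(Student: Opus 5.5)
We build an explicit unitary on the algebraic level and then check that it intertwines both actions. The model is $L^2(C^*_r(G),E)$, which is the completion of $C^*_r(G)$ for the $C^*_r(H)$-valued inner product $\langle a,b\rangle = E(a^*b)$, with cyclic vector $\eta$. Fix a set of representatives $R\subset G$ of the cosets $G/H$, so that $G=\biguplus_{r\in R} rH$. The natural candidate is
\[V(\lambda(g)\eta\otimes b) = \delta_{gH}\otimes \lambda(g)b,\qquad g\in G,\ b\in C^*_r(G).\]
This does not depend on representatives: if $g'=gh$ with $h\in H$, then $\lambda(g')\eta\otimes b=\lambda(g)\eta\otimes \lambda(h)b$, and both sides map to $\delta_{gH}\otimes\lambda(gh)b$. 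So $V$ is well defined on the algebraic span of such elements, which is dense in the interior tensor product.

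First we check that $V$ preserves the $C^*_r(G)$-valued inner products. In $L^2\otimes_{C^*_r(H)}C^*_r(G)$ we have
\[\langle \lambda(g_1)\eta\otimes b_1,\lambda(g_2)\eta\otimes b_2\rangle = b_1^*E(\lambda(g_1^{-1}g_2))b_2 = \mathbf 1_{g_1H=g_2H}\, b_1^*\lambda(g_1^{-1}g_2)b_2 .\]
In $l^2(G/H)\otimes C^*_r(G)$ we have
\[\langle \delta_{g_1H}\otimes\lambda(g_1)b_1,\delta_{g_2H}\otimes\lambda(g_2)b_2\rangle=\delta_{g_1H,g_2H}\, b_1^*\lambda(g_1)^*\lambda(g_2)b_2 .\]
These agree, so $V$ extends to an isometry. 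Its image contains $\delta_{rH}\otimes c$ for every $r\in R$ and every $c\in C^*_r(G)$, by taking $b=\lambda(r)^*c$. These elements span a dense subspace, so $V$ is surjective and hence unitary. It is also right $C^*_r(G)$-linear, since the right action only affects the second leg.

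Next we check the left action. For $g'\in G$,
\[V(\lambda(g')\lambda(g)\eta\otimes b)=\delta_{g'gH}\otimes\lambda(g')\lambda(g)b=(\pi(g')\otimes L(\lambda(g')))V(\lambda(g)\eta\otimes b).\]
The operators $\pi(g')\otimes L(\lambda(g'))$ form a unitary representation of $G$ on the Hilbert module $l^2(G/H)\otimes C^*_r(G)$. It extends to $C^*_r(G)$ because $V$ conjugates it to the left action on $L^2\otimes C^*_r(G)$, and that action is already defined on $C^*_r(G)$. The same conclusion also follows from Fell absorption, since $\lambda\otimes\pi\simeq\lambda$ up to multiplicity.

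The main delicate point is this last step: we must ensure the left action formula is meant as the extension of $g\mapsto \pi(g)\otimes L(\lambda(g))$ to the reduced algebra, rather than to the full group C$^*$-algebra. The intertwining identity through $V$ settles this, since $V$ transports an action that is already defined on $C^*_r(G)$. The rest of the argument is routine density and continuity.
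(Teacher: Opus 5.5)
Your proposal is correct and follows the paper's argument: your map $V(\lambda(g)\eta\otimes b)=\delta_{gH}\otimes\lambda(g)b$ is exactly the inverse of the paper's unitary $U(\delta_{gH}\otimes a)=\lambda(g)\xi\otimes\lambda(g)^*a$. You also write out the inner-product, surjectivity and intertwining checks that the paper leaves as ``one checks''. Your observation that conjugating by $V$ is what makes $\lambda(g)\mapsto\pi(g)\otimes L(\lambda(g))$ extend to $C^*_r(G)$ is a useful addition.
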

\begin{proof}
One checks that the operator $U :l^2(G/H)\otimes C^*_r(G)
\longrightarrow L^2(C^*_r(G),E)\otimes_{C^*_r(H)} C^*_r(G)$ given by 
$U(\delta_{gH}\otimes a)
= \lambda(g)\xi\otimes \lambda(g)^*a$
for $g\in G$ and $a\in C^*_r(G)$ is well-defined and gives the desired unitary equivalence.
\end{proof}
Let $A = C^*_r(G) = \overline{\bigoplus_{w\in W}F_{i,j}(w)}$ be the reduced C$^*$-algebra of $G$ together with its associated expected $W$-decomposition as in Example \ref{GOP}. Denote by $B_i = C^*_r(H_i)$ the Borel C$^*$-subalgebras.
\begin{theorem}\label{convcrossed}
The covariance C$^*$-algebra based at $i$ is independent of the choice of $i$ inside $\mathcal{L}_\mathcal{P}(\mathcal{X})$ and equals the global covariance C$^*$-algebra $\mathscr{C}$. Moreover there is a $*$-isomorphism $\mathscr{C} \cong C(\Omega) \rtimes_rG$.
\end{theorem}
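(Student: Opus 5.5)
Our plan is to realize everything inside $\mathcal{L}_{\mathcal{P}}(\mathcal{X})\cong\prod_{j}\mathcal{L}_{B_j}(X_j)$ and to identify each $X_j$ with a concrete module over $C^*_r(G)$, using Lemma \ref{FELLABS}.

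\emph{Step 1: a concrete picture of $\mathcal{X}$.} Tensoring each $X_j=L^2(C^*_r(G),E_j)$ with $C^*_r(G)$ over $B_j=C^*_r(H_j)$ gives, by Lemma \ref{FELLABS}, $X_j\otimes_{B_j}C^*_r(G)\cong l^2(G/H_j)\otimes C^*_r(G)$. Summing over $j$, we obtain $l^2(X)\otimes C^*_r(G)$ with $X=\coprod_j G/H_j$. Lemma \ref{ZERO} makes $T\mapsto T\otimes\mathrm{id}$ faithful on $\mathcal{L}_{\mathcal{P}}(\mathcal{X})$, since $C^*_r(H_j)\subset C^*_r(G)$ acts faithfully. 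Under this identification, $\lambda(g)$ becomes $\pi(g)\otimes L(\lambda(g))$. Using the decomposition $X_j=\bigoplus_w X_{i,j}(w)$ and Proposition \ref{order}, the submodule $\mathcal{X}_{i,\geq w}$ corresponds to the span of the $\delta_{gH_k}$ with $gH_k\geq_{o_i}$ some chamber at $W$-distance $w$ from $o_i=H_i$. More precisely, the key point is that $p_{i,w}$ becomes $\sum_{y:\,\delta(o_i,y)=w}P^{o_i}_y\otimes 1$, where $P^{o}_y$ is the projection onto $l^2U(o,y)$. This sum is finite by local finiteness.

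\emph{Step 2: the path operators.} We compute the creation, annihilation and diagonal operators for $a=\lambda(g)$ with $g\in C_{i,j}(s)$. On $\delta_{x}\otimes b$, the operator $a^\dagger$ acts as $\pi(g)$ restricted to chambers $x$ with $x\ngeq_{o_j}$ (a neighbour of $o_j$ of type $s$), tensored with $L(\lambda(g))$. The other two operators are analogous. Each of them therefore has the form $(\pi(g)\chi)\otimes L(\lambda(g))$, where $\chi$ is a finite Boolean combination of the indicators $1_{U(o,y)}$, hence lies in $C(\Omega)$. This uses Proposition \ref{CT} for the graph $(X,o)$ and Proposition \ref{omega}, which make $C(\Omega)$ the C$^*$-algebra generated by all $1_{U(o',y)}$ acting on $l^2X$, independently of the base point.

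\emph{Step 3: both inclusions.} Consider the covariant pair $(\pi\otimes L\circ\lambda,\ M\otimes 1)$, where $M$ is the multiplication representation of $C(\Omega)$. This gives a representation of $C(\Omega)\rtimes G$ on $l^2X\otimes C^*_r(G)$. By Fell absorption, it is unitarily equivalent to a regular representation, because $\pi\otimes\lambda\simeq\lambda^{\oplus}$. The C$^*$-algebra it generates is therefore $C(\Omega)\rtimes_rG$; faithfulness holds since $M$ is faithful and $X$ is dense in $\Omega$. Step 2 then shows $\mathscr{C}\subseteq C(\Omega)\rtimes_r G$. For the reverse inclusion $C(\Omega)\rtimes_rG\subseteq\mathscr{C}(i)$, we note that $\lambda(g)\in A^r\subset\mathscr{C}(i)$ once $g$ is written as a product along a path based at $i$. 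By Proposition \ref{pw}, the projections $p_{i,w}$ lie in $\mathscr{C}(i)$, and $\lambda(g)p_{j,w}\lambda(g)^*$ lies there as well. It then suffices to show that the translates $\lambda(g)p_{i,w}\lambda(g)^*$ and the $p_{i,w}$ generate $C(\Omega)\otimes1$. For this we use that $1_{U(o_i,y)}=\lambda(h)(\ldots)$, with $y=ho_k$ reached by a path of type $w$. This isolates a single $P^{o_i}_y$ out of the finite sum $p_{i,w}$: conjugate by $\lambda(h)$ to move $y$ to a base chamber, and multiply by suitable diagonal operators. We then conclude with Lemma \ref{convconv} for changing the base point.

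The main obstacle is this last step, that is, showing that $\mathscr{C}(i)$ contains every individual $1_{U(o',y)}\otimes1$ and not merely the $W$-type sums $p_{i,w}$. Our first attempt will be the formula $a^\dagger(a^\dagger)^*=\sigma_{i,w}(\theta_{a\eta,a\eta})$ with $a=\lambda(h)$, $h\in C_{i,k}(w)$. This operator equals $\lambda(h)q_{k,w}\lambda(h)^*$, which is exactly the projection onto $l^2U(o_i,ho_k)$. Once this is in hand, the equality $\mathscr{C}(i)=\mathscr{C}=C(\Omega)\rtimes_rG$ follows, since $\mathscr{C}(i)\subseteq\mathscr{C}$ holds trivially.
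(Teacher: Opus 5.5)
Your proposal is correct and follows essentially the paper's proof. The paper uses the same faithful embedding of $\mathcal{L}_\mathcal{P}(\mathcal{X})$ obtained from Lemma \ref{FELLABS} and Lemma \ref{ZERO}. It gets $\mathscr{C}\subset C(\Omega)\rtimes_rG$ from $\sigma(p_{j,w})=p'_{j,w}\otimes\mathrm{id}$, where $p'_{j,w}$ is a finite join of the projections $P_{j,x}$. It gets the reverse inclusion from exactly your formula $\lambda(g)^\dagger(\lambda(g)^\dagger)^*$, written there as $\lambda(g_1)^\dagger\cdots\lambda(g_n)^\dagger(\lambda(g_n)^\dagger)^*\cdots(\lambda(g_1)^\dagger)^*$ and identified via Proposition \ref{order} with the projection onto $l^2U(o_i,gH_j)$.

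Two small differences: your intermediate detour through translates of the $p_{i,w}$ is not needed, while your Fell-absorption argument justifies the identification of the generated C$^*$-algebra with $C(\Omega)\rtimes_rG$, which the paper takes for granted.
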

\begin{proof}
    For all $i\in \mathscr{I},$ write $\pi_i : G \rightarrow \mathcal{U}(l^2(G/H_i))$ for the quasi-regular representation. Write $\pi =\bigoplus_{j\in \mathscr{I}}\pi_j: G \rightarrow \mathcal{U}(\bigoplus_{j\in \mathscr{I}}l^2(G/H_j))$. Let $\lambda : C^*_r(G) \rightarrow B(l^2G)$ be the left regular representation of $G$. Let $i\in \mathscr{I}$. Consider the embedding \[\mathcal{L}_\mathcal{P}(\mathcal{X}) \cong \prod_{j\in \mathscr{I}}\mathcal{L}_{B_j}(X_j) \hookrightarrow \prod_{j\in \mathscr{I}}\mathcal{L}_A(X_j\otimes_{B_j}A) \cong \prod_{j\in \mathscr{I}}\mathcal{L}_A(l^2(G/H_j)\otimes A)\] where the first embedding is given by Lemma \ref{ZERO} and the isomorphism is given by the last lemma. Embed each $\mathcal{L}_A(l^2(G/H_j)\otimes A)$ into $B(l^2(G/H_j)\otimes l^2G)$ as in Lemma \ref{ZERO}, using faithfulness of $\lambda$. We denote by $\sigma : \mathcal{L}_\mathcal{P}(\mathcal{X}) \hookrightarrow B(l^2X\otimes l^2G)$ the embedding obtained by composing the one above with the diagonal embedding \[\prod_{j\in \mathscr{I}}B(l^2(G/H_j)\otimes l^2G) \hookrightarrow B(\bigoplus_{j\in \mathscr{I}}l^2(G/H_j)\otimes l^2G) = B(l^2X\otimes l^2G).\] Let $i \in \mathscr{I}$ be a choice of base index and choose $o = H_i\in X$ as root of $X$. We are going to prove the inclusions $C(\Omega)\rtimes_rG \subset \sigma(\mathscr{C}(i)) \subset \sigma(\mathscr{C}) \subset C(\Omega)\rtimes_rG$. The C$^*$-subalgebra $C(\Omega) \subset B(l^2X)$ is generated by the projections $P_{i,x}$ for $x\in X$ as in Proposition \ref{CT}. Here, for all $y\in X,$ $P_{i,y}$ is the projection on the Hilbert space whose basis is the subset \[
U(o,y)
=
\{x\in X\mid x\geq_o y\}.
\] By Lemma \ref{convconv}, this C$^*$-subalgebra is independent of the choice of root. Consider the embedding $C(\Omega) \rtimes_rG \subset B(l^2X\otimes l^2G)$. Inside $B(l^2X\otimes l^2G)$, the C$^*$-algebra $C(\Omega)\rtimes_rG$ is generated by elements of the form $\pi(g) \otimes \lambda(g)$ and the projections $P_{i,gH_j}\otimes \mathrm{id}_G$ for $g\in G$ and $j\in \mathscr{I}$. The C$^*$-algebra $\sigma(\mathscr{C}(i)) \subset B(l^2X\otimes l^2G)$ is generated by the image by $\sigma$ of all path operators based at $i$. For all $j\in \mathscr{I}$ and $w\in W$ we let $p_{j,w}' \in B(l^2X)$ be the projection onto the Hilbert subspace of $l^2X$ spanned by elements of the form $\delta_{gH_k}$ for $g\in C_{j,k}(u), k \in \mathscr{I}$ and $u\geq w$.  Easy computations prove that \[\sigma(p_{j,w}) = p_{j,w}'\otimes \mathrm{id}_{l^2G},\qquad \sigma (\lambda(g)) = \pi(g) \otimes \lambda(g).\] for all $j\in \mathscr{I}, w\in W$ and $g\in G.$ We know that $A^r \subset \mathscr{C}(i)$. Hence $\sigma(\mathscr{C}(i))$ contains elements of the form $\pi(g)\otimes\lambda(g),$ $g\in G$. Let $g \in C_{i,j}(w)$ for some $w\in W, j \in \mathscr{I}$. Take $w = s_1\ldots s_n$ a reduced expression of $w$. Write $g = g_1\ldots g_n$ for $g_k \in C_{i_{k-1},i_k}(s_k), 1\leq k \leq n$ and $i_0 = i$ and $i_n = j$. Proposition \ref{order} gives the formula \[P_{i,gH_j}\otimes \mathrm{id}_{l^2G} = \sigma(\lambda(g_1)^{\dagger})\ldots \sigma(\lambda(g_n)^{\dagger})\sigma(\lambda(g_n)^{\dagger})^*\ldots \sigma(\lambda(g_1)^{\dagger})^*.\] This implies $P_{i,gH_j}\otimes \mathrm{id}_{l^2G} \in \sigma(\mathscr{C}(i))$ hence $C(\Omega)\rtimes_rG \subset \sigma(\mathscr{C}(i)).$

We now prove $\sigma(\mathscr{C}) \subset C(\Omega)\rtimes_rG$. We know that $\sigma(\mathscr{C})$ is in the C$^*$-subalgebra generated by $\sigma(C^*_r(G))$ and the projections $p_{i,w}'\otimes \mathrm{id}_{l^2G}$ for arbitrary $i \in \mathscr{I}$ and $w\in W$. As $C(\Omega)\rtimes_rG$ already contains the copy of $C^*_r(G)$ it remains to prove that $C(\Omega)$ contains all the projections $p_{i,w}'$. Let $i\in \mathscr{I}$ and $w\in W$. As $X$ is locally finite, we know that there are only finitely many chambers at distance $w$ of $H_i$. Using Proposition \ref{order}, one may thus write \[
p_{i,w}'
=
\bigvee_{\substack{
x\in X\\
\delta(H_i,x) = w
}}
P_{i,x}.
\] Hence $p_{i,w}' \in C(\Omega)$. By Proposition \ref{omega}, this reasoning can be done for any $i \in \mathscr{I}$. This finishes the proof.
\end{proof}
\begin{example}
Let $C^*_{r,q}(W) = \overline{\bigoplus_{w\in W}\mathbb{C}\cdot T_w}$ be a multiparameter Iwahori-Hecke C$^*$-algebra as in Example \ref{Bruhathecke}. The canonical state $\phi : C_{r,q}(W) \rightarrow \mathbb{C}$ makes this an expected Bruhat decomposition whose Fock space is $l^2W$. We have a simple description of the covariance C$^*$-algebra in this context as $\mathscr{C} \cong C(\overline{(W,S)})\rtimes_r W$. In particular, $\mathscr{C}$ is independent of the deformation parameter.
\end{example}
\section{Understanding the covariance \texorpdfstring{C$^*$-algebra}{C*-algebra}}\label{section4}

\subsection{Universality of the covariance \texorpdfstring{C$^*$-algebra}{C*-algebra}}
We fix a Coxeter system $(W,S)$. We assume that $S$ is finite. Let $A$ be a unital C$^*$-algebra with a finite type expected $W$-decomposition $A = \overline{\sum_{w\in W}L_{i,j}(w)}$ of index set $\mathscr{I}$ with Borel C$^*$-subalgebras $B_i$ and conditional expectations $E_i : A \rightarrow B_i$. Let $A^r = \overline{\sum_{w\in W}F_{i,j}(w)} \subset \mathcal{L}_\mathcal{P}(\mathcal{X})$ be the corresponding reduced C$^*$-algebra. Throughout this section we assume that the left action of $B_i$ on $\mathcal{X}_i(s)$ is faithful for any $i \in \mathscr{I}$.

Let $i\in \mathscr{I}$. Our goal is to prove that $\mathscr{C}(i)$ has a universal property that can be expressed in terms of the so-called covariant representations of the decomposition. For all $u,v\in W, i,j,k \in \mathscr{I}$, such that $l(uv) = l(u)+l(v),$  write $\phi_{i,j,k}^{u,v} : X_{i,j}(u)\otimes_{B_j} X_{j,k}(v)\hookrightarrow X_{i,k}(uv)$ for the embedding given by Proposition \ref{fock}. This embedding is defined by the formula  $\phi_{i,j,k}^{u,v}(a\eta_j \otimes b\eta_k) = ab\eta_k$ for $a\in F_{i,j}(u),b\in F_{j,k}(v)$. For all $x\in X_{i,j}(u), y \in X_{j,k}(v)$ define the product $xy\in X_{i,k}(uv)$ by the formula \[xy = \begin{cases}
    \phi_{i,j,k}^{u,v}(x\otimes y) \text{ if } l(uv) = l(u)+l(v), \\
    0 \text{ otherwise.}
\end{cases}\] This product is associative as it is associative on the elements of the form $a\eta_j, a\in F_{i,j}(w), i,j\in \mathscr{I},w\in W$. In the situation $u = e, i = j$ (resp. $v = e, j=k$), the map $\phi_{i,i,k}^{e,v}$ (resp. $\phi_{i,j,j}^{u,e}$) implements the left (resp. right) action of $B_i$ (resp. $B_j$) on $X_{i,k}(v)$ (resp. $X_{i,j}(v)$). If $u=v=e$ and $i=j$ then this product is just the product of $B_i \cong X_{i,i}(e)$.

Recall from  \cite[Theorem 2.13]{Kli23} that $C(\overline{(W,S)}) \cong \mathcal{D}(W,e)$ is the universal C$^*$-algebra generated by projections $P_w$ such that for all $u,v \in W,$ we have $P_u P_v =
\begin{cases}
P_{u \vee v} & \text{if } u \vee v < \infty, \\
0 & \text{otherwise}.
\end{cases}$
For all $w\in W$ we denote $Q_w \in C(\overline{(W,S)})$ the projection on the clopen set consisting of those elements $x\in \overline{(W,S)}$ such that $x$ does not begin with any of the elements $w_i \in W$ given by Lemma \ref{qwqw}, i.e. $Q_w = P_{w_1}^\perp \ldots P_{w_n}^\perp$.
\begin{definition}\label{covariantrep}
    A representation of the expected $W$-decomposition on a unital C$^*$-algebra $C$ is the data of linear maps $\rho_{i,j}^w : X_{i,j}(w) \rightarrow C$ for all $i,j \in \mathscr{I}, w\in W$ together with unital $*$-homomorphisms $\psi_i: C(\overline{(W,S)}) \rightarrow C$ for all $i\in \mathscr{I}$ such that \begin{enumerate}[label=(\roman*)]
    \item $\forall i,j,k\in \mathscr{I}, \forall u,v\in W, \forall x\in X_{i,j}(u), \forall y\in X_{j,k}(v), \rho_{i,k}^{uv}(xy) = \rho_{i,j}^u(x)\rho_{j,k}^v(y),$
    \item $\forall i,j,k\in \mathscr{I}, \forall w\in W, \forall x\in X_{i,j}(w),\forall y \in X_{i,k}(w), \rho_{i,j}^w(x)^*\rho_{i,k}^w(y) = \delta_{j,k}\rho_{j,j}^e(\langle x,y\rangle_{B_j})\psi_j(Q_w),$
    \item $\forall i,j \in \mathscr{I}, \forall u,v\in W, \forall x\in X_{i,j}(u), \rho_{i,j}^u(x)\psi_j(P_v) = \psi_i(\alpha_u(P_v))\rho_{i,j}^u(x).$
\end{enumerate}
\end{definition}
These axioms imply that the maps $\rho_{i,i}^e:B_i \rightarrow C$ are $*$-homomorphisms. We will often write $\rho(x)$ for $\rho_{i,j}^w(x)$ when it is already understood that $x\in X_{i,j}(w).$
\begin{lemma}\label{formulabasic}
    Let $(\rho_{i,j}^w : X_{i,j}(w) \rightarrow C)_{i,j \in \mathscr{I}, w\in W}$ be such a representation. For all $i,j \in \mathscr{I},w\in W$ and $x\in X_{i,j}(w)$ we have $\rho(x)\psi_{j}(Q_w) = \psi_i(P_w)\rho(x)=\rho(x)$. For every $i,j \in \mathscr{I},w\in W$ the linear map $\rho_{i,j}^w : X_{i,j}(w) \rightarrow C$ together with the $*$-homomorphism $B_j \rightarrow C$ defined by $b \mapsto \rho_{j,j}^{e}(b)\psi_j(Q_w)$ induce $*$-homomorphisms $\pi_{i,j}^w : \mathcal{K}_{B_j}(X_{i,j}(w)) \rightarrow C$ by the formula $\pi_{i,j}^w(\theta_{x,y}) = \rho(x)\rho(y)^*$. If $b \mapsto \rho_{j,j}^{e}(b)\psi_j(Q_w)$ is injective then $\pi_{i,j}^w$ is injective for all $w\in W$.
\end{lemma}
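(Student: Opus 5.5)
We prove the three assertions of Lemma \ref{formulabasic} in turn. The first identity comes from axiom (ii) together with the C$^*$-identity, the second from axiom (iii), and the statement about $\pi_{i,j}^w$ from Proposition \ref{BOBO}.

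\emph{The identity $\rho(x)\psi_j(Q_w)=\rho(x)$.} Let $x\in X_{i,j}(w)$. Axiom (ii) with $y=x$ and $k=j$ gives
\[\rho(x)^*\rho(x)=\rho_{j,j}^e(\langle x,x\rangle_{B_j})\psi_j(Q_w).\]
Put $c=\rho(x)(1-\psi_j(Q_w))$. Then
\[c^*c=(1-\psi_j(Q_w))\,\rho_{j,j}^e(\langle x,x\rangle)\,\psi_j(Q_w)\,(1-\psi_j(Q_w)).\]
To conclude that $c^*c=0$ I need $\rho_{j,j}^e(B_j)$ to commute with $\psi_j(Q_w)$. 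This follows from (iii) with $u=e$: $\alpha_e=\mathrm{id}$, so $\rho^e_{j,j}(b)$ commutes with every $\psi_j(P_v)$, hence with $\psi_j(Q_w)=\psi_j(P_{w_1}^\perp\cdots P_{w_n}^\perp)$. Therefore $c^*c=0$, so $c=0$, which is $\rho(x)=\rho(x)\psi_j(Q_w)$.

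\emph{The identity $\psi_i(P_w)\rho(x)=\rho(x)$.} Apply (iii) with $u=w$ and $v$ running over the elements $w_1,\dots,w_n$ of Lemma \ref{qwqw}. This gives
\[\rho(x)\psi_j(Q_w)=\psi_i(\alpha_w(Q_w))\rho(x).\]
Next I identify $\alpha_w(Q_w)$. $Q_w$ is the indicator of the clopen set of points $z$ such that $w\cdot z$ is "reduced". In the Cayley graph, $w\cdot z$ is reduced exactly when $wz\geq w$. Hence $\alpha_w(Q_w)$ is the indicator of $wD=\{y:y\geq w\}$ on $W$, and since $W$ is dense in $\overline{(W,S)}$ and both sides are continuous, $\alpha_w(Q_w)=P_w$. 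Combining with the first part gives $\rho(x)=\rho(x)\psi_j(Q_w)=\psi_i(P_w)\rho(x)$. I expect the identification $\alpha_w(Q_w)=P_w$ to be the only genuinely delicate point. For $z\in W$ it is immediate; the density argument needs continuity of the $W$-action \cite[Theorem 3.3]{Kli23}. Alternatively it can be checked directly on the generators $P_v$ using the description of $q_{i,w}$ via Lemma \ref{qwqw}.

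\emph{The homomorphisms $\pi_{i,j}^w$.} Apply Proposition \ref{BOBO} with $\mathcal{A}=B_j$, $\mathcal{H}=X_{i,j}(w)$, $\mathcal{B}=C$, $\tau=\rho_{i,j}^w$ and $\pi(b)=\rho^e_{j,j}(b)\psi_j(Q_w)$.
\begin{itemize}
\item[(a)] $\pi$ is a $*$-homomorphism: $\rho^e_{j,j}$ is a $*$-homomorphism and commutes with the projection $\psi_j(Q_w)$, as shown in the first step.
\item[(b)] The hypothesis $\tau(x)^*\tau(y)=\pi(\langle x,y\rangle)$ is exactly axiom (ii) with $k=j$.
\end{itemize}
Proposition \ref{BOBO} then yields $\pi_{i,j}^w$ with $\pi_{i,j}^w(\theta_{x,y})=\rho(x)\rho(y)^*$. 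It also gives injectivity of $\pi_{i,j}^w$ whenever $\pi$ is injective, which is the final assertion.
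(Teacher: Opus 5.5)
Your proof is correct and follows the paper's argument essentially step for step. You use (iii) with $u=e$ to get commutation and hence the $*$-homomorphism $b\mapsto\rho_{j,j}^e(b)\psi_j(Q_w)$, (ii) with the C$^*$-identity to kill $\rho(x)(1-\psi_j(Q_w))$, the identity $\alpha_w(Q_w)=P_w$ together with (iii), and Proposition \ref{BOBO} for $\pi_{i,j}^w$ and its injectivity. You also supply details the paper leaves implicit: extending (iii) from the $P_v$ to $Q_w$, and checking $\alpha_w(Q_w)=P_w$ on $W$ and then extending by density. One small correction: the commutation is not actually needed to get $c^*c=0$, since the rightmost factors $\psi_j(Q_w)(1-\psi_j(Q_w))$ already vanish.
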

\begin{proof}
Let $i,j\in\mathscr{I}$. By applying (iii) with $u=e$, we see that the ranges of $\rho_{j,j}^e$ and $\psi_j$ commute, hence $b \mapsto \rho_{j,j}^{e}(b)\psi_j(Q_w)$ is a $*$-homomorphism. Let $w\in W$ and $x\in X_{i,j}(w)$. Applying (ii) we get
\[
\begin{aligned}
\bigl(\rho(x)(1-\psi_j(Q_w))\bigr)^*
\rho(x)(1-\psi_j(Q_w))
&=
(1-\psi_j(Q_w))\rho(x)^*\rho(x)(1-\psi_j(Q_w))\\
&=
(1-\psi_j(Q_w))
\rho_{j,j}^e(\langle x,x\rangle_{B_j})
\psi_j(Q_w)
(1-\psi_j(Q_w))\\
&=0.
\end{aligned}
\]
Therefore $\rho(x)\psi_j(Q_w)=\rho(x)$.
Moreover, as $\alpha_w(Q_w)=P_w$, axiom (iii) gives $\rho(x)\psi_j(Q_w)=\psi_i(P_w)\rho(x).$ The rest of the proof follows from Proposition \ref{BOBO}.
\end{proof}
Through the identification $\mathcal{K}_{\mathcal{P}}(\mathcal{X}_i(w)) \cong \bigoplus^{c_0}_{j\in \mathscr{I}}\mathcal{K}_{B_j}(X_{i,j}(w)),$ the above representation also induces $*$-homomorphisms $\pi_i^w : \mathcal{K}_{\mathcal{P}}(\mathcal{X}_i(w)) \rightarrow C$ which are injective if for all $j\in \mathscr{I},$ $b \mapsto \rho_{j,j}^{e}(b)\psi_j(Q_w)$ is injective. The representation will be called covariant if it satisfies $\pi_i^w(\mathrm{id}_{\mathcal{X}_i(w)}) = \psi_i(P_w)$ for all $w\in W$. Whenever the representation is covariant then $\rho_{j,j}^e$ is unital for all $j\in \mathscr{I}$.
\begin{proposition}
    For all $i,j \in \mathscr{I}, w\in W\setminus\{e\},x\in X_{i,j}(w)$, let $\rho_{i,j}^w(x) = x^\dagger$. Let $\rho_{i,i}^e : B_i \rightarrow \mathscr{C}$ be the $*$-homomorphism given by the left action of $B_i$ on $\mathcal{X}$. Let $\psi_i : C(\overline{(W,S)}) \rightarrow \mathscr{C}$ be the embedding given by Proposition \ref{cw}. The family of linear maps $(\rho_{i,j}^w)_{i,j \in \mathscr{I}, w\in W}$ together with the $*$-homomorphisms $(\psi_i)_{i \in \mathscr{I}}$ form a covariant representation of the $W$-decomposition.
\end{proposition}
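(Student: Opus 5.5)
We check the three axioms of Definition \ref{covariantrep} and then covariance, treating the creation map $x\mapsto x^\dagger$ as the canonical extension of $a\eta_j\mapsto a^\dagger = aq_{j,w}$. First we make sure $\rho_{i,j}^w$ is well defined and bounded on all of $X_{i,j}(w)$. Proposition \ref{QW} gives $(a^\dagger)^*b^\dagger=\delta_{j,k}E_j(a^*b)q_{j,w}$. Hence $\|a^\dagger\|^2\le\|E_j(a^*a)\|=\|a\eta_j\|^2$, so $a\eta_j\mapsto a^\dagger$ extends continuously to $X_{i,j}(w)$; this is the map $\tau_{i,w}$ already used for $\sigma_{i,w}$. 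For $w=e$ and $i=j$, $\rho^e_{i,i}$ is the left action of $B_i$, which agrees with $b^\dagger=b$. Since the decomposition is of finite type, Proposition \ref{pw} puts $\psi_i(C(\overline{(W,S)}))=\Delta_0(i)$ inside $\mathscr{C}(i)\subset\mathscr{C}$. We must also check that $\psi_i(P_w)=p_{i,w}$ and $\psi_i(Q_w)=q_{i,w}$. The second follows from the first together with Lemma \ref{qwqw} and the remark after it.

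Axiom (ii) is exactly Proposition \ref{QW}: it gives $(x^\dagger)^*y^\dagger=\delta_{j,k}\langle x,y\rangle_{B_j}q_{j,w}=\delta_{j,k}\rho^e_{j,j}(\langle x,y\rangle)\psi_j(Q_w)$ on elementary vectors, and this extends by density. Axiom (iii) follows from Proposition \ref{CWDELTA}. Indeed, $a^\dagger$ for $a=a_1\cdots a_n\in F_{i,j}(u)$ with $u$ reduced equals $a_1^\dagger\cdots a_n^\dagger$, which is a path operator from $i$ to $j$ of degree $u$. Every element of $F_{i,j}(u)$ is a sum of such products by (OWD3), so $a^\dagger\psi_j(f)=\psi_i(\alpha_u(f))a^\dagger$, and this again extends by continuity.

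Axiom (i) splits into two cases. If $l(uv)=l(u)+l(v)$, then $xy=\phi(x\otimes y)$, and on $x=a\eta_j$, $y=b\eta_k$ we need $(ab)^\dagger=a^\dagger b^\dagger$, which is the remark following the creation-operator proposition. The cases $u=e$ or $v=e$ reduce to $b\,a^\dagger=(ba)^\dagger$ and $a^\dagger b=(ab)^\dagger$, using that $B_j$ commutes with $q_{j,w}$. If $l(uv)<l(u)+l(v)$, then $xy=0$ and we must show $a^\dagger b^\dagger=aq_{j,u}\,b\,q_{k,v}=0$. Write $b^\dagger=b\,q_{k,v}$. Its range lies in $\bigoplus_{t}\mathcal{X}_j(vt)$ over those $t$ with $v\cdot t$ reduced. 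Then $q_{j,u}$ kills every such summand, because $u\cdot(vt)$ reduced would force $u\cdot v$ to be reduced. We check this on each summand $\mathcal{X}_j(vt)$ via the degree proposition for path operators.

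Covariance asks for $\pi_i^w(\mathrm{id}_{\mathcal{X}_i(w)})=p_{i,w}$. By construction $\pi_i^w(\theta_{x,y})=x^\dagger(y^\dagger)^*=\sigma_{i,w}(\theta_{x,y})$, so $\pi_i^w=\sigma_{i,w}$. The proof of Proposition \ref{pw} then gives $\sigma_{i,w}(\mathrm{id})=p_{i,w}$. Concretely, by the second part of the $\sigma_{i,w}$ proposition, $\sigma(\mathrm{id})$ acts as the identity on $X_{i,k}(w)\otimes X_{k,l}(v)$ and vanishes on $X_{i,k}(u)$ for $u\ngeq w$. I expect the main obstacle to be the bookkeeping in the non-reduced case of (i) and the continuity extensions. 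The identities themselves are already contained in Proposition \ref{QW}, Proposition \ref{CWDELTA}, and the description of $\sigma_{i,w}$.
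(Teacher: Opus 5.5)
Your proposal is correct and follows the paper's proof: the paper cites Proposition \ref{QW} for (ii), Proposition \ref{CWDELTA} for (iii) and Proposition \ref{pw} (i.e.\ $\sigma_{i,w}(\mathrm{id})=p_{i,w}$) for covariance, and calls (i) obvious. You also fill in details the paper leaves implicit, and they check out: the bounded extension of $a\eta_j\mapsto a^\dagger$, the identification $\psi_j(Q_w)=q_{j,w}$ via Lemma \ref{qwqw}, and the vanishing $a^\dagger b^\dagger=0$ when $l(uv)<l(u)+l(v)$.
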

\begin{proof}
    Point (i) is obvious. Point (ii) is precisely Proposition \ref{QW}. Point (iii) is Proposition \ref{CWDELTA}. The covariance condition was proven in Proposition \ref{pw}.
\end{proof}
We let $\mathscr{C}^{\max}$ be the universal C$^*$-algebra generated by all covariant representations of the expected $W$-decomposition. 
From now on, we let $\rho_{i,j}^w : X_{i,j}(w) \rightarrow \mathscr{C}^{\max}$ and $\psi_i : C(\overline{(W,S)}) \rightarrow \mathscr{C}^{\max}$ be the maps of the universal covariant representation.
\begin{definition}
Let $i,j \in \mathscr{I}$. A path operator of $\mathscr{C}^{\max}$ from $i\in \mathscr{I}$ to $j$ is either an element of the form \[\rho_{i_{0},i_1}^{s_1}(x_1)^{\epsilon_1}\ldots \rho_{i_{n-1},i_n}^{s_n}(x_n)^{\epsilon_n}\] for some $n\geq 1$, $i_0,\ldots i_n \in \mathscr{I}$ such that $i = i_0$ and $j = i_n$, $s_k \in S$, $x_k \in X_{i_{k-1},i_k}(s_k)$ and $\epsilon_k \in \{\star,1\}$, or an operator of the form $\rho_{i,i}^e(b)$ for some $b\in B_i$ in the case when $i=j$. A path operator of $\mathscr{C}^{\max}$ from $i\in \mathscr{I}$ to $i$ is called a path operator of $\mathscr{C}^{\max}$ based at $i$. We let $\mathscr{C}^{\max}(i)$ be the C$^*$-subalgebra of $\mathscr{C}^{\max}$ generated by all path operators of $\mathscr{C}^{\max}$ based at $i.$
\end{definition}
The last proposition thus gives us a $*$-homomorphism $\Phi : \mathscr{C}^{\max} \rightarrow \mathscr{C} \subset \mathcal{L}_\mathcal{P}(\mathcal{X})$ which induces $*$-homomorphisms $\Phi_i : \mathscr{C}^{\max}(i) \rightarrow \mathscr{C}(i)$ for all $i\in \mathscr{I}$. \begin{proposition}
    The $*$-homomorphism $\Phi : \mathscr{C}^{\max} \rightarrow \mathscr{C}$ is surjective. For all $i \in \mathscr{I}$, the restricted $*$-homomorphism $\Phi_i : \mathscr{C}^{\max}(i) \rightarrow \mathscr{C}(i)$ is surjective.
\end{proposition}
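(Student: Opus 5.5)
The plan is to show that the image of $\Phi$ contains a generating set of $\mathscr{C}$, and that the image of $\Phi_i$ contains every path operator based at $i$. Since the image of a $*$-homomorphism between C$^*$-algebras is closed, this gives surjectivity in both cases.

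First, recall how $\Phi$ acts on generators. For $x=a\eta_j\in X_{i,j}(w)$ with $a\in F_{i,j}(w)$ we have $\Phi(\rho_{i,j}^w(x))=a^\dagger$. The map $\Phi\circ\rho_{i,i}^e$ is the left action of $B_i$. Finally, $\Phi\circ\psi_i$ is the embedding of Proposition \ref{cw}, so $\Phi(\psi_i(P_w))=p_{i,w}$. For every $s\in S$ and $a\in F_{i,j}(s)$, the creation operator $a^\dagger$ is then $\Phi(\rho_{i,j}^s(a\eta_j))$, and the annihilation operator $(a^\dagger)^*$ is its adjoint. Each $\rho_{i,j}^s$ is linear on $X_{i,j}(s)$, and every element $a\eta_j$ lies in $X_{i,j}(s)$, so no closure or continuity issue arises: each creation or annihilation operator of type $(i,j,s)$ is the image of a single element $\rho(x)$ or $\rho(x)^*$.

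The only point that needs care is the elementary diagonal operators $\partial(a)=p_{i,s}ap_{j,s}$. Here I would use the decomposition $a=\partial(a)+a^\dagger+((a^*)^\dagger)^*$, which gives
\[\partial(a)=a-a^\dagger-((a^*)^\dagger)^*.\]
It therefore suffices to realise $a$ itself as an image. Since $a^*\in F_{j,i}(s)$, the operator $(a^*)^\dagger=\Phi(\rho_{j,i}^s(a^*\eta_i))$ is in the image, and so is its adjoint. To reach $a$ I would combine the covariance condition with Proposition \ref{BOBO}. The projection $p_{i,s}=\sigma_{i,s}(\mathrm{id}_{\mathcal{X}_i(s)})$ is in the image of $\pi_i^s$ pushed forward by $\Phi$; more precisely, $\Phi\circ\pi_i^s=\sigma_{i,s}$ on $\mathcal{K}_\mathcal{P}(\mathcal{X}_i(s))$, checked on rank-one operators $\theta_{a_1\eta_j,a_2\eta_k}$. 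The operator $p_{i,s}\,a\,p_{j,s}$ acts on $X_{i,k}(s)\otimes\mathcal{X}_k(v)$ as $T\otimes\mathrm{id}$, where $T\in\mathcal{L}(\mathcal{X}_j(s),\mathcal{X}_i(s))$. By the finite-type assumption, $T$ is compact, being a map between finitely generated modules.

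Writing $T=\sum_k\theta_{x_k,y_k}$ with $x_k\in\mathcal{X}_i(s)$ and $y_k\in\mathcal{X}_j(s)$, I would then check that
\[\partial(a)=\sum_k x_k^\dagger (y_k^\dagger)^*=\Phi\Big(\sum_k\rho(x_k)\rho(y_k)^*\Big),\]
by comparing both sides on each summand $\mathcal{X}_j(sv)$ through Lemma \ref{TWW} and the preceding proposition on $\sigma_{i,w}$. This comparison is the main obstacle: one has to verify that the "generalised compact" $\sum x_k^\dagger(y_k^\dagger)^*$ indeed reproduces $\partial(a)$. This works because both operators are supported on $D(s)$ and act as $T\otimes\mathrm{id}$ there.

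Given this, every creation, annihilation and elementary diagonal operator of type $(i,j,s)$ is $\Phi$ of a finite sum of products of $\rho(x)$'s and their adjoints. Each such product is either a path operator of $\mathscr{C}^{\max}$ from $i$ to $j$ or a sum of products that pass through the indices $i$ and $j$ in the correct way. Consequently, every path operator of $\mathscr{C}$ based at $i$ is the image of an element of $\mathscr{C}^{\max}(i)$, and every arbitrary path operator is the image of an element of $\mathscr{C}^{\max}$. Elements of $B_i$ are $\Phi(\rho_{i,i}^e(b))$. Since $\mathscr{C}(i)$ and $\mathscr{C}$ are generated by these path operators, and images of $*$-homomorphisms are closed, both $\Phi$ and $\Phi_i$ are surjective.
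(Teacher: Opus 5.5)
Your proposal is correct and is essentially the paper's proof. The paper also reduces to the elementary diagonal operators. Using finite type, it writes each $\partial(a)$ with $a\in F_{i,j}(s)$ as a finite sum $\sum_q x_q^\dagger (y_q^\dagger)^*$ with $x_q\in X_{i,l}(s)$ and $y_q\in X_{j,l}(s)$, which is $\Phi\bigl(\sum_q\rho(x_q)\rho(y_q)^*\bigr)$.

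Your preliminary reduction to ``realising $a$ itself'' is never used, since you end up realising $\partial(a)$ directly, so you can drop it.
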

\begin{proof}
To prove the surjectivity of $\Phi$, it suffices to prove that its image contains all elementary diagonal operators. Let $j,k\in \mathscr{I}$ and $s\in S, a\in F_{j,k}(s)$. For all $l \in \mathscr{I}$, we may see the restriction $\partial(a)|_{X_l}$ of $\partial(a)$ to $\mathcal{L}_{B_l}(X_l)$ as an operator of $\mathcal{L}_{B_l}(X_{k,l}(s),X_{j,l}(s))$. As $X_{k,l}(s)$ and $X_{j,l}(s)$ are finitely generated over $B_l$, we may view the restriction of $\partial(a)$ to $\mathcal{L}_{B_l}(X_l)$ as an operator of $\mathcal{K}_{B_l}(X_{k,l}(s),X_{j,l}(s))$.  There exist $x_1,\ldots x_n \in X_{j,l}(s),y_1,\ldots y_n \in X_{k,l}(s)$ such that \[\partial(a)|_{X_l} = \sum_{1\leq q \leq n}x_q^\dagger (y_q^\dagger)^*\] as an element of $\mathcal{L}_{B_l}(X_l)$. Hence we can write $\partial(a) \in \mathcal{L}_\mathcal{P}(\mathcal{X})$ as a finite linear combination of operators of the form $x^\dagger (y^\dagger)^*$ for $x\in X_{j,l}(s), y \in X_{k,l}(s), l \in \mathscr{I}.$
\end{proof}
Our goal is now to show the injectivity of $\Phi_i$. The proof of the following proposition is routine, so we omit it.
\begin{proposition}\label{fellbundlecmax}
For every \(w \in W\), let \(\mathscr{C}_w^{\max}(i)\) denote the closure of the linear span of the path operators of $\mathscr{C}^{\max}$ based at $i$ of degree $w$, meaning the elements of the form \[\rho_{i_{0},i_1}^{s_1}(x_1)^{\epsilon_1}\ldots \rho_{i_{n-1},i_n}^{s_n}(x_n)^{\epsilon_n}\] for some $n\geq 1$, $i_0,\ldots i_n \in \mathscr{I}$ such that $i = i_0 = i_n$, $s_1,\ldots, s_n \in S$, $x_k \in X_{i_{k-1},i_k}(s_k)$ and $\epsilon_k \in \{\star,1\}$, such that \(s_1\cdots s_n = w\) or an operator of the form $\rho_{i,i}^e(b)$ for some $b\in B_i$ in the case when $w = e$. We have \[
\mathscr{C}^{\max}(i)
=
\overline{\sum_{w \in W} \mathscr{C}^{\max}_w(i)},
\qquad
\,\ \mathscr{C}_u^{\max}(i)\mathscr{C}_v^{\max}(i)
\subset
\mathscr{C}_{uv}^{\max}(i),
\qquad
(\mathscr{C}^{\max}_w(i))^*
=
\mathscr{C}_{w^{-1}}^{\max}(i),
\]
for all $u,v,w\in W.$ Moreover, we have $\Phi_i(\mathscr{C}^{\max}_w(i)) \subset \mathscr{C}_w(i)$ for all $w\in W$.
\end{proposition}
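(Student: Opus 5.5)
The plan is to prove the four assertions in the order in which they depend on each other: first the multiplicativity and adjoint rules for path operators, and only then the spanning statement and the compatibility with $\Phi_i$.

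\textbf{Products and adjoints.} The set $\mathcal{E}^{\max}(i)$ of path operators of $\mathscr{C}^{\max}$ based at $i$ is closed under products and adjoints.
\begin{enumerate}[label=(\alph*)]
\item A concatenation of a path $i\to i$ with a path $i\to i$ is again a path $i\to i$.
\item The degree is multiplicative on concatenations, since the degrees simply concatenate as words in $S$.
\item Taking the adjoint of $\rho(x_1)^{\epsilon_1}\cdots\rho(x_n)^{\epsilon_n}$ reverses the order of the factors and flips each $\epsilon_k$. The letters are therefore read in reverse order, so the degree becomes $s_n\cdots s_1=(s_1\cdots s_n)^{-1}$.
\item Products with $\rho_{i,i}^e(b)$ are absorbed, because $\rho_{i,i}^e(b)\rho_{i,j}^{s}(x)=\rho_{i,j}^s(bx)$ by axiom (i) applied with $u=e$. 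Likewise, on the right one has $\rho(x)\rho(b)=\rho(xb)$.
\end{enumerate}
Passing to closed linear spans and using continuity of multiplication and of the adjoint then gives $\mathscr{C}^{\max}_u(i)\mathscr{C}^{\max}_v(i)\subset\mathscr{C}^{\max}_{uv}(i)$ and $(\mathscr{C}^{\max}_w(i))^*=\mathscr{C}^{\max}_{w^{-1}}(i)$. The reverse inclusion in the adjoint statement follows by applying the same argument to $w^{-1}$.

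\textbf{Spanning.} Every element of $\mathcal{E}^{\max}(i)$ lies in some $\mathscr{C}^{\max}_w(i)$. By the previous step, the span of $\mathcal{E}^{\max}(i)$ is a $*$-algebra. Its closure is therefore a C$^*$-algebra containing the generators of $\mathscr{C}^{\max}(i)$, hence equal to $\mathscr{C}^{\max}(i)$. This yields $\mathscr{C}^{\max}(i)=\overline{\sum_w\mathscr{C}^{\max}_w(i)}$.

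\textbf{Compatibility with $\Phi_i$.} It suffices to check $\Phi(T)\in\mathscr{C}_w(i)$ on a single generator $T$ of degree $w$, then use linearity and continuity.
\begin{enumerate}[label=(\alph*)]
\item For $x\in X_{j,k}(s)$ we have $\Phi(\rho^s_{j,k}(x))=x^\dagger$. When $x=a\eta_k$ with $a\in F_{j,k}(s)$, this is a creation operator of type $(j,k,s)$.
\item A general $x$ is a norm limit of such vectors $a\eta_k$, and $x\mapsto x^\dagger$ is bounded, since $\|x^\dagger\|^2=\|\langle x,x\rangle\|$ by Proposition \ref{QW}. Thus $x^\dagger$ lies in the closed span of creation operators of type $(j,k,s)$.
\item Adjoints of such operators are annihilation operators, again of degree $s$ in the sense of path operators.
\item Consequently $\Phi(T)$ is a norm limit of linear combinations of products of creation and annihilation operators along the same path $i_0,\dots,i_n$ with the same letters $s_k$. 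Each such product is a path operator of $\mathscr{C}(i)$ of degree $s_1\cdots s_n=w$. For $T=\rho^e_{i,i}(b)$ we have $\Phi(T)=b\in\mathscr{C}_e(i)$.
\end{enumerate}

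\textbf{Main obstacle.} There is one point requiring care: the degree must be well defined as an element of $W$. In $\mathscr{C}^{\max}$, unlike in $\mathscr{C}$, there are no elementary diagonal operators among the generators, so every letter contributes $s_k$ and no letter contributes $e$. Note also that the statement only requires the spaces $\mathscr{C}^{\max}_w(i)$ to span, not to be in direct sum. Directness is not claimed, and it would only follow afterwards, from the conditional expectation obtained by transporting through $\Phi_i$ or through a gauge-type argument.
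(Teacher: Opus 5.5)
Your argument is correct and is the routine verification the paper omits: closure of path operators under concatenation and reversal, absorption of $\rho_{i,i}^e(b)$ via axiom (i), and continuity of $x\mapsto x^\dagger$ for the compatibility with $\Phi_i$. One point should be made explicit, namely the index convention for starred letters: read a factor $\rho(x_k)^*$ with $x_k\in X_{i_k,i_{k-1}}(s_k)$, matching the annihilation part $((a_k^*)^\dagger)^*$ of $a_k\in F_{i_{k-1},i_k}(s_k)$, because under the paper's literal indexing $x_k\in X_{i_{k-1},i_k}(s_k)$ for both values of $\epsilon_k$ the reversed word in your adjoint step would not again be a path based at $i$.
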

We denote by \(\mathcal{D}^{\max}(i) = \mathscr{C}_e^{\max}(i)\) the diagonal subalgebra of the universal covariance \(C^*\)-algebra based at $i$.  First we are going to show the injectivity of $\Phi_i$ on $\mathcal{D}^{\max}(i)$. Define $X^\Delta_{i,j}(w) = \overline{\operatorname{span}}\{x^\dagger p_{j,v} | x\in X_{i,j}(w),v\in W\}$ as a closed subspace of $\mathcal{L}_\mathcal{P}(\mathcal{X})$ for all $w\in W$.
\begin{lemma}
 The C$^*$-algebra $\Delta(j) \cong C(\overline{(W,S)})\otimes B_j$ acts on $X^\Delta_{i,j}(w)$ on the right. The formula $(a^\dagger)^*b^\dagger = E_j(a^*b)q_{j,w}$ for all $a,b\in F_{i,j}(w)$ makes $X^\Delta_{i,j}(w)$ a right Hilbert $\Delta(j)$-module. See $\Delta(j)$ as a $B_j$-$\Delta(j)$ correspondence through the $*$-homomorphism defined by $b \mapsto bq_{j,w}$ for all $b\in B_j$. As a right Hilbert $\Delta(j)$-module, $X^\Delta_{i,j}(w)$ is isomorphic to $X_{i,j}(w)\otimes_{B_j}\Delta(j)$.
\end{lemma}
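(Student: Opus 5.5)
The plan is to verify the module structure directly inside $\mathcal{L}_\mathcal{P}(\mathcal{X})$, and then to obtain the isomorphism from an inner-product-preserving map with dense range. Throughout, I fix $i,j\in\mathscr{I}$ and $w\in W$.

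\emph{Right action.} I first show that $X^\Delta_{i,j}(w)\cdot\Delta(j)\subset X^\Delta_{i,j}(w)$. Since $\Delta(j)$ is generated by $B_j$ and the projections $p_{j,v}$, and these commute, the closed span of the products $b\,p_{j,v}$ is dense in $\Delta(j)$; here I use the relations $p_{j,u}p_{j,v}=p_{j,u\vee v}$ (or $0$). So it suffices to check two things. First, $x^\dagger p_{j,v}\,p_{j,v'}=x^\dagger p_{j,v\vee v'}$ or $0$. Second, $x^\dagger p_{j,v}b=x^\dagger b\,p_{j,v}=(xb)^\dagger p_{j,v}$. This holds because $b\in B_j$ commutes with $p_{j,v}$ and with $q_{j,w}$, and because $xb\in X_{i,j}(w)$. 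Closedness then gives the inclusion.

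\emph{Inner product.} I define $\langle S,T\rangle=S^*T$. For generators, Proposition~\ref{QW} gives
\[
(x^\dagger p_{j,v})^*(y^\dagger p_{j,v'})=p_{j,v}\langle x,y\rangle_{B_j}q_{j,w}\,p_{j,v'}\in\Delta(j),
\]
after extending Proposition~\ref{QW} from $a\eta_j$ to arbitrary $x\in X_{i,j}(w)$ by continuity. The map $x\mapsto x^\dagger$ is continuous because $\|x^\dagger\|^2=\|\langle x,x\rangle q_{j,w}\|\le\|x\|^2$. Hence $S^*T\in\Delta(j)$ for all $S,T$ in the closed span. Positivity, definiteness and right $\Delta(j)$-linearity are inherited from the C$^*$-algebra $\mathcal{L}_\mathcal{P}(\mathcal{X})$. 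Completeness follows because the module norm coincides with the operator norm and $X^\Delta_{i,j}(w)$ is closed. So $X^\Delta_{i,j}(w)$ is a right Hilbert $\Delta(j)$-module, namely a closed ternary subspace of $\mathcal{L}_\mathcal{P}(\mathcal{X})$.

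\emph{Isomorphism.} I define $V:X_{i,j}(w)\odot\Delta(j)\to X^\Delta_{i,j}(w)$ by $V(x\otimes d)=x^\dagger d$; this lands in $X^\Delta_{i,j}(w)$ by the right action step. For the inner products, Proposition~\ref{QW} gives
\[
\langle V(x\otimes d),V(y\otimes d')\rangle=d^*\langle x,y\rangle_{B_j}q_{j,w}\,d'.
\]
This is exactly the interior tensor product inner product when $B_j$ acts on $\Delta(j)$ through $b\mapsto bq_{j,w}$. (That map is a $*$-homomorphism because $q_{j,w}\in\Delta_0(j)$ commutes with $B_j$.) So $V$ is isometric, descends to the balanced quotient, and extends to the completion $X_{i,j}(w)\otimes_{B_j}\Delta(j)$. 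Its range is dense, since $V(x\otimes p_{j,v})=x^\dagger p_{j,v}$ and these span a dense subspace. An isometric right $\Delta(j)$-linear map with dense range between Hilbert modules is a unitary isomorphism, which gives the last claim.

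\emph{Main obstacle.} The substantive step is the inner product formula for general vectors $x\in X_{i,j}(w)$ rather than only for $a\eta_j$ with $a\in F_{i,j}(w)$, together with the well-definedness of $x\mapsto x^\dagger$ on the completion. I would settle both by continuity using the isometry estimate above. Everything else is formal.
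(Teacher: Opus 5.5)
Your proposal is correct and takes the same route as the paper: the right action of $\Delta(j)$ is multiplication inside $\mathcal{L}_\mathcal{P}(\mathcal{X})$, and the isomorphism $x\otimes d\mapsto x^\dagger d$ preserves inner products by Proposition~\ref{QW}. You additionally supply the details that the paper's two-line proof leaves implicit: continuity of $x\mapsto x^\dagger$, the Hilbert-module structure of the closed ternary subspace, and density of the range.
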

\begin{proof}
    The right action of $\Delta(j)$ on $X^\Delta_{i,j}(w)$ is just the one induced by the product inside $\mathcal{L}_\mathcal{P}(\mathcal{X})$. The isomorphism $X^\Delta_{i,j}(w)\cong X_{i,j}(w)\otimes_{B_j}\Delta(j)$ is a direct consequence of Proposition \ref{QW}.
\end{proof}
We now define the unital C$^*$-algebra \[\mathcal{P}^\Delta = \prod_{j\in \mathscr{I}}\Delta(j).\] We also define the right Hilbert $\mathcal{P}^\Delta$-module \[\mathcal{X}_i^\Delta(w) = \bigoplus_{j\in \mathscr{I}} X_{i,j}(w)\otimes_{B_j}\mathcal{P}^\Delta,\] where $\mathcal{P}^\Delta$ becomes a left $B_j$-module through the $*$-homomorphism $b \mapsto (\delta_{j,k}bq_{k,w})_{k \in \mathscr{I}}$. Notice that the right $\mathcal{P}^\Delta$-modules of the form $\mathcal{X}_i^\Delta(w)$ and $X_{i,j}^\Delta(w)$ are finitely generated.
\begin{lemma}\label{compactDelta}
Let $i\in \mathscr{I}$ and $w\in W$. Proposition \ref{BOBO} gives a $*$-isomorphism  \[
\mathcal{K}_{\mathcal{P}^\Delta}(\mathcal{X}_i^\Delta(w)) \cong 
\overline{\operatorname{span}}\left\{ x^\dagger (y^\dagger)^*p_{i,u} \;\middle|\; x,y \in X_{i,j}(w), j\in \mathscr{I},u\in W \right\}.
\] It is induced on each factor $\mathcal{K}_{\Delta(j)}(X_{i,j}^\Delta(w))$ by the inclusions $\Delta(j) \subset \mathcal{L}_\mathcal{P}(\mathcal{X})$ and $X_{i,j}^\Delta(w) \subset \mathcal{L}_\mathcal{P}(\mathcal{X}).$ In the same way, we get an identification \[
\mathcal{K}_{\mathcal{P}^\Delta}(\mathcal{X}_i^\Delta(w)) \cong 
\overline{\operatorname{span}}\{ \rho_{i,j}^w(x)\rho_{i,j}^w(y)^*\psi_i(P_u) | x,y \in X_{i,j}(w), j\in \mathscr{I}, u \in W \}
\] induced on each factor $\mathcal{K}_{\Delta(j)}(X_{i,j}^\Delta(w))$ by the embedding $\Delta(j) \hookrightarrow \mathscr{C}^{\max}$ and the linear map $X_{i,j}^\Delta(w) \cong X_{i,j}(w)\otimes_{B_j} \Delta(j) \rightarrow \mathscr{C}^{\max}$ induced by $\rho^w_{i,j}$.
\end{lemma}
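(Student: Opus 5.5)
The plan is to apply Proposition \ref{BOBO} twice, once with target $\mathcal{L}_\mathcal{P}(\mathcal{X})$ and once with target $\mathscr{C}^{\max}$, factor by factor over $j\in\mathscr{I}$, and then assemble the factors using $\mathcal{K}_{\mathcal{P}^\Delta}(\mathcal{X}_i^\Delta(w)) \cong \bigoplus^{c_0}_{j}\mathcal{K}_{\Delta(j)}(X_{i,j}(w)\otimes_{B_j}\Delta(j))$. This is the analogue of Lemma \ref{box} for the product $\mathcal{P}^\Delta=\prod_j\Delta(j)$, proved the same way. By finite type (Proposition \ref{finito}) only finitely many $j$ contribute, so the $c_0$-sum is a finite direct sum.

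For the first identification, fix $j$ and take $\mathcal{A}=\Delta(j)$, $\mathcal{B}=\mathcal{L}_\mathcal{P}(\mathcal{X})$, with $\pi$ the inclusion $\Delta(j)\subset\mathcal{L}_\mathcal{P}(\mathcal{X})$, which is injective. Let $\tau$ be the composite $X_{i,j}(w)\otimes_{B_j}\Delta(j)\cong X^\Delta_{i,j}(w)\subset\mathcal{L}_\mathcal{P}(\mathcal{X})$ supplied by the previous lemma, i.e. $\tau(x\otimes d)=x^\dagger d$. The hypothesis $\tau(\xi)^*\tau(\zeta)=\pi(\langle\xi,\zeta\rangle)$ reduces on elementary tensors to $(x^\dagger)^*y^\dagger=E_j(\langle x,y\rangle)q_{j,w}$, which is Proposition \ref{QW}. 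Proposition \ref{BOBO} then gives an injective $*$-homomorphism whose range is the closed span of $x^\dagger d\,d'^*(y^\dagger)^*$ with $d,d'\in\Delta(j)$.

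To rewrite this range as stated, use Proposition \ref{CWDELTA}: $x^\dagger\psi_j(f)=\psi_i(\alpha_w(f))x^\dagger$. Moreover $B_j$ elements can be absorbed into $x$ via the right module structure, and $\Delta(j)$ is spanned by products $b\,\psi_j(f)$ since its two generating pieces commute. Hence $x^\dagger d(y^\dagger)^*$ is a limit of sums of $\psi_i(g)\,x'^\dagger (y^\dagger)^*$ with $g\in C(\overline{(W,S)})$. Again by Proposition \ref{CWDELTA}, $x'^\dagger(y^\dagger)^*$ commutes with $\Delta_0(i)$, since it is a path operator of degree $e$. Since $C(\overline{(W,S)})$ is spanned by products of the $P_u$ and $P_u^\perp$, and these products lie in the span of the $P_v$ by the join relation, the range equals $\overline{\operatorname{span}}\{x^\dagger(y^\dagger)^*p_{i,u}\}$. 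For $j\neq j'$ the images are mutually orthogonal, by the $\delta_{j,k}$ in Proposition \ref{QW}, so the map on the direct sum is still injective.

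The $\mathscr{C}^{\max}$ version follows the same scheme. Here $\pi$ is $\Delta(j)\cong C(\overline{(W,S)})\otimes B_j\to\mathscr{C}^{\max}$, given by $f\otimes b\mapsto\psi_j(f)\rho^e_{j,j}(b)$; the two ranges commute by (iii) with $u=e$. The map $\tau(x\otimes d)=\rho^w_{i,j}(x)\pi(d)$ satisfies the inner-product identity by axiom (ii), and axiom (iii) replaces Proposition \ref{CWDELTA} in moving $\psi$ across.

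The main obstacle is injectivity of $\pi$ here, since Proposition \ref{BOBO} only yields injectivity when $\pi$ is injective. I would get it by composing with $\Phi$: $\Phi\circ\pi$ is the inclusion of $\Delta(j)$ from Proposition \ref{cw}, which is injective, so $\pi$ is injective. Orthogonality across different $j$ follows from the $\delta_{j,k}$ in axiom (ii). This gives the second isomorphism, and it is compatible with the first through $\Phi$.
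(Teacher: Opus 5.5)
Your proof is correct and follows the paper's argument. You apply Proposition~\ref{BOBO} factor by factor, getting the inner-product condition from Proposition~\ref{QW} (resp.\ axiom (ii)), orthogonality across different $j$ from the $\delta_{j,k}$, and the form of the range by moving $\Delta(j)$ across $x^\dagger$ with Proposition~\ref{CWDELTA} (resp.\ axiom (iii)). You also justify that $\Delta(j)\to\mathscr{C}^{\max}$ is injective by composing with $\Phi$ and using Proposition~\ref{cw}; the paper leaves this implicit in the word ``embedding'' and relies on it right after the lemma.
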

\begin{proof}
    Recall that there is a $*$-isomorphism $\mathcal{K}_{\mathcal{P}^\Delta}(\mathcal{X}^\Delta_i(w)) \cong \bigoplus^{c_0}_{j \in \mathscr{I}}\mathcal{K}_{\Delta(j)}(X_{i,j}^\Delta(w))$. The condition $\tau(x)^*\tau(y) = \pi(\langle x,y\rangle)$ of Proposition \ref{BOBO} is verified in the first case because of Proposition \ref{QW} and in the second case because of condition (ii) in Definition \ref{covariantrep}.
    Proposition \ref{QW} and condition (ii) also tell us that the induced $*$-homomorphisms $\mathcal{K}_{\Delta(j)}(X_{i,j}^\Delta(w)) \rightarrow \mathcal{L}_\mathcal{P}(\mathcal{X})$ and $\mathcal{K}_{\Delta(j)}(X_{i,j}^\Delta(w)) \rightarrow \mathscr{C}^{\max}$ have orthogonal images for different $j \in\mathscr{I}$ and thus define $*$-homomorphisms on $\mathcal{K}_{\mathcal{P}^\Delta}(\mathcal{X}^\Delta_i(w))$. The image of $\mathcal{K}_{\mathcal{P}^\Delta}(\mathcal{X}^\Delta_i(w))$ is the closure of the span of the elements of the form $x^\dagger p_{j,u}p_{j,v} (y^\dagger)^*$ for $x,y \in X_{i,j}(w), j\in \mathscr{I},u,v\in W$. Conclude by Proposition \ref{CWDELTA}. For the second representation, conclude by (iii) of Definition \ref{covariantrep}.
\end{proof}
For all $w\in W$ and $i\in \mathscr{I}$, denote by $K_{i,w} \cong \mathcal{K}_{\mathcal{P}^\Delta}(\mathcal{X}_i^\Delta(w))$ the image of the embedding $\mathcal{K}_{\mathcal{P}^\Delta}(\mathcal{X}_i^\Delta(w)) \hookrightarrow \mathcal{D}(i)$. We denote by $K_{i,w}^{\max} \cong \mathcal{K}_{\mathcal{P}^\Delta}(\mathcal{X}_i^\Delta(w))$ the image of the corresponding embedding in $\mathcal{D}^{\max}(i)$. We have $\Phi_i(K_{i,w}^{\max}) = K_{i,w}$. Moreover the restriction of $\Phi_i$ to $K_{i,w}^{\max}$ is injective by Proposition \ref{BOBO} and Proposition \ref{cw}. Notice that $p_{i,w}$ (resp. $\psi_i(P_{w})$) is the unit of $K_{i,w}$ (resp. $K^{\max}_{i,w}$).
\begin{lemma}\label{kuvv}
    For all $u,v \in W$ such that $u \vee v < \infty$ (for the right weak order) we have $K_{i,u}\cdot K_{i,v} \subset K_{i,u\vee v}$ and $K_{i,u}^{\max}\cdot K_{i,v}^{\max} \subset K_{i,u\vee v}^{\max}$. If $u \vee v$ does not exist then $K_{i,u}\cdot K_{i,v} = 0$ and $K_{i,u}^{\max}\cdot K_{i,v}^{\max} = 0$.
\end{lemma}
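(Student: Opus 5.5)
By Lemma \ref{compactDelta}, the space $K_{i,u}$ is the closed span of the elements $x^\dagger (y^\dagger)^* p_{i,u'}$ with $x,y\in X_{i,j}(u)$, $j\in\mathscr{I}$, $u'\in W$. The same holds in the max setting, with $\rho(x)\rho(y)^*\psi_i(P_{u'})$ in place of $x^\dagger(y^\dagger)^*p_{i,u'}$. By bilinearity and continuity it therefore suffices to treat a product of two such generators. We argue with the identity $\rho(x)=\psi_i(P_u)\rho(x)$ of Lemma \ref{formulabasic}, which also gives $\rho(y)^* = \rho(y)^*\psi_i(P_u)$. In the reduced case the corresponding identity is $x^\dagger = p_{i,u}x^\dagger$; it follows from Proposition \ref{CWDELTA} because $p_{i,u}=\psi_i(\alpha_u(Q_u))$. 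So every element of $K_{i,u}$ is fixed by multiplication by $\psi_i(P_u)$ on both sides, and likewise for $K_{i,v}$. Moreover $\psi_i(P_v)$ commutes with the generators of $K_{i,u}$: it commutes with $x^\dagger(y^\dagger)^*$ by Proposition \ref{CWDELTA}, since this is a degree-$e$ product, $\alpha_u$ followed by $\alpha_{u^{-1}}$. In the max case the same commutation follows from axiom (iii) of Definition \ref{covariantrep}, applied once to $\rho(x)$ and once to the adjoint of $\rho(y)$.

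The case where $u\vee v$ does not exist is then immediate. Writing $k\in K_{i,u}$ and $k'\in K_{i,v}$, we have $kk' = k\,\psi_i(P_u)\psi_i(P_v)\,k'$, and $P_uP_v=0$ in $C(\overline{(W,S)})$ when $u\vee v=\infty$. The same computation works in the reduced case, using $p_{i,u}p_{i,v}=0$.

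Now assume $u\vee v=:r$ exists, and write $r=u\cdot t$ as a reduced product. Take generators $k=\rho(x)\rho(y)^*\psi_i(P_{u'})\in K_{i,u}$ and $k'\in K_{i,v}$. Then $kk' = \rho(x)\rho(y)^*\psi_i(P_{u'})\psi_i(P_v)k'$. Moving $\psi_i(P_v)$ leftwards past $\rho(x)\rho(y)^*$ gives
\[
kk'=\rho(x)\,\rho(y)^*\psi_i(P_v)\,\psi_i(P_{u'})k'.
\]
By axiom (iii), $\rho(y)^*\psi_i(P_v)=\psi_j(\alpha_{u^{-1}}(P_v))\rho(y)^*$. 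On the range of $\psi_j(Q_u)$, which carries the sequences $z$ with $u\cdot z$ reduced, the function $\alpha_{u^{-1}}(P_v)$ coincides with $P_t$: indeed, for $u\cdot z$ reduced, $uz\geq v$ holds iff $uz\geq r$ iff $z\geq t$. Since $\rho(x)=\rho(x)\psi_j(Q_u)$, this yields
\[
\rho(x)\rho(y)^*\psi_i(P_v) = \rho(x)\psi_j(P_t)\rho(y)^*.
\]

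Next we use covariance to expand $\psi_j(P_t)$. By covariance, $\psi_j(P_t)=\pi_j^t(\mathrm{id}_{\mathcal{X}_j(t)})$, which is a finite sum $\sum_q \rho(z_q)\rho(z_q)^*$ with $z_q\in X_{j,l_q}(t)$; this is where finite type is used. By axiom (i), $\rho(x)\rho(z_q)=\rho(xz_q)$, and similarly $\rho(z_q)^*\rho(y)^* = \rho(yz_q)^*$, where $xz_q, yz_q\in X_{i,l_q}(ut)=X_{i,l_q}(r)$ because $u\cdot t$ is reduced. Hence $kk'$ is a finite sum of terms
\[
\rho(xz_q)\rho(yz_q)^*\,\psi_i(P_{u'})k'.
\]
Each $\rho(xz_q)\rho(yz_q)^*$ lies in $K^{\max}_{i,r}$. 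We then multiply on the right by $\psi_i(P_{u'})$ and by the generator $k'$ of $K_{i,v}^{\max}$, which lie in $\Delta_0(i)$ and in $K_{i,v}^{\max}$ respectively. This leaves one more absorption step: $K^{\max}_{i,r}$ times a generator of $K^{\max}_{i,v}$, with $v\le r$.

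For this step we repeat the same argument with the roles reversed. We write $r=v\cdot t'$ and expand the $\psi(P_t)$-type factor from the left of $k'$, so that everything is absorbed into elements $\rho(a)\rho(b)^*\psi_i(P_{u''})$ with $a,b\in X_{i,l}(r)$. Concretely, for $v\le r$, a generator $\rho(a)\rho(b)^*$ of $K^{\max}_{i,r}$ satisfies $\rho(b)^* = \rho(b)^*\psi_i(P_r)$. We factor $b=b_1b_2$ with $b_1\in X(v)$ and $b_2\in X(t')$ using Proposition \ref{fock}, and apply axiom (ii) to $\rho(b_1)^*\rho(x')$. The resulting product stays in the form $\rho(a)\,(\text{element of }\Delta(l))\,\rho(\cdot)^*$, which lies in $K^{\max}_{i,r}$ by Lemma \ref{compactDelta}, since $\Delta(l)$ acts on $X^\Delta$. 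The reduced case is identical, with $\dagger$-operators and Propositions \ref{QW} and \ref{CWDELTA} in place of the axioms.

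The main obstacle is the identification of $\alpha_{u^{-1}}(P_v)\psi_j(Q_u)$ with $P_t\psi_j(Q_u)$, together with keeping track of the indices $j,l$ when factoring through Proposition \ref{fock}.
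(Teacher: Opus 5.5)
Your proof is correct. For $K^{\max}$ it follows the paper's argument. The paper also uses Lemma~\ref{formulabasic} to write $k_1k_2=k_1\psi_i(P_u)\psi_i(P_v)k_2$, which vanishes when $u\vee v=\infty$. It then only asserts that $k_1\psi_i(P_{u\vee v})k_2\in K^{\max}_{i,u\vee v}$ ``by a direct computation using the covariance condition''. Your identity $Q_u\,\alpha_{u^{-1}}(P_v)=Q_uP_t$, the finite-type expansion $\psi_j(P_t)=\sum_q\rho(z_q)\rho(z_q)^*$, and the absorption through axiom (ii) are that computation written out.

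The genuine difference is the reduced case. The paper works directly on the Fock module. It notes that $f=x_1^\dagger(y_1^\dagger)^*x_2^\dagger(y_2^\dagger)^*$ kills every $X_{i,l}(w)$ with $w\ngeq u$ or $w\ngeq v$, and that its restriction to the finitely generated module $X_{i,l}(u\vee v)$ is automatically compact. You instead observe that the $\dagger$-operators and the projections $p_{i,w}$ satisfy the covariant-representation axioms (Propositions~\ref{QW}, \ref{CWDELTA} and \ref{pw}), and you rerun the same algebra.

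The paper's route is shorter and more geometric. However, concluding $f\in K_{i,u\vee v}$ from compactness of $f|_{X_{i,l}(u\vee v)}$ also requires knowing how $f$ acts on $X_{i,l}(w)$ for $w>u\vee v$. That information comes from Lemma~\ref{TWW} and the description of $\sigma_{i,w}$, and the paper leaves this step implicit. Your route treats both algebras uniformly, uses nothing beyond the axioms, and so works for any covariant representation.

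Two small remarks. First, axiom (iii) is only stated for the generators $P_v$. Before applying it to $\alpha_{u^{-1}}(P_v)$, you should note that it extends to all of $C(\overline{(W,S)})$: the set of functions satisfying it is a closed unital subalgebra containing the self-adjoint generators.

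Second, your final absorption step is simpler than you describe. Write the generator of $K^{\max}_{i,v}$ as $\rho(x')\rho(y')^*\psi_i(P_{v'})$ with $x',y'\in X_{i,j'}(v)$, and write $b=b_1b_2$ with $b_1\in X_{i,j'}(v)$, $b_2\in X_{j',l}(t')$. The identity $\psi_{j'}(Q_v)\rho(y')^*=\rho(y')^*$ and axiom (i) give $\rho(b_2)^*\rho(\langle b_1,x'\rangle)\psi_{j'}(Q_v)\rho(y')^*=\rho(y'\langle x',b_1\rangle b_2)^*$, with $y'\langle x',b_1\rangle b_2\in X_{i,l}(r)$. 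So no $\Delta(l)$-factor survives, and Lemma~\ref{compactDelta} is not needed at that point.
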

\begin{proof}
    Let $j,k \in \mathscr{I}$. Let $u,v\in W$. Let $x_1,y_1\in X_{i,j}(u)$ and $x_2,y_2 \in X_{i,k}(v)$. Define $f = x_1^\dagger(y_1^\dagger)^*x_2^\dagger(y_2^\dagger)^*$. Let $w\in W$ and $l \in \mathscr{I}$. It is clear that $f|_{X_{i,l}(w)} = 0$ if $w \ngeq u$ or $w\ngeq v$. In particular, if $u\vee v = \infty$ then $f=0.$ Assume that $u\vee v < \infty$. Let $l\in \mathscr{I}$. As $X_{i,l}(u \vee v)$ is of finite type over $B_l$, we have $f|_{X_{i,l}(u \vee v)} \in \mathcal{L}_{B_l}(X_{i,l}(u \vee v)) = \mathcal{K}_{B_l}(X_{i,l}(u \vee v))$. The first inclusion follows.
    Let $u,v\in W$. Let $k_1 \in K_{i,u}^{\max}$. Let $k_2 \in K_{i,v}^{\max}$. Lemma \ref{formulabasic} gives $k_1k_2 = k_1\psi_i(P_u)\psi_i(P_v)k_2$ which equals $0$ if $u\vee v = \infty$ and equals $k_1\psi_i(P_{u\vee v})k_2$ otherwise. In this case a direct computation using the covariance condition proves $k_1\psi_i(P_{u\vee v})k_2\in K_{i,u\vee v}^{\max}$. This proves the other desired inclusion.
\end{proof}
\begin{proposition}\label{Cspancompacts}
Let $i,j \in \mathscr{I}$.  Every path operator from $i$ to $j$ is a finite linear combination of operators of the form \[x^\dagger(y^\dagger)^*p_{j,w} \in \mathcal{L}_\mathcal{P}(\mathcal{X})\] for some $x\in X_{i,k}(u),y\in X_{j,k}(v),k \in \mathscr{I}, u,v,w\in W$. Every path operator of $\mathscr{C}^{\max}$ from $i$ to $j$ is a finite linear combination of operators of the form \[\rho(x)\rho(y)^*\psi_j(P_{w}) \in \mathscr{C}^{\max}\] for some $x\in X_{i,k}(u),y\in X_{j,k}(v),k \in \mathscr{I}, u,v,w\in W$.
\end{proposition}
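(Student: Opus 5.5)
Argue by induction on the number $n$ of elementary factors in a path operator $T=f_1\cdots f_n$ from $i$ to $j$, with the inductive claim that $T$ lies in the span $\mathcal{S}(i,j)$ of operators $x^\dagger(y^\dagger)^*p_{j,w}$ ($x\in X_{i,k}(u)$, $y\in X_{j,k}(v)$). For $n=0$, i.e. $T=b\in B_i$, write $b=b^\dagger(1^\dagger)^*p_{i,e}$ with $u=v=w=e$. The $\mathscr{C}^{\max}$ case follows the same route, replacing the identities in $\mathcal{L}_\mathcal{P}(\mathcal{X})$ by the axioms of Definition \ref{covariantrep} and Lemma \ref{formulabasic}. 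The inductive step is to show that $\mathcal{S}(i,k)$ is stable under right multiplication by each elementary operator of type $(k,j,s)$, landing in $\mathcal{S}(i,j)$.

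First reduce the elementary operators to three types. A creation operator is $a^\dagger=(a\eta_j)^\dagger$. An annihilation operator is $(a'^\dagger)^*$ with $a'=a^*$. An elementary diagonal operator $\partial(a)$, by finite type and the argument used for surjectivity of $\Phi$, is a finite sum of terms $x^\dagger(y^\dagger)^*$ with $x\in X_{k,l}(s)$ and $y\in X_{j,l}(s)$. So it suffices to handle right multiplication of $x^\dagger(y^\dagger)^*p_{k,w}$ by a creation $z^\dagger$ and by an annihilation $(z^\dagger)^*$ of type $s$. For a creation, Proposition \ref{CWDELTA} gives $p_{k,w}z^\dagger=z^\dagger\psi_j(\alpha_{s}(P_w))$. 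The projection $\alpha_s(P_w)$ lies in $C(\overline{(W,S)})$ and so, via Klisse's description, is a finite combination of products of $P_{w'}$'s and their complements; since $p_{j,w'}p_{j,w''}=p_{j,w'\vee w''}$, it is a finite combination of $p_{j,w'}$'s. For an annihilation, $p_{k,w}(z^\dagger)^*=(z^\dagger)^*p_{j,w}'$ similarly, now in $\Delta_0(j)$. In both cases the step reduces to moving a single creation or annihilation past $(y^\dagger)^*$.

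The core computation is the product $(y^\dagger)^*z^\dagger$, where $y\in X_{k,l}(v)$ and $z\in X_{k,m}(s)$. Write $v$ either as $s\cdot v'$ reduced or with $s\cdot v$ reduced.
\begin{itemize}
\item If $s\cdot v$ is reduced, use the $B$-module decomposition of $X_{k,l}(v)$ together with the fact that $z^\dagger$ maps into the $\geq s$ part. Then $(y^\dagger)^*z^\dagger$ factors as $(y'^\dagger)^*$ with $y'$ in some $X_{m,l}(\cdot)$, possibly multiplied by $p$'s.
\item If $v=sv'$, decompose $y=\sum y_1y_2$ with $y_1\in X_{k,m'}(s)$ and $y_2\in X_{m',l}(v')$. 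Then $(y^\dagger)^*z^\dagger=\sum (y_2^\dagger)^*(y_1^\dagger)^*z^\dagger=\sum(y_2^\dagger)^*\delta_{m',m}\langle y_1,z\rangle q_{m,s}$ by Proposition \ref{QW}, with $q_{m,s}=1-p_{m,s}$.
\end{itemize}
The terms with a $p_{m,s}$ are absorbed as in the previous paragraph, after commuting through $(y_2^\dagger)^*$ by Proposition \ref{CWDELTA}. The annihilation case is simpler: $(y^\dagger)^*(z^\dagger)^*=((zy)^\dagger)^*$ when $s\cdot v$ is reduced, and otherwise it is a combination handled by the same decomposition of $zy$ via (OWD3). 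Finally $x^\dagger$ times a creation stays of creation form, because $x^\dagger y'^\dagger=(xy')^\dagger$ when reduced and $0$ otherwise, multiplied by $q$'s.

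The main obstacle is the case $l(sv)>l(v)$ for a creation, where $(y^\dagger)^*$ does not cancel against $z^\dagger$. There the plan is to use that $z^\dagger$ has range in $p_{k,s}\mathcal{X}$ and that $(y^\dagger)^*$ vanishes off the range of $p_{k,v}$, so only the part above $s\vee v$ survives. Using finite type, write the restriction to $X_{k,\cdot}(s\vee v)$ as a compact operator $\sum x'^\dagger(y'^\dagger)^*$, exactly as in Lemma \ref{kuvv}, and then apply Lemma \ref{TWW} to extend this uniformly to higher words. Showing that this restriction-and-extension produces exactly terms of the stated form, with the correct $p_{j,w}$, is the step most likely to need care. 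In $\mathscr{C}^{\max}$ the same step follows from the covariance condition $\pi_k^{s\vee v}(\mathrm{id})=\psi_k(P_{s\vee v})$.
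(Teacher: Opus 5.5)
Your strategy works, but the first bullet is false and has to be replaced by the argument in your last paragraph.

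\textbf{The first bullet.} Write $z\in X_{k,j}(s)$ (your $m$ is $j$). When $s\cdot v$ is reduced, $(y^\dagger)^*z^\dagger$ has degree $v^{-1}s$, of length $l(v)+1$. It is supported on the summands $\mathcal{X}_j(r)$ with $s\cdot r$ reduced. Any annihilation of that degree, even multiplied by projections, vanishes on those summands. Concretely, in $C^*_r(W)$ with $m_{s,t}=2$ one has $(\lambda_t^\dagger)^*\lambda_s^\dagger=\lambda_s^\dagger(\lambda_t^\dagger)^*\neq 0$, a genuine creation--annihilation product.

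\textbf{The fix.} You should write it out as follows. First, $(y^\dagger)^*z^\dagger=(y^\dagger)^*p_{k,v}p_{k,s}z^\dagger$, which is $0$ if $s\vee v$ does not exist. Otherwise insert $p_{k,s\vee v}=\sigma_{k,s\vee v}(\mathrm{id}_{\mathcal{X}_k(s\vee v)})=\sum_q x_q^\dagger(y_q^\dagger)^*$, using finite type and Proposition~\ref{pw}. In $\mathscr{C}^{\max}$ this identity is literally the covariance condition, and Lemma~\ref{TWW} is not needed. Factor each $x_q$ through $v$ and each $y_q$ through $s$; these are finite sums because the modules are finitely generated. Then apply Proposition~\ref{QW}, resp.\ axiom (ii), on both sides. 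You get $\sum_q c_q^\dagger(d_q^\dagger)^*$ times elements of $\Delta_0(j)$, with $c_q\in X_{l,l_q}(v^{-1}(s\vee v))$ and $d_q\in X_{j,l_q}(s(s\vee v))$. Finally $x^\dagger c_q^\dagger$ is either $(xc_q)^\dagger$ or $0$.

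This insertion works equally when $v\geq s$ (then $s\vee v=v$), so your case split is unnecessary. Also, in the annihilation case with $s\cdot v$ not reduced, $z^\dagger y^\dagger=0$ outright; no appeal to (OWD3) is needed.

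\textbf{Comparison with the paper.} With this repair your route is correct, and it differs from the paper's mainly on the reduced side.

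\begin{itemize}
\item The paper applies Lemma~\ref{TWW} to the whole path operator at once. $T$ vanishes off a clopen $D\subset D(u)$ and acts on $X_{j,k}(u)\otimes\mathcal{X}_k(v)$ as $T|_{X_{j,k}(u)}\otimes\mathrm{id}$. Finite type makes $T|_{X_{j,k}(u)}$ compact, so $T=\sum x^\dagger(y^\dagger)^*P_D$, with no case analysis and no separate treatment of diagonal operators.
\item Only for $\mathscr{C}^{\max}$, where there is no module picture, does the paper induct, multiplying on the left by $\rho(z)$ and $\rho(z)^*$. Its nontrivial step is exactly the computation above, mirrored: an annihilation of type $s$ against a creation of type $u$, handled via $\psi_i(P_{s\vee u})$ and (ii) twice.
\end{itemize}

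Your right-multiplication induction is therefore a single algebraic argument valid verbatim in both algebras. This is because Propositions~\ref{QW}, \ref{CWDELTA} and $p_{k,w}=\sigma_{k,w}(\mathrm{id})$ are the concrete counterparts of axioms (i)--(iii) and covariance. The paper's TWW shortcut is shorter on the reduced side but does not transfer to $\mathscr{C}^{\max}$.
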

\begin{proof}
    For the first assertion, use Lemma \ref{TWW} and the fact that each $X_{j,k}(w)$ is of finite type to write every path operator $f$ from $i$ to $j$ as a finite linear combination of elements of the form $x^\dagger (y^\dagger)^*p$, where $x\in X_{i,k}(u),y\in X_{j,k}(v), u,v \in W$, $p \in \Delta_0(j)$. We now prove the second assertion. Take an operator of the form $f = \rho(x)\rho(y)^*\psi_j(P_{w}) \in \mathscr{C}^{\max}$ for some $x\in X_{i,k}(u),y\in X_{j,k}(v),k \in \mathscr{I}, u,v,w\in W$. Let $z\in X_{k',i}(s), s\in S,k' \in \mathscr{I}$.
    In this case, we have $\rho(z)\rho(x)\rho(y)^* = \rho(zx)\rho(y)^*$. Let $z\in X_{i,k'}(s), s\in S,k' \in \mathscr{I}$. We then have $\rho(z)^*\rho(x) = \rho(z)^*\psi_i(P_s)\psi_i(P_u)\rho(x)$ so we may assume that $s\vee u < \infty$. We have assumed that the decomposition is of finite type, hence there exist families $x_q \in X_{i,l_q}(s\vee u), y_q \in X_{i,l_q}(s \vee u),l_q \in \mathscr{I}$ indexed by $1 \leq q \leq n$ such that \[\psi_i(P_{s\vee u}) = \sum_{1\leq q \leq n} \rho(x_q)\rho(y_q)^*.\] For all $1\leq q \leq n$, write $x_q = \sum_{l\in \mathscr{I}}a_{q,l}^1a_{q,l}^2$ for some $a_{q,l}^1 \in X_{i,l}(s), a_{q,l}^2 \in X_{l,l_q}(s(s\vee u))$ and $y_{q} = \sum_{l\in \mathscr{I}}b_{q,l}^1b_{q,l}^2$ for some $b_{q,l}^1 \in X_{i,l}(u), b_{q,l}^2 \in X_{l,l_q}(u^{-1}(s\vee u))$. We know that for all fixed $1 \leq q \leq n$ the families $(a^1_{q,l})_{l\in \mathscr{I}}$ and $(b^1_{q,l})_{l\in \mathscr{I}}$ have finite support by Proposition \ref{finito}. The covariance condition gives
\[
\begin{aligned}
\rho(z)^*\rho(x)
&= \rho(z)^* \psi_i(P_{s \vee u}) \rho(x) \\
&= \sum_{1 \leq q \leq n}
   \rho(z)^* \rho(x_q) \rho(y_q)^* \rho(x) \\
&= \sum_{1 \leq q \leq n, l,l'\in \mathscr{I}}
   \rho(z)^* \rho(a_{q,l}^1)\rho(a_{q,l}^2)
   \rho(b_{q,l'}^2)^* \rho(b_{q,l'}^1)^* \rho(x) \\
&= \sum_{1 \leq q \leq n, l,l' \in \mathscr{I}}
   \delta_{k',l}\delta_{k,l'}\rho(\langle z, a_{q,l}^1 \rangle)\,
   \psi_l(Q_s)\,
   \rho(a_{q,l}^2)\rho(b_{q,l'}^2)^*\,
   \rho_e(\langle b_{q,l'}^1, x \rangle)\psi_k(Q_u) \\
   &= \sum_{1 \leq q \leq n}
   \psi_{k'}(Q_s)\,
   \rho(c_{q,k'})\rho(d_{q,k}^2)^*\psi_k(Q_u).
\end{aligned}
\]
Here we have defined $c_{q,k'} = \langle z,a_{q,k'}^1\rangle a^2_{q,k'} \in X_{k',l_q}(s(s\vee u))$ and $d_{q,k}^2 = \langle x,b^1_{q,k}\rangle b^2_{q,k} \in X_{k,l_q}(u^{-1}(s\vee u))$. We can now conclude using (iii) and Lemma \ref{qwqw}.
\end{proof}
The last proposition gives the following more pleasant description of the covariance C$^*$-algebras:
\[
    \mathscr{C}(i) = \overline{\mathrm{span}}\{x^\dagger(y^\dagger)^*p_{i,w}|x\in X_{i,j}(u),y\in X_{i,j}(v),j \in \mathscr{I}, u,v,w\in W\}\] and  \[\mathscr{C}^{\max}(i) = \overline{\mathrm{span}}\{\rho_u(x)\rho_v(y)^*\psi_i(P_w)|x\in X_{i,j}(u),y\in X_{i,j}(v),j \in \mathscr{I}, u,v,w\in W\}.\]
    In particular, \[\mathcal{D}(i) = \overline{\mathrm{span}}\{x^\dagger(y^\dagger)^*p_{i,v}|x\in X_{i,j}(u), y\in X_{i,j}(u), j \in \mathscr{I}, u,v\in W\}\] and \[\mathcal{D}^{\max}(i) = \overline{\mathrm{span}}\{\rho_u(x)\rho_u(y)^*\psi_i(P_v)|x\in X_{i,j}(u),y\in X_{i,j}(u), j \in \mathscr{I},u,v\in W\}.\]
\begin{definition}
    A subset $\mathcal{F} \subset W$ is called $\vee$-closed if for all $u,v\in \mathcal{F}$ such that $u\vee v < \infty$, we have $u\vee v \in \mathcal{F}$.
\end{definition}
Let $i\in \mathscr{I}$. Let $\mathcal{F}\subset W$ be $\vee$-closed (for the right weak order on $W$). Let \[K_i(\mathcal{F}) = \overline{\sum_{w\in \mathcal
 F}K_{i,w}}\subset \mathcal{D}(i), K^{\max}_i(\mathcal{F}) = \overline{\sum_{w\in \mathcal
 F}K_{i,w}^{\max}}\subset \mathcal{D}^{\max}(i).\]
Lemma \ref{kuvv} proves that these are actually C$^*$-subalgebras. Whenever $\mathcal{F}_1 \subset \mathcal{F}_2$ we have $K_i(\mathcal{F}_1) \subset K_i(\mathcal{F}_2)$ and $K^{\max}_i(\mathcal{F}_1) \subset K^{\max}_i(\mathcal{F}_2)$. Proposition \ref{Cspancompacts} gives the following description of the diagonal subalgebras:

\[
\mathcal{D}(i) =\overline{\bigcup_{\substack{\mathcal{F} \subset W \\ \mathcal{F}\ \text{finite, }\vee\text{-closed}}} K_i(\mathcal{F})}, \qquad  \mathcal{D}^{\max}(i) = \overline{\bigcup_{\substack{\mathcal{F} \subset W \\ \mathcal{F}\ \text{finite, }\vee\text{-closed}}} K^{\max}_i(\mathcal{F})}.\]

For every finite $\vee$-closed subset $\mathcal{F}\subset W$, we define the projections $p_{i,\mathcal{F}} = \bigvee_{w\in \mathcal{F}}p_{i,w} \in \Delta_0(i)$ and $P_{\mathcal{F}} = \bigvee_{w\in \mathcal{F}}P_w \in C(\overline{(W,S)})$.
\begin{lemma}\label{compactinter}
   Let $\mathcal{F}\subset W$ be finite and $\vee$-closed. Let $w\in \mathcal{F}$ be of minimal length. Then $K_i(\mathcal{F}\setminus\{w\})$ and $K^{\max}_i(\mathcal{F}\setminus\{w\})$ are respectively C$^*$-ideals of $K_i(\mathcal{F})$ and of $K^{\max}_i(\mathcal{F})$. Moreover \[K_{i,w}\cap K_i(\mathcal{F}\setminus\{w\}) = K_{i,w}\cdot p_{i,\mathcal{F}\setminus\{w\}},\qquad K^{\max}_{i,w}\cap K^{\max}_i(\mathcal{F}\setminus\{w\}) = K_{i,w}^{\max}\cdot \psi_i(P_{\mathcal{F}\setminus\{w\}}).\] 
\end{lemma}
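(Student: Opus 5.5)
The argument runs in parallel for $K_i$ and $K_i^{\max}$, using only Lemma \ref{kuvv}, the facts that $p_{i,v}$ (resp. $\psi_i(P_v)$) is the unit of $K_{i,v}$ (resp. $K_{i,v}^{\max}$), and Proposition \ref{CWDELTA} (resp. axiom (iii) of Definition \ref{covariantrep}). I describe the reduced case; the $\max$ case is word for word the same with $p_{i,v}$ replaced by $\psi_i(P_v)$.

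\textbf{Step 1: the ideal property.} Since $\mathcal{F}$ is $\vee$-closed and $w$ has minimal length, $\mathcal{F}\setminus\{w\}$ is again $\vee$-closed. Indeed, if $u,v\in\mathcal{F}\setminus\{w\}$ and $u\vee v=w$, then $u\leq w$ with $l(u)\geq l(w)$, so $u=w$, a contradiction. Hence $K_i(\mathcal{F}\setminus\{w\})$ is a C$^*$-subalgebra. Because $K_i(\mathcal{F})=\overline{K_{i,w}+K_i(\mathcal{F}\setminus\{w\})}$, it suffices to check
\[
K_{i,w}\cdot K_{i,v}\subset K_i(\mathcal{F}\setminus\{w\}) \quad\text{and}\quad K_{i,v}\cdot K_{i,w}\subset K_i(\mathcal{F}\setminus\{w\}) \quad\text{for all } v\in\mathcal{F}\setminus\{w\}.
\]
By Lemma \ref{kuvv}, such a product is $0$ or lies in $K_{i,w\vee v}$, and $w\vee v\in\mathcal{F}$. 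Moreover $w\vee v\neq w$: otherwise $v\leq w$, and minimality of $l(w)$ would force $v=w$. This proves the first assertion.

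\textbf{Step 2: the intersection formula.} Let $p=p_{i,\mathcal{F}\setminus\{w\}}$. This is a finite join of commuting projections, so it lies in the algebra generated by the $p_{i,v}$, namely in the span of the products $p_{i,v_1\vee\cdots\vee v_r}$ with $v_1,\dots,v_r\in\mathcal{F}\setminus\{w\}$, all of which have their joins in $\mathcal{F}\setminus\{w\}$.

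For the inclusion $\supset$: take $k\in K_{i,w}$. Then $kp$ is a combination of terms $k\,p_{i,v}$ with $v\in\mathcal{F}\setminus\{w\}$. Since $p_{i,v}$ is the unit of $K_{i,v}$, we have $k\,p_{i,v}\in K_{i,w}K_{i,v}\subset K_i(\mathcal{F}\setminus\{w\})$ by Step 1. Also $kp\in K_{i,w}$: from Lemma \ref{compactDelta}, $K_{i,w}$ is the closed span of elements $x^\dagger(y^\dagger)^*p_{i,u}$, and it is stable under right multiplication by $\Delta_0(i)$ because $p_{i,u}p_{i,v}\in\{0,p_{i,u\vee v}\}$.

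For the inclusion $\subset$: let $k\in K_{i,w}\cap K_i(\mathcal{F}\setminus\{w\})$. Every element of $K_{i,v}$ with $v\in\mathcal{F}\setminus\{w\}$ satisfies $a=ap_{i,v}=p_{i,v}a$. The first equality holds because $p_{i,v}$ is the unit of $K_{i,v}$. The second follows from Proposition \ref{CWDELTA}, since elements of $\mathcal{D}(i)$ commute with $\Delta_0(i)$. As $p_{i,v}\leq p$, we get $a=ap$. By linearity and continuity, $k=kp$ for every $k\in K_i(\mathcal{F}\setminus\{w\})$, so $k=kp\in K_{i,w}\cdot p$.

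\textbf{Step 3: the $\max$ case and the main obstacle.} In the $\max$ case the same reasoning applies. Commutation of $\mathcal{D}^{\max}(i)$ with $\psi_i(C(\overline{(W,S)}))$ follows from (iii) applied to products of generators of degree $e$. The unit statement for $K_{i,v}^{\max}$ was already noted after Lemma \ref{compactDelta}.

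The only delicate point is making sure that $K_{i,w}$, defined as the image of $\mathcal{K}_{\mathcal{P}^\Delta}(\mathcal{X}_i^\Delta(w))$, really is closed under right multiplication by $\Delta_0(i)$ (resp. $\psi_i(C(\overline{(W,S)}))$). I would verify this directly from the description in Lemma \ref{compactDelta}, using the formula $p_{i,u}p_{i,v}=p_{i,u\vee v}$ and its $\max$ analogue $\psi_i(P_u)\psi_i(P_v)=\psi_i(P_{u\vee v})$ (or $0$).
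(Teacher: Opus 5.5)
Your proposal is correct and follows the paper's argument. The ideal property comes from Lemma \ref{kuvv} together with the observation that $w\vee v\neq w$ for $v\in\mathcal{F}\setminus\{w\}$, and the intersection formulas come from $p_{i,\mathcal{F}\setminus\{w\}}$ (resp. $\psi_i(P_{\mathcal{F}\setminus\{w\}})$) being the unit of $K_i(\mathcal{F}\setminus\{w\})$ (resp. $K_i^{\max}(\mathcal{F}\setminus\{w\})$); you additionally spell out details the paper leaves implicit, namely the $\vee$-closedness of $\mathcal{F}\setminus\{w\}$ and the stability of $K_{i,w}$ (resp. $K_{i,w}^{\max}$) under right multiplication by the projections $p_{i,v}$ (resp. $\psi_i(P_v)$).
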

\begin{proof}
    The fact that $K_i(\mathcal{F}\setminus\{w\})$ is an ideal of $K_i(\mathcal{F})$ and that $K^{\max}_i(\mathcal{F}\setminus\{w\})$ is an ideal of $K^{\max}_i(\mathcal{F})$ is a consequence of Lemma \ref{kuvv}. The two equalities follow from the fact that $p_{i,\mathcal{F}\setminus\{w\}}$ (resp. $\psi_i(P_{\mathcal{F}\setminus\{w\}})$) is the unit of $K_i(\mathcal{F}\setminus\{w\})$ (resp. $K^{\max}_i(\mathcal{F}\setminus\{w\})$).
\end{proof}
\begin{proposition}
    $\Phi_i$ induces a $*$-isomorphism $\mathcal{D}^{\max}(i)\cong \mathcal{D}(i)$.
\end{proposition}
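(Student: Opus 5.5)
Since $\Phi_i(\mathcal{D}^{\max}(i)) = \mathcal{D}(i)$ (surjectivity of $\Phi_i$ together with $\Phi_i(\mathscr{C}^{\max}_w(i))\subset \mathscr{C}_w(i)$ and the conditional expectation of Proposition \ref{cgraded}), it only remains to prove injectivity. By the inductive-limit descriptions
\[
\mathcal{D}(i)=\overline{\bigcup_{\mathcal{F}} K_i(\mathcal{F})},\qquad \mathcal{D}^{\max}(i)=\overline{\bigcup_{\mathcal{F}} K^{\max}_i(\mathcal{F})},
\]
with $\mathcal{F}$ running over finite $\vee$-closed subsets of $W$ directed by inclusion, it suffices to prove that $\Phi_i$ restricted to each $K^{\max}_i(\mathcal{F})$ is injective. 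An injective $*$-homomorphism is isometric, so $\Phi_i$ is then isometric on a dense $*$-subalgebra of $\mathcal{D}^{\max}(i)$, hence isometric everywhere.

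I prove injectivity on $K^{\max}_i(\mathcal{F})$ by induction on $|\mathcal{F}|$, following Katsura's gauge-invariant uniqueness argument. For $|\mathcal{F}|=1$, we have $K^{\max}_i(\{w\})=K^{\max}_{i,w}$, on which $\Phi_i$ is injective (noted after Lemma \ref{compactDelta}). For the inductive step, pick $w\in\mathcal{F}$ of minimal length and set $\mathcal{F}'=\mathcal{F}\setminus\{w\}$. This set is still $\vee$-closed, because any join of two elements has length at least that of each of them, and a join equal to $w$ would force both elements to be $\le w$, hence equal to $w$ by minimality. By Lemma \ref{compactinter}, $J^{\max}=K^{\max}_i(\mathcal{F}')$ is an ideal of $K^{\max}_i(\mathcal{F})=K^{\max}_{i,w}+J^{\max}$, and likewise $J=K_i(\mathcal{F}')$ is an ideal of $K_i(\mathcal{F})$. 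By induction, $\Phi_i$ is injective on $J^{\max}$. A standard lemma then applies: if $\Phi$ is injective on an ideal $J^{\max}$ and on a subalgebra $K$ with $K+J^{\max}$ the whole algebra, and $\Phi(K)\cap\Phi(J^{\max})=\Phi(K\cap J^{\max})$, then $\Phi$ is injective. Indeed, $\Phi$ induces maps $K/(K\cap J^{\max})\cong (K+J^{\max})/J^{\max}\to \Phi(K)/(\Phi(K)\cap \Phi(J^{\max}))$, which are injective under that condition, and the kernel meets $J^{\max}$ trivially. So the key identity to establish is $\Phi_i(K^{\max}_{i,w}\cap J^{\max})=K_{i,w}\cap J$.

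By Lemma \ref{compactinter}, both sides are the products of $K_{i,w}$-type algebras with the corresponding projection, $\psi_i(P_{\mathcal{F}'})$ on one side and $p_{i,\mathcal{F}'}$ on the other, and $\Phi_i(\psi_i(P_{\mathcal{F}'}))=p_{i,\mathcal{F}'}$. Hence $\Phi_i(K^{\max}_{i,w}\psi_i(P_{\mathcal{F}'}))=K_{i,w}\,p_{i,\mathcal{F}'}$ follows from $\Phi_i(K^{\max}_{i,w})=K_{i,w}$, using that multiplication by these projections preserves the respective $K_{i,w}$'s. This last point is the content of Lemma \ref{kuvv}, since $P_{\mathcal{F}'}$ is a polynomial in the $P_v$ and $P_wP_v=P_{w\vee v}$ is of the appropriate form. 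Together with injectivity on $K^{\max}_{i,w}$, this gives the required intersection condition.

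The main obstacle is precisely this intersection compatibility, and in particular checking that $K^{\max}_{i,w}\cdot\psi_i(P_{\mathcal{F}'})$ is closed and corresponds exactly under $\Phi_i$. If needed, I would make this rigorous via the identification $K^{\max}_{i,w}\cong\mathcal{K}_{\mathcal{P}^\Delta}(\mathcal{X}_i^\Delta(w))\cong K_{i,w}$ of Lemma \ref{compactDelta}. Under it, multiplication by $P_{\mathcal{F}'}$ corresponds on both sides to the same operation: right multiplication by the projection $\alpha_{w^{-1}}$-translated inside $\mathcal{P}^\Delta$, using (iii) of Definition \ref{covariantrep} and Proposition \ref{CWDELTA}. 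This makes the two sides visibly equal, and this is where the faithfulness hypothesis of Proposition \ref{cw} enters.
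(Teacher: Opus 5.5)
Your proposal is correct and follows essentially the same route as the paper. Both argue by induction on $|\mathcal{F}|$ over finite $\vee$-closed sets, removing an element $w$ of minimal length, use Lemma \ref{compactinter} to identify $K^{\max}_{i,w}\cap K^{\max}_i(\mathcal{F}\setminus\{w\})=K^{\max}_{i,w}\psi_i(P_{\mathcal{F}\setminus\{w\}})$ with image $K_{i,w}\,p_{i,\mathcal{F}\setminus\{w\}}$, and conclude from $K^{\max}_{i,w}\cong K_{i,w}$. Your ``standard lemma'' is just the injectivity half of the paper's five-lemma argument on the two short exact sequences. You also make explicit two points the paper leaves implicit: that $\mathcal{F}\setminus\{w\}$ is again $\vee$-closed, and that the finite $\vee$-closed sets form a directed family.
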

\begin{proof}
    It suffices to prove that $\Phi_i$ induces a $*$-isomorphism $K_i^{\max}(\mathcal{F}) \cong K_i(\mathcal{F})$ for every $\vee$-closed, finite subset of $W$. We do so by induction on $n = |\mathcal{F}|$. If $\mathcal{F} = \{w\}$ then $K_i^{\max}(\mathcal{F}) = K_{i,w}^{\max} \cong K_{i,w} = K_i(\mathcal{F})$. Let $n\geq 1$. Assume the result is true for every such $\mathcal{F}$ of cardinality $n$. Let $\mathcal{F}$ be a $\vee$-closed, finite subset of $W$ of cardinality $n+1$. Let $w\in \mathcal{F}$ be of minimal length. The last lemma implies that $K_i(\mathcal{F}\setminus \{w\}) \lhd K_i(\mathcal{F})$ and $K_i^{\max}(\mathcal{F}\setminus \{w\}) \lhd K_i^{\max}(\mathcal{F})$ are C$^*$-ideals. Moreover we have two short exact sequences and a commutative diagram\[\begin{tikzcd}
0 \arrow[r] & K_i^{\max}(\mathcal{F}\setminus \{w\}) \arrow[r] \arrow[d] & K_i^{\max}(\mathcal{F}) \arrow[r] \arrow[d] & K_{i,w}^{\max}/(K_{i,w}^{\max}\psi_i(P_{\mathcal{F}\setminus\{w\}})) \arrow[r] \arrow[d] & 0 \\
0 \arrow[r] & K_i(\mathcal{F}\setminus \{w\}) \arrow[r]                  & K_i(\mathcal{F}) \arrow[r]                  & K_{i,w}/(K_{i,w}p_{i,\mathcal{F}\setminus\{w\}}) \arrow[r]                     & 0
\end{tikzcd}
\]
where the vertical maps are induced by $\Phi_i$. We know that $\Phi_i$ restricts to an isomorphism $K_{i,w}^{\max} \cong K_{i,w} \cong \mathcal{K}_{\mathcal{P}^\Delta}(\mathcal{X}_i^\Delta(w))$. Hence the third vertical arrow in the above diagram is an isomorphism. The five lemma concludes the proof.
\end{proof}
\begin{proposition}
    The Fell bundle structure of $\mathscr{C}^{\max}(i)
=
\overline{\sum_{w \in W} \mathscr{C}^{\max}_w(i)}$ is a topological $W$-grading.
\end{proposition}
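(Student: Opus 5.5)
We must produce a conditional expectation $\mathcal{E}:\mathscr{C}^{\max}(i)\to\mathcal{D}^{\max}(i)$ that vanishes on $\mathscr{C}^{\max}_w(i)$ for $w\neq e$; by Exel's results this also shows that the subspaces $\mathscr{C}^{\max}_w(i)$ are independent. The plan is to transport the structure from the reduced side. We compose $\Phi_i$ with the faithful expectation $F$ of Proposition \ref{cgraded}, and then invert $\Phi_i$ on the diagonal using the isomorphism $\mathcal{D}^{\max}(i)\cong\mathcal{D}(i)$ just proved. Concretely, we define
\[
\mathcal{E} = (\Phi_i|_{\mathcal{D}^{\max}(i)})^{-1}\circ F\circ \Phi_i : \mathscr{C}^{\max}(i)\rightarrow \mathcal{D}^{\max}(i).
\]
This map is well defined because $F$ takes values in $\mathcal{D}(i)=\Phi_i(\mathcal{D}^{\max}(i))$ and $\Phi_i$ is injective on $\mathcal{D}^{\max}(i)$. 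It is a composition of completely positive contractive maps and a $*$-isomorphism, so it is contractive and completely positive.

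Next we check the required properties. First, $\mathcal{E}$ is the identity on $\mathcal{D}^{\max}(i)$. This holds because $\Phi_i(\mathcal{D}^{\max}(i))\subset \mathcal{D}(i)$ by Proposition \ref{fellbundlecmax}, and $F$ is the identity on $\mathcal{D}(i)$. Hence $\mathcal{E}$ is an idempotent contraction onto $\mathcal{D}^{\max}(i)$, and Tomiyama's theorem makes it a conditional expectation. Second, for $w\neq e$ we have $\Phi_i(\mathscr{C}^{\max}_w(i))\subset \mathscr{C}_w(i)$, again by Proposition \ref{fellbundlecmax}, and $F(\mathscr{C}_w(i))=0$ by Proposition \ref{cgraded}. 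Therefore $\mathcal{E}(\mathscr{C}^{\max}_w(i))=0$.

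With this expectation in hand, we deduce that the sum $\sum_w \mathscr{C}^{\max}_w(i)$ is direct. Suppose $x=\sum_{w\in\mathcal{F}} x_w$ is a finite sum with $x_w\in\mathscr{C}^{\max}_w(i)$ and $x=0$. For each $u\in\mathcal{F}$, we compute $\mathcal{E}(x\, x_u^{*})$. By the Fell bundle relations of Proposition \ref{fellbundlecmax}, $x_w x_u^{*}\in \mathscr{C}^{\max}_{wu^{-1}}(i)$, so only the term with $w=u$ survives and $\mathcal{E}(x x_u^{*})=x_u x_u^{*}$. Since $x=0$, this gives $x_u x_u^*=0$, hence $x_u=0$. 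Together with the other Fell bundle properties listed in Proposition \ref{fellbundlecmax}, this gives exactly the definition of a topological $W$-grading in the sense of \cite{Exe97}.

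The only subtle point is that $\mathcal{E}$ is well defined, which rests entirely on the previously established injectivity of $\Phi_i$ on $\mathcal{D}^{\max}(i)$. Beyond that, the argument is a direct diagram chase, and I do not expect any further obstacle.
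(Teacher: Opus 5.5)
Your proposal is correct and is essentially the paper's proof. The paper defines the same map $F^{\max}=(\Phi_i|_{\mathcal{D}^{\max}(i)})^{-1}\circ F\circ\Phi_i$ and uses Propositions \ref{cgraded} and \ref{fellbundlecmax} to see that it vanishes on $\mathscr{C}^{\max}_w(i)$ for $w\neq e$; the only difference is that you verify the independence of the subspaces $\mathscr{C}^{\max}_w(i)$ by hand, via $\mathcal{E}(x x_u^*)=x_u x_u^*$, where the paper simply invokes \cite[Theorem 3.3]{Exe97}.
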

\begin{proof}
    We know that $\Phi_i$ induces a $*$-isomorphism $\mathcal{D}^{ \max}(i) \rightarrow \mathcal{D}(i)$.
Use this to define a conditional expectation $F^{\max}
=
\left(\Phi_i|_{\mathcal D^{\max}(i)}\right)^{-1}\circ F\circ \Phi_i$. By Proposition \ref{cgraded}, and Proposition \ref{fellbundlecmax} we have $F^{\max}(\mathscr{C}^{\max}_w(i)) = 0$ for all $w\in W\setminus\{e\}$. Conclude by \cite[Theorem 3.3]{Exe97}.
\end{proof}
 Now we are going to show the injectivity of $\Phi_i$ by using Exel's work on amenability of graded C$^*$-algebras. 
\begin{definition}
    Let $\Gamma$ be a discrete group. Let $\mathcal{A} = \overline{\bigoplus_{g\in \Gamma}\mathcal{A}_g}$ be a $\Gamma$-graded C$^*$-algebra. The grading is said to satisfy Exel's approximation property if there exists a net $(a_{\lambda})$ of functions $a_{\lambda} : \Gamma \rightarrow \mathcal{A}_e$ such that \[\sup_{\lambda}\left\| \sum_{g\in \Gamma}a_{\lambda}(g)^*a_{\lambda}(g)  \right\|< \infty,\] and for all $h\in \Gamma$, $x\in \mathcal{A}_h$, \[
\sum_{g\in \Gamma}
a_{\lambda}(hg)^*\,x\,a_{\lambda}(g)
\underset{\lambda\to\infty}{\longrightarrow}
x.
\]
\end{definition}
\begin{theorem}\cite{Exe97}\label{theoremexel}
    Let $\Gamma$ be a discrete group. Let $\mathcal{A} = \overline{\bigoplus_{g\in \Gamma}\mathcal{A}_g}$ be a topologically $\Gamma$-graded C$^*$-algebra with conditional expectation $\mathcal{E} : \mathcal{A} \rightarrow \mathcal{A}_e$. Assume the grading satisfies Exel's approximation property. Then $\mathcal{E}$ is faithful.
\end{theorem}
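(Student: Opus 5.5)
The plan is to show that $N=\{x\in\mathcal{A}\mid \mathcal{E}(x^*x)=0\}$ is zero. I will write $N$ as the kernel of a regular representation $\Lambda$ of the Fell bundle $(\mathcal{A}_g)_{g\in\Gamma}$. Then I will use the net $(a_\lambda)$ to build uniformly bounded maps $T_\lambda\colon\mathcal{A}\to\mathcal{A}$. These maps factor through $\Lambda$ and converge pointwise to the identity, which forces $N=0$.

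\emph{Step 1: the regular representation.} Let $\ell^2(\mathcal{A})$ be the right Hilbert $\mathcal{A}_e$-module obtained by completing $\bigoplus_{g\in\Gamma}\mathcal{A}_g$ for $\langle (\xi_g),(\eta_g)\rangle=\sum_g \xi_g^*\eta_g$. This is an $\mathcal{A}_e$-valued inner product because $\mathcal{A}_g^*\mathcal{A}_g\subset\mathcal{A}_e$.
\begin{itemize}
\item Since the grading is topological, $\mathcal{E}$ extends the orthogonal splitting. Equivalently, $\ell^2(\mathcal{A})\cong L^2(\mathcal{A},\mathcal{E})$ via $x\mapsto(\text{$g$-component of }x)_g$, using $\mathcal{E}(x^*y)=\sum_g x_g^*y_g$ on the algebraic span.
\item Left multiplication therefore gives a $*$-homomorphism $\Lambda\colon\mathcal{A}\to\mathcal{L}_{\mathcal{A}_e}(\ell^2(\mathcal{A}))$, with $\Lambda(x_k)(\xi)_{kg}=x_k\xi_g$ for $x_k\in\mathcal{A}_k$.
\item For the kernel, note that $\langle\Lambda(x)\hat y,\Lambda(x)\hat y\rangle=\mathcal{E}(y^*x^*xy)$, where $\hat y$ denotes the image of $y$. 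Hence $\Lambda(x)=0$ if and only if $\mathcal{E}(y^*x^*xy)=0$ for all $y$.
\item Taking $y=1$, or an approximate unit of $\mathcal{A}_e$ in the non-unital case, shows that $\Lambda(x)=0$ implies $\mathcal{E}(x^*x)=0$.
\item Conversely, if $\mathcal{E}(x^*x)=0$, then $\mathcal{E}(y^*x^*xy)\le\|y\|^2\mathcal{E}(x^*x)$ fails as stated. Instead I would use that $N$ is a closed left ideal and that $\mathcal{E}(y^*x^*xy)=0$ follows by the Cauchy--Schwarz inequality for $\mathcal{E}$ applied to $z\mapsto \mathcal{E}(z^*z)$ with $z=xy\in N$.
\item So $\ker\Lambda=N$. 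This is the bookkeeping step I expect to need the most care.
\end{itemize}

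\emph{Step 2: the maps $T_\lambda$.} Consider the internal tensor product $\ell^2(\mathcal{A})\otimes_{\mathcal{A}_e}\mathcal{A}$ and define $W_\lambda\colon\mathcal{A}\to\ell^2(\mathcal{A})\otimes_{\mathcal{A}_e}\mathcal{A}$ by $W_\lambda(y)=\sum_g \delta_g a_\lambda(g)\otimes y$. Here $\delta_g a_\lambda(g)$ is the vector with $a_\lambda(g)\in\mathcal{A}_e\subset\mathcal{A}_g\mathcal{A}_e$ placed at coordinate $g$; I would rather use the vectors $\delta_g$ of $\ell^2$ of the trivial bundle, i.e. put $a_\lambda(g)$ in the $g$-slot of $\ell^2(\Gamma)\otimes\mathcal{A}_e$ and let $x_k$ shift slots.
\begin{itemize}
\item $W_\lambda$ is adjointable with $\|W_\lambda\|^2=\|\sum_g a_\lambda(g)^*a_\lambda(g)\|$, so it is bounded uniformly in $\lambda$.
\item Set $T_\lambda(x)=W_\lambda^*(\Lambda(x)\otimes 1)W_\lambda$. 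This is left multiplication by an element of $\mathcal{A}$.
\item For $x\in\mathcal{A}_h$ one computes $T_\lambda(x)=\sum_g a_\lambda(hg)^*\,x\,a_\lambda(g)$, exactly the expression in Exel's approximation property.
\end{itemize}

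\emph{Step 3: conclusion.} The maps $T_\lambda$ are linear, they satisfy $\|T_\lambda(x)\|\le C\|\Lambda(x)\|\le C\|x\|$ with $C=\sup_\lambda\|\sum_g a_\lambda(g)^*a_\lambda(g)\|$, and they vanish on $\ker\Lambda=N$. By hypothesis $T_\lambda(x)\to x$ for $x$ in each $\mathcal{A}_h$, hence on the dense span. The uniform bound then gives $T_\lambda(x)\to x$ for every $x\in\mathcal{A}$. In particular every $x\in N$ satisfies $x=\lim_\lambda T_\lambda(x)=0$, so $\mathcal{E}$ is faithful.

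The main obstacle is Step 2: making sense of $(\Lambda(x)\otimes 1)$ on the right module so that $W_\lambda^*(\cdot)W_\lambda$ lands in $\mathcal{A}$ and reproduces exactly $\sum_g a_\lambda(hg)^*xa_\lambda(g)$. If that is awkward, I would instead realize everything inside $\mathcal{L}_{\mathcal{A}_e}(\ell^2(\mathcal{A}))$ and compare through the faithful copy of $\mathcal{A}/N$ there.
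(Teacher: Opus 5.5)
Your strategy matches Exel's. The paper gives no proof and only cites \cite{Exe97}, and Exel's argument has the same shape: uniformly bounded maps $T_\lambda$ that factor through the regular representation and converge pointwise to the identity. However, the step that carries the analytic content is missing.

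The operator you need in Step 2 is the shift representation $\sigma(x_k)=\lambda_k\otimes x_k$, and it does not act where you put it. The module $\ell^2(\Gamma)\otimes\mathcal{A}_e$ is not invariant, since $x_kc\in\mathcal{A}_k$. On $\ell^2(\Gamma)\otimes H$, for a faithful $\mathcal{A}\subset B(H)$, the operator $\sum_k\lambda_k\otimes x_k$ is not ``$\Lambda(x)\otimes 1$''. With $V_\lambda\xi=\sum_g\delta_g\otimes a_\lambda(g)\xi$ you do get $V_\lambda^*\sigma(x_k)V_\lambda=\sum_g a_\lambda(kg)^*x_ka_\lambda(g)$. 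But then everything hinges on the estimate $\|\sum_k\lambda_k\otimes x_k\|\le\|\Lambda(\sum_k x_k)\|$ for finitely supported sums. Without it, $\sigma$ need not extend to $\mathcal{A}$, $T_\lambda$ is not known to be bounded on $\mathcal{A}$, and nothing forces $T_\lambda$ to vanish on $N$.

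This estimate is Fell's absorption principle for Fell bundles, and it is exactly where Exel's proof does its work. For group algebras it is implemented by the unitary $\delta_t\otimes\xi\mapsto\delta_t\otimes u_t\xi$. The fibres of a general Fell bundle need not contain unitaries, so an argument is needed. For instance:
\begin{itemize}
\item For $\zeta=\delta_t\otimes\xi$ one has $\langle\sigma(z)\zeta,\zeta\rangle=\langle\mathcal{E}(z)\xi,\xi\rangle$.
\item Hence, on the invariant subspace generated by $\delta_t\otimes H$, the matrix inequality $[\mathcal{E}(y_i^*x^*xy_j)]_{i,j}\le\|\Lambda(x)\|^2[\mathcal{E}(y_i^*y_j)]_{i,j}$ bounds $\sigma$ by $\Lambda$.
\item These subspaces, for $t\in\Gamma$, span a dense subspace. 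Comparing kernels in the full cross-sectional algebra then gives $\|\sigma(x)\|\le\|\Lambda(x)\|$.
\end{itemize}

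Your first version does not avoid this. We have $\ell^2(\mathcal{A})\otimes_{\mathcal{A}_e}\mathcal{A}\cong\bigoplus_g\delta_g\otimes\overline{\mathcal{A}_g\mathcal{A}}$, and $\Lambda(x)\otimes 1$ is indeed bounded by $\|\Lambda(x)\|$ there. But $\delta_g\otimes a_\lambda(g)y$ need not lie in this module; your inclusion $\mathcal{A}_e\subset\mathcal{A}_g\mathcal{A}_e$ is false in general. The fallback fails too: inside $\mathcal{L}_{\mathcal{A}_e}(\ell^2(\mathcal{A}))$ you only control $\|\Lambda(T_\lambda(x))\|$, which cannot detect $\ker\Lambda$.

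Second, in Step 1 the inclusion $N\subset\ker\Lambda$ is argued circularly, and this is the direction Step 3 actually uses. Applying Cauchy--Schwarz ``with $z=xy\in N$'' presupposes $xy\in N$. That $N$ is a left ideal gives $yx\in N$, not $xy\in N$, and for a general conditional expectation $N$ is not a right ideal. The grading repairs this:
\begin{itemize}
\item For $y\in\mathcal{A}_g$ one has $y^*\mathcal{A}_hy\subset\mathcal{A}_{g^{-1}hg}$, so by density $\mathcal{E}(y^*zy)=y^*\mathcal{E}(z)y$ for all $z\in\mathcal{A}$.
\item Hence $xy\in N$ for homogeneous $y$, and then for all $y$, since $N$ is a closed subspace.
\end{itemize}
Alternatively, once $\sigma$ is available on $\mathcal{A}$ you can skip this. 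The identity $\|\sigma(x)(\delta_t\otimes\xi)\|^2=\langle\mathcal{E}(x^*x)\xi,\xi\rangle$ gives $N\subset\ker\sigma$ directly, which is all Step 3 needs. With these two repairs, Step 3 goes through as you wrote it.
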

\begin{proposition}\label{amenable}
    The grading $\mathscr{C}^{\max}(i)
=
\overline{\sum_{w \in W} \mathscr{C}^{\max}_w(i)}$ satisfies Exel's approximation property.
\end{proposition}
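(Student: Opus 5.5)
We transport the amenability of $W \curvearrowright \overline{(W,S)}$ (Theorem \ref{coxexact}) into $\mathscr{C}^{\max}(i)$ through the embedding $\psi_i : C(\overline{(W,S)}) \rightarrow \Delta_0(i)$. First we check that $\psi_i(C(\overline{(W,S)}))$ lies in $\mathcal{D}^{\max}(i)$. By the covariance condition, $\psi_i(P_w) = \pi_i^w(\mathrm{id}_{\mathcal{X}_i(w)})$, and this is a finite sum of elements $\rho(x)\rho(y)^*$ with $x,y\in X_{i,j}(w)$, hence of degree $e$. Since the $P_w$ generate $C(\overline{(W,S)})$, the whole image lies in $\mathcal{D}^{\max}(i) = \mathscr{C}^{\max}_e(i)$.

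Amenability of the action gives a net of finitely supported functions $\xi_\lambda : W \rightarrow C(\overline{(W,S)})$ with $\xi_\lambda(g)\geq 0$ and $\sum_g \xi_\lambda(g)^2 = 1$, such that $\sup_{x}\sum_g |\xi_\lambda(hg)(x) - \alpha_h(\xi_\lambda(g))(x)|^2 \rightarrow 0$ for every $h$. We would use the standard positive-type formulation of amenability here, adjusting left/right conventions as needed. We set $a_\lambda(g) = \psi_i(\xi_\lambda(g)) \in \mathcal{D}^{\max}(i)$. The first condition of Exel's approximation property is then immediate: $\sum_g a_\lambda(g)^*a_\lambda(g) = \psi_i(1) = 1$.

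For the second condition it suffices, by boundedness of the maps $x \mapsto \sum_g a_\lambda(hg)^* x\, a_\lambda(g)$ (norm $\leq 1$ by a Cauchy--Schwarz estimate), to take $x$ among the spanning elements $\rho(x_1)\rho(y_1)^*\psi_i(P_w)$ of $\mathscr{C}^{\max}_h(i)$, or among path operators of degree $h$. The key commutation relation comes from axiom (iii) of Definition \ref{covariantrep}. For a path operator $T$ of degree $h$ based at $i$ and $f \in C(\overline{(W,S)})$, we get $T\psi_i(f) = \psi_i(\alpha_h(f))T$: we iterate (iii) over the factors, using that adjoints of $\rho$ satisfy the reversed relation, so each factor of degree $s$ shifts by $\alpha_s$ and the degree is multiplicative. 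This is the universal analogue of Proposition \ref{CWDELTA}. Then
\[\sum_g a_\lambda(hg)^* x\, a_\lambda(g) = \Bigl(\sum_g \psi_i\bigl(\xi_\lambda(hg)\,\alpha_h(\xi_\lambda(g))\bigr)\Bigr)x,\]
using commutativity of $\psi_i(C(\overline{(W,S)}))$. Writing $\sum_g \xi_\lambda(hg)\alpha_h(\xi_\lambda(g))$ as an inner product, the asymptotic invariance shows it converges uniformly to $1$. Since $\psi_i$ is contractive, the whole expression converges to $x$.

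The main obstacle is the commutation relation $T\psi_i(f) = \psi_i(\alpha_h(f))T$ for annihilation-type factors $\rho(x)^*$ in the universal algebra. Axiom (iii) is stated only for $\rho(x)$. Taking adjoints gives $\psi_j(P_v)\rho(x)^* = \rho(x)^*\psi_i(\alpha_u(P_v))$, that is, $\rho(x)^*\psi_i(f) = \psi_j(\alpha_{u^{-1}}(f))\rho(x)^*$. This holds on the range of $\rho(x)^*$, where $\psi_i(P_u)$ acts as the identity, and Lemma \ref{formulabasic} is needed to handle the cut-down by $\psi_i(P_u)$. A secondary check is that the conventions of the amenable-action approximation match the left action $\alpha$ used in (iii).
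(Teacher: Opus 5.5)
Your proposal is correct and follows essentially the paper's route. The paper also builds $a_\lambda(u)=\sqrt{m_\lambda^{(\cdot)}(u)}$ (your positive-type $\xi_\lambda$) inside $\psi_i(C(\overline{(W,S)}))\subset\mathcal{D}^{\max}(i)$, uses axiom (iii) to rewrite $\sum_u a_\lambda(wu)^*f\,a_\lambda(u)$ as $\bigl(\sum_u a_\lambda(wu)^*\,\alpha_w(a_\lambda(u))\bigr)f$, and concludes with the same Cauchy--Schwarz estimate. Your check via the covariance condition that $\psi_i$ lands in $\mathcal{D}^{\max}(i)$ fills in a point the paper leaves implicit, but the obstacle you flag for annihilation factors is not real: once (iii) is extended to all $f\in C(\overline{(W,S)})$, taking adjoints gives $\rho(x)^*\psi_i(g)=\psi_j(\alpha_{u^{-1}}(g))\rho(x)^*$ for every $g$ and every $x\in X_{i,j}(u)$, because $\alpha_u$ is an automorphism, so neither a cut-down nor Lemma \ref{formulabasic} is needed.
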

\begin{proof}
By Theorem \ref{coxexact} we know there exists a net of continuous maps $m_{\lambda} : \overline{(W,S)} \rightarrow \mathrm{Prob}(W)$ such that for all $w\in W$ we have \[
\sup_{x \in \overline{(W,S)}} \bigl\| w \cdot m_{\lambda}^x - m_{\lambda}^{w \cdot x} \bigr\|_1 \xrightarrow[\lambda \to \infty]{} 0.
\]
For every index $\lambda$ and every \(u\in W\), define
$a_{\lambda}(u)\colon x\longmapsto \sqrt{m_{\lambda}^x(u)}$.
This defines a net
$a_{\lambda}\colon W\longrightarrow C(\overline{(W,S)}) \cong \Delta_0(i)\subset \mathcal D^{\max}(i).$
Moreover, for every index $\lambda$, we have
\[
\sum_{u\in W} a_{\lambda}(u)^*a_{\lambda}(u)
=
\left(x\longmapsto \sum_{u\in W} m_{\lambda}^x(u)\right)
=1.
\]
Let \(w\in W\), and let \(f\in \mathscr C_w^{\max}(i)\). Using point (iii) of Definition \ref{covariantrep} we obtain\[
\sum_{u \in W} a_{\lambda}(wu)^* f a_{\lambda}(u)
=
\Biggl(
\sum_{u \in W} a_{\lambda}(wu)^* (w \cdot a_{\lambda})(u)
\Biggr) f.
\] For all $x \in \overline{(W,S)}$,
\[
\begin{aligned}
\Biggl\lvert
\Big{(}\sum_{u \in W} a_{\lambda}(wu)^*(x)\,(w \cdot a_{\lambda})(u)(x)\Big{)} - 1
\Biggr\rvert
&=
\Biggl\lvert
\sum_{u \in W}
\sqrt{m_{\lambda}^x(wu)\,m_{\lambda}^{w^{-1}\cdot x}(u)}
- m_{\lambda}^x(wu)
\Biggr\rvert \\
&\leq
\sum_{u \in W}
\Biggl\lvert
\sqrt{m_{\lambda}^x(wu)}
\Bigl(
\sqrt{m_{\lambda}^{w^{-1}\cdot x}(u)}
- \sqrt{m_{\lambda}^x(wu)}
\Bigr)
\Biggr\rvert \\
&\leq
\Big{(}\sum_{u \in W}
\Bigl\lvert
\sqrt{m_{\lambda}^{w^{-1}\cdot x}(u)}
- \sqrt{m_{\lambda}^x(wu)}
\Bigr\rvert^2 \Big{)}^{1/2}\\
&\leq
\Big{(}\sum_{u \in W}
\Bigl\lvert
m_{\lambda}^{w^{-1}\cdot x}(u)
- m_{\lambda}^x(wu)
\Bigr\rvert \Big{)}^{1/2}
\;\xrightarrow[\lambda \to \infty]{}\; 0.
\end{aligned}
\]
\end{proof}
We can finally prove the main theorem of this section.
\begin{theorem}\label{bigbig}
Let $A = \overline{\sum_{w\in W}L_{i,j}(w)}$ be a unital C$^*$-algebra with a finite-type expected $W$-decomposition
of index set $\mathscr{I}$. Assume that $S$ is finite. Assume that the left action of $B_i$ on $\mathcal{X}_i(s)$ is
faithful for all $i \in \mathscr{I}$ and $s\in S$. Then for all $i\in \mathscr{I}$, the universal $*$-homomorphism $\Phi_i:\mathscr C^{\max}(i)\to \mathscr{C}(i)$ is a \(*\)-isomorphism.
\end{theorem}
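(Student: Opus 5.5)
Surjectivity of $\Phi_i$ is already established, so the plan is to prove injectivity by the standard argument combining a faithful conditional expectation on $\mathscr{C}^{\max}(i)$ with injectivity on the unit fiber. First, by the proposition just proved, the grading $\mathscr{C}^{\max}(i)=\overline{\sum_{w\in W}\mathscr{C}^{\max}_w(i)}$ is topological. Its conditional expectation is
\[F^{\max}=\left(\Phi_i|_{\mathcal D^{\max}(i)}\right)^{-1}\circ F\circ \Phi_i,\]
which is well defined because $\Phi_i$ restricts to a $*$-isomorphism $\mathcal{D}^{\max}(i)\cong\mathcal{D}(i)$. Proposition \ref{amenable} shows that this grading satisfies Exel's approximation property. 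Applying Theorem \ref{theoremexel} then gives that $F^{\max}$ is faithful on $\mathscr{C}^{\max}(i)$.

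Next we check the intertwining relation $F\circ\Phi_i=\Phi_i\circ F^{\max}$ on $\mathscr{C}^{\max}(i)$. This holds by the very definition of $F^{\max}$, since $\Phi_i\circ(\Phi_i|_{\mathcal D^{\max}(i)})^{-1}$ is the identity on $\mathcal{D}(i)$. A more concrete check also works: by Proposition \ref{fellbundlecmax} we have $\Phi_i(\mathscr{C}^{\max}_w(i))\subset\mathscr{C}_w(i)$, and by Proposition \ref{cgraded} $F$ kills $\mathscr{C}_w(i)$ for $w\neq e$. So both sides agree on each $\mathscr{C}^{\max}_w(i)$, hence everywhere by continuity.

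The injectivity argument is then short. Let $c\in\mathscr{C}^{\max}(i)$ with $\Phi_i(c)=0$. Then $\Phi_i(c^*c)=0$, so
\[\Phi_i(F^{\max}(c^*c))=F(\Phi_i(c^*c))=0.\]
Since $F^{\max}(c^*c)\in\mathcal{D}^{\max}(i)$ and $\Phi_i$ is injective there, $F^{\max}(c^*c)=0$. Faithfulness of $F^{\max}$ then forces $c^*c=0$, so $c=0$. Therefore $\Phi_i$ is injective, and together with surjectivity it is a $*$-isomorphism.

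I do not expect a genuine obstacle at this stage: the hard inputs are the diagonal isomorphism, obtained by induction over finite $\vee$-closed sets, and Exel's approximation property via the amenable action $W\curvearrowright\overline{(W,S)}$. Both are already in place. The only point to double-check is that the approximating net in Proposition \ref{amenable} takes values in $\mathcal{D}^{\max}(i)$. This uses $\psi_i(C(\overline{(W,S)}))=\Delta_0\subset\mathcal{D}^{\max}(i)$, which comes from finite type: each $\psi_i(P_w)$ lies in $K^{\max}_{i,w}$ by the covariance condition. The remaining inputs, that the expected sums converge appropriately and that Exel's theorem applies to a topological grading indexed by $W$, are exactly as stated in Theorem \ref{theoremexel}.
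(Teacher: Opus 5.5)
Your argument is correct and is essentially the paper's proof. It obtains faithfulness of $F^{\max}=(\Phi_i|_{\mathcal{D}^{\max}(i)})^{-1}\circ F\circ\Phi_i$ from Theorem \ref{theoremexel} via Proposition \ref{amenable}, uses the relation $\Phi_i\circ F^{\max}=F\circ\Phi_i$, and applies injectivity of $\Phi_i$ on $\mathcal{D}^{\max}(i)$ to the positive element $c^*c$. Your remark that the approximating net lands in $\mathcal{D}^{\max}(i)$, because $\psi_i(P_w)$ is the unit of $K^{\max}_{i,w}$ by finite type and covariance, correctly makes explicit a point the paper leaves implicit.
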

\begin{proof}
 By Theorem \ref{theoremexel} and the last proposition we know that $F^{\max}$ is faithful. Let $x$ be a positive element of $\ker \Phi_i$. We have $\Phi_i(F^{\max}(x)) = F(\Phi_i(x)) = 0$ hence by injectivity of $\Phi_i$ on $\mathcal{D}^{\max}(i)$ we have $F^{\max}(x) = 0$, thus $x= 0$.
\end{proof}
\begin{example}
    Take $C^*_rW = \overline{\bigoplus_{w\in W}\mathbb{C}\cdot\lambda_w}$ as an expected Bruhat decomposition. The last theorem tells us that $C(\overline{(W,S)})\rtimes_r W$ is universal for representations on a unital C$^*$-algebra $C$ consisting of a unital $*$-homomorphism $\psi:C(\overline{(W,S)}) \rightarrow C$ together with a map $\rho : W \rightarrow C$ satisfying \begin{enumerate}[label=(\roman*)]
        \item $\forall u,v\in W, \rho_u \rho_v = \begin{cases}
        \rho_{uv} \text{ if } l(uv) = l(u) + l(v), \\
         0 \text{ otherwise.}
        \end{cases}$
        \item $\forall u,v\in W, \rho_u\psi(P_v) = \psi(\alpha_u(P_v))\rho_u$,
        \item $\forall w\in W, \rho_w^*\rho_w = \psi(Q_w), \rho_w\rho_w^* = \psi(P_w)$.
    \end{enumerate}
\end{example}
\begin{remark}
Assume for simplicity that we have an expected finite-type Bruhat decomposition $A = \overline{\sum_{w\in W}L(w)}$. One defines a natural left action of $\Delta = \overline{\operatorname{span}}\{p_w |w\in W\}$ on each $X^{\Delta}(w) = \overline{\operatorname{span}}\{x^\dagger p_v |x \in X(w),v\in W\}$ by adjointable operators. Moreover, for all $u,v \in W$ such that $l(uv) = l(u) +l(v)$ there is a $\Delta$-correspondence unitary isomorphism $\phi_{u,v}: X^{\Delta}(u)\otimes_\Delta X^{\Delta}(v) \cong X^{\Delta}(uv)$. When $l(uv)< l(u) + l(v)$ we let $\phi_{u,v} = 0$. $(X^{\Delta}(w))_{w\in W}$ with the maps $(\phi_{u,v})_{u,v \in W}$ becomes a subproduct system of correspondences over the C$^*$-algebra $\Delta$ and the group $W$ in the sense of \cite{CLSV11}. Regarding the main result of \cite{Has17}, it would be interesting to investigate to what extent our covariance C$^*$-algebra coincides with one of those introduced in \cite{CLSV11, Seh19}.
\end{remark}
\begin{remark}
    We do not think that a universal property for the global covariance C$^*$-algebra $\mathscr{C}$ exists. In order to establish a more general universality result, one must use C$^*$-categories. One may rewrite this entire article by replacing $A$, $A^r$, $\mathscr{C}$ and $\mathscr{C}^{\max}$ by C$^*$-categories whose object set is $\mathscr{I}$. We could have defined the notion of a $W$-decomposition of a C$^*$-category in order to fit Norledge's formalism of $W$-groupoids \cite{Nor17} (see Remark \ref{groupoids}). In this slightly more natural formalism we would have hom-spaces $A^r(i,j) \subset \mathscr{C}(i,j) \subset \mathcal{L}_\mathcal{P}(\bigoplus_{w\in W}\mathcal{X}_j(w),\bigoplus_{w\in W}\mathcal{X}_i(w))$ for all $i,j \in \mathscr{I}$. We would define $\mathscr{C}(i,j)$ as the closure of the span of path operators from $i$ to $j$. One would then redefine the notion of covariance representation (Definition \ref{covariantrep}) on a C$^*$-category instead of on a C$^*$-algebra. The proof carried out in this section tells us that the canonical $*$-functor $\mathscr{C}^{\max} \twoheadrightarrow \mathscr{C}$ is an isomorphism, meaning that for all $i,j \in \mathscr{I}$, the induced map $\mathscr{C}^{\max}(i,j) \twoheadrightarrow \mathscr{C}(i,j)$ is bijective.
\end{remark}
\subsection{Applications}
Let $(W,S)$ be a Coxeter system. We assume that $S$ is finite in this whole section.
\begin{theorem}\label{nucnucexex}
Let $A = \overline{\sum_{w\in W}L_{i,j}(w)}$ be an expected $W$-decomposition of a unital C$^*$-algebra. Assume that the decomposition is of finite type. Assume moreover that for all $i \in \mathscr{I}$ and $s\in S$, the left action of $B_i$ on $\mathcal{X}_i(s)$ is faithful. Let $(*)$ be either exactness or nuclearity. The following three conditions are equivalent \begin{enumerate}[label=(\roman*)]
\item For all $i \in \mathscr{I}$, $B_i$ has $(*)$.
\item For all $i \in \mathscr{I}$, $\mathscr{C}(i)$ has $(*)$.
\item There exists an $i \in \mathscr{I}$ such that $\mathscr{C}(i)$ has $(*)$.
\end{enumerate}
\end{theorem}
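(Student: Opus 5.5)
The plan has three implications: (ii)$\Rightarrow$(iii) is trivial, (iii)$\Rightarrow$(i) passes to subalgebras and uses connectivity of the index set, and (i)$\Rightarrow$(ii) is the real content.

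For (iii)$\Rightarrow$(i), fix $i$ with $\mathscr{C}(i)$ having $(*)$. The algebra $B_i$ embeds in $\mathscr{C}(i)$ as $\rho_{i,i}^e(B_i)$. For another index $j$, I would reach $B_j$ through corners and compact operators.
\begin{itemize}
\item Choose a gallery from $i$ to $j$: a reduced word $w$ with $X_{i,j}(w)\neq 0$. Such a $w$ exists because the building is connected (by (OWD3), or by iterating (Wdec3)-type nondegeneracy).
\item By Proposition~\ref{pw}, $p_{i,w}\in\mathscr{C}(i)$. By Lemma~\ref{compactDelta}, $\mathscr{C}(i)$ contains a copy of $\mathcal{K}_{B_j}(X_{i,j}(w))$ via $\sigma_{i,w}$, which is injective.
\item $X_{i,j}(w)$ is a nonzero finitely generated Hilbert $B_j$-module. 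So $\mathcal{K}_{B_j}(X_{i,j}(w))$ is Morita equivalent to the ideal $J=\overline{\langle X,X\rangle}$ of $B_j$, and in fact is a corner of $M_n(J)$.
\end{itemize}
For exactness this gives exactness of $J$. For nuclearity, note that $\mathcal{K}(X_{i,j}(w))$ is a hereditary subalgebra of $\mathscr{C}(i)$, since it is $p\,\mathscr{C}(i)\,p$-like on the range. Both properties pass to hereditary subalgebras, and then to $J$ by Morita equivalence.

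To get all of $B_j$ rather than only $J$, I would use the faithfulness hypothesis. Faithfulness of $B_j$ on $\mathcal{X}_j(s)$ means $B_j$ embeds in $\mathcal{L}(\mathcal{X}_j(s))=\mathcal{K}(\mathcal{X}_j(s))$, using finite type. So $B_j\hookrightarrow \mathcal{K}_{\mathcal{P}}(\mathcal{X}_j(s))\subset\mathcal{K}_{\mathcal P}(\mathcal{X}_i(ws))$, tensored against $X_{i,j}(w)$ through Lemma~\ref{xxxxxxx}. That is only a subalgebra, which suffices for exactness. For nuclearity I would instead write $B_j$ as the corner $q\,\mathcal{K}(X_{i,j}(w)\oplus \dots)\,q$. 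This is the step I expect to need the most care, but in both cases (iii)$\Rightarrow$(i) reduces to embedding $B_j$ as a hereditary subalgebra of $\mathscr{C}(i)$ or of its compact pieces.

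For (i)$\Rightarrow$(ii), apply Theorem~\ref{bigbig} together with Exel's theory. $\mathscr{C}(i)$ is topologically $W$-graded (Proposition~\ref{cgraded}), and by Proposition~\ref{amenable} and Theorem~\ref{bigbig} the grading satisfies the approximation property. So $\mathscr{C}(i)\cong C^*_r$ of its Fell bundle, with amenability coming from the amenable action of $W$ on $\overline{(W,S)}$ (Theorem~\ref{coxexact}) sitting inside the unit fibre. The standard result then applies: a Fell bundle over a discrete group whose unit fibre is nuclear (resp. exact), and which has the approximation property, has nuclear (resp. exact) cross-sectional algebra. For exactness I would instead use that $W$ is exact and that reduced cross-sectional algebras of Fell bundles over exact groups with exact unit fibre are exact.

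It therefore remains to show that $\mathcal{D}(i)$ is nuclear (resp. exact). By the description preceding Lemma~\ref{compactinter}, $\mathcal{D}(i)$ is an inductive limit of the $K_i(\mathcal{F})$ over finite $\vee$-closed $\mathcal F$. Each $K_i(\mathcal{F})$ is an iterated extension, via the exact sequences in the proof that $\mathcal{D}^{\max}(i)\cong\mathcal{D}(i)$, of quotients of the algebras $K_{i,w}\cong\mathcal{K}_{\mathcal{P}^\Delta}(\mathcal{X}_i^\Delta(w))$. These are finite direct sums (finite type) of corners of $M_n(\Delta(j))$, and $\Delta(j)\cong C(\overline{(W,S)})\otimes B_j$ is nuclear (resp. exact) under (i).
\begin{itemize}
\item Nuclearity is preserved by quotients, extensions and inductive limits, which finishes that case.
\item For exactness, quotients of exact algebras are exact but extensions need not be. I would avoid this by proving the exact case more directly: embed $\mathscr{C}(i)$ into $\mathscr{C}(i)\otimes Q$ via Proposition~\ref{covtensor} and check tensor-exactness. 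Alternatively, I would use that $K_i(\mathcal F)$ is locally split, since the quotient map restricted to $K_{i,w}$ is a surjection from a C$^*$-subalgebra.
\end{itemize}
Handling exactness under extensions is the main obstacle. The first thing I would try is the locally split extension argument: an extension $0\to I\to E\to E/I\to 0$ that admits a completely positive lift is exact when $I$ and $E/I$ are. The lift is available here because $K_{i,w}\subset K_i(\mathcal{F})$ maps onto the quotient.
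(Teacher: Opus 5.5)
\textbf{Gap in (iii)$\Rightarrow$(i).} Your detour through $\sigma_{i,w}(\mathcal{K}_{B_j}(X_{i,j}(w)))$ only reaches the ideal $J=\overline{\langle X_{i,j}(w),X_{i,j}(w)\rangle}$ of $B_j$. Both steps meant to upgrade this to all of $B_j$ fail.

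First, that image is not hereditary in $\mathscr{C}(i)$. Take $e=\sigma_{i,w}(\mathrm{id}_{X_{i,j}(w)})$ and $ws$ reduced. Then $e\,p_{i,ws}$ is a nonzero element of $e\,\mathscr{C}(i)\,e$; it is nonzero on $X_{i,j}(w)\otimes_{B_j}\mathcal{X}_j(s)$ by faithfulness. But it vanishes on $X_{i,j}(w)$, whereas $\sigma_{i,w}(f)$ restricts to $f$ there. So $e\,p_{i,ws}$ is not in the image.

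Second, the map $B_j\hookrightarrow\mathcal{K}_{\mathcal P}(\mathcal X_j(s))\to\mathcal{K}_{\mathcal P}(\mathcal X_i(ws))$ is not defined. The amplification $T\mapsto \mathrm{id}_{X_{i,j}(w)}\otimes T$ requires $T$ to commute with the left $B_j$-action, and elements of $B_j$ do not. Moreover, $ws$ need not be reduced.

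The missing observation is much simpler. Since $A^r=\overline{\sum_{w}F_{i,i}(w)}\subset\mathscr{C}(i)$, you get $\lambda(B_j)=\overline{F_{j,j}(e)}\subset\mathscr{C}(i)$ for \emph{every} $j$, not only $j=i$. Compressing $\mathcal{L}_{\mathcal P}(\mathcal X)$ to the summand $X_{j,j}(e)\otimes_{B_j}\mathcal P\cong B_j$ gives a u.c.p.\ map $\mathscr{C}(i)\to B_j$ that is a left inverse of $\lambda|_{B_j}$, i.e.\ a conditional expectation onto $\lambda(B_j)$. Exactness passes to this subalgebra and nuclearity passes to the range of the conditional expectation. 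This is the paper's argument, and it needs neither faithfulness, nor finite type, nor a gallery from $i$ to $j$.

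\textbf{On (i)$\Rightarrow$(ii).} This is the paper's route. You handle $\mathcal{D}(i)$ through the extensions of Lemma~\ref{compactinter} and the inductive limit over finite $\vee$-closed $\mathcal F$. You then conclude with \cite[Proposition 25.12]{Exe17} (using exactness of $W$) and \cite[Proposition 25.10]{Exe17} (using Proposition~\ref{amenable}).

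You are right that extensions of exact algebras need not be exact; the paper passes over this point. However, the fact that $K_{i,w}$ surjects onto the quotient does not by itself give a c.p.\ lift. Here is what closes it. Set $p=p_{i,\mathcal F\setminus\{w\}}$. It commutes with $K_{i,w}$ (Proposition~\ref{CWDELTA}) and satisfies $K_{i,w}p\subset K_{i,w}$ (Lemma~\ref{compactDelta}). Hence $K_{i,w}=K_{i,w}p\oplus K_{i,w}(1-p)$. Since $p$ is the unit of $K_i(\mathcal F\setminus\{w\})$, the subalgebra $K_{i,w}(1-p)\subset K_i(\mathcal F)$ maps isomorphically onto $K_i(\mathcal F)/K_i(\mathcal F\setminus\{w\})$. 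The extension is therefore split by a $*$-homomorphism, and exactness passes to $K_i(\mathcal F)$.
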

\begin{proof}
For all $i,j \in \mathscr{I}$, there is a conditional expectation from $\mathscr{C}(i)$ to $B_j$ given by the contraction of $\mathcal{L}_{\mathcal{P}}(\mathcal{X})$ onto the submodule $X_{j,j}(e)\otimes_{B_j}\mathcal{P} \cong B_j$. Hence it suffices to prove that (i) implies (ii) for both exactness and nuclearity.

Assume that $B_i$ is exact for all $i\in \mathscr{I}$. For all $j \in \mathscr{I},$ the C$^*$-algebra $\Delta(j)$ is isomorphic to $C(\overline{(W,S)})\otimes B_j$, hence it is exact. Hence for all $i\in \mathscr{I}$ and $w\in W,$ we know that $K_{i,w} \cong \bigoplus_{j\in \mathscr{I}}^{c_0}\mathcal{K}_{\Delta(j)}(X_{i,j}^{\Delta}(w))$ is exact. A direct induction on the cardinality of $\mathcal{F}$ together with the short exact sequence given by Lemma \ref{compactinter} gives exactness of $K_i(\mathcal{F})$ for all finite $\vee$-closed $\mathcal{F}$ and $i \in \mathscr{I}.$ Hence $\mathcal{D}(i)$ is exact. By \cite{DJ99} we know that $W$ is an exact group. We conclude by \cite[Proposition 25.12]{Exe17} that $\mathscr{C}(i)$ is exact.

The proof for nuclearity works the same, using Proposition \ref{amenable} (which requires the actions of each $B_i$ on $\mathcal{X}_i(s)$ to be faithful and $S$ to be finite) and \cite[Proposition 25.10.]{Exe17}.
\end{proof}
We now follow the approach of \cite{Has17} and use the universal property established in the last section to deduce the WEP and LLP for $\mathscr{C}(i)$. For any two C$^*$-algebras $C$ and $D$, denote by $C\odot D$ their algebraic tensor product. For any C$^*$-algebra $Q$ say that a map $C\odot D \rightarrow Q$ is min-bounded if it extends to a map defined on the minimal tensor product $C\otimes D \rightarrow Q$.
\begin{lemma}
    Let $P,Q$ be any two unital C$^*$-algebras. Let $\phi : \mathscr{C}^{\max} \rightarrow P$ be a unital $*$-homomorphism. If $\phi|_{B_i}\otimes \mathrm{id}_Q: B_i\odot Q \rightarrow P\otimes_{\max}Q$ is min-bounded for all $i\in \mathscr{I}$ then the $*$-homomorphism $\mathscr{C}(i)\odot Q \rightarrow P\otimes_{\max}Q$ obtained by composing $\phi|_{\mathscr{C}^{\max}(i)}\otimes \mathrm{id}_Q$ with the $*$-isomorphism $\mathscr{C}(i) \cong \mathscr{C}^{\max}(i)$ of Theorem \ref{bigbig} is min-bounded.
\end{lemma}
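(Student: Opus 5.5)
The plan is to reduce the statement to the tensor-product compatibility of Proposition \ref{covtensor}, combined with the universal property of Theorem \ref{bigbig} applied to the decomposition of $A\otimes Q$ from Proposition \ref{tensor}. By hypothesis, each map $\phi|_{B_i}\otimes\mathrm{id}_Q$ extends to a $*$-homomorphism $\theta_i: B_i\otimes Q\to P\otimes_{\max}Q$. The decomposition of $A\otimes Q$ is still of finite type, and its Borel subalgebras are $B_i\otimes Q$. The faithfulness of the left action of $B_i\otimes Q$ on the corresponding module follows from faithfulness for $B_i$, since the modules are exterior tensor products $X_{i,j}(s)\otimes_{\mathrm{ext}}Q$ and minimal tensor products of faithful maps are faithful. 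So Theorem \ref{bigbig} applies to $A\otimes Q$, and it gives $\mathscr{C}(A\otimes Q)(i)\cong \mathscr{C}^{\max}(A\otimes Q)(i)$.

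The key step is to build a covariant representation of the decomposition of $A\otimes Q$ in $P\otimes_{\max}Q$. Write $(\rho,\psi)$ for the universal covariant representation of the decomposition of $A$. I would set
\[
\tilde\psi_j=(\phi\circ\psi_j)\otimes 1_Q ,
\]
and let $\tilde\rho^w_{i,j}$ be determined on elementary tensors $x\otimes q\in X_{i,j}(w)\otimes_{\mathrm{ext}}Q$ by $\tilde\rho(x\otimes q)=\phi(\rho(x))\otimes q$. On $w=e$ I would instead use $\theta_j$. The main obstacle is showing that $\tilde\rho^w_{i,j}$ extends continuously from the algebraic tensor product to the exterior tensor product $X_{i,j}(w)\otimes_{\mathrm{ext}}Q$. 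For this I would compute
\[
\tilde\rho(\xi)^*\tilde\rho(\xi)=\theta_j(\langle\xi,\xi\rangle)\,\tilde\psi_j(Q_w)
\]
on finite sums $\xi=\sum_k x_k\otimes q_k$, using axiom (ii) for $\rho$. The inner product $\langle\xi,\xi\rangle$ lies in $B_j\odot Q$, and the boundedness of $\theta_j$ for the minimal norm then gives $\|\tilde\rho(\xi)\|\le\|\xi\|$. This is exactly where min-boundedness on $B_j$ is used. Axioms (i)–(iii) and covariance pass to elementary tensors directly: the product in the modules is componentwise on $Q$, $\tilde\psi$ commutes with $1\otimes Q$, and $\pi^w_i(\mathrm{id})$ is computed from finite frames of $X_{i,j}(w)$ tensored with $1_Q$. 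By continuity they then hold everywhere.

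By universality this yields a $*$-homomorphism $\mathscr{C}^{\max}(A\otimes Q)\to P\otimes_{\max}Q$. Restricting to the part based at $i$ and composing with the isomorphisms
\[
\mathscr{C}(i)\otimes Q\cong\mathscr{C}(A\otimes Q)(i)\cong\mathscr{C}^{\max}(A\otimes Q)(i),
\]
coming from Proposition \ref{covtensor} and Theorem \ref{bigbig}, gives a $*$-homomorphism defined on $\mathscr{C}(i)\otimes Q$. It remains to check that this map agrees with $\phi|_{\mathscr{C}^{\max}(i)}\otimes\mathrm{id}_Q$ on $\mathscr{C}(i)\odot Q$. It suffices to test elements $T\otimes q$ with $T$ a spanning element $x^\dagger(y^\dagger)^*p_{i,w}$ from Proposition \ref{Cspancompacts}. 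On such an element both maps give $\phi(\rho(x)\rho(y)^*\psi_i(P_w))\otimes q$, because $\tilde\rho(x\otimes 1)\tilde\rho(y\otimes1)^*\tilde\psi_i(P_w)(1\otimes q)$ equals exactly this. Hence the map is min-bounded.
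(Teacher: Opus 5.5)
Your proposal is correct and follows essentially the same route as the paper. Like the paper, you pass to the expected $W$-decomposition of $A\otimes Q$ via Proposition \ref{covtensor} and Theorem \ref{bigbig}, and you build the covariant representation $x\otimes q\mapsto\phi(\rho(x))\otimes q$, $P_w\mapsto\phi(\psi_j(P_w))\otimes 1_Q$, with $\phi|_{B_j}\otimes\mathrm{id}_Q$ in degree $e$. You also supply the checks the paper leaves as ``one easily checks'': that the identity $\tilde\rho(\xi)^*\tilde\rho(\xi)=\theta_j(\langle\xi,\xi\rangle)\tilde\psi_j(Q_w)$ is exactly where min-boundedness on $B_j$ enters, that the hypotheses of Theorem \ref{bigbig} hold for $A\otimes Q$, and that the resulting map agrees with $\phi\otimes\mathrm{id}_Q$ on the spanning elements $x^\dagger(y^\dagger)^*p_{i,w}\otimes q$.
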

\begin{proof}
Write $\mathscr{C}' = \mathscr{C}(A\otimes Q).$ Let $i \in \mathscr{I}$. By Proposition \ref{covtensor}, we know that there is a $*$-isomorphism $\mathscr{C}'(i) \cong \mathscr{C}(i)\otimes Q$. Moreover, by Theorem \ref{bigbig}, there is a $*$-homomorphism from $\mathscr{C}'(i)$ to $(\mathscr{C}')^{\max}$. Hence we just have to build a $*$-homomorphism $(\mathscr{C}')^{\max} \rightarrow P\otimes_{\max} Q$ extending $\phi|_{B_i}\otimes \mathrm{id}_Q$. We do so by constructing a covariant representation of the expected $W$-decomposition $A\otimes Q = \overline{\sum_{w\in W}G_w}$ on $P\otimes_{\max}Q$. We denote the maps of this representation by $\alpha^w_{i,j} : Y_{i,j}(w) \rightarrow P\otimes_{\max}Q$ and $\chi_i : C(\overline{(W,S)}) \rightarrow P\otimes_{\max}Q$ for all $i,j \in \mathscr{I}$ and $w\in W$. Recall that the Borel C$^*$-algebras for this decomposition are given by $B_i\otimes Q$ and the Fock modules are given by $Y_{i,j}(w) = X_{i,j}(w)\otimes_{\mathrm{ext}} Q$. Let $\alpha_{i,i}^{e} = \phi|_{B_i}\otimes \mathrm{id}_Q$ for all $i \in \mathscr{I}$. Define $\alpha_{i,j}^{w}(x\otimes q) = \phi(\rho^w_{i,j}(x)) \otimes q$ for all $x\in X_{i,j}(w), w\neq e, q\in Q$ and $i,j \in \mathscr{I}$. Define $\chi_i : C(\overline{(W,S)}) \rightarrow P\otimes_{\max} Q$ by $\chi_i(P_w) = \phi(\psi_i(P_w))\otimes 1_Q$ for all $w\in W$ and $i \in \mathscr{I}$. One easily checks that the families $(\alpha_{i,j}^w)$ and $(\chi_i)$ do define a covariant representation of the expected $W$-decomposition of $A\otimes Q$.
\end{proof}
\begin{corollary}\label{wepllp}
Let $A = \overline{\sum_{w\in W}L_{i,j}(w)}$ be an expected $W$-decomposition of a unital C$^*$-algebra. Assume that the decomposition is of finite type. Assume that $S$ is finite. Assume moreover that for all $i,j \in \mathscr{I}$ and $s\in S$, the left action of $B_i$ on $\mathcal{X}_i(s)$ is faithful. Assume that for all $i \in \mathscr{I},$ we have $\mathscr{C}(i) = \mathscr{C}$ (e.g. $\mathscr{I}$ is a singleton or the decomposition comes from a group).
Then $\mathscr{C}$ has WEP (resp. LLP) if and only if for all $i \in \mathscr{I}$, $B_i$ has WEP (resp. LLP).
\end{corollary}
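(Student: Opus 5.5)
The plan is to follow Hasegawa's strategy \cite{Has17}, using the preceding lemma as the key tool. Two facts make both directions work: each $B_j$ sits inside $\mathscr{C}$, and there is a conditional expectation $\mathscr{C}(i)\to B_j$ given by compression onto $X_{j,j}(e)\otimes_{B_j}\mathcal{P}\cong B_j$, as in the proof of Theorem \ref{nucnucexex}. Since $\mathscr{C}=\mathscr{C}(i)$, this expectation is defined on all of $\mathscr{C}$.

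\textbf{Necessity.} I would first prove the easy direction. WEP passes to the range of a conditional expectation. LLP passes to relatively weakly injective subalgebras, and in particular to the range of a conditional expectation. Hence if $\mathscr{C}$ has WEP (resp. LLP), so does every $B_j$.

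\textbf{Sufficiency for LLP.} I would use Kirchberg's characterization: $\mathscr{C}$ has LLP if and only if $\mathscr{C}\odot B(H)\to \mathscr{C}\otimes_{\max}B(H)$ is min-bounded.
\begin{itemize}
\item Apply the previous lemma with $Q=B(H)$, $P=\mathscr{C}\cong\mathscr{C}^{\max}(i)$, and $\phi$ the canonical map $\mathscr{C}^{\max}\to\mathscr{C}$.
\item One point needs care. The lemma is stated for a map $\phi:\mathscr{C}^{\max}\to P$, and I would need the composite $B_j\odot Q\to P\otimes_{\max}Q$ to be min-bounded. This follows from LLP of $B_j$, which gives $B_j\otimes B(H)=B_j\otimes_{\max}B(H)$, followed by the inclusion $B_j\otimes_{\max}B(H)\to\mathscr{C}\otimes_{\max}B(H)$.
\item The lemma then yields that $\mathscr{C}(i)\odot B(H)\to\mathscr{C}\otimes_{\max}B(H)$ extends to $\mathscr{C}(i)\otimes B(H)$. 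This extension is the identity on the algebraic tensor product, so the min and max norms agree and LLP follows.
\end{itemize}

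\textbf{Sufficiency for WEP.} I would use Kirchberg's criterion: $\mathscr{C}$ has WEP if and only if $\mathscr{C}\odot C^*(\mathbb{F}_\infty)$ is min-bounded into $\mathscr{C}\otimes_{\max}C^*(\mathbb{F}_\infty)$. Equivalently, for $\mathscr{C}\subset B(H)$ one needs min-boundedness into $B(H)\otimes_{\max}C^*(\mathbb{F}_\infty)$.
\begin{itemize}
\item Take $Q=C^*(\mathbb{F}_\infty)$, $P=B(H)$ with a faithful representation $\phi:\mathscr{C}\cong\mathscr{C}^{\max}(i)\to B(H)$, extended through $\mathscr{C}^{\max}$.
\item WEP of $B_j$ makes $B_j\otimes C^*(\mathbb{F}_\infty)\to B(H)\otimes_{\max}C^*(\mathbb{F}_\infty)$ bounded. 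Here I would use that $B_j\subset B(H)$ with WEP gives $B_j\otimes_{\max}C^*(\mathbb{F}_\infty)\subset B(H)\otimes_{\max}C^*(\mathbb{F}_\infty)$, combined with $B_j\otimes_{\max}C^*(\mathbb{F}_\infty)=B_j\otimes C^*(\mathbb{F}_\infty)$.
\item The lemma then gives min-boundedness of $\mathscr{C}\odot C^*(\mathbb{F}_\infty)\to B(H)\otimes_{\max}C^*(\mathbb{F}_\infty)$, hence WEP.
\end{itemize}

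\textbf{Main obstacle.} The delicate step is applying the lemma. It requires $\phi$ to be defined on $\mathscr{C}^{\max}$ and not merely on $\mathscr{C}^{\max}(i)$. The hypothesis $\mathscr{C}(i)=\mathscr{C}$ for all $i$ is exactly what makes the composite $\mathscr{C}^{\max}\twoheadrightarrow\mathscr{C}=\mathscr{C}(i)\cong\mathscr{C}^{\max}(i)$ available. With this composite, the restriction of $\phi$ to $\mathscr{C}^{\max}(i)$ is the map whose tensor extension we control, and its restriction to each $B_j$ is the canonical embedding. I would verify these compatibilities explicitly. I would also check that the representation of $\mathscr{C}$ obtained from the lemma, through the isomorphism $\mathscr{C}(A\otimes Q)(i)\cong\mathscr{C}(i)\otimes Q$ of Proposition \ref{covtensor}, is indeed the canonical one on elementary tensors.
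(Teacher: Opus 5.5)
Your necessity argument is fine, and your LLP argument is exactly the paper's ($P=\mathscr{C}$, $\phi=\Phi$, $Q=B(H)$). The WEP sufficiency step, however, is wrong. With $P=B(H)$ and $Q=C^*(\mathbb{F}_\infty)$, the lemma gives you min-boundedness of $\mathscr{C}\odot C^*(\mathbb{F}_\infty)\to B(H)\otimes_{\max}C^*(\mathbb{F}_\infty)$. This holds for \emph{every} C$^*$-subalgebra of $B(H)$: by Kirchberg's theorem $B(H)\otimes_{\max}C^*(\mathbb{F}_\infty)=B(H)\otimes C^*(\mathbb{F}_\infty)$, and the minimal tensor product is injective. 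As a symptom, your verification of the hypothesis on $B_j$ never actually uses WEP either. So your ``equivalently'' is false: $C^*_r(\mathbb{F}_2)\subset B(\ell^2\mathbb{F}_2)$ satisfies it but does not have WEP. Lance's criterion asks that $\mathscr{C}\otimes_{\max}C\to B(H)\otimes_{\max}C$ be isometric. That is a statement about the max norm of $\mathscr{C}\odot C$ itself. Min-boundedness into the max tensor product of a larger algebra only controls the smaller norm induced from that algebra.

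The fix is to treat WEP exactly as you treated LLP, which is what the paper does: take $P=\mathscr{C}$, $\phi=\Phi$, $Q=C^*(\mathbb{F}_\infty)$. WEP of $B_j$ gives $B_j\otimes C^*(\mathbb{F}_\infty)=B_j\otimes_{\max}C^*(\mathbb{F}_\infty)$. This maps boundedly to $\mathscr{C}\otimes_{\max}C^*(\mathbb{F}_\infty)$ by functoriality of $\otimes_{\max}$, and here WEP is genuinely used. The lemma then makes $\mathscr{C}(i)\odot C^*(\mathbb{F}_\infty)\to\mathscr{C}\otimes_{\max}C^*(\mathbb{F}_\infty)$ min-bounded. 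Since $\mathscr{C}(i)=\mathscr{C}$, this says $\mathscr{C}\otimes C^*(\mathbb{F}_\infty)=\mathscr{C}\otimes_{\max}C^*(\mathbb{F}_\infty)$, which is Kirchberg's characterization of WEP.

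This also clarifies the real role of the hypothesis $\mathscr{C}(i)=\mathscr{C}$. The map $\Phi:\mathscr{C}^{\max}\to\mathscr{C}$ exists in any case. Without the hypothesis, the lemma would only give min-boundedness of $\mathscr{C}(i)\odot Q$ into $\mathscr{C}\otimes_{\max}Q$, the max tensor product of a possibly larger algebra. That is the same pitfall as in your WEP step.
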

\begin{proof}
We use the classical result \cite[Proposition 1.1]{Kir93}. Assume that $B_i$ has WEP for all $i \in \mathscr{I}$. Let $Q = C^*(\mathbb{F}_{\infty})$. In the last lemma set $P = \mathscr{C}$ and let $\phi = \Phi$ be the canonical surjection. For all $i \in \mathscr{I}$, the map $\phi|_{B_i}\otimes \mathrm{id}_Q: B_i\odot Q \rightarrow \mathscr{C}\otimes_{\max}Q$ is min-bounded. Hence $\phi|_{\mathscr{C}}\otimes \mathrm{id}_Q: \mathscr{C}\odot Q \rightarrow \mathscr{C}\otimes_{\max}Q$ is min-bounded. We conclude that $\mathscr{C}$ has WEP. The proof for LLP is the same putting $Q = B(l^2\mathbb{N})$.
\end{proof}
Recall that the group $\mathrm{Aut}(W,S)$ of automorphisms of the Coxeter complex $(W,S)$ is the group of permutations $\sigma : S \rightarrow S$ such that $m_{\sigma(s),\sigma(t)} = m_{s,t}$ for all $s,t\in S.$ Such a $\sigma$ naturally extends to a group automorphism $\sigma : W \rightarrow W.$ Let $G$ be a discrete group. Let $X$ be a building of type $(W,S)$ with $W$-distance $\delta : X\times X \rightarrow W.$ An action of $G$ on the building $X$ is the data of an action of $G$ on the set $X$ and a group homomorphism $\sigma : G \rightarrow \mathrm{Aut}(W,S)$ such that for all $g\in G$ we have \[\forall x,y \in X, \delta(g\cdot x,g \cdot y)  =\sigma_g(\delta(x,y)).\]
By specializing to the example of a group and using Theorem \ref{convcrossed}, we get the following result.
\begin{corollary}\label{exactG}
    Assume that $S$ is finite. Let $G$ be a discrete group acting on a locally finite building $X$. For all $x\in X$, let $H_x$ be the $G$-stabilizer of $x$. Let $\Omega$ be the minimal combinatorial compactification of $X$. Let $(*)$ be one of the following properties: nuclearity, exactness, WEP, and LLP. Then the reduced crossed product $C(\Omega)\rtimes_rG$ has $(*)$ if and only if for all $x\in X$, $C^*_r(H_x)$ has $(*)$.
\end{corollary}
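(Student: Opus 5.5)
The plan is to reduce everything to Theorem \ref{nucnucexex} and Corollary \ref{wepllp}, applied to the expected $W$-decomposition of $A=C^*_r(G)$ from Example \ref{GOP}. We then translate the conclusion through Theorem \ref{convcrossed}, which identifies $\mathscr{C}=\mathscr{C}(i)\cong C(\Omega)\rtimes_r G$ for every $i$. Set $\mathscr{I}=G\backslash X$, choose representatives $x_i$, and use Theorem \ref{CTB} to get a $W$-decomposition $G=\biguplus_w C_{i,j}(w)$ with Borel subgroups $H_i=\mathrm{stab}_G(x_i)$. The hypotheses of those results then need to be checked.

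\begin{itemize}
\item Finite type holds because $X$ is locally finite (Proposition \ref{finitogroup}).
\item Faithfulness of the left action of $B_i=C^*_r(H_i)$ on $\mathcal{X}_i(s)$ is Proposition \ref{nondeggroup}.
\item $S$ is finite by assumption.
\item $\mathscr{C}(i)=\mathscr{C}$ for all $i$ is part of Theorem \ref{convcrossed}.
\end{itemize}

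With these in place, Theorem \ref{nucnucexex} (for nuclearity and exactness) and Corollary \ref{wepllp} (for WEP and LLP) show that $\mathscr{C}$ has $(*)$ if and only if every $C^*_r(H_i)$ does. To finish, note that any $x\in X$ equals $gx_i$ for some $g$ and $i$. Then $H_x=gH_ig^{-1}$, so $C^*_r(H_x)\cong C^*_r(H_i)$. Hence the condition over $i\in\mathscr{I}$ is the same as the condition over all $x\in X$.

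The main obstacle is that the statement allows type-changing actions: $\delta(gx,gy)=\sigma_g(\delta(x,y))$ with $\sigma:G\to\mathrm{Aut}(W,S)$, whereas everything proved so far assumes $W$-isometric actions. I would pass to $G_0=\ker\sigma$. This is a normal subgroup of finite index, since $\mathrm{Aut}(W,S)$ is finite when $S$ is finite, and it acts by $W$-isometries, so the argument above applies to $G_0$. Its stabilizers are $H_x\cap G_0$, which have finite index in $H_x$.

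We then transfer between $G_0$ and $G$:

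\begin{itemize}
\item \textbf{Stabilizers.} Each of the four properties passes between $C^*_r(H)$ and $C^*_r(H')$ when $H'\le H$ has finite index. For nuclearity and exactness this is amenability and exactness of groups. For WEP and LLP I would use a conditional expectation onto $C^*_r(H')$, together with the fact that $C^*_r(H)$ is a finite-index algebraic extension, equivalently a corner of $M_n(C^*_r(H'))$ up to a twisted crossed product by a finite set.
\item \textbf{Crossed products.} $C(\Omega)\rtimes_r G$ is a crossed product of $C(\Omega)\rtimes_r G_0$ by the finite group $G/G_0$, which requires $\Omega$ to be $G$-equivariant. Proposition \ref{toptop} and Proposition \ref{omega} adapt immediately, since the graph structure of $X$ is preserved by type-changing automorphisms. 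Conversely, $C(\Omega)\rtimes_r G_0$ sits inside $C(\Omega)\rtimes_r G$ with a finite-index conditional expectation. Each of the four properties is stable under both passages, by standard permanence results (for WEP and LLP, via Kirchberg's tensor characterisation and the fact that a crossed product by a finite group embeds in a matrix algebra over the algebra).
\end{itemize}

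The delicate step is confirming these finite-index permanences for WEP and LLP. If I could not make them work cleanly, the fallback would be to restrict to type-preserving actions, where the result follows immediately.
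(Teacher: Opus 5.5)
Your proposal is correct and follows the paper's proof. You pass to the finite-index subgroup $G_0=\ker\sigma$, which acts by $W$-isometries, apply Proposition \ref{nondeggroup}, Theorem \ref{convcrossed}, Theorem \ref{nucnucexex} and Corollary \ref{wepllp} to $G_0$, and then transfer between $G_0$ and $G$ (for both the crossed products and the stabilizer algebras) by finite-index permanence. The paper gets that last transfer in one step from \cite[Corollary~3.11]{FA18} rather than your hand-made sketch. Your sketch is fine provided that, in the direction from $G_0$ up to $G$, you use a conditional expectation onto the (twisted) finite-group crossed product inside $M_n$, or the fact that such crossed products commute with $\otimes_{\max}$ and $\otimes_{\min}$; the mere embedding into $M_n$ is not enough for WEP and LLP.
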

\begin{proof}
    As $S$ is finite we know that the subgroup $G_0=\ker(\sigma) < G$ has finite index. The action of $G_0$ on $X$ is $W$-isometric so by Proposition \ref{nondeggroup}, Theorem \ref{convcrossed}, Theorem \ref{nucnucexex} and Corollary \ref{wepllp}, we know that $C(\Omega)\rtimes_rG_0$ has $(*)$ if and only if $C^*_r(H_x\cap G_0)$ has $(*)$ for all $x\in X$. We conclude by \cite[Corollary~3.11]{FA18}.
\end{proof}
In particular, if each $H_x$ is amenable then the action $G \curvearrowright \Omega$ is amenable.
We also get a weaker version of a result of \cite{CF17}.
\begin{corollary}
Let $\Gamma$ be a simplicial graph with vertex set $V$. For all $v\in V$, let $A_v$ be a unital finite-dimensional C$^*$-algebra. Then the reduced graph product C$^*$-algebra of the family $(A_v)_{v\in V}$ is exact.
\end{corollary}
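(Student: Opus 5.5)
The plan is to realize the graph product C$^*$-algebra as the reduced C$^*$-algebra $A^r$ of an expected Bruhat decomposition over the right-angled Coxeter group $W_\Gamma$, to embed $A^r$ in the covariance C$^*$-algebra $\mathscr{C}$, and to apply Theorem \ref{nucnucexex}, since exactness passes to C$^*$-subalgebras. By Example \ref{graphproductexample}, the maximal graph product of the $A_v$ has a natural Bruhat decomposition of type $W_\Gamma$ with Borel subalgebra $B=\mathbb{C}$. We equip each $A_v$ with a faithful state $\phi_v$; a faithful trace given by a positive combination of normalized traces on the matrix blocks will do, and it is automatically GNS-faithful. Gluing these states by Theorem \ref{VV} turns the decomposition into an expected one, and the associated reduced C$^*$-algebra is the reduced graph product of \cite{CF17}. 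Since $\mathscr{I}$ is a singleton, $\mathscr{C}=\mathscr{C}(i)\supset A^r$.

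Next we check the hypotheses of Theorem \ref{nucnucexex}. The finite type condition (iii) of Proposition \ref{finito} requires $X(v)=L^2(A_v,\phi_v)\ominus\mathbb{C}\eta$ to be finitely generated over $B=\mathbb{C}$, and this holds because $A_v$ is finite-dimensional. The left action of $B=\mathbb{C}$ on $\mathcal{X}(v)$ is faithful as soon as $X(v)\neq 0$, that is, as soon as $A_v\neq\mathbb{C}$. Finally, $B=\mathbb{C}$ is exact.

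The main obstacle is that Theorem \ref{nucnucexex} assumes $S=V$ is finite, while the statement allows an arbitrary graph. I would handle this first. For a finite subset $F\subset V$, the reduced graph product over the induced subgraph $\Gamma_F$ embeds into the reduced graph product over $\Gamma$, compatibly with the conditional expectations, since the Fock space over $\Gamma_F$ is a reducing piece of the big one. The full reduced graph product is the inductive limit of these finite subalgebras. Inductive limits of exact C$^*$-algebras with injective connecting maps are exact, which reduces the problem to finite $V$.

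There is one more degenerate case. Vertices with $A_v=\mathbb{C}$ contribute trivially and violate the faithfulness hypothesis, so I would discard them beforehand; removing them does not change the graph product. After these reductions, Theorem \ref{nucnucexex} with $(*)=$ exactness shows that $\mathscr{C}$ is exact, and hence so is its C$^*$-subalgebra $A^r$.
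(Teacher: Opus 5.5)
This is essentially the paper's own argument, which it leaves implicit: Theorem \ref{nucnucexex} with $B=\mathbb{C}$, applied to the finite-type expected Bruhat decomposition of Example \ref{graphproductexample}, together with $A^r\subset\mathscr{C}$. The paper relies on its standing assumption that $S=V$ is finite, so your inductive-limit reduction and your removal of vertices with $A_v=\mathbb{C}$ (needed for the faithfulness hypothesis) are small but genuine additions; moreover nothing in your argument needs $\phi_v$ to be faithful rather than merely GNS-faithful.

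The one step to repair is the embedding of the reduced graph product over $\Gamma_F$ into the one over $\Gamma$. Invariance of the small Fock space $H_{\Gamma_F}\subset H_\Gamma$ only gives a compression map from $C^*(A_v : v\in F)$ onto the small reduced algebra, and an onto map is the wrong direction for transferring exactness. Injectivity instead comes from the factorization $H_\Gamma\cong H_{\Gamma_F}\otimes K$, induced by the coset decomposition $w=w_F\,{}^Fw$, under which each $A_v$ with $v\in F$ acts only on the first tensor factor. This is the same construction as the unitaries $U_I$ in the proof of Theorem \ref{VV}; see also \cite{CF17}.
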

We finish by presenting an application to the strong condition (AO). Recall the following definition.
\begin{definition}[{\cite[Definition~2.6]{HI17}}]
Let $M$ be a von Neumann algebra and let $(M,H,J,P)$ be a standard form of $M$. We say that $M$ satisfies the strong condition (AO) if there exist unital $C^*$-subalgebras $A$ and $\mathscr{C}$ of $B(H)$ such that $A \subset M$, $ A \subset \mathscr{C}$ and 
\begin{enumerate}[label=(\roman*)] 
    \item $A$ is $\sigma$-weakly dense in $M$,
    \item $\mathscr{C}$ is nuclear,
    \item $\forall c\in \mathscr{C}, \forall a\in A, [c,JaJ] \in \mathcal{K}(H)$.
\end{enumerate}
\end{definition}
Assume that $S$ is finite. Let $G = \biguplus_{w\in W}C_{i,j}(w)$ be a discrete group with a $W$-decomposition of index set $\mathscr{I}$. Let $H_i = C_{i,i}(e)$ denote the Borel subgroup for all $i \in \mathscr{I}$. Let $A \cong C^*_r(G) = \overline{\bigoplus_{w\in W}F_{i,j}(w)} \subset B(l^2G)$ be the associated reduced C$^*$-algebra. The following theorem is inspired by \cite[Theorem 4.6]{Kli23}.
\begin{theorem} Assume that the action of $G$ on the underlying building $X$ is proper and that $X$ is locally finite.  If $(W,S)$ is small at infinity then $M = L(G)$ satisfies the strong condition (AO).
\end{theorem}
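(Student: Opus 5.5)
We will verify the three conditions of the strong condition (AO) with $A = C^*_r(G)\subset B(l^2G)$, which is $\sigma$-weakly dense in $M=L(G)$, and $\mathscr{C} = C(\Omega)\rtimes_r G$ realized inside $B(l^2G)$. Standard form: $H=l^2G$ and $J\xi(g)=\overline{\xi(g^{-1})}$, so $JC^*_r(G)J = C^*_\rho(G)$ is the right regular algebra.

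\textbf{Nuclearity.} Properness means each stabilizer $H_x$ is finite, so $C^*_r(H_x)$ is finite dimensional and in particular nuclear. By Corollary \ref{exactG}, $C(\Omega)\rtimes_rG$ is nuclear. The Borel C$^*$-algebras are nuclear as well. The faithfulness hypothesis holds by Proposition \ref{nondeggroup}, and finite type holds by Proposition \ref{finitogroup} because $X$ is locally finite. Theorem \ref{convcrossed} identifies $\mathscr{C}$ with the covariance algebra, and Theorem \ref{nucnucexex} then gives nuclearity directly in the $W$-isometric case.

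\textbf{Embedding.} Fix a base chamber $o=H_{i_0}$ and use the $G$-equivariant orbit map $G\to X$, $g\mapsto gH_{i_0}$, which has finite fibres $gH_{i_0}$. Pulling back gives an embedding $C(\Omega)\to \ell^\infty(X)\to\ell^\infty(G)$ acting by multiplication on $l^2G$. Together with $\lambda(G)$ this produces a covariant pair. We expect the C$^*$-algebra it generates to be a quotient of the full crossed product, and it is nuclear in any case because $C(\Omega)\rtimes_r G$ is nuclear. The reduced and full crossed products coincide here, since by Corollary \ref{exactG} the action is amenable (each $H_x$ is finite, hence amenable). Any representation of $C(\Omega)\rtimes G$ therefore gives a nuclear image, so we can take $\mathscr{C}$ to be the C$^*$-algebra generated by $\lambda(G)$ and $C(\Omega)$ inside $B(l^2G)$. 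It contains $A$.

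\textbf{Commutators.} We need $[c,\rho(g)]\in\mathcal K(l^2G)$ for all $c\in\mathscr{C}$. Since $\lambda$ and $\rho$ commute, it suffices to treat $f\in C(\Omega)$. For such $f$, $[f,\rho(g)]$ is multiplication by the function $h\mapsto f(hH_{i_0}) - f(hgH_{i_0})$, up to the unitary $\rho(g)$. This is compact if and only if the function vanishes at infinity on $G$. The fibres of the orbit map are finite and $X$ is locally finite, so this reduces to the following statement: for fixed $g$, the chambers $x = hH_{i_0}$ and $y=hgH_{i_0}$ satisfy $d(x,y)=l([g]_{i_0,i_0})$, bounded independently of $h$. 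What is then needed is that $f(x)-f(y)\to 0$ as $x\to\infty$ with $d(x,y)\le R$. In other words, the action of $G$ must be trivial on the boundary $\Omega\setminus X$ in the "small at infinity" sense: sequences at bounded distance converge to the same boundary point.

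\textbf{Main obstacle.} This last step is where we must use that $(W,S)$ is small at infinity. The plan is to transfer Klisse's result \cite[Theorem 4.6]{Kli23} for $\overline{(W,S)}$ to $\Omega$ using the retractions $\delta(o,\cdot)$. By Lemma \ref{convW}, convergence in $\Omega$ is detected through $\delta(o,x_n)$ together with the finitely many chambers at each $W$-distance. Concretely, we take $f = 1_{U(o,z)}$ and sequences $x_n\to\infty$ with $\delta(x_n,y_n)\in S$. We need $x_n\ge_o z \iff y_n\ge_o z$ eventually. By (Bui2), $\delta(o,y_n)\in\{\delta(o,x_n),\delta(o,x_n)s\}$. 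Smallness at infinity for $W$ shows that the projections $P_v$ are asymptotically invariant under right multiplication by $s$. We will then argue, via the geodesic $o\to z\to x_n$ and the local finiteness used in Lemma \ref{convconv}, that membership in $U(o,z)$ can differ only for finitely many $x_n$. The delicate point is lifting the Coxeter-group statement to the building, where we expect to reuse the gallery-adjacency case analysis of Lemma \ref{convconv}.
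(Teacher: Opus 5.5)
Your set-up is the paper's own: the C$^*$-algebra generated by $\lambda(G)$ and the pull-back of $C(\Omega)$ along $g\mapsto gH_{i_0}$ is exactly the paper's $\iota(\mathscr{C})$ with $j=i_0$. It is nuclear as a quotient of $C(\Omega)\rtimes_rG$; your covariant pair is the regular representation induced from evaluation at $o$, so you do not even need amenability for this. Your reduction of condition (iii) is also correct: it suffices that $1_{U(o,z)}(x)=1_{U(o,z)}(y)$ for adjacent chambers $x,y$ with $x$ outside a finite set.

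The gap is that this last statement is only announced, not proved. It is the one place where smallness at infinity enters, so it is the real content of the theorem. It does not follow formally from the Coxeter-group statement, because $1_{U(o,z)}(x)$ is not a function of $\delta(o,x)$. Knowing that $\delta(o,x)$ and $\delta(o,y)$ eventually lie on the same side of $a=\delta(o,z)$ in the weak order does not tell you whether the chamber at $W$-distance $a$ on the way from $o$ to $x$ (resp. $y$) is $z$. Lemma \ref{convconv}, which concerns changing the base chamber, does not supply this either.

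The paper avoids the problem by testing (iii) on different generators.
\begin{itemize}
\item By the proof of Theorem \ref{convcrossed}, $C(\Omega)\rtimes_rG$ is generated by $\lambda(G)$ together with the projections $p'_{i,u}$ onto $\{x\in X:\delta(H_i,x)\geq u\}$, for $i\in\mathscr{I}$ and $u\in W$. (The cone projections are products of the operators $\lambda(g_k)p_{i_k,s_k}^\perp$ and their adjoints.) These projections depend on $x$ only through a $W$-distance.
\item For $g\in C_{j,j}(v)$, $\rho(g)$ maps $l^2C_{i,j}(w)$ into $\bigoplus_{z\in F}l^2C_{i,j}(wz)$ for a finite $F\subset W$.
\item Hence $[\iota(p_{i,u}),\rho(g)]$ is supported on $\bigoplus_{w\in D}l^2C_{i,j}(w)$, where $D$ is the set of $w$ for which $w\geq u$ and $wz\geq u$ disagree for some $z\in F$.
\item $D$ is finite because $[P_u,\rho_z]\in\mathcal{K}(l^2W)$ by smallness at infinity \cite[Lemma 5.3.17]{BO08}. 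Each $C_{i,j}(w)$ is finite by local finiteness and finiteness of $H_j$, so the commutator has finite rank.
\end{itemize}

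If you want to keep your generators $1_{U(o,z)}$, you must add the building fact you are missing. When $w=\delta(o,x)\geq a$, there is a unique chamber $\pi_a(x)$ with $\delta(o,\pi_a(x))=a$ and $\delta(\pi_a(x),x)=a^{-1}w$; this is uniqueness of minimal galleries of a given reduced type. Consequently $x\geq_o z$ if and only if $w\geq a$ and $\pi_a(x)=z$.

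Now take $\delta(x,y)=s$ and split into three cases.
\begin{itemize}
\item $\delta(o,y)=ws$ with $l(ws)>l(w)$. Then $y\geq_o x$, and $\pi_a(y)=\pi_a(x)$ whenever $w\geq a$.
\item $\delta(o,y)=ws$ with $l(ws)<l(w)$. This is the previous case with $x$ and $y$ swapped.
\item $\delta(o,y)=w$ with $l(ws)<l(w)$. Both $x$ and $y$ lie above the projection of $o$ onto their common $s$-panel.
\end{itemize}
In each case, $1_{U(o,z)}(x)\neq 1_{U(o,z)}(y)$ forces exactly one of $w\geq a$, $ws\geq a$ to hold. By smallness at infinity this happens for only finitely many $w$, and hence, by local finiteness, for only finitely many $x$.
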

\begin{proof}
Let $j \in \mathscr{I}$. Consider the $*$-homomorphism \[\iota : \mathscr{C} \subset \mathcal{L}_\mathcal{P}(\mathcal{X}) \twoheadrightarrow \mathcal{L}_{B_j}(X_j)\hookrightarrow B(l^2G)\] obtained by composing the projection on $\mathcal{L}_{B_j}(X_j)$ with the embedding given by identifying $l^2G = X_j\otimes_{B_j}l^2H_j$ as a $G$-Hilbert space. It is easy to prove that $\iota$ restricted to $A \subset \mathcal{L}_\mathcal{P}(\mathcal{X})$ is just the left regular representation. We consider the C$^*$-algebras $\iota(A) = C^*_r(G) \subset M = L(G)\subset B(l^2G)$. For all $i\in \mathscr{I}$, $B_i$ is finite-dimensional so $\mathscr{C}$ is nuclear by Theorem \ref{nucnucexex} (recall that $\mathscr{C}(i) = \mathscr{C}$ by Theorem \ref{convcrossed}.) Hence $\iota(\mathscr{C})$ is nuclear as a quotient of $\mathscr{C}$. For all $g\in G$, we have $J\lambda(g)J = \rho(g)$, where $\rho(g)$ is the right multiplication operator on $l^2G$. All that is left to check is that $[c,\rho(g)] \in \mathcal{K}(l^2G)$ for all $u,v\in W,c\in \iota(\mathscr{C})$ and $g\in C_{i,j}(v)$. It suffices to check this condition on the elements of $C^*_r(G)$ and the projections of the form $\iota(p_{i,w})$ for arbitrary $i \in \mathscr{I}$ and $w\in W$. We have $[a, \rho(g)] = 0$ for every $a\in C^*_r(G)$ and $g\in G$. For all $w\in W$, denote by $\rho_w \in B(l^2W)$ the operator of right multiplication by $w$. Let $P_w$ be the projection on the submodule of $l^2W$ spanned by vectors $\delta_u$ such that $u \geq w$ for the right weak order. Let $u,v \in W$. As $(W,S)$ is small at infinity, \cite[Lemma 5.3.17]{BO08} gives $[P_{u},\rho_v] \in \mathcal{K}(l^2W)$. Set
\[
D(u,v)
=
\left\{
w\in W
\;\middle|\;
(w\geq u \text{ and } wv\ngeq u) \text{ or } (w\ngeq u \text{ and } wv\geq u)
\right\}.
\]
 For all $w\in W$ we have $[P_u,\rho_{v^{-1}}]\delta_w = (\mathbf{1}_{wv\geq u}-\mathbf{1}_{w\geq u})\delta_{wv}$. It follows from compactness that $D(u,v)$ is finite. Let $i \in \mathscr{I}$. Let $u \in W$. Under the identification $l^2G = \bigoplus_{w\in W}l^2C_{i,j}(w)$, $\iota(p_{i,u})$ is the projection onto the Hilbert subspace $\bigoplus_{w\geq u}l^2C_{i,j}(w)$. Let $g\in C_{j,j}(v)$ for some $v\in W$. Iterating the axiom (Wdec4), we know that there exists a finite set $F \subset W$ such that $\rho(g)$ sends every subspace of the form $l^2C_{i,j}(w)$ to a subspace of the form $\bigoplus_{z\in F}l^2C_{i,j}(wz).$ Consider the finite subset \[
D(u,F)= \bigcup_{z\in F}D(u,z) \subset W.\]
Let $w \notin D(u,F)$. Let $x \in C_{i,j}(w)$. There exists a $z \in F$ such that $xg^{-1} \in C_{i,j}(wz)$. Compute
\[\begin{aligned}
    \bigl[\iota(p_{i,u}),\rho(g)\bigr]\cdot \delta_x & =  \iota(p_{i,u})\rho(g)\delta_{x}
    - \rho(g) \iota(p_{i,u})\delta_{x} \\ & = (\mathbf{1}_{wz\geq u}-\mathbf{1}_{w\geq u})\delta_{xg^{-1}} \\ &= 0.
\end{aligned}\] This means that the operator $\bigl[\iota(p_{i,u}),\rho(g)\bigr]$ is supported on $\bigoplus_{w\in D(u,F)}l^2C_{i,j}(w)$. As $X$ is locally finite and each Borel subgroup $H_j$ is finite, we know that each $C_{i,j}(w)$ is finite. Hence the operator $\bigl[\iota(p_{i,u}),\rho(g)\bigr]$ is of finite rank.
\end{proof}

Notice that for $(W,S)$ to be small at infinity, $W$ has to be hyperbolic in the sense of Gromov by \cite[Theorem 0.3]{Kli23}.

\section*{Acknowledgements}
The author would like to thank his PhD supervisor Emmanuel Germain for support and comments on earlier versions of this paper. He also wishes to thank Jean Lécureux for helpful email exchanges, as well as Pierre Fima for bringing relevant references to his attention.


\begin{thebibliography}{CORIMB}

\bibitem[AB08]{AB08}
P. Abramenko and K.S. Brown,
\newblock \emph{Buildings: Theory and Applications}
\newblock {Graduate Texts in Mathematics, Vol.~248.}
\newblock {Springer, 2008.}

\bibitem[BB05]{BB05}
A. Bj\"orner, F. Brenti,
\newblock \emph{Combinatorics of Coxeter groups,}
\newblock {Graduate Texts in Mathematics, 231. Springer, New
York, 2005. xiv+363 pp.}


\bibitem[BH99]{BH99}
M.~R.~Bridson and A.~Haefliger,
\textit{Metric spaces of non-positive curvature},
Grundlehren der Mathematischen Wissenschaften, vol.~319,
Springer-Verlag, Berlin, 1999, xxii+643~pp.
ISBN: 3-540-64324-9.


\bibitem[BO08]{BO08}
N. Brown, N. Ozawa,
\newblock \emph{$C^*$-algebras and Finite-dimensional Approximations,}
\newblock Graduate Studies in Mathematics,
88. American Mathematical Society, Providence, RI, 2008. xvi+509 pp.
\bibitem[BT72]{BT72}
F.~Bruhat and J.~Tits,
\newblock \emph{Groupes r\'eductifs sur un corps local.
I. Donn\'ees radicielles valu\'ees,}
\newblock Publ. Math. Inst. Hautes \'Etudes Sci. \textbf{41} (1972), 5--251.

\bibitem[BT84]{BT84}
F.~Bruhat and J.~Tits,
\newblock \emph{Groupes r\'eductifs sur un corps local.
II. Sch\'emas en groupes. Existence d'une donn\'ee radicielle valu\'ee,}
\newblock Publ. Math. Inst. Hautes \'Etudes Sci. \textbf{60} (1984), 5--184.

\bibitem[CL11]{CL11}
P. E. Caprace, J. Lécureux,
\newblock \emph{Combinatorial and group-theoretic compactifications of buildings,}
\newblock Ann. Inst. Fourier (Grenoble) 61 (2011), no. 2, 619–672.

\bibitem[CLSV11]{CLSV11}
T.~M.~Carlsen, N.~S.~Larsen, A.~Sims, and S.~T.~Vittadello,
\textit{Co-universal algebras associated to product systems, and gauge-invariant uniqueness theorems},
Proc. Lond. Math. Soc. (3) \textbf{103} (2011), no.~4, 563--600.
\href{https://doi.org/10.1112/plms/pdq028}{\path{doi:10.1112/plms/pdq028}}.

\bibitem[CF17]{CF17}
M. Caspers, P. Fima,
\newblock \emph{Graph products of operator algebras,}
\newblock Journal of Noncommutative Geometry 11.1 (2017): 367-411.
\newblock \href{https://ems.press/content/serial-article-files/30688}{\path{https://ems.press/content/serial-article-files/30688}}.
\bibitem[DJ99]{DJ99}
A.~N.~Dranishnikov and T.~Januszkiewicz,
\textit{Every Coxeter group acts amenably on a compact space},
Topology Proc. \textbf{24} (1999), 135--141.

\bibitem[Dyk04]{Dyk04}
K.~J.~Dykema,
\textit{Exactness of reduced amalgamated free product C$^*$-algebras},
Forum Math. \textbf{16} (2004), no.~2, 161--180.

\bibitem[Exe97]{Exe97}
R.~Exel,
\newblock Amenability for Fell bundles,
\newblock {\em J. Reine Angew. Math.}, 492 (1997), 41--74.

\bibitem[Exe17]{Exe17}
R.~Exel,
\textit{Partial Dynamical Systems, Fell bundles and Applications},
Mathematical Surveys and Monographs, vol.~224,
American Mathematical Society, Providence, RI, 2017.

\bibitem[FA18]{FA18}
M.~Forough and M.~Amini,
\newblock \emph{Hilbert \(C^*\)-bimodules of finite index and
approximation properties of \(C^*\)-algebras,}
\newblock Glasgow Math. J. \textbf{60} (2018), no.~2, 321--331.
\newblock
\href{https://doi.org/10.1017/S001708951700012X}
{\path{doi:10.1017/S001708951700012X}}.
\bibitem[Has17]{Has17}
K. Hasegawa,
\newblock \emph{Noncommutative Bass–Serre trees and their applications,}
\newblock PhD thesis
\newblock \href {https://catalog.lib.kyushu-u.ac.jp/opac_download_md/1866255/math0220.pdf}{\path{https://catalog.lib.kyushu-u.ac.jp/opac_download_md/1866255/math0220.pdf}}.

\bibitem[Has19]{Has19}
K. Hasegawa,
\newblock \emph{Bass-Serre trees of amalgamated free product $C^{\ast}$-algebras,}
\newblock Int. Math. Res. Not. IMRN 2019, no.
21, 6529–6553.
\newblock \href {https://arxiv.org/abs/1609.08837} {\path{https://arxiv.org/abs/1609.08837}}.

\bibitem[HI17]{HI17}
C.~Houdayer and Y.~Isono,
\textit{Unique prime factorization and bicentralizer problem for a class of type~III factors},
Adv. Math. \textbf{305} (2017), 402--455.

\bibitem[KS91]{KS91}
G.~Kasparov and G.~Skandalis,
\textit{Groups acting on buildings, operator $K$-theory, and Novikov's conjecture},
K-Theory \textbf{4} (1991), no.~4, 303--337.

\bibitem[KLT87]{KLT87}
W.~M.~Kantor, R.~A.~Liebler, and J.~Tits,
\newblock \emph{On discrete chamber-transitive automorphism groups of affine buildings,}
\newblock Bull. Amer. Math. Soc. (N.S.) \textbf{16} (1987), no.~1, 129--133.
\newblock
\href{https://doi.org/10.1090/S0273-0979-1987-15487-5}
{\path{doi:10.1090/S0273-0979-1987-15487-5}}.

\bibitem[Kat04]{Kat04}
T.~Katsura,
\textit{On $C^*$-algebras associated with $C^*$-correspondences},
J. Funct. Anal. \textbf{217} (2004), no.~2, 366--401.
\href{https://doi.org/10.1016/j.jfa.2004.03.010}{\path{doi:10.1016/j.jfa.2004.03.010}}.

\bibitem[Kir93]{Kir93}
E. Kirchberg,
\newblock \emph{On nonsemisplit extensions, tensor products and exactness of
group $C^{\ast}$-algebras,}
\newblock Invent. Math. 112 (1993), no. 3, 449--489.
\newblock \href{https://doi.org/10.1007/BF01232444}{\path{doi:10.1007/BF01232444}}.

\bibitem[Kli22]{Kli22}
M.~Klisse,
\emph{The Structure of Hecke Operator Algebras},
PhD thesis, Delft University of Technology, 2022.
\url{https://doi.org/10.4233/uuid:0f0982ee-3e3b-4364-ab6d-c8dd03435c0e}.

\bibitem[Kli23]{Kli23}
M.~Klisse,
\textit{Topological boundaries of connected graphs and Coxeter groups},
J. Operator Theory \textbf{89} (2023), no.~2, 429--476.
\href{https://arxiv.org/abs/2010.03414}{\path{https://arxiv.org/abs/2010.03414}}.

\bibitem[Kli25]{Kli25}
M. Klisse,
\newblock  \emph{Universal $C^{\ast}$-Algebras from Graph Products: Structure and Applications,}
\newblock arXiv preprint, 2025.
\newblock \href{https://arxiv.org/abs/2507.12271} {\path{https://arxiv.org/abs/2507.12271}}.

\bibitem[Lec09]{Lec09}
J. Lécureux,
\newblock{\em Amenability of actions on the boundary of a building, Int. Math. Res. Not. IMRN 2010 no. 17, 3265-3302.}
\newblock \href {https://arxiv.org/abs/0907.2033} {\path{https://arxiv.org/abs/0907.2033}}.

\bibitem[MRV24]{MRV24}
S.~A.~Mutter, A.-C.~Radu, and A.~Vdovina,
\textit{C$^*$-algebras of higher-rank graphs from groups acting on buildings, and explicit computation of their $K$-theory},
Publ. Mat. \textbf{68} (2024), no.~1, 187--217.
\href{https://doi.org/10.5565/PUBLMAT6812408}{\path{doi:10.5565/PUBLMAT6812408}}.

\bibitem[Nor17]{Nor17}
W.~Norledge,
\newblock \emph{Stacky buildings,}
\newblock arXiv preprint arXiv:1710.06968, 2017,
\newblock
\href{https://arxiv.org/abs/1710.06968}
{\path{https://arxiv.org/abs/1710.06968}}.

\bibitem[Nor21]{Nor21}
W.~Norledge,
\newblock \emph{Quotients of buildings by groups acting freely on chambers,}
\newblock J. Pure Appl. Algebra \textbf{225} (2021), no.~11, Article 106730.
\newblock
\href{https://doi.org/10.1016/j.jpaa.2021.106730}
{\path{doi:10.1016/j.jpaa.2021.106730}}.

\bibitem[Rob00]{Rob00}
G.~Robertson,
\textit{Boundary Actions for Affine Buildings and Higher Rank Cuntz--Krieger Algebras},
in: J.~Cuntz and S.~Echterhoff (eds.),
\textit{$C^*$-Algebras},
Springer, Berlin, Heidelberg, 2000, pp.~182--196.
\href{https://doi.org/10.1007/978-3-642-57288-3_10}{\path{doi:10.1007/978-3-642-57288-3_10}}.

\bibitem[RS99]{RS99}
G.~Robertson and T.~Steger,
\textit{Affine buildings, tiling systems and higher rank Cuntz--Krieger algebras},
J. Reine Angew. Math. \textbf{513} (1999), 115--144.
\bibitem[Ron09]{Ron09}
M.~Ronan,
\emph{Lectures on Buildings},
updated and revised ed.,
University of Chicago Press, Chicago, 2009.
\bibitem[Seh19]{Seh19}
C.~F.~Sehnem,
\textit{On $C^*$-algebras associated to product systems},
J. Funct. Anal. \textbf{277} (2019), no.~2, 558--593.
\href{https://doi.org/10.1016/j.jfa.2018.10.012}{\path{doi:10.1016/j.jfa.2018.10.012}}.

\bibitem[Ser77]{Ser77}
J.-P.~Serre,
\textit{Arbres, amalgames, $\mathrm{SL}_2$},
Ast\'erisque, no.~46,
Soci\'et\'e Math\'ematique de France, Paris, 1977.

\bibitem[Sol07]{Sol07}
M.~S.~Solleveld,
\textit{Periodic cyclic homology of affine Hecke algebras},
Ph.D. thesis, Universiteit van Amsterdam, Amsterdam, 2007.

\bibitem[Voi85]{Voi85}
D. Voiculescu,
\newblock{\em Symmetries of some reduced free product C$^*$
-algebras,}
\newblock Operator algebras and their connections
with topology and ergodic theory (Busteni, 1983), 556–588. Lecture Notes in Math., 1132 Springer-Verlag,
Berlin, 1985.

\end{thebibliography}
\end{document}